\documentclass[12 pt, hidelinks, leqno]{amsart} 

\usepackage[top=3cm, bottom=3cm, left=2cm, right=2cm]{geometry}
\usepackage[english]{babel}
\usepackage{hyperref}
\usepackage{amsmath}
\usepackage{amsthm}
\usepackage{amssymb}
\usepackage{ucs}
\usepackage{mathtools}
\usepackage{enumerate}
\usepackage[T1]{fontenc}
\usepackage{color}
\usepackage{tikz}
\usepackage{diagbox}
\usepackage{tikz-cd}
\usepackage{tkz-graph}
\usepackage{comment}
\usepackage{relsize,exscale}
\usepackage{mathrsfs}

\usepackage{xcolor}

\AtBeginDocument{%
   \def\MR#1{}
}

\theoremstyle{plain}
\newtheorem{theorem}{Theorem}[section]
\newtheorem{theoremA}{Theorem}

\newtheorem{lemma}[theorem]{Lemma}
\newtheorem{proposition}[theorem]{Proposition}
\newtheorem{corollary}[theorem]{Corollary}
\newtheorem{property}{Property} 
\theoremstyle{definition}
\newtheorem{definition}[theorem]{Definition}

\newtheorem{notation}[theorem]{Notation}
\newtheorem{remark}[theorem]{Remark}

\newcommand{\C}{\mathbb{C}}

\newcommand{\Lcal}{\mathcal{L}}

\newcommand{\ot}{\otimes}
\newcommand{\id}{{\rm id}}

\newcommand{\Pol}{{\rm Pol}}

\newcommand{\Linf}{{\rm L}^\infty}

\newcommand{\Mor}{\mathrm{Mor}}

\newcommand{\Rep}{\mathrm{Rep}}

\newcommand{\GG}{\mathbb{G}}

\newcommand{\NC}{{\rm NC}}
\newcommand{\Ccal}{\mathcal{C}}

\usepackage{tikz}
\usetikzlibrary{calc,decorations.pathreplacing}
\usepackage{caption}

\begin{document}

\title{Representation Category of Free Wreath Product of Classical Groups}
\author{Yigang Qiu}
\address{Yigang Qiu
\newline
Universit\'e Paris Cit\'e and Sorbonne Universit\'e, CNRS, IMJ-PRG, F-75013 Paris, France.}
\email{yqiu@imj-prg.fr}

\date{} 

\maketitle
 \begin{abstract}
In this paper, we construct a rigid concrete $C^*$-tensor category whose
associated compact quantum group, reconstructed via Woronowicz--Tannaka--Krein
duality, is the free wreath product of classical groups.
 \end{abstract}

\section{Introduction} 
\subsection*{Background}
\noindent The free wreath product, introduced by Bichon in \cite{Bi04} to describe the quantum automorphism group of $N$ disjoint copies of a given graph, has been a source of important examples in quantum group theory. It has later been generalized in various directions \cite{pit14,FP16,TW18,FS18,Fr22,FT25}. The more advanced construction from \cite{FT25} produces, given any compact quantum groups $G,H$ and an action $\beta\,:\,H\curvearrowright(B,\psi)$ on a finite dimensional C*-algebra preserving a faithful state $\psi$, a compact quantum group $G\wr_{*,\beta}H$, called the generalized free wreath product and it generalizes the previous constructions. It also produces new interesting examples that do not fit in the previous constructions. One of them is the free wreath product of classical groups: $\Gamma$ is a discrete group and $\Lambda$ is a finite group with action $\Lambda\curvearrowright(C^*(\Lambda),\tau_\Lambda)$ by left translation, where $\tau_\Lambda$ is the canonical trace on $C^*(\Lambda)$. Let us call $\GG:=\widehat{\Gamma}\wr_{*,\beta}\Lambda$ the generalized free wreath product. Most of the results from \cite{FT25}, except the ones about approximation properties, are not applicable to $\GG$, since $\Lambda$ is assumed to be finite.

The compact quantum group
\[
\GG=\widehat{\Gamma}\wr_{\ast,\beta}\Lambda
\]
was already studied in a previous work of Fima and the author \cite{FimaQiu2025FreeWreath}. There we established an explicit combinatorial formula for the Haar state on $\GG$, which is not known in the general setting of generalized free wreath products considered in \cite{FT25}. This formula was then used to analyze the operator-algebraic structure of $\GG$, leading in particular to results on simplicity and uniqueness of trace for $C_r(\GG)$, and on factoriality, fullness, and the center of $\Linf(\GG)$.

For the convenience of the reader, and since it will serve as the starting point of the representation-theoretic considerations below, we briefly recall from \cite{FT25} the presentation of $\GG$ by generators and relations. The algebra $C(\GG)$ is the universal unital C*-algebra generated by elements $\nu_\gamma(g)$ for $\gamma\in\Lambda$ and $g\in \Gamma$ and by $C^*(\Lambda)$ with relations,
\begin{subequations}\label{eq:nu-relations}
\begin{align}
(\nu_\gamma(g))^*&=\nu_{\gamma^{-1}}(g^{-1}),
\label{eq:nu-relations-a}
\\
\nu_\gamma(1)&=\delta_{\gamma,1}\,1,
\label{eq:nu-relations-b}
\\
\nu_\gamma(gh)
&=
\sum_{\substack{r,s\in\Lambda\\ rs=\gamma}}
\nu_r(g)\nu_s(h),
\label{eq:nu-relations-c}
\\
s\,\nu_{rs}(g)&=\nu_{sr}(g)\,s.
\label{eq:nu-relations-d}
\end{align}
\end{subequations}
\noindent The comultiplication on $C(\GG)$ is the unique unital $*$-homomorphism $\Delta\,:\, C(\GG)\rightarrow C(\GG)\ot C(\GG)$ such that:
$$\Delta(\gamma)=\gamma\ot\gamma\quad\text{and}\quad\Delta(\nu_\gamma(g))=\sum_{r,s\in\Lambda,\,rs=\gamma} \nu_r(g)s\ot\nu_s(g)\quad\text{for all }\gamma\in\Lambda,\,g\in\Gamma.$$

\subsection*{Main results}

\begin{theoremA}\label{thm:main}
Let $\mathcal C_{\Gamma,\Lambda}$ be the concrete linear category whose objects
are finite tuples of elements of $\Gamma$, and whose morphism spaces are spanned
by the partition operators associated with admissible bi-coloured noncrossing
partitions in the sense of Definitions~\ref{bi-coloured} and
\ref{DEF: INTERTWINER}.

Then, equipped with the usual composition, tensor product, and involution of
operators, $\mathcal C_{\Gamma,\Lambda}$ is a rigid concrete $C^*$-tensor
category. Moreover, the compact quantum group reconstructed from
$\mathcal C_{\Gamma,\Lambda}$ by Woronowicz's Tannaka--Krein theorem is
canonically isomorphic to $\GG$.
\end{theoremA}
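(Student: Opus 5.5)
The proof has two halves. First we show that $\mathcal C_{\Gamma,\Lambda}$ is a rigid concrete $C^*$-tensor category. Then we identify the quantum group it reconstructs with $\GG$, by comparing both objects with the concrete category of finite dimensional representations of $\GG$ generated by a fundamental family.

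\emph{Fundamental representations.} For each $g\in\Gamma$, consider the matrix $u^g=(u^g_{r,s})_{r,s\in\Lambda}\in M_{\Lambda}(C(\GG))$ with entries
\[
u^g_{r,s}=\nu_{rs^{-1}}(g)\,s\qquad\text{(up to the normalisation dictated by $\Delta$)}.
\]
Relations \eqref{eq:nu-relations-a}--\eqref{eq:nu-relations-d} together with the formula for $\Delta$ show that $u^g$ is a unitary representation on $\C^\Lambda\cong \ell^2(\Lambda)$. This is the Hilbert space on which the partition operators of Definition~\ref{DEF: INTERTWINER} act. We then check two facts:
\begin{itemize}
\item the entries of all $u^g$ generate $C(\GG)$, since $\nu_\gamma(g)=\sum_s u^g_{\gamma s,s}s^{-1}$ up to normalisation, and $\Lambda$ is recovered from $u^1$ via \eqref{eq:nu-relations-b};
\item $\overline{u^g}\simeq u^{g^{-1}}$ via \eqref{eq:nu-relations-a}.
\end{itemize}
To a tuple $(g_1,\dots,g_k)$ we associate $u^{g_1}\ot\cdots\ot u^{g_k}$. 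By Woronowicz's Tannaka--Krein theorem it then suffices to prove
\[
\Mor_{\mathcal C_{\Gamma,\Lambda}}\big((g_i),(h_j)\big)=\Mor_{\GG}\big(u^{g_1}\ot\cdots\ot u^{g_k},\,u^{h_1}\ot\cdots\ot u^{h_l}\big)
\]
for all tuples. If this holds, the right-hand side is automatically a rigid concrete $C^*$-tensor category, so the first assertion of the theorem also follows. Independently, we will verify directly that admissible bi-coloured noncrossing partitions are stable under the standard operations: vertical concatenation (where the usual loop factors, powers of $|\Lambda|$ or trace values, appear as scalars), horizontal concatenation, and reflection (which gives $T_p^*=T_{p^*}$). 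Rigidity comes from the duality partitions pairing $g$ with $g^{-1}$, which realise $\overline{u^g}\simeq u^{g^{-1}}$.

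\emph{Inclusion $\subseteq$.} We show that every $T_p$ intertwines. Since the morphism spaces on both sides are closed under composition, tensor products and adjoints, it is enough to treat a generating family of admissible partitions. We first prove a combinatorial lemma: every admissible bi-coloured noncrossing partition is a composition and tensor product of
\begin{itemize}
\item identities;
\item the "multiplication" partition $(g,h)\to(gh)$, corresponding to \eqref{eq:nu-relations-c};
\item the unit partition $\emptyset\to(1)$, corresponding to \eqref{eq:nu-relations-b};
\item the duality partition $(g,g^{-1})\to\emptyset$, corresponding to \eqref{eq:nu-relations-a};
\item the partitions encoding the $\Lambda$-twist, corresponding to \eqref{eq:nu-relations-d}.
\end{itemize}
This lemma follows the standard noncrossing decomposition argument: peel off an innermost block and use the admissibility condition (that the product of the colours in each block equals $1$ in $\Gamma$, with compatible $\Lambda$-labels). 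For each generator, the intertwining property is a direct translation of one defining relation.

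\emph{Inclusion $\supseteq$.} This is the main obstacle. The approach is the standard universality argument. Let $\mathbb H$ be the compact quantum group reconstructed from $\mathcal C_{\Gamma,\Lambda}$, with fundamental unitaries $w^g$. Because the generators listed above are morphisms in $\mathcal C_{\Gamma,\Lambda}$, the entries of the $w^g$ satisfy relations \eqref{eq:nu-relations-a}--\eqref{eq:nu-relations-d}, and $\Lambda$ embeds via the block structure of $w^1$. By universality of $C(\GG)$ we obtain a surjection $C(\GG)\to C(\mathbb H)$ intertwining the comultiplications. This gives $\mathbb H\subseteq \GG$, and therefore $\Mor_{\GG}\subseteq\Mor_{\mathbb H}=\Mor_{\mathcal C}$.

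The delicate point is extracting exactly relation \eqref{eq:nu-relations-d} and the identification of $\Lambda$ from partition intertwiners. Both depend on the precise form of Definition~\ref{bi-coloured}, so I would check these relations carefully, one generator at a time. As a consistency check, I would compare dimensions of $\Mor(1,u^{g_1}\ot\cdots\ot u^{g_k})$ on the two sides. The left-hand dimension is obtained by counting admissible partitions, provided their operators are linearly independent. The right-hand dimension is $h(\chi)$, computed from the Haar state formula of \cite{FimaQiu2025FreeWreath}; that formula is a sum over noncrossing partitions, which should match the count term by term.
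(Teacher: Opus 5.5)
Your reconstruction step follows the paper. The paper also reconstructs $\mathbb H$ from $\mathcal C_{\Gamma,\Lambda}$ and gets $\varphi\colon C(\mathbb H)\to C(\GG)$ from $\Mor_{\mathcal C}\subseteq\Mor_{\GG}$. It then uses explicit partition intertwiners ($p_1,p(t),P$ for $w(1)$, then $q(t),p_2,p_3$ and the cup $\xi$) to show that $\bar s:=w_{s,s}(1)$ and $\bar\nu_\gamma(g):=w_{\gamma,1}(g)$ satisfy \eqref{eq:nu-relations-a}--\eqref{eq:nu-relations-d}, with $w_{r,s}(g)=\bar\nu_{rs^{-1}}(g)\bar s$. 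For $\Mor_{\mathcal C}\subseteq\Mor_{\GG}$ the paper needs no generating family. It treats $(0,n)$-partitions directly, by induction on the number of blocks, removing a consecutive block and merging its label into a neighbour when needed. It then rotates using $T_{(p,\vec t)}=(T_{\cap_k}\otimes\id)\circ(\id\otimes T_{\mathrm{Lrot}^k(p)})$.

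The genuine gap is the first assertion: that $\mathrm{span}\{T_{(p,\vec t)}\}$ is closed under composition. You defer this as a routine check and place the difficulty in $\supseteq$, but your $\supseteq$ argument presupposes it. You cannot reconstruct $\mathbb H$ from $\mathcal C_{\Gamma,\Lambda}$, or write $\Mor_{\mathbb H}=\Mor_{\mathcal C}$, before knowing that $\mathcal C_{\Gamma,\Lambda}$ is a category. If you reconstruct from the category generated by the $T_{(p,\vec t)}$ instead, then, granting $\subseteq$, your argument shows that this generated category equals the intertwiner category of $\GG$. The theorem as stated then reduces exactly to the missing closure statement.

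This closure is not the easy-quantum-group computation with loop factors, because $\delta_{(p,\vec t)}$ is not a product of local conditions. A through-block imposes a prefix-product condition, and a single-layer block imposes a product over all of $[\min V,\max V]$, including points of nested blocks. So after summing over the middle row, the colour of a composite block is not determined locally. It is also not a priori clear that $T_q\circ T_p$ is a multiple of a \emph{single} partition operator. The paper needs all of Section~4 for this:
\begin{itemize}
\item constancy of $\prod\vec s|_E$ on entrances $E$ of each $(p,q)$-component, proved by an induction using gluing and truncation, which makes $\mathrm{lab}(\mathcal C)$ well defined;
\item a proof that $q\cdot p$ satisfies the boundary condition;
\item the equivalence $\delta_{q\cdot p}(\vec r,\vec d)=1\iff\theta_p^{\vec r}\cap\Omega_q^{\vec d}\neq\emptyset$, by induction on $|K(q,p)|$;
\item the fact that a solvable middle system has exactly $|\Lambda|^{c-1}$ solutions, independently of $(\vec r,\vec d)$. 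Here $c$ is the number of components of the graph on $\{0,\dots,l\}$ with edges $\{\min B-1,\max B\}$, $B\in p^-\sqcup q^+$, and the count comes from potentials on a balanced gain graph.
\end{itemize}
Together these give $T_q\circ T_p\in\{0,\ |\Lambda|^{c-1}T_{q\cdot p}\}$.

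Your proposal offers no substitute for any of these steps, and your generator-decomposition lemma would need the same kind of composition formula. The dimension check cannot replace them, since partition operators are far from linearly independent: for $\Lambda=\{e\}$ every $T_{(p,\vec t)}$ is the scalar $1$. Note also that admissibility is not per-block compatibility of $\Lambda$-labels. The $\Lambda$-constraint is the global boundary condition on $\partial p$, and inner blocks carry arbitrary labels.
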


The first assertion is proved in Theorem~\ref{thm:C-Gamma-Lambda-category},
and the reconstruction statement is established in
Theorem~\ref{Reconstruction}.

\subsection*{Organization of the paper} The paper is organized as follows. In Section~2, we introduce the boundary structure of bi-coloured noncrossing partitions and recall the Woronowicz--Tannaka--Krein reconstruction theorem. In Section~3, we introduce a family of fundamental representations of $\GG$ together with the intertwiner operators induced by bi-coloured noncrossing partitions satisfying the required compatibility conditions. In Section~4, we construct the concrete rigid $C^*$-tensor category $\mathcal C_{\Gamma,\Lambda}$. A substantial part of this section is devoted to the definition of the vertical composition of coloured partitions. To this end, we introduce the notion of connected components associated with a pair of partitions $(p,q)\in \NC_{\Lambda}(k,l)\times \NC_{\Lambda}(l,m),$ and study in detail the interplay between their geometric structure and the colouring rules. More precisely, from Definition~4.4 to Corollary~4.28, we analyze the relation between the geometry of these connected components and the admissible colourings. Then, from Definition~4.29 to Proposition~4.32, we define the vertical composition of coloured partitions satisfying the boundary conditions and prove that this operation is well defined. From Corollary~4.33 to Proposition 4.45, we prove that the operators induced by coloured partitions are closed under composition. Finally, from Proposition~4.46 to the end of the section, we complete the construction of the concrete rigid $C^*$-tensor category $\mathcal C_{\Gamma,\Lambda}$ and prove that it is well defined. In the final section, we reconstruct the compact quantum group $\GG=\widehat{\Gamma}\wr_{\ast,\beta}\Lambda$
from the category $\mathcal C_{\Gamma,\Lambda}$ using Woronowicz--Tannaka--Krein duality together with the intertwiner relations induced by several basic partition operators.

 \section{Preliminary}
 
 \subsection{Boundary Condition for Bi-Coloured Noncrossing Partitions}
\begin{definition}\label{blocks}
 For $k,l\geq 0$, we define $\NC(k,l)$ and we view a partition $p\in\NC(k,l)$ as a diagram with $[k]=\{1,\dots,k\}$ upper points, $[l]=\{1,\dots l\}$ lower points and with lines joining the upper points and the lowers points which are in the same block of $p$. Note that since $p$ is non-crossing, no lines from different blocks do cross. When $p\in\NC(k,l)$ and $V\in p$, we write $V_+:=V\cap[k]$ and $V_-:=V\cap[l]$.

Let $B$ be a block of $p$.
\begin{itemize}
\item If $B\cap[k]\neq\varnothing$ and $B\cap[\ell]\neq\varnothing$ (a through-block), define
\[
\mathrm{span}(B)
:= \{\, i\in [k]\mid 1\le i\le \max(B_+)\,\}
\ \sqcup\
\{\, j\in [\ell]\mid 1\le j\le \max(B_-)\,\}.
\]
\item  If $B$ is single-layer (i.e.\ $B\subset[k]$ or $B\subset[\ell]$), define
\[
\mathrm{span}(B):=\{\, i\in\mathbb{N}\mid \min(B)\le i\le \max(B)\,\}.
\]

where $\min(\cdot),\max(\cdot)$ are taken in the natural order of the
corresponding row.
\end{itemize}
A block $B$ is \emph{global-outer} if there is no $C\in p\setminus\{B\}$ with
$\mathrm{span}(B)\subsetneq \mathrm{span}(C)$. The collection of all
global-outer blocks of $p$ is called the \emph{boundary} of $p$, denoted by
$\partial p$.\end{definition}

\begin{definition}
Let $p\in \NC(k,l)$, let $B\in p$ be a single-layer block, and let $C\in p\setminus\{B\}$ be another block. We say that $B$ is \emph{nested in} $C$ if one of the following holds:
\begin{itemize}
    \item $B\subset [k]$, $C_+\neq \varnothing$, and
    \[
    \min(C_+)<\min(B)\le \max(B)<\max(C_+);
    \]
    \item $B\subset [l]$, $C_-\neq \varnothing$, and
    \[
    \min(C_-)<\min(B)\le \max(B)<\max(C_-).
    \]
\end{itemize}
\end{definition}

\begin{definition}
On $\partial p$, define a strict total order $\prec$ as follows.  
For distinct $B,C\in\partial p$, we set $B\prec C$ if and only if one of the following holds:
\begin{itemize}
    \item $B$ and $C$ are upper single-layer blocks, and $\max(B_+)>\max(C_+)$;
    \item $B$ is an upper single-layer block, and $C$ is not;
    \item $C$ is a lower single-layer block, and $B$ is not;
    \item $B$ and $C$ are lower single-layer blocks, and $\max(B_-)<\max(C_-)$.
\end{itemize}
\end{definition}

It is immediate that $\prec$ is a strict total order on $\partial p$: upper single-layer blocks come first, then the through-block if it exists, and finally lower single-layer blocks; within the upper class the order is by decreasing $\max(B_+)$, and within the lower class by increasing $\max(B_-)$.

We write this ordered list as $\partial p=\{V_1\prec\cdots\prec V_r\}$.

\begin{definition}\label{through block order}
Let \(\pi\in \mathrm{NC}(m,n)\), and set
\[
\mathrm{Th}(\pi):=\{\,B\in \pi:\ B\cap[m]\neq\emptyset,\ B\cap[n]\neq\emptyset\,\}.
\]

For \(V,W\in \mathrm{Th}(\pi)\), write \(W\lhd V\) if
\[
\max(W\cap[n])<\min(V\cap[n]).
\]
Equivalently, by non-crossing,
\[
W\lhd V
\quad\Longleftrightarrow\quad
\max(W\cap[m])<\min(V\cap[m]).
\]
Thus \(\lhd\) is the natural left-to-right order on \(\mathrm{Th}(\pi)\).
\end{definition}

\begin{definition}

For $p\in \NC(k,l)$, we regard $p$ as the set of its blocks.
A $\Lambda$-coloring ($\Lambda$-labelling) of $p$ is a map
\[
\mathrm{col}:p\to\Lambda,\qquad B\longmapsto \mathrm{col}(B).
\]
Equivalently, a colored partition is a pair
\[
(p,\mathrm{col})\in \NC(k,l)\times \Lambda^{|p|},
\]
where $\Lambda^{|p|}:=\{\mathrm{col}(V)_{V\in p}:p\to\Lambda\}$ denotes the set of all maps from $p$ to $\Lambda$.

For a coloring $\vec{t}:=(t_V)_{V\in p}=\mathrm{col}(V)_{V\in p}$,
we define the ordered product along the boundary by
\[
\prod_{V\in\partial p}^{\prec} t_V\ :=\ t_{V_1}\cdots t_{V_r}.
\]

We say that a $\Lambda$-colored partition $(p,\mathrm{col})$ satisfies the
\emph{boundary condition} if
\[
\prod_{V\in\partial p}^{\prec} \mathrm{col}(V)=1.
\]

\end{definition}

\begin{definition}\label{boundary product}
\textit{Single-layer outer block.}
If $B\subset [k]$ (upper row), then $B$ is an upper outer block iff there is no single-layer $C\subset [k]$, $C\neq B$, such that
$$
\mathrm{span}(B)\ \subsetneq\ \mathrm{span}(C)\quad
\text{(resp.\ if $B\subset[\ell]$, a lower outer block with $C\subset[\ell]$).}
$$

\textit{Relative outer blocks.}
Let $\mathcal{S}$ be a collection of single-layer blocks of $\pi$ lying on the same row.  
We call a block $B\in\mathcal{S}$ \emph{outer relative to $\mathcal{S}$} if no other
$C\in\mathcal{S}$,
$
\mathrm{span}(B)\subsetneq \mathrm{span}(C).$

(Upper row) If $\mathcal{S}\subset\{B:\,B\subset[k]\}$, define the set of relative outer
blocks of $\mathcal{S}$ by
\[
\partial^{\uparrow}(\mathcal{S})
:=\{\,B\in\mathcal{S}:\ \nexists\,C\in\mathcal{S},\,C\neq B,\ 
\mathrm{span}(B)\subsetneq \mathrm{span}(C)\,\},
\]
and analogously define $\partial^{\downarrow}(\mathcal{S})$ for 
$\mathcal{S}\subset\{B:\,B\subset[\ell]\}$.

\textit{Ordering.}
Let $\mathcal{S}$ be a finite set of single-layer blocks, and let 
$\vec{t}=(t_B)_{B\in\mathcal{S}}\in\Lambda^{\mathcal{S}}$ 
be a family of elements of $\Lambda$ indexed by~$\mathcal{S}$.

If $\mathcal{S}$ consists of upper single-layer blocks, let $
\partial^{\uparrow}(\mathcal{S})=\{B_1\prec\cdots\prec B_m\}$be its relative upper boundary, ordered by the restriction of $\prec$ to
upper single-layer blocks. We define the \emph{upper boundary product} of
$\mathcal{S}$ by
$$
\mathcal{T}^{\uparrow}(\mathcal{S})
\;:=(\;
\prod_{B\in \partial^{\uparrow}(\mathcal{S})}^{\prec} t_B)^{-1}
\;=\;
\bigl(t_{B_1}\cdots t_{B_m}\bigr)^{-1}.
$$

If $\mathcal{S}$ consists of lower single-layer blocks, let $
\partial^{\downarrow}(\mathcal{S})=\{B_1\prec\cdots\prec B_m\}$be its relative lower boundary, ordered by the restriction of $\prec$ to
lower single-layer blocks. We define the \emph{lower boundary product} of
$\mathcal{S}$ by
$$
\mathcal{T}^{\downarrow}(\mathcal{S})
\;:=\;
\prod_{B\in \partial^{\downarrow}(\mathcal{S})}^{\prec} t_B
\;=\;
t_{B_1}\cdots t_{B_m}.
$$
\end{definition}

\begin{definition}
For every non-empty subset $A\subset [n]$, we define
\[
\mathrm{Int}(A):=[\min A,\max A]
=\{\min A,\min A+1,\dots,\max A\}\subset [n].
\]
We call $\mathrm{Int}(A)$ the consecutive interval generated by $A$.

A subset $I\subset [n]$ is called a \emph{consecutive interval} if
\[
I=[s,t]:=\{s,s+1,\dots,t\}
\]
for some $1\le s\le t\le n$.
Equivalently, $I$ is a consecutive interval if and only if $\mathrm{Int}(I)=I$.

For any vector $\vec a=(a_1,\dots,a_n)$ and any non-empty subset $A\subset [n]$,
we set
\[
\prod_A \vec a:=\prod_{\mathrm{Int}(A)}\vec a
:=\prod_{j=\min A}^{\max A} a_j.
\]
In particular, if $I=[s,t]\subset [n]$ is a consecutive interval, then $\prod_I \vec a=a_s a_{s+1}\cdots a_t.$

If $p\in \NC(n)$ and $V\in p$, we say that $V$ is a \emph{consecutive block}
of $p$ if $V$ is a consecutive interval. In that case, the notation $\prod_V \vec a$
has the same meaning.
\end{definition}

\begin{remark}
The notation
\[
\prod_A \vec a
\]
introduced above is \emph{not} the product of the restricted subvector
$\vec a|_A$ in general. Indeed, if
\[
A=\{i_1<\cdots<i_r\}\subset [n],
\]
we define the restricted subvector by
\[
\vec a|_A:=(a_{i_1},\dots,a_{i_r}),
\]
and its product by
\[
\prod(\vec a|_A):=a_{i_1}\cdots a_{i_r}.
\]
By contrast,
\[
\prod_A \vec a=\prod_{j=\min A}^{\max A} a_j
\]
is the product over the whole consecutive interval generated by $A$.

Thus, in general,
\[
\prod_A \vec a\neq \prod(\vec a|_A),
\]
unless $A$ itself is a consecutive interval. For example, if $A=\{2,5\}$, then
\[
\prod_A \vec a=a_2a_3a_4a_5,
\qquad
\prod(\vec a|_A)=a_2a_5.
\]
\end{remark}

\begin{definition}\label{bi-coloured}

\begin{enumerate}
\item
Let $k, l \in \mathbb{N}$. We define
\[
\NC_{\Lambda}(k, l) := \left\{ (p, \vec{t}) \in (\NC(k, l)\times\Lambda^{|p|} )\;\middle|\; 
\vec{t} \in \Lambda^{|p|} \text{ provides a labeling of the blocks of } p,\; 
\prod \vec{t}\vert_{\partial_p} = 1 \right\}.
\]
where the product $\prod_{\partial_p}\vec{t}\vert_{\partial_p}=1$ is defined as the ordered multiplication of the labels of the boundary blocks of $p$ in counterclockwise direction.

\item Let $\vec{g}\in\Gamma^k$ and $\vec{h}\in\Gamma^l$. We define
\[
\NC_{\Gamma}(\vec g,\vec h)
:=
\Bigl\{
p\in \NC(k,l)\;\Bigm|\;
\forall V\in p,\ 
\prod \vec g|_{V_+}=\prod \vec h|_{V_-}
\Bigr\},
\]
where the empty product is understood to be \(1\).

\item Let $\vec g\in \Gamma^k$ and $\vec h\in \Gamma^l$. We define
\[
\NC_{\Lambda}(\vec g,\vec h)
:=
\Bigl\{
(p,\vec t)\in \NC_{\Lambda}(k,l)
\;\Bigm|\;
\forall V\in p,\ 
\prod \vec g|_{V_+}=\prod \vec h|_{V_-}
\Bigr\},
\]
where the empty product is understood to be \(1\).

\end{enumerate}
\end{definition}

\subsection{Woronowicz--Tannaka--Krein reconstruction}
\begin{theorem}[Woronowicz--Tannaka--Krein reconstruction \cite{NT13}]
\label{thm:TK-reconstruction}
Let $\mathcal C$ be a rigid concrete monoidal $C^*$-category, and let
$F:\mathcal C\to \mathrm{Hilb}_f$ be the forgetful functor. Then there exist a compact quantum group
\[
\mathbb H=(C(\mathbb H),\Delta_{\mathbb H})
\]
and a unitary monoidal equivalence
\[
\Phi:\mathcal C\to \Rep(\mathbb H)
\]
such that $F$ is naturally unitarily monoidally isomorphic to the composition of $\Phi$
with the canonical fiber functor $\Rep(\mathbb H)\to \mathrm{Hilb}_f$.
Moreover, the Hopf $*$-algebra $\Pol(\mathbb H)$ is uniquely determined up to Hopf $*$-algebra isomorphism.
\end{theorem}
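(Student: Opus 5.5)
We would prove this by the standard ``algebra of matrix coefficients'' construction, following \cite{NT13}. We first replace $\mathcal C$ by its completion under finite direct sums and subobjects. This is harmless, because the completion is again a rigid concrete $C^*$-tensor category with the same morphism spaces between the original objects. Since $\mathcal C$ is a $C^*$-category with finite-dimensional morphism spaces, every endomorphism algebra $\mathrm{End}(U)$ is a finite-dimensional $C^*$-algebra, so the completion is semisimple. We then fix a set $\{U_\alpha\}_{\alpha\in I}$ of representatives of the simple objects, with underlying Hilbert spaces $H_\alpha=F(U_\alpha)$.

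Concretely, we define the vector space
\[
\mathcal A:=\bigoplus_{\alpha\in I} B(H_\alpha)^*.
\]
Equivalently, $\mathcal A$ is the universal algebra generated by the matrix coefficients $u^U_{ij}$, one family for each object $U$, subject to the relations $T u^U=u^V T$ for every morphism $T\in\Mor(U,V)$. The operations are as follows.
\begin{enumerate}
\item The comultiplication is dual to operator multiplication in each $B(H_\alpha)$, that is, $\Delta(u^\alpha_{ij})=\sum_k u^\alpha_{ik}\ot u^\alpha_{kj}$.
\item The product is defined by decomposing $U_\alpha\ot U_\beta$ into simple objects via a family of isometries $w_i\in\Mor(U_{\gamma_i},U_\alpha\ot U_\beta)$ with $\sum_i w_iw_i^*=1$. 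Associativity and the fact that $\Delta$ is multiplicative follow from naturality and the associativity of $\ot$ in $\mathcal C$.
\item The counit is $\varepsilon(u^\alpha_{ij})=\delta_{ij}$.
\end{enumerate}

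Next we use rigidity to define the antipode and the $*$-structure. For each $U_\alpha$ we choose a conjugate $\bar U_\alpha$ together with solutions $(R,\bar R)$ of the conjugate equations. We set
\[
(u^\alpha_{ij})^*:=(\bar u^\alpha)_{\bar{\imath}\bar{\jmath}},
\]
twisted by the invertible operator coming from $R$, so that $\bar u^\alpha$ is again unitary. The antipode is then $S(u)=u^*$ on unitary corepresentations. One checks that $\mathcal A$ becomes a Hopf $*$-algebra in which every $u^\alpha$ is a unitary corepresentation. Hence $\mathcal A$ admits a Haar state, obtained as the projection onto the trivial summand $\alpha=0$. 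This state is positive because the matrix coefficients of distinct $\alpha$ are orthogonal, by the orthogonality relations computed with $R,\bar R$. It follows that $\mathcal A=\Pol(\mathbb H)$ for a compact quantum group $\mathbb H$, with $C(\mathbb H)$ its universal enveloping $C^*$-algebra.

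We then define $\Phi(U)=(F(U),u^U)$ and $\Phi(T)=T$. By construction $\Phi$ is a unitary tensor functor, and its composition with the fiber functor is exactly $F$. Since $\mathcal A$ is spanned by the coefficients of the $u^\alpha$, the Peter--Weyl theorem shows that every irreducible representation of $\mathbb H$ is equivalent to some $u^\alpha$, so $\Phi$ is essentially surjective.

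The main obstacle is fullness, namely showing $\Mor_{\mathbb H}(\Phi U,\Phi V)=\Mor_{\mathcal C}(U,V)$. By Frobenius reciprocity, which is available on both sides through $R,\bar R$, it suffices to show $\Mor_{\mathbb H}(1,\Phi U)=\Mor_{\mathcal C}(1,U)$. Decomposing $U$ into simples, this reduces to two facts:
\begin{enumerate}
\item $\Mor_{\mathbb H}(1,u^\alpha)=0$ for $\alpha\neq 0$, which follows because the Haar state annihilates the coefficients of $u^\alpha$ for $\alpha\neq 0$;
\item the coefficients $\{u^\alpha_{ij}\}$ are linearly independent across all $\alpha$ and $i,j$, which holds by the very definition of $\mathcal A$ as a direct sum.
\end{enumerate}
Finally, uniqueness of $\Pol(\mathbb H)$ holds because, for any compact quantum group realizing $\mathcal C$ in this way, $\Pol(\mathbb H)$ is the span of the coefficients of the image of $\Phi$, and its structure maps are forced by the formulas above.
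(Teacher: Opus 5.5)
The paper gives no proof of this statement. It is quoted from \cite{NT13}, so the benchmark is the argument there. Your outline is the standard reconstruction, correct in substance, but organized on the predual side.

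\textbf{Comparison with \cite{NT13}.} There one works with $\mathcal U=\mathrm{End}(F)\cong\prod_\alpha B(H_\alpha)$, where product and adjoint are componentwise. The tensor structure gives a coproduct $\hat\Delta$, and the conjugates give an antipode $\hat S$ (transpose with respect to $(R,\bar R)$). The algebra of matrix coefficients inherits its Hopf $*$-structure by duality, with $a^*(\eta)=\overline{a(\hat S(\eta)^*)}$. The compact quantum group then comes from the general fact that a Hopf $*$-algebra spanned by coefficients of finite-dimensional unitary corepresentations is a CQG algebra. Fullness is a commutant computation for the action of $\prod_\alpha B(H_\alpha)$ on $F(U)$.

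You instead build $\bigoplus_\alpha B(H_\alpha)^*$ directly, with the product coming from decompositions of $U_\alpha\ot U_\beta$, and prove fullness through Frobenius reciprocity and invariant vectors. That fullness argument is correct. It only uses that $h$ is a unital functional vanishing on the non-trivial blocks. Your coefficient-level picture is also the one the paper relies on in Proposition~\ref{prop:WTK-concrete-universal}.

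\textbf{Steps that need repair.}

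\emph{Positivity of $h$.} This does not follow from what you say. Orthogonality of distinct blocks only reduces $h(a^*a)\ge 0$ to $a\in\mathrm{span}\{u^\alpha_{ij}\}$ for a single $\alpha$. There you still need the Gram matrix $\bigl(h((u^\alpha_{ij})^*u^\alpha_{kl})\bigr)$ to be positive \emph{definite}. Definiteness, not mere positivity, is what gives $\mathcal A\hookrightarrow C(\mathbb H)$, i.e.\ $\mathcal A=\Pol(\mathbb H)$. This is where the $C^*$-structure enters, through $R_\alpha$ and $\bar R_\alpha$. The simplest fix is to skip this step and invoke the CQG-algebra criterion, as \cite{NT13} does.

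\emph{The ``equivalently, universal algebra'' description.} This is wrong as stated. With one family of generators per object and only the relations $Tu^U=u^VT$, you get the tensor algebra on the coend. You must also impose $u^{U\ot V}=u^U_{13}u^V_{23}$ and $u^{\mathbf 1}=1$, or take generators only for a generating family as in Proposition~\ref{prop:WTK-concrete-universal}.

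\emph{The $*$-structure.} The real work lies here, not in fullness, and ``one checks'' hides all of it.
\begin{itemize}
\item Well-definedness on the quotient uses $T^*\in\Mor(V,U)$ for $T\in\Mor(U,V)$.
\item Anti-multiplicativity uses that $\bar V\ot\bar U$, with $(\iota\ot R_U\ot\iota)R_V$ and $(\iota\ot\bar R_V\ot\iota)\bar R_U$, is a conjugate of $U\ot V$, together with independence of the chosen solutions.
\item Involutivity and two-sided unitarity of $u^\alpha$ need both $R$ and $\bar R$. The twist must be chosen so that $u^\alpha$ itself becomes unitary for the new involution.
\end{itemize}

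\emph{Ordering.} Essential surjectivity uses that the $u^\alpha$ are irreducible and pairwise inequivalent. That is a consequence of fullness, so it has to come after it.
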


\begin{remark}
\label{rem:TK-infinite-generation}
If a rigid monoidal $C^*$-category is generated by a finite family of objects, one may replace this family by their direct sum and reduce to the compact matrix quantum group case. In the general case, one works with the full rigid monoidal $C^*$-subcategories generated by finite subsets and passes to the inductive limit of the reconstructed Hopf $*$-algebras.
\end{remark}

\begin{proposition}[Concrete universal form, cf.\ {\cite[Theorem~1.1 and Section~4]{Mal18}}]
\label{prop:WTK-concrete-universal}
Let $\mathcal C$ be a rigid concrete monoidal $C^*$-category, and assume that $\mathcal C$
is generated, as a rigid monoidal $C^*$-category, by a family of objects $(v_i)_{i\in I}$
together with their conjugates. Let
\[
\mathbb H=(C(\mathbb H),\Delta_{\mathbb H})
\]
be the compact quantum group reconstructed from $\mathcal C$ by Theorem~\ref{thm:TK-reconstruction},
and let
\[
\Phi:\mathcal C\to \Rep(\mathbb H)
\]
be the corresponding unitary monoidal equivalence. Set
\[
w_i:=\Phi(v_i)\in \Rep(\mathbb H),\qquad i\in I.
\]
Then $\Rep(\mathbb H)$ is generated, as a rigid monoidal $C^*$-category, by the family
$(w_i)_{i\in I}$ together with their conjugates, and $\Pol(\mathbb H)$ is generated as a
Hopf $*$-algebra by the matrix coefficients of the representations $w_i$, $i\in I$.

Moreover, let $\mathbb G=(C(\mathbb G),\Delta_{\mathbb G})$ be a full compact quantum group
generated by the coefficients of a family of finite-dimensional unitary representations
$(u_i)_{i\in I}$. Assume that for all $k,l\ge 0$ and all
$(i_1,\dots,i_k)\in I^k$, $(j_1,\dots,j_l)\in I^l$, one has
\[
\Mor_{\mathcal C}\bigl(v_{i_1}\otimes\cdots\otimes v_{i_k},
v_{j_1}\otimes\cdots\otimes v_{j_l}\bigr)
\subseteq
\Mor_{\mathbb G}\bigl(u_{i_1}\otimes\cdots\otimes u_{i_k},
u_{j_1}\otimes\cdots\otimes u_{j_l}\bigr).
\]
Then there exists a surjective Hopf $*$-algebra morphism
\[
\pi:\Pol(\mathbb H)\to \Pol(\mathbb G)
\]
such that
\[
(\id\otimes \pi)(w_i)=u_i,\qquad i\in I.
\]
Consequently, by the universal property of the full $C^*$-completion, $\pi$ extends to a
surjective $*$-homomorphism
\[
\varphi:C(\mathbb H)\to C(\mathbb G)
\]
satisfying
\[
(\id\otimes \varphi)(w_i)=u_i,\qquad i\in I.
\]
\end{proposition}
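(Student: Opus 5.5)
The statement is a universal property. For each finite subset the representation category of $\mathbb H$ is a rigid concrete $C^*$-category generated by the $w_i$, and intertwiners of $\mathbb G$ contain the images of those of $\mathcal C$. So the plan is to build $\pi$ on generators and check that it is well defined using the inclusion of morphism spaces.

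The first assertion comes straight from Theorem~\ref{thm:TK-reconstruction}. The objects of $\mathcal C$ are all direct summands of tensor products of the $v_i$ and $\bar v_i$. Since $\Phi$ is a unitary monoidal equivalence, every irreducible of $\mathbb H$ is a subrepresentation of a tensor product of the $w_i$ and $\bar w_i$. Matrix coefficients of irreducibles span $\Pol(\mathbb H)$ (Peter--Weyl), so the coefficients of the $w_i$ generate $\Pol(\mathbb H)$ as a $*$-algebra, and in particular as a Hopf $*$-algebra.

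For the morphism, I would use the standard description of $\Pol(\mathbb H)$ from the Tannaka--Krein construction. Identify $\Phi(v)$ with the underlying Hilbert space $F(v)$ via the natural isomorphism. Then $\Pol(\mathbb H)$ is the universal $*$-algebra generated by the coefficients of the matrices $w_i\in B(H_i)\otimes\Pol(\mathbb H)$, subject to two families of relations:
\begin{itemize}
\item each $w_i$ is unitary;
\item $(T\otimes1)\,w_{i_1}\cdots w_{i_k} = w_{j_1}\cdots w_{j_l}\,(T\otimes 1)$ for every $T\in\Mor_{\mathcal C}$ between the corresponding tensor products. Here products of the $w$'s are understood in the leg-numbered sense $w_{i_1}\otimes\cdots$ of tensor product representations.
\end{itemize}
This is the content of \cite[Section~4]{Mal18}. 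Equivalently, I would take the algebra $\bigoplus F(v)^*\otimes F(v)$ modulo naturality and check that the relations above are exactly the defining ones, conjugates being handled by rigidity: solutions of the conjugate equations lie in $\mathcal C$, and the hypothesis transfers them to $\mathbb G$.

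Now define $\pi$ on generators by $(\id\otimes\pi)(w_i)=u_i$. The relations hold in $\Pol(\mathbb G)$: each $u_i$ is unitary, and every $T$ as above intertwines the $u$'s by the hypothesis. So $\pi$ exists as a $*$-homomorphism. It is surjective because the $u_i$ generate $\Pol(\mathbb G)$. It respects comultiplication because $\Delta(w_i)_{ab}=\sum_c (w_i)_{ac}\otimes (w_i)_{cb}$, and the same formula holds for $u_i$. Counit and antipode are then automatic for a Hopf $*$-algebra map.

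Finally, $C(\mathbb G)$ is the full $C^*$-completion (the enveloping $C^*$-algebra), so the composite $\Pol(\mathbb H)\to\Pol(\mathbb G)\subset C(\mathbb G)$ extends from the universal completion $C(\mathbb H)$. The extension $\varphi$ is surjective since its image is closed and contains a dense subalgebra.

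The main obstacle is the universal-relations description of $\Pol(\mathbb H)$, namely that imposing only the intertwiner relations from $\mathcal C$ gives exactly the reconstructed algebra. To handle it, I would use Woronowicz's construction directly: realize $\Pol(\mathbb H)$ as the quotient of the free algebra on coefficient symbols by exactly these relations, and then verify via Frobenius reciprocity that no further relations are needed.
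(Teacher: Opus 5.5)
Your route is the paper's: present $\Pol(\mathbb H)$ as the universal $*$-algebra on the coefficients of the $w_i$, modulo the intertwiner relations coming from $\mathcal C$, and send $w_i\mapsto u_i$. You bypass the paper's reduction to one object by finite direct sums and its inductive limit over finite $E\subset I$; that is harmless. Your surjectivity, comultiplication and $C^*$-extension steps are also fine.

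\textbf{The gap is your treatment of conjugates.} Solutions of the conjugate equations for $v_i$ live in $\Mor_{\mathcal C}(\mathbf 1,v_i\otimes\bar v_i)$ and $\Mor_{\mathcal C}(\mathbf 1,\bar v_i\otimes v_i)$. These are not among the spaces $\Mor_{\mathcal C}(v_{i_1}\otimes\cdots\otimes v_{i_k},v_{j_1}\otimes\cdots\otimes v_{j_l})$ covered by the hypothesis, so the hypothesis transfers nothing about them to $\mathbb G$. Consequently the presentation you write down (unitarity of the $w_i$ plus intertwiners between tensor products of the $w_i$ alone) is not $\Pol(\mathbb H)$ in general.

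For a counterexample, let $\mathcal C=\Rep(U_2^+)$ and let $v=u$ be its fundamental representation. Each $u^{\otimes k}$ is irreducible and these are pairwise inequivalent, so $\Mor(u^{\otimes k},u^{\otimes l})=\delta_{k,l}\,\C\,\id$. Your relations therefore reduce to ``$u$ is unitary'', which gives Brown's algebra. The hypothesis of the proposition then holds for $\mathbb G=SU_q(2)$, $q\neq\pm1$, with its fundamental unitary $u'$. Yet no $*$-homomorphism $\Pol(U_2^+)\to\Pol(SU_q(2))$ with $u\mapsto u'$ exists: $\bar u=(u_{ij}^*)$ is unitary in $\Pol(U_2^+)$, whereas $\bar u'$ is not unitary. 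So neither your argument nor the statement as written holds without an extra assumption. The paper's own proof, which simply invokes \cite{Mal18}, passes over the same point.

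\textbf{The fix is to assume the conjugates are already visible among the $v_j$.} For example, require $\bar v_i\cong v_{i'}$ in $\mathcal C$ for some $i'\in I$, or more generally that $\bar v_i$ is a subobject of a tensor product of the $v_j$. Alternatively, impose the inclusion of morphism spaces for all words in the $v_i$ and $\bar v_i$, with $\bar u_i$ on the $\mathbb G$ side. Under the first assumption:
\begin{itemize}
\item After composing with a unitary $\bar v_i\cong v_{i'}$, a solution of the conjugate equations lies in $\Mor_{\mathcal C}(\mathbf 1,v_i\otimes v_{i'})$ and $\Mor_{\mathcal C}(\mathbf 1,v_{i'}\otimes v_i)$.
\item Your relations then force $\bar w_i$ to be similar to $w_{i'}$. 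The universal algebra is thus a CQG algebra, and Woronowicz's argument identifies it with $\Pol(\mathbb H)$.
\item The hypothesis now does transfer these relations to the $u_i$, and from there your proof goes through as written.
\end{itemize}
This assumption is exactly what holds in the paper's application: the cup intertwiner $T_\xi\in\Mor(\epsilon,u(g)\otimes u(g^{-1}))$ gives $\overline{u(g)}\cong u(g^{-1})$.
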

\begin{proof}
By Theorem~\ref{thm:TK-reconstruction}, there exist a compact quantum group $\mathbb H$
and a unitary monoidal equivalence
\[
\Phi:\mathcal C\to \Rep(\mathbb H).
\]
Setting $w_i:=\Phi(v_i)$, it follows that $\Rep(\mathbb H)$ is generated, as a rigid monoidal
$C^*$-category, by the family $(w_i)_{i\in I}$ together with their conjugates. Hence
$\Pol(\mathbb H)$ is generated as a Hopf $*$-algebra by the matrix coefficients of the
representations $w_i$, $i\in I$.

For the second assertion, first assume that the generating family is finite. Replacing it by
a direct sum, one is reduced to the single-object reconstruction of \cite[Theorem~1.1]{Mal18}.
In that construction, the reconstructed Hopf $*$-algebra is obtained as the quotient by the
ideal generated by the prescribed intertwiner relations. Since, by assumption,
\[
\Mor_{\mathcal C}\bigl(v_{i_1}\otimes\cdots\otimes v_{i_k},
v_{j_1}\otimes\cdots\otimes v_{j_l}\bigr)
\subseteq
\Mor_{\mathbb G}\bigl(u_{i_1}\otimes\cdots\otimes u_{i_k},
u_{j_1}\otimes\cdots\otimes u_{j_l}\bigr),
\]
the family $(u_i)_{i\in I}$ satisfies all the intertwiner relations encoded by $\mathcal C$.
Hence there exists a surjective Hopf $*$-algebra morphism
\[
\pi:\Pol(\mathbb H)\to \Pol(\mathbb G)
\]
such that
\[
(\id\otimes \pi)(w_i)=u_i,\qquad i\in I.
\]

For an arbitrary generating family, let $E\subset I$ be finite, and denote by $\mathcal C_E$
the full rigid monoidal $C^*$-subcategory generated by $(v_i)_{i\in E}$ and their conjugates.
Let $\mathbb H_E$ be the compact quantum group reconstructed from $\mathcal C_E$, and let
$\mathbb G_E\subset \mathbb G$ be the compact quantum subgroup generated by the coefficients of
$(u_i)_{i\in E}$. By the finitely generated case, there exists a surjective Hopf $*$-algebra morphism
\[
\pi_E:\Pol(\mathbb H_E)\to \Pol(\mathbb G_E)
\]
such that
\[
(\id\otimes \pi_E)(w_i^{(E)})=u_i,\qquad i\in E.
\]
If $E\subset E'$, the inclusions $\mathcal C_E\subset \mathcal C_{E'}$ induce compatible embeddings
\[
\Pol(\mathbb H_E)\hookrightarrow \Pol(\mathbb H_{E'}),\qquad
\Pol(\mathbb G_E)\hookrightarrow \Pol(\mathbb G_{E'}),
\]
and the morphisms $\pi_E$ are compatible with these embeddings. Therefore they define a morphism on
the inductive limits. By the construction recalled in \cite[Section~4 and Theorem~4.1]{Mal18},
one has $\Pol(\mathbb H)=\varinjlim_E \Pol(\mathbb H_E),$ and similarly, since $\mathbb G$ is generated by the coefficients of the family $(u_i)_{i\in I}$, $\Pol(\mathbb G)=\varinjlim_E \Pol(\mathbb G_E).$ Passing to the inductive limit yields a surjective Hopf $*$-algebra morphism $\pi:\Pol(\mathbb H)\to \Pol(\mathbb G)$
such that $(\id\otimes \pi)(w_i)=u_i, i\in I.$
Finally, by the universal property of the full $C^*$-completion, $\pi$ extends to a surjective
$*$-homomorphism $\varphi:C(\mathbb H)\to C(\mathbb G).$
\end{proof}

\section{Fundamental Representations and Intertwiners from Noncrossing Partitions}

\begin{definition}\label{block relations}
Let $\vec g \in \Gamma^k$, $\vec h \in \Gamma^l$, and
$(p,\vec t) \in \NC_\Lambda(\vec g,\vec h)$.
For $\vec r \in \Lambda^k$ and $\vec s \in \Lambda^l$, define
\[
\delta_{(p,\vec t)} : \Lambda^k \times \Lambda^l \to \{0,1\}
\]
by declaring that $\delta_{(p,\vec t)}(\vec r,\vec s)=1$ if and only if, for
every $V \in p$,
\begin{itemize}
    \item if $V_- \neq \emptyset$ and $V_+ \neq \emptyset$, then
    $\displaystyle \prod_{v=1}^{\max V_-} s_v
    =
    \left(\prod_{u=1}^{\max V_+} r_u\right)t_V$;
    \item if $V_- \neq \emptyset$ and $V_+ = \emptyset$, then
    $\displaystyle \prod_{v=\min V_-}^{\max V_-} s_v = t_V$;
    \item if $V_- = \emptyset$ and $V_+ \neq \emptyset$, then
    $\displaystyle \left(\prod_{u=\min V_+}^{\max V_+} r_u\right)t_V = 1$.
\end{itemize}
All products are taken from left to right.
\end{definition}

\begin{definition}\label{DEF: INTERTWINER} 
To every $(p,\vec{t})\in \NC_{\Lambda}(\vec{g},\vec{h})$, we associate the linear map
$$T_{(p,\vec t)}:(l^2(\Lambda))^{\ot k}\longrightarrow (l^2(\Lambda))^{\ot l},\,\,
T_{(p,\vec t)}(e_{\vec{r}} )=\sum_{\vec {s}\in {\Lambda}^l} \delta_{(p,\vec t)}(\vec{r},\vec {s})e_{\vec{s}},$$
where $(e_s)_{s\in\Lambda}$ is the canonical orthonormal basis of $l^2(\Lambda)$ and $e_{\vec{s}}:=e_{s_1}\ot\dots\ot e_{s_l}$ for $\vec{s}=(s_1,\dots,s_l)\in\Lambda^l$.
\end{definition}

For $g\in\Gamma$ define the element $u(g)\in\Lcal(l^2(\Lambda))\ot C(\GG)$ by $u(g):=\sum_{r,s\in\Lambda}e_{rs}\ot \nu_{rs^{-1}}(g)s$

\begin{proposition}
For all $g\in\Gamma$, $u(g)$ is a unitary representation of $\GG$ and $C(\GG)$ is generated, as a C*-algebra, by the coefficients of $u(g)$, for $g\in\Gamma$.
\end{proposition}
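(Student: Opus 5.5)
We need to prove three things about $u(g)=\sum_{r,s}e_{rs}\otimes u(g)_{rs}$ with coefficients $u(g)_{rs}=\nu_{rs^{-1}}(g)s$: the comultiplication formula $\Delta(u(g)_{rs})=\sum_{t}u(g)_{rt}\otimes u(g)_{ts}$, unitarity of $u(g)$, and that these coefficients generate $C(\GG)$. All three are direct computations from the relations \eqref{eq:nu-relations-a}--\eqref{eq:nu-relations-d}, and I would do them in that order.

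\textbf{Comultiplication.} Since $\Delta(s)=s\otimes s$,
\[
\Delta(\nu_{rs^{-1}}(g)s)=\sum_{ab=rs^{-1}}\nu_a(g)bs\otimes\nu_b(g)s .
\]
Substituting $t:=bs$ gives $a=rt^{-1}$ and $b=ts^{-1}$. The sum becomes
\[
\sum_{t\in\Lambda}\nu_{rt^{-1}}(g)t\otimes\nu_{ts^{-1}}(g)s=\sum_t u(g)_{rt}\otimes u(g)_{ts},
\]
which is exactly the representation property.

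\textbf{Unitarity.} First compute the adjoint of a coefficient. By \eqref{eq:nu-relations-a},
\[
(u(g)_{rs})^*=s^{-1}\nu_{sr^{-1}}(g^{-1}).
\]
Then use \eqref{eq:nu-relations-d} in the form $x\,\nu_{yx}(h)=\nu_{xy}(h)\,x$ to move the group element $s^{-1}$ to the right. Taking $x=s^{-1}$ and $yx=sr^{-1}$ gives $y=sr^{-1}s$ and hence $xy=r^{-1}s$. Therefore
\[
(u(g)_{rs})^*=\nu_{r^{-1}s}(g^{-1})\,s^{-1}.
\]
With this, the entries of $u(g)^*u(g)$ are
\[
\sum_r (u(g)_{rs})^*u(g)_{rt}=\sum_r \nu_{r^{-1}s}(g^{-1})\,s^{-1}\nu_{rt^{-1}}(g)\,t .
\]
Apply \eqref{eq:nu-relations-d} once more to move $s^{-1}$ past $\nu_{rt^{-1}}(g)$. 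Taking $x=s^{-1}$ and $xy=rt^{-1}$ gives $y=srt^{-1}$, so $yx=srt^{-1}s^{-1}$ and
\[
s^{-1}\nu_{rt^{-1}}(g)=\nu_{srt^{-1}s^{-1}}(g)\,s^{-1}.
\]
The sum is then
\[
\sum_r\nu_{r^{-1}s}(g^{-1})\,\nu_{srt^{-1}s^{-1}}(g)\,s^{-1}t .
\]
Reindex by $a:=r^{-1}s$. The product of the two subscripts is $r^{-1}s\cdot srt^{-1}s^{-1}$. I expect this bookkeeping to be the main obstacle. The subscripts must be arranged so that the total product is a fixed element independent of $a$, so that \eqref{eq:nu-relations-c} applies. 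If the naive ordering does not give this, I would instead conjugate the other factor, or expand via a different conjugation, until the sum collapses to
\[
\nu_{\gamma}(g^{-1}g)=\nu_\gamma(1)=\delta_{\gamma,1}.
\]
The fixed product should come out as $st^{-1}$, giving
\[
\delta_{s,t}\,s^{-1}t=\delta_{s,t}.
\]

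By symmetry $u(g)u(g)^*=1$ is handled the same way: apply \eqref{eq:nu-relations-c} to $gg^{-1}$. Alternatively, $\Lambda$ is finite, so $\Lcal(l^2(\Lambda))\otimes C(\GG)$ is a matrix algebra over a C*-algebra. In that setting one-sided invertibility does not automatically give unitarity, but it does once one also knows that the transpose/conjugate representation is invertible. The cleanest route is simply to do both computations.

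\textbf{Generation.} Use \eqref{eq:nu-relations-b}: taking $g=1$ gives
\[
u(1)_{rs}=\delta_{r,s}\,s,
\]
so every $s\in\Lambda$ is a coefficient, and $\Lambda$ generates $C^*(\Lambda)$. Then
\[
\nu_{\gamma}(g)=u(g)_{\gamma s,s}\,s^{-1}
\]
for any $s$, for instance $s=1$, where $u(g)_{\gamma,1}=\nu_\gamma(g)$ directly. Hence every generator $\nu_\gamma(g)$ and every element of $\Lambda$ lies in the C*-algebra generated by the coefficients, and that C*-algebra is all of $C(\GG)$.
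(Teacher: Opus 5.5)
\textbf{The gap is in your computation of $u(g)^*u(g)$.} Your first use of \eqref{eq:nu-relations-d} is correct. In the second, however, you matched $rt^{-1}$ with $xy$, whereas in $x\,\nu_{yx}(h)=\nu_{xy}(h)\,x$ the subscript to the right of $x$ is $yx$. The identity you obtain, $s^{-1}\nu_{rt^{-1}}(g)=\nu_{srt^{-1}s^{-1}}(g)\,s^{-1}$, conjugates in the wrong direction and is false for non-abelian $\Lambda$. With it, the subscript product $r^{-1}s^{2}rt^{-1}s^{-1}$ depends on $r$ in general, so \eqref{eq:nu-relations-c} cannot be applied. Your fallback (``conjugate the other factor \dots\ until the sum collapses'') does not close the gap.

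The correct commutation is $s^{-1}\nu_{rt^{-1}}(g)=\nu_{s^{-1}rt^{-1}s}(g)\,s^{-1}$. Setting $a:=r^{-1}s$, the two subscripts become $a$ and $a^{-1}t^{-1}s$. Their product $t^{-1}s$ is independent of $r$, so \eqref{eq:nu-relations-c} and \eqref{eq:nu-relations-b} give
\[
\bigl(u(g)^*u(g)\bigr)_{st}=\sum_{a\in\Lambda}\nu_a(g^{-1})\,\nu_{a^{-1}t^{-1}s}(g)\,s^{-1}t=\nu_{t^{-1}s}(g^{-1}g)\,s^{-1}t=\delta_{s,t}.
\]
The constant is $t^{-1}s$ rather than the $st^{-1}$ you predicted, though this does not change the delta.

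Apart from this, your proof follows the paper. The other product $u(g)u(g)^*$, which you dispatch ``by symmetry'', is the one the paper actually computes, and it is the easy direction. Since $u(g)_{rt}\,(u(g)_{st})^*=\nu_{rt^{-1}}(g)\,t\,t^{-1}\,\nu_{ts^{-1}}(g^{-1})$, the group elements cancel with no commutation needed, and \eqref{eq:nu-relations-c}, \eqref{eq:nu-relations-b} give $\nu_{rs^{-1}}(1)=\delta_{r,s}$ at once. The direction you attempted is the one the paper leaves as ``similarly'', and it is exactly where \eqref{eq:nu-relations-d} is needed. Once corrected, your computation supplies that step.

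Your comultiplication computation matches the paper's. Your generation argument, via $u(1)_{rs}=\delta_{r,s}\,s$ and $u(g)_{\gamma,1}=\nu_\gamma(g)$, is correct and more explicit than the paper's appeal to the definition of $C(\GG)$.
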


\begin{proof}
Let $g\in\Gamma$. Recall that
\[
u(g)=\sum_{r,s\in\Lambda} e_{rs}\ot \nu_{rs^{-1}}(g)\,s.
\]

First, using the formula for $\Delta$ on the generators together with
\eqref{eq:nu-relations-d}, we get
\begin{eqnarray*}
(\id\ot\Delta)(u(g))
&=&
\sum_{r,s,x,y\in\Lambda,\;xy=rs^{-1}}
e_{rs}\ot \nu_x(g)\,ys\ot \nu_y(g)\,s\\
&=&
\sum_{r,s,t\in\Lambda}
e_{rs}\ot \nu_{rt^{-1}}(g)\,t\ot \nu_{ts^{-1}}(g)\,s\\
&=&
u(g)_{12}u(g)_{13}.
\end{eqnarray*}
Thus $u(g)$ is a representation.

Next, to prove unitarity, we use \eqref{eq:nu-relations-a} to compute the adjoint,
then \eqref{eq:nu-relations-c} with $h=g^{-1}$, and finally
\eqref{eq:nu-relations-b}. This gives
\begin{eqnarray*}
u(g)u(g)^*
&=&
\sum_{r,s\in\Lambda}e_{rs}\ot
\left(\sum_{t\in\Lambda}\nu_{rt^{-1}}(g)\,\nu_{ts^{-1}}(g^{-1})\right)\\
&=&
\sum_{r,s\in\Lambda}e_{rs}\ot \nu_{rs^{-1}}(1)
\qquad\text{by \eqref{eq:nu-relations-c}}\\
&=&
\sum_{r\in\Lambda}e_{rr}\ot 1
\qquad\text{by \eqref{eq:nu-relations-b}}\\
&=&
1.
\end{eqnarray*}
Similarly, using again \eqref{eq:nu-relations-a}, \eqref{eq:nu-relations-c}
and \eqref{eq:nu-relations-b}, one proves that $u(g)^*u(g)=1$.

Finally, it follows from the definition of $C(\GG)$ that the $*$-algebra generated by
the coefficients of $u(g)$, for $g\in\Gamma$, is dense in $C(\GG)$.
\end{proof}

\begin{notation}
Let $n\ge 1$. For
\[
\vec g=(g_1,\dots,g_n)\in \Gamma^n,
\qquad
\vec\gamma=(\gamma_1,\dots,\gamma_n)\in \Lambda^n,
\]
we set
\[
u(\vec g):=u(g_1)\otimes\cdots\otimes u(g_n)
\in \mathcal L\bigl(\ell^2(\Lambda)^{\otimes n}\bigr)\otimes C(\mathbb G),
\]
and
\[
\nu_{\vec\gamma}(\vec g)
:=
\nu_{\gamma_1}(g_1)\cdots \nu_{\gamma_n}(g_n)
\in C(\mathbb G).
\]
\end{notation}

\begin{lemma}\label{consecutive block}
Every non-crossing partition \(p\in \NC(n)\) has a consecutive block.
\end{lemma}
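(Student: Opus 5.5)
We argue by a minimality (extremal) argument on the set of blocks of $p$, ordered by the length of their consecutive hull $\mathrm{Int}(V)=[\min V,\max V]$. Here $p\in\NC(n)$ is viewed as a partition of the single row $[n]$, with $n\ge 1$ so that $p$ has at least one block. Choose a block $V\in p$ for which $\max V-\min V$ is minimal among all blocks of $p$. We claim that $V$ is a consecutive block, that is, $\mathrm{Int}(V)=V$.

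Suppose not. Then there is an index $j$ with $\min V<j<\max V$ and $j\notin V$. Let $W\in p$ be the block containing $j$, so $W\neq V$. The key step is to show that $W\subset \mathrm{Int}(V)$, with strict containment of hulls. If some $w\in W$ satisfied $w<\min V$ or $w>\max V$, then the four points $w$, $\min V$, $j$, $\max V$ would interleave. In the case $w<\min V<j<\max V$, the elements $w,j\in W$ and $\min V,\max V\in V$ alternate, and the case $w>\max V$ is symmetric. This alternation is exactly a crossing between $V$ and $W$, contradicting $p\in\NC(n)$.

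Hence every element of $W$ lies in $[\min V,\max V]$. Moreover $W\cap V=\varnothing$ and $\min V,\max V\in V$, so in fact $W\subset(\min V,\max V)$. It follows that $\max W-\min W\le (\max V-1)-(\min V+1)<\max V-\min V$, contradicting the minimality of $V$. Therefore $V$ is consecutive.

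There is no real obstacle here. The only point requiring care is to state the non-crossing condition in its standard form: there are no $a<b<c<d$ with $a,c$ in one block and $b,d$ in another. This matches the diagrammatic description of Definition~\ref{blocks} when all points lie on one row. An alternative approach would be induction on $n$ by removing the block containing $1$, but the extremal argument above is shorter and self-contained.
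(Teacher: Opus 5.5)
Your proof is correct and is essentially the paper's argument: both choose a block minimizing $\max V-\min V$, take a missing point $j$ in its hull, use non-crossingness to trap the block containing $j$ strictly inside $(\min V,\max V)$, and contradict minimality. You also spell out the alternating quadruple behind the crossing and make the harmless assumption $n\ge 1$ explicit, which the paper leaves implicit.
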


\begin{proof}
Choose a block \(B\in p\) such that \(\max B-\min B\) is minimal among all blocks of \(p\).
We claim that \(B\) is consecutive. Indeed, if not, then there exists
\(x\in [n]\) such that
\[
\min B < x < \max B
\qquad\text{and}\qquad
x\notin B.
\]
Let \(D\) be the block of \(p\) containing \(x\). Since \(p\) is non-crossing, \(D\) cannot contain any point outside the interval
\([\min B,\max B]\), for otherwise \(D\) would cross \(B\). Hence
\[
\min B < \min D \le x \le \max D < \max B.
\]
Therefore
\[
\max D-\min D < \max B-\min B,
\]
contradicting the minimality of \(B\). Thus \(B\) is a consecutive block.
\end{proof}

\begin{lemma}\label{Consecutive criterion}
Let $(p,\vec t)\in \NC_\Lambda(0,n)$, and let
$V=\{a+1,\dots,a+m\}$ be a consecutive block of $p$, with block label $t_V$.
Write $\vec s_-=(s_1,\dots,s_a)\in \Lambda^a, \vec s_+=(s_{a+m+1},\dots,s_n)\in \Lambda^{\,n-a-m}.$
For each $\vec x=(x_{a+1},\dots,x_{a+m})\in \Lambda^m,$ set $\vec s(\vec x):=(s_1,\dots,s_a,x_{a+1},\dots,x_{a+m},s_{a+m+1},\dots,s_n)\in \Lambda^n.$
Then, for fixed $(\vec s_-,\vec s_+)$, the following are equivalent:
\begin{enumerate}
\item there exists $\vec x\in \Lambda^m$ such that $\delta_{(p,\vec t)}(1,\vec s(\vec x))=1$;
\item for every $\vec x\in \Lambda^m$ with $\prod_V \vec x=t_V$, one has $\delta_{(p,\vec t)}(1,\vec s(\vec x))=1$.
\end{enumerate}
\end{lemma}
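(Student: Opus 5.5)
The plan is to show that, for fixed $(\vec s_-,\vec s_+)$, the value $\delta_{(p,\vec t)}(1,\vec s(\vec x))$ depends on $\vec x$ only through the product $\prod_V\vec x=x_{a+1}\cdots x_{a+m}$. The block $V$ will then force this product to equal $t_V$, and both implications follow.

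First I unwind Definition~\ref{block relations} in the case $k=0$. Every block $W\in p$ is then a lower single-layer block, so $\delta_{(p,\vec t)}(1,\vec s)=1$ if and only if
\[
\prod_{\mathrm{Int}(W)}\vec s=\prod_{v=\min W}^{\max W}s_v=t_W \qquad\text{for every } W\in p .
\]
For $W=V$, since $V$ is consecutive, this condition reads exactly $\prod_V\vec x=t_V$.

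The main step is a trichotomy for each block $W\neq V$, relative to the interval $[a+1,a+m]$:
\begin{itemize}
\item either $\mathrm{Int}(W)\cap[a+1,a+m]=\varnothing$;
\item or $[a+1,a+m]\subset \mathrm{Int}(W)$.
\end{itemize}
To see this, suppose $\mathrm{Int}(W)$ meets $[a+1,a+m]$. Since $W\cap V=\varnothing$ and $V=[a+1,a+m]$, no point of $W$ lies in $[a+1,a+m]$. Hence $W$ has a point $<a+1$ and a point $>a+m$, which gives $\min W\le a<a+m<\max W$, so $\mathrm{Int}(W)\supset[a+1,a+m]$. (This is where consecutiveness of $V$ is used; non-crossingness is not even needed here.)

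In the first case, $\prod_{\mathrm{Int}(W)}\vec s(\vec x)$ involves only entries of $\vec s_-$ or only entries of $\vec s_+$, so it does not depend on $\vec x$. In the second case, the product over $\mathrm{Int}(W)$, taken left to right, factors as
\[
\Bigl(\prod_{v=\min W}^{a}s_v\Bigr)\Bigl(\prod_V\vec x\Bigr)\Bigl(\prod_{v=a+m+1}^{\max W}s_v\Bigr),
\]
so it depends on $\vec x$ only through $\prod_V\vec x$. Consequently, if $\prod_V\vec x=\prod_V\vec x'$, then $\delta_{(p,\vec t)}(1,\vec s(\vec x))=\delta_{(p,\vec t)}(1,\vec s(\vec x'))$.

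Now the two implications.
\begin{itemize}
\item For $(1)\Rightarrow(2)$: let $\vec x_0$ satisfy $\delta_{(p,\vec t)}(1,\vec s(\vec x_0))=1$. The condition for the block $V$ gives $\prod_V\vec x_0=t_V$. Any $\vec x$ with $\prod_V\vec x=t_V$ has the same product as $\vec x_0$, hence the same $\delta$-value, which is $1$.
\item For $(2)\Rightarrow(1)$: it suffices to check that the set $\{\vec x\in\Lambda^m:\prod_V\vec x=t_V\}$ is nonempty. Since $m\ge1$, the vector $\vec x=(t_V,1,\dots,1)$ belongs to it, and by $(2)$ it satisfies $\delta_{(p,\vec t)}(1,\vec s(\vec x))=1$.
\end{itemize}

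The only point needing care is the trichotomy above; once it is established, the rest is bookkeeping.
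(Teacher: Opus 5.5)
Your proposal is correct and follows the paper's proof essentially step for step. The paper also shows that each block relation depends on $\vec x$ only through $\prod_V\vec x$, using the same case split (block to the left of $V$, to the right of $V$, or surrounding $V$), and then gets $(1)\Rightarrow(2)$ from the relation for $V$ itself. Two small details in your version are accurate: the explicit nonemptiness check $(t_V,1,\dots,1)$ for $(2)\Rightarrow(1)$, which the paper calls immediate, and your remark that non-crossingness is not needed for the case split.
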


\begin{proof}
Since $p$ has only lower points, we have
$\delta_{(p,\vec t)}(1,\vec s)=1\Longleftrightarrow
\prod_{i=\min B}^{\max B}s_i=t_B \text{for every }B\in p,$
where all products are taken from left to right.

Fix $\vec s_-$, $\vec s_+$ and $\vec x\in\Lambda^m$, and write
$\vec s(\vec x)=(\vec s_-,\vec x,\vec s_+).$ We claim that, for every block $B\in p$, the relation $\prod_{i=\min B}^{\max B}s(\vec x)_i=t_B$
depends on $\vec x$ only through $\prod_V\vec x$.

Indeed, if $B=V$, then this relation is exactly$\prod_V\vec x=t_V.$
Now assume $B\neq V$. Since $V=\{a+1,\dots,a+m\}$
is a consecutive block of the non-crossing partition $p$, exactly one of the following holds: $\max B<a+1,\min B>a+m,\min B<a+1\le a+m<\max B.$
In the first case, $B\subset\{1,\dots,a\}$, so
$\prod_{i=\min B}^{\max B}s(\vec x)_i$
depends only on $\vec s_-$ and is therefore independent of $\vec x$.
In the second case, $B\subset\{a+m+1,\dots,n\}$, so
$\prod_{i=\min B}^{\max B}s(\vec x)_i$
depends only on $\vec s_+$ and is again independent of $\vec x$.
In the third case, $B$ strictly surrounds $V$, and therefore

$\prod_{i=\min B}^{\max B}s(\vec x)_i
=
\left(\prod_{i=\min B}^{a}s_i\right)
\left(\prod_{j=a+1}^{a+m}x_j\right)
\left(\prod_{i=a+m+1}^{\max B}s_i\right).$
Hence in this case it depends on $\vec x$ only through
$\prod_{j=a+1}^{a+m}x_j=\prod_V\vec x.$

Thus, for every $B\in p$, the block relation $\prod_{i=\min B}^{\max B}s(\vec x)_i=t_B$
depends on $\vec x$ only through $\prod_V\vec x$. Consequently, $\delta_{(p,\vec t)}(1,\vec s(\vec x))$
itself depends on $\vec x$ only through $\prod_V\vec x$.

Now assume that there exists $\vec x\in\Lambda^m$ such that

$\delta_{(p,\vec t)}(1,\vec s(\vec x))=1.$
Then, in particular, the block relation for $V$ must hold, hence $\prod_V\vec x=t_V.$
Therefore, for every $\vec y\in\Lambda^m$ satisfying $\prod_V\vec y=t_V=\prod_V\vec x,$
we obtain $\delta_{(p,\vec t)}(1,\vec s(\vec y))
=\delta_{(p,\vec t)}(1,\vec s(\vec x))=1.$
This proves $(1)\Rightarrow(2)$. The converse is immediate.
\end{proof}

\begin{proposition}

$(p,\vec{t}) \in \NC_{\Lambda}((0,n),{\vec{g}}) $. Then $T_{(p,\vec{t})}\in Mor(\epsilon, u(g_1)\ot u(g_2)\ot\dots\ot u(g_n))$ 

\end{proposition}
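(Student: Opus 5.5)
The plan is to show directly that the vector
\[
\xi_{(p,\vec t)}:=T_{(p,\vec t)}(1)=\sum_{\vec s\in\Lambda^n}\delta_{(p,\vec t)}(1,\vec s)\,e_{\vec s}
\]
satisfies $u(\vec g)(\xi_{(p,\vec t)}\ot 1)=\xi_{(p,\vec t)}\ot 1$. This is exactly the statement $T_{(p,\vec t)}\in\Mor(\epsilon,u(\vec g))$.

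\textbf{Step 1: expand the action.} Writing out the definition of $u(g)$, we get
\[
u(\vec g)(e_{\vec s}\ot 1)=\sum_{\vec r\in\Lambda^n}e_{\vec r}\ot \nu_{r_1s_1^{-1}}(g_1)s_1\,\nu_{r_2s_2^{-1}}(g_2)s_2\cdots\nu_{r_ns_n^{-1}}(g_n)s_n .
\]
Relation \eqref{eq:nu-relations-d} says $s\,\nu_a(g)=\nu_{sas^{-1}}(g)\,s$. Using it repeatedly, I push all group elements $s_i$ to the right. The coefficient of $e_{\vec r}$ then becomes a word
\[
\nu_{a_1}(g_1)\cdots\nu_{a_n}(g_n)\,s_1\cdots s_n .
\]
Each index $a_i$ is an explicit conjugate of $r_is_i^{-1}$ by the partial product $s_1\cdots s_{i-1}$. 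It is convenient to reparametrize by the partial products $S_i=s_1\cdots s_i$ and $R_i=r_1\cdots r_i$, since the block relations of Definition~\ref{block relations} are expressed through such consecutive products. So the claim reduces to an identity: for each fixed $\vec r$, the sum over $\vec s$ of $\delta_{(p,\vec t)}(1,\vec s)$ times this word equals $\delta_{(p,\vec t)}(1,\vec r)\cdot 1$.

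\textbf{Step 2: induction on the number of blocks.} I prove the identity by induction on $|p|$, using Lemma~\ref{consecutive block} to pick a consecutive block $V=\{a+1,\dots,a+m\}$.

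By Lemma~\ref{Consecutive criterion}, the constraint $\delta_{(p,\vec t)}(1,\vec s)=1$ splits in two parts:
\begin{itemize}
\item the condition $\prod_V\vec x=t_V$ on the entries $\vec x=(s_{a+1},\dots,s_{a+m})$;
\item a condition on $(\vec s_-,\vec s_+)$ and $\prod_V\vec x$ only.
\end{itemize}
So the inner sum over $\vec x$ with fixed product $t_V$ can be carried out first. The iterated form of \eqref{eq:nu-relations-c} collapses $\sum\nu_{x'_{a+1}}(g_{a+1})\cdots\nu_{x'_{a+m}}(g_{a+m})$, with the product of indices fixed, into $\nu_{\gamma}(g_{a+1}\cdots g_{a+m})$. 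Here the $x'_j$ denote the conjugated indices from Step~1.

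Since $(p,\vec t)\in\NC_\Lambda((0,n),\vec g)$ and $V_+=\emptyset$, we have $\prod\vec g|_V=1$. Relation \eqref{eq:nu-relations-b} then gives $\nu_\gamma(1)=\delta_{\gamma,1}$. This forces the corresponding $r$-partial product over $V$ to equal $t_V$, which is exactly the block relation for $V$ on the $\vec r$ side. At the same time the factor $s_{a+1}\cdots s_{a+m}=t_V$ is absorbed.

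What remains is the analogous sum for the partition $p'=p\setminus V$ on $n-m$ points. Its blocks are relabelled so that the block immediately surrounding $V$ (if any) absorbs $t_V$ into its interval product, or, if $V$ is outer, the boundary product is adjusted by $t_V$. I will check that $(p',\vec t')\in\NC_\Lambda((0,n-m),\vec g')$ still holds, and then apply the induction hypothesis. The base case is the empty partition, where the boundary condition $\prod^{\prec}\vec t=1$ gives $\xi=e_\emptyset$ and invariance is trivial. Here the boundary condition of Definition~\ref{bi-coloured} is what makes the final $\nu$-index and group factor trivial.

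\textbf{Main obstacle.} The hardest part will be the bookkeeping in Step~2: tracking how the conjugations from \eqref{eq:nu-relations-d} interact with the removal of $V$. In particular, I need the collapsed index $\gamma$ to be $1$ precisely when the $\vec r$-block relation for $V$ holds, and the residual word to be exactly the one for $(p',\vec t')$.

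To handle this, I will first treat the case where $V$ is the last block, $\max V=n$. There all conjugating prefixes are shared and the computation is cleanest. I will then reduce the general case to it by observing that everything to the right of $V$ only conjugates by the fixed element $S_{a+m}=S_a t_V$.
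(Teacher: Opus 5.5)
Your skeleton is the paper's: induction on $|p|$ through a consecutive block $V=\{a+1,\dots,a+m\}$, splitting off the $\vec x$-sum, collapsing it with \eqref{eq:nu-relations-c} and \eqref{eq:nu-relations-b} to the factor $\delta_{\prod_V\vec r,\,t_V}$, and using the boundary condition to get $s_1\cdots s_n=1$ on the support. The gap is the step you postpone, and the reduced pair you describe does not work.

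Once $V$ is summed out, every index to the right of $V$ is conjugated by the prefix $S_at_V$. Simply deleting the $V$-coordinates leaves the prefix $S_a$ instead. Likewise, a block $B$ surrounding $V$ now imposes $\bigl(\prod_{i=\min B}^{a}s_i\bigr)\,t_V\,\bigl(\prod_{i=a+m+1}^{\max B}s_i\bigr)=t_B$. Unless $t_V$ is central, this is not equivalent to an interval condition on the shortened vector for any new colour of $B$. In the outer case, recolouring a neighbouring outer block to restore the boundary condition changes that block's own relation. Your fallback does not rescue this either. A consecutive block with $\max V=n$ need not exist (e.g.\ $p=\{\{1,3\},\{2\}\}$). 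Moreover, $S_at_V$ depends on the summation variable $\vec s_-$, so it is not a fixed conjugation that can be pulled out of the sum.

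The missing idea, which is the device the paper uses, is to fold $t_V$ into the neighbouring coordinate on both sides. Set $s'_a:=s_at_V$ and $r'_a:=r_at_V$, and keep all other coordinates outside $V$. This is legitimate once $\prod_V\vec r=t_V$ has been forced. Then:
\begin{itemize}
\item $S'_a=S_at_V$, so every residual index coincides with the corresponding index computed from $(\vec r',\vec s')$;
\item every block surrounding $V$ keeps both its interval product and its colour;
\item the only block to recolour is the block $W\ni a$ when $\max W=a$, via $t'_W:=t_Wt_V$;
\item if $V$ is outer, this $W$ is the outer block immediately preceding $V$ in the boundary order, so the boundary word $\cdots t_Wt_V\cdots$ is unchanged and $(p',\vec t')\in\NC_\Lambda((0,n-m),\vec g')$.
\end{itemize}
It then remains to check two things. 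First, $(\vec s_-,\vec s_+)\mapsto\vec s'$ is a bijection from the admissible pairs onto $\{\vec s':\delta_{(p',\vec t')}(1,\vec s')=1\}$. Second, $\delta_{(p,\vec t)}(1,\vec r)=\delta_{\prod_V\vec r,\,t_V}\,\delta_{(p',\vec t')}(1,\vec r')$. Together these close the induction. When $a=0$ and $m<n$ there is no coordinate to the left, a case the paper does not address. You can either pick a consecutive block of the restriction of $p$ to $[m+1,n]$ instead, or fold $t_V$ from the left into coordinate $m+1$ and recolour the block starting at $m+1$ by $t_V t_{W'}$.
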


\begin{proof}

It suffices to prove that
\[
\bigl((T_{(p,\vec t)}\otimes \id)\epsilon\bigr)(1)
=
\Bigl(\bigl(u(g_1)\otimes\cdots\otimes u(g_n)\bigr)(T_{(p,\vec t)}\otimes \id)\Bigr)(1)
\]
in \((\ell^2(\Lambda))^{\otimes n}\otimes \Pol(\mathbb G)\).

Let \(\{e_{\vec r}\}_{\vec r\in\Lambda^n}\) be the canonical tensor-product basis of
\((\ell^2(\Lambda))^{\otimes n}\), and let
\(\omega_{\vec r}\in ((\ell^2(\Lambda))^{\otimes n})^*\) be the corresponding coordinate functional.
It is therefore enough to prove that, for every \(\vec r\in\Lambda^n\),
\[
(\omega_{\vec r}\otimes \id)\Bigl(\bigl((T_{(p,\vec t)}\otimes \id)\epsilon\bigr)(1)\Bigr)
=
(\omega_{\vec r}\otimes \id)\Bigl(\bigl(u(g_1)\otimes\cdots\otimes u(g_n)\bigr)(T_{(p,\vec t)}\otimes \id)\Bigr)(1).
\]

Fix $\vec{\gamma}\in {\Lambda}^{n}$, by Lemma \ref{consecutive block}, we can take $V=\{a+1,\dots,a+m\}$ a  consecutive block of $p\in NC(\vec{g})$,
 and write $[n]=V_{-}\sqcup V\sqcup V_+$ ,where $V_-:=\{1,\dots,a\}$ and $V_+:=\{a+m+1,\dots,n\}$.
  Using the notation of Lemma~\ref{Consecutive criterion}, for each $\vec{x}\in\Lambda^m$, write
$$
\vec{s}(\vec{x})=(\vec{s}_-,\vec{x},\vec{s}_+)\in \Lambda^n.
$$
For simplicity, here we write $$
\prod \vec{s}_-:=\prod_{i=1}^{a}s_i,\qquad
\prod \vec{x}:=\prod_{i=a+1}^{a+m}x_i,\qquad
\prod \vec{s}_+:=\prod_{i=a+m+1}^{n}s_i.
$$

Fix $\vec\gamma=(\gamma_1,\dots,\gamma_n)\in \Lambda^n$. For every
$\vec\theta=(\theta_1,\dots,\theta_n)\in \Lambda^n$, define
\[
\vec\gamma(\vec\theta):=(d_1,\dots,d_n)\in \Lambda^n
\]
by
\[
d_\iota
=
\Bigl(\prod_{i=1}^{\iota-1}\theta_i\Bigr)\gamma_\iota
\Bigl(\prod_{i=1}^{\iota}\theta_i\Bigr)^{-1},
\qquad 1\le \iota\le n,
\]
with the convention that the empty product is equal to \(e\).

Now, for \(\vec\theta=\vec s(\vec x)=(\vec s_-,\vec x,\vec s_+)\), we write
\[
\vec\gamma(\vec s(\vec x))=(d_1,\dots,d_n),
\]
so that
\[
d_\iota
=
\Bigl(\prod_{i=1}^{\iota-1}(\vec s(\vec x))_i\Bigr)\gamma_\iota
\Bigl(\prod_{i=1}^{\iota}(\vec s(\vec x))_i\Bigr)^{-1},
\qquad 1\le \iota\le n.
\]

 Then
$$
\begin{aligned}
\vec{\gamma}(\vec{s}(\vec{x}))\vert_{V_-}
&:=
(\gamma_1s_1^{-1},\,s_1\gamma_2s_2^{-1}s_1^{-1},\,\dots,\,
(\prod_{i=1}^{a-1}s_i)\gamma_a(\prod \vec{s}_-)^{-1}),\\
\vec{\gamma}(\vec{s}(\vec{x}))\vert_{V_+}
&:=
((\prod \vec{s}_-\prod \vec{x})\gamma_{a+m+1}s_{a+m+1}^{-1}(\prod \vec{s}_-\prod \vec{x})^{-1},\,\dots,\\
&\qquad
(\prod \vec{s}_-\prod \vec{x})(\prod_{i=a+m+1}^{n-1}s_i)\gamma_n
(\prod \vec{s}_-\prod \vec{x}\prod \vec{s}_+)^{-1}).
\end{aligned}
$$
Hence, once $\vec{s}_-$ and $\vec{s}_+$ are fixed, both
$\vec{\gamma}(\vec{s}(\vec{x}))\vert_{V_-}$ and
$\vec{\gamma}(\vec{s}(\vec{x}))\vert_{V_+}$
remain constant for all $\vec{x}\in \Lambda^m$ satisfying
$\prod_V \vec{x}=t_V=\mathrm{col}(V)$.
 
$\vec{\gamma}(\vec{s}(\vec{x}))\vert_{V}
:=((\prod\vec{s}_-)\gamma_{a+1}x_{a+1}^{-1}(\prod \vec{s}_-)^{-1},\,\dots,
(\prod \vec{s}_-)(\prod_{i=a+1}^{a+m-1}x_{i})\gamma_{a+m}(\prod \vec{s}_-\prod\vec{x})^{-1})$

Then, $\prod_V\vec{\gamma}(\vec{s}(\vec{x}))=d_{a+1}\dots d_{a+m}=(\prod \vec{s}_-)(\prod_V\vec{\gamma})(\prod\vec{x})^{-1}(\prod\vec{s}_-)^{-1}$.

If $\vec{s}$  satisfying  $\delta_{(p,\vec{t})}(1,\vec{s}(\vec{x}))=1$, since $\delta_{(p,\vec t)}(1,\vec s(\vec x))=1$, the block relation holds for every outer block of $p$.
Let $V_1,\dots,V_r$ be the outer blocks of $p$, ordered so that
$$
\min V_1<\min V_2<\cdots<\min V_r.
$$
Then
$$
[n]=\bigsqcup_{j=1}^r [\min V_j,\max V_j].
$$ Because of boundary condition, we obtain
$$
\prod_{[n]} \vec{s}(\vec x)=e.
$$
Equivalently,
$$
(\prod \vec s_-)(\prod \vec x)(\prod \vec s_+)=(\prod \vec s_-)t_V(\prod \vec{s}_+)=e,
$$

 $$\prod_V\vec{\gamma}(\vec{s}(\vec{x}))=(\prod \vec{s}_-)(\prod_V\vec{\gamma})t_V^{-1}(\prod\vec{s}_-)^{-1}=(\prod \vec{s}_-)(\prod_V\vec{\gamma})(\prod \vec{s}_+)$$

Denote
$$
I_{(p,\vec t)}
:=
\Bigl\{
(\vec s_-,\vec s_+)\in \Lambda^{n-m}
:\exists\,\vec x\in \Lambda^m \text{ such that }
\delta_{(p,\vec t)}\bigl(1,\vec s(\vec x)\bigr)=1
\Bigr\},
$$
where $\vec s(\vec x):=(\vec s_-,\vec x,\vec s_+)\in \Lambda^n$.
By the previous lemma, equivalently,
$$
I_{(p,\vec t)}
=
\Bigl\{
(\vec s_-,\vec s_+)\in \Lambda^{n-m}
:
\delta_{(p,\vec t)}\bigl(1,\vec s(\vec x)\bigr)=1
\text{ for every }\vec x\in \Lambda^m \text{ with }\prod_V \vec x=t_V
\Bigr\}.
$$

For $(\vec s_-,\vec s_+)\in I_{(p,\vec t)}$, the quantity
$\prod_{V}\vec\gamma(\vec s(\vec x))\vert_V$ is independent of the choice of $\vec x$ with $\prod _V\vec x=t_V$.
Therefore, for each $z\in\Lambda$, we may define
$$
I_{(p,\vec t)}^{z}
:=
\Bigl\{
(\vec s_-,\vec s_+)\in I_{(p,\vec t)}
:\,
(\prod \vec s_-)\,(\prod_V\vec\gamma)\,(\prod \vec s_+)=z
\Bigr\}.
$$
Then
$$
I_{(p,\vec t)}=\bigsqcup_{z\in\Lambda} I_{(p,\vec t)}^{z}.
$$

\medskip
\noindent\textbf{Case 1.}
For the fixed $\vec\gamma\in\Lambda^n$ with
$
\prod_V \vec\gamma|_V\neq t_V,
$

Fix $(\vec s_-,\vec s_+)$. For every $\vec x\in \Lambda^m$ with
$
\prod \vec x=t_V,
$
recall that:
$$
\prod_{V}\vec\gamma(\vec s(\vec x))
=
(\prod \vec s_-)\,(\prod_{V}\vec\gamma)\,(\prod \vec x)^{-1}\,(\prod \vec s_-)^{-1}
=
(\prod \vec s_-)\,(\prod_{V}\vec\gamma)\,t_V^{-1}\,(\prod \vec s_-)^{-1}.
$$

Therefore $\prod_{V}\vec\gamma(\vec s(\vec x))$ does not depend on $\vec x$.

Since
$
\prod_{V}\vec\gamma\neq t_V,
$
it follows that
$
\prod_{V}\vec\gamma(\vec s(\vec x))\neq 1
$
for all $\vec x\in \Lambda^m$ satisfying $\prod_V \vec x=t_V$.

Moreover, for each fixed $(\vec s_-,\vec s_+)\in I_{(p,\vec t)}^{z}$, set
$
z_{(\vec s_-,\vec s_+)}
:=
(\prod \vec s_-)(\prod_V\vec\gamma|_V)t_V^{-1}(\prod \vec s_-)^{-1}\neq 1.
$
Then the map
\begin{eqnarray*}
\Phi_{(\vec s_-,\vec s_+)}:
\{\vec x\in \Lambda^m:\prod_{[m]} \vec x=t_V\}
&\longrightarrow&
\{\vec d\in \Lambda^m:\prod_{[m]} \vec d=z_{(\vec s_-,\vec s_+)}\},\\
\vec x=(x_{a+1},\dots,x_{a+m})
&\longmapsto&
\vec\gamma(\vec s(\vec x))|_V
\end{eqnarray*}
is a bijection.

Indeed, if
$
\vec\gamma(\vec s(\vec x))|_V=(d_{a+1},\dots,d_{a+m}),
$
then the first coordinate gives
\begin{eqnarray*}
d_{a+1}
&=&
(\prod \vec s_-)\gamma_{a+1}x_{a+1}^{-1}(\prod \vec s_-)^{-1},
\end{eqnarray*}
hence
\begin{eqnarray*}
x_{a+1}
&=&
\gamma_{a+1}
\Bigl(
(\prod \vec s_-)^{-1}
d_{a+1}
(\prod \vec s_-)
\Bigr)^{-1}.
\end{eqnarray*}
More generally, once $x_{a+1},\dots,x_{a+j-1}$ are known, the $(a+j)$-th coordinate
satisfies
\begin{eqnarray*}
d_{a+j}
&=&
(\prod \vec s_-)
(x_{a+1}\cdots x_{a+j-1})
\gamma_{a+j}
x_{a+j}^{-1} \\
&&\cdot
(x_{a+1}\cdots x_{a+j-1})^{-1}
(\prod \vec s_-)^{-1},
\end{eqnarray*}
so $x_{a+j}$ is uniquely determined. Hence the map is injective.
Conversely, given
$
(d_{a+1},\dots,d_{a+m})\in \Lambda^m
$
with
$
\prod_{[m]} \vec d=z_{(\vec s_-,\vec s_+)},
$
the same formulas determine successively a unique tuple
$
(x_{a+1},\dots,x_{a+m})\in \Lambda^m.
$
Moreover, the condition
$
\prod_{[m]} \vec d=z_{(\vec s_-,\vec s_+)}
$
implies
$
\prod_{[m]} \vec x=t_V.
$
Therefore the map is surjective, and hence bijective.

 Using the facts established above, we obtain:
 \begin{eqnarray*}
\sum_{\delta_{(p,\vec t)}(1,\vec \theta)=1}\nu_{\vec\gamma(\vec \theta)}(\vec g)
&=&
\sum_{z\in\Lambda\setminus\{e\}}
\sum_{(\vec s_-,\vec s_+)\in I_{(p,\vec t)}^{z}}
\sum_{\prod_{[m]} \vec x=t_V}
\nu_{\vec\gamma(\vec s(\vec x))}(\vec g).
\end{eqnarray*}
Now fix $z\in \Lambda\setminus\{e\}$ and
$(\vec s_-,\vec s_+)\in I_{(p,\vec t)}^{z}$.
Since we have already proved that, once $\vec s_-$ and $\vec s_+$ are fixed,
the restrictions
$
\vec\gamma(\vec s(\vec x))|_{V_-}
$
and
$
\vec\gamma(\vec s(\vec x))|_{V_+}
$
remain constant for all $\vec x\in \Lambda^m$ satisfying
$
\prod_{[m]} \vec x=t_V,
$
we may write
\begin{eqnarray*}
\sum_{\prod_{[m]} \vec x=t_V}
\nu_{\vec\gamma(\vec s(\vec x))}(\vec g)
&=&
\sum_{\prod_{[m]} \vec x=t_V}
\nu_{\vec\gamma(\vec s(\vec x))|_{V_-}}(\vec g_-)\,
\nu_{\vec\gamma(\vec s(\vec x))|_{V}}(\vec g|_{V})\,
\nu_{\vec\gamma(\vec s(\vec x))|_{V_+}}(\vec g_+)\\
&=&
\nu_{\vec\gamma(\vec s(\vec x))|_{V_-}}(\vec g_-)\,
\Bigl(
\sum_{\prod_{[m]} \vec x=t_V}
\nu_{\vec\gamma(\vec s(\vec x))|_{V}}(\vec g|_{V})
\Bigr)\,
\nu_{\vec\gamma(\vec s(\vec x))|_{V_+}}(\vec g_+).
\end{eqnarray*}
Here the first and the last factors are independent of the choice of
$\vec x$ under the condition $\prod_{[m]} \vec x=t_V$.

Moreover, by the bijection
\begin{eqnarray*}
\Bigl\{
\vec x\in \Lambda^m:\ \prod_{[m]}\vec x=t_V
\Bigr\}
&\longrightarrow&
\Bigl\{
\vec d\in \Lambda^m:\ \prod_{[m]} \vec d=z
\Bigr\},\\
\vec x
&\longmapsto&
\vec\gamma(\vec s(\vec x))|_V,
\end{eqnarray*}
we may rewrite the inner sum as
\begin{eqnarray*}
\sum_{\prod_{[m]} \vec x=t_V}
\nu_{\vec\gamma(\vec s(\vec x))|_{V}}(\vec g|_{V})
&=&
\sum_{\prod_{[m]} \vec d=z}\nu_{\vec d}(\vec g|_{V}).
\end{eqnarray*}
Now, by the algebraic relation \ref{eq:nu-relations-c}, we have
\begin{eqnarray*}
\sum_{\prod_{[m]} \vec d=z}\nu_{\vec d}(\vec g_{V})
&=&
\nu_z\Bigl(\prod_{V} \vec g\Bigr).
\end{eqnarray*}
Since $V$ is a block of $p$ and $(p,\vec t)\in \NC_\Lambda((0,n),\vec g)$, the block relation (See Definition \ref{bi-coloured})  gives
\begin{eqnarray*}
\prod_{V} \vec g=1.
\end{eqnarray*}
By the algebraic relation \ref{eq:nu-relations-b}, we obtain:
\begin{eqnarray*}
\sum_{\prod_{[m]} \vec x=t_V}
\nu_{\vec\gamma(\vec s(\vec x))|_{V}}(\vec g|_{V})
&=&
\nu_z(1).
\end{eqnarray*}

Therefore
\begin{eqnarray*}
\sum_{\delta_{(p,\vec t)}(1,\vec \theta)=1}\nu_{\vec\gamma(\vec \theta)}(\vec g)
&=&
\sum_{z\in\Lambda\setminus\{e\}}
\sum_{(\vec s_-,\vec s_+)\in I_{(p,\vec t)}^{z}}
\nu_{\vec\gamma(\vec s(\vec x))|_{V_-}}(\vec g_-)\,
\nu_z(1)\,
\nu_{\vec\gamma(\vec s(\vec x))|_{V_+}}(\vec g_+)\\
&=&0,
\end{eqnarray*}
since $\nu_z(1)=0$ for every $z\neq e$.

\medskip
\noindent\textbf{Case 2.} 
For the fixed $\vec\gamma\in\Lambda^n$ with
$
\prod_V \vec\gamma=t_V,
$
$$\sum_{\delta_{(p,\vec{t})}(1,\vec \theta)=1}\nu_{\vec\gamma(\vec \theta)}(\vec{g})=\sum_{z \in \Lambda}\sum_{(\vec{s}_-,\vec{s}_+)\in I_{(p,\vec{t})}^{z} }\sum_{\prod\vec{x}=t_V }\nu_{\vec{\gamma}(\vec{s}(\vec{x}))\vert_{V_{-}}}(\vec{g}_{-})
\nu_{\vec{\gamma}(\vec{s}(\vec{x}))\vert_{V}}(\vec{g}_{\vert_{V}})\nu_{\vec{\gamma}(\vec{s}(\vec{x}))\vert_{V_{+}}}(\vec{g}_{+})$$

For $(\vec s_-,\vec s_+)\in I_{(p,\vec t)}^{1}$, we have
$
\prod_V \vec\gamma(\vec s(\vec x))=1
$
for every $\vec x\in\Lambda^m$ with $\prod \vec x=t_V$.
Moreover, as shown above, once $(\vec s_-,\vec s_+)$ is fixed, the restrictions
$
\vec\gamma(\vec s(\vec x))|_{V_-}
$
and
$
\vec\gamma(\vec s(\vec x))|_{V_+}
$
do not depend on $\vec x$ under the condition $\prod \vec x=t_V$.
Hence, using the bijection
\[
\Bigl\{
\vec x\in \Lambda^m:\ \prod \vec x=t_V
\Bigr\}
\longrightarrow
\Bigl\{
\vec d\in \Lambda^m:\ \prod \vec d=1
\Bigr\},
\qquad
\vec x\longmapsto \vec\gamma(\vec s(\vec x))|_V,
\]
together with the defining algebraic relation
$
\sum_{\prod \vec d=z}\nu_{\vec d}(\vec h)=\nu_z(\prod \vec h),
$
applied to $z=1$ and $\vec h=\vec g|_V$, we obtain
\begin{eqnarray*}
\sum_{\delta_{(p,\vec{t})}(1,\vec \theta)=1}\nu_{\vec\gamma(\vec \theta)}(\vec{g})
&=&
\sum_{(\vec s_-,\vec s_+)\in I_{(p,\vec t)}^{1}}
\nu_{\vec\gamma(\vec s(\vec x))|_{V_-}}(\vec g_-)\,
\nu_{1}\!\Bigl(\prod \vec g|_V\Bigr)\,
\nu_{\vec\gamma(\vec s(\vec x))|_{V_+}}(\vec g_+)\\
&=&
\sum_{(\vec s_-,\vec s_+)\in I_{(p,\vec t)}^{1}}
\nu_{\vec\gamma(\vec s(\vec x))|_{V_-}}(\vec g_-)\,
\nu_{\vec\gamma(\vec s(\vec x))|_{V_+}}(\vec g_+),
\end{eqnarray*}
where in the second equality we used that $V$ is a block of $p$, hence
$
\prod \vec g|_V=1,
$
so that
$
\nu_1(\prod \vec g|_V)=\nu_1(1)=1.
$

Notice that, since $\delta_{(p,\vec{t})}(1,\vec{s}(\vec{x}))=1$,  $(\vec{\gamma}(\vec{s}(\vec{x}))\vert_{V_{-}} , \vec{\gamma}(\vec{s}(\vec{x}))\vert_{V_{+}})$ is equal to:
\begin{eqnarray*}
&(&d_1,\dots,d_{a-1},(\prod_{i=1}^{a-1}s_i)(\gamma_{a}t_V)(\prod_{i=1}^{a-1}s_i\cdot s_{a}t_{V})^{-1},(\prod_{i=1}^{a-1}s_i\cdot s_{a}t_{V})\gamma_{a+m+1}(\prod_{i=1}^{a-1}s_i\cdot s_{a}t_{V}\cdot s_{a+m+1})^{-1},\\&&(\prod_{i=1}^{a-1}s_i\cdot s_{a}t_{V}\cdot s_{a+m+1})\gamma_{a+m+2}(\prod_{i=1}^{a-1}s_i\cdot s_{a}t_{V}s_{a+m+1}s_{a+m+2})^{-1}
,\dots)
\end{eqnarray*}

It means:$(\vec{\gamma}(\vec{x}(\vec{s}))\vert_{V_{-}} , \vec{\gamma}(\vec{x}(\vec{s}))\vert_{V_{+}})$ can be seen as $\vec{\gamma}^{\prime}(\vec{s}^\prime)\in \Lambda^{n-m}$,  where:$$\vec{\gamma}^{\prime}=(\gamma_1,\dots,\gamma_{a-1},\gamma_{a}t_V,\gamma_{a+m+1},\dots,\gamma_{n})$$  and $$\vec{s}^\prime=(s_1,\dots,s_{a-1},s_{a}t_V,s_{a+m+1},\dots,s_{n})$$

Denote by $W$ the block containing the $a$-th point. We define $(p^{\prime},\vec{t}^{\prime})\in NC_{\Lambda}((0,n-m),{\vec{g}\prime})$ as following:
 
 If $maxW>a+m=maxV$, we simply erase the consecutive block $V$ and keep the labels of the remaining blocks same as in ${(p,\vec{t})}$.
If $maxW=a<a+1=minV$, we remove $V$ and relabel the block $W$ by
$t_{W}' = t_{W} t_V$, while keeping the labels of all other blocks the same as in $(p, \vec{t})$.
Here, $t_{W}$ denotes the label of $W$ in $(p, \vec{t})$. Finally, we define $\vec g'=(g_1,\dots,g_a,g_{a+m+1},\dots,g_n).$

For every block $B\in p$ with $B\neq V$, let $B'$ be its image in $p'$ under the order-preserving bijection
$\{1,\dots,n\}\setminus V\to\{1,\dots,n-m\}$.
Then, by construction of $(p',\vec t')$, the label condition for $B'$ is equivalent to that for $B$, namely
\[
\prod_{i=\min B'}^{\max B'}\gamma_i'=t_B'
\iff
\prod_{i=\min B}^{\max B}\gamma_i=t_B,
\qquad
\prod_{i=\min B'}^{\max B'}s_i'=t_B'
\iff
\prod_{i=\min B}^{\max B}s_i=t_B.
\]

$$ \prod_{i=\min B'}^{\max B'}g_i'=1
\iff
\prod_{i=\min B}^{\max B} g_i=1 $$

It remains to check the boundary condition for \((p',\vec t')\).
Let \(O_1,\dots,O_r\) be the outer blocks of \(p\), ordered from left to right.
Since \((p,\vec t)\in \NC_\Lambda((0,n),\vec g)\), we have $t_{O_1}\cdots t_{O_r}=e.$
First assume that \(\max W>a+m=\max V\). Then \(W\) surrounds \(V\), hence
\(V\) is not an outer block. Removing \(V\) does not change the ordered list
of outer blocks, except for applying the natural order-preserving
identification $\{1,\dots,n\}\setminus V \simeq \{1,\dots,n-m\}.$ Moreover, the labels of all remaining blocks are unchanged. Hence the
ordered product of the outer-block labels is unchanged, and the boundary
condition still holds for \((p',\vec t')\).

Now assume that \(\max W=a<a+1=\min V\). Recall that in this case the image
\(W'\) of \(W\) in \(p'\) is labelled by $t_{W'}'=t_Wt_V,$
while all other remaining blocks keep their original labels.

We distinguish two subcases. If \(V\) is not an outer block of \(p\), then
there exists a block \(U\) such that $\min U<\min V=a+1$ and $
\max U>\max V=a+m.$
Since \(W\) contains the point \(a\) and \(\max W=a\), noncrossingness implies
that \(W\) is contained in the interval \([\min U,\max U]\). Thus \(W\) is not
an outer block either. Consequently, neither \(V\) nor \(W\) contributes to
the ordered product of outer-block labels. After removing \(V\), the images
of the outer blocks of \(p\) are precisely the outer blocks of \(p'\), with
the same labels. Hence the boundary condition is unchanged.

If \(V\) is an outer block of \(p\), then \(W\) is also outer. Indeed, if
\(W\) were contained in a larger block \(U\), then \(U\) would have a point
to the left of \(W\) and a point strictly to the right of \(a\). Since the
points \(a+1,\dots,a+m\) all belong to \(V\), this would force
\(\max U>a+m\), so \(U\) would surround \(V\), contradicting the assumption
that \(V\) is outer. Hence \(W\) is outer. Moreover, \(W\) and \(V\) are  outer blocks. Thus, in the ordered
product of outer-block labels of \(p\), the two adjacent factors \(t_W\) and
\(t_V\) occur as $\cdots\, t_W t_V\, \cdots .$ In \(p'\), the block \(V\) is removed and \(W\) is replaced by \(W'\) with
label \(t_{W'}'=t_Wt_V\). Therefore the ordered product of outer-block labels
in \(p'\) is identical to that in \(p\). Hence it is still equal to \(e\).

In all cases, \((p',\vec t')\) satisfies the boundary condition.

By the definition  and argument above, if $\delta_{(p,\vec{t})}(1,\vec {\gamma})=0$ (resp.$\delta_{(p,\vec{t})}(1,\vec {\gamma})=1$), we know that $\delta_{(p^{\prime},\vec{t}^{\prime})}(1,\vec {\gamma}^{\prime})=0$ (resp.$\delta_{(p^{\prime},\vec{t}^{\prime})}(1,\vec {\gamma}^{\prime})=1$), otherwise, we would have $\delta_{(p,\vec{t})}(1,\vec {\gamma})=1$ (resp.     $\delta_{(p,\vec{t})}(1,\vec {\gamma})=0$ ), which leads to a contradiction. 

It remains to justify that the map
\[
\Phi:I_{(p,\vec t)}^{e}\longrightarrow
\{\vec s'\in \Lambda^{n-m}:\delta_{(p',\vec t')}(1,\vec s')=1\},
\qquad
(\vec s_-,\vec s_+)\longmapsto \vec s'
\]
is bijective, where
\[
\vec s'=(s_1,\dots,s_{a-1},s_at_V,s_{a+m+1},\dots,s_n).
\]
It is well defined, because for $(\vec s_-,\vec s_+)\in I_{(p,\vec t)}^{e}$ there exists
$\vec x\in\Lambda^m$ with $\prod \vec x=t_V$ and $\delta_{(p,\vec t)}(1,\vec s(\vec x))=1$; then, by the block-by-block equivalence proved above, we get
\[
\delta_{(p',\vec t')}(1,\vec s')=1.
\]
The map is injective, since $\vec s'$ determines $(\vec s_-,\vec s_+)$ uniquely:
the first $a-1$ coordinates are unchanged, the $a$-th coordinate is recovered from
$s_a=s_a't_V^{-1}$, and the remaining coordinates are exactly those of $\vec s_+$.
Conversely, if $\vec s'\in\Lambda^{n-m}$ satisfies $\delta_{(p',\vec t')}(1,\vec s')=1$, define
\[
\vec s_-:=(s_1',\dots,s_{a-1}',s_a't_V^{-1}),
\qquad
\vec s_+:=(s_{a+1}',\dots,s_{n-m}').
\]
Then the same equivalence of block conditions argument show that
$\delta_{(p,\vec t)}(1,\vec s(\vec x))=1$ for any $\vec x$ with
$\prod_V \vec x=t_V$, so $(\vec s_-,\vec s_+)\in I_{(p,\vec t)}^{e}$ and
$\Phi(\vec s_-,\vec s_+)=\vec s'$. Hence $\Phi$ is surjective.

Then, we have:
$$\sum_{\delta_{(p,\vec{t})}(1,\vec \theta)=1}\nu_{\vec\gamma(\vec \theta)}(\vec{g})=\sum_{(\vec{s}_-,\vec{s}_+)\in I_{(p,\vec{t})}^e}\nu_{\vec{\gamma}(\vec{x})\vert_{V_{-}}}(\vec{g}_{-})\nu_{\vec{\gamma}(\vec{x})\vert_{V_{+}}}(\vec{g}_{+})=\sum_{\delta_{(p^{\prime},\vec{t}^{\prime})}(1,{\vec {s}}^{\prime})=1}\nu_{\vec{\gamma}^{\prime}(\vec{s}^{\prime})}(\vec{g}^{\prime})$$
Note that in the above formula,  $\vert p^{\prime}\vert=\vert p\vert-1$,

We proceed by induction on $\vert p \vert$.

When $\vert p \vert = 1$,

$ \sum_{\delta_{(p,\vec{t})}(1,\vec{s})=1}\nu_{\vec{\gamma}(\vec{s})}(\vec{g})=
\begin{cases}
1 & \text{if } ,\delta_{(p,\vec{t})}(1,\vec{\gamma})=1\\
0 & \text{if } ,\delta_{(p,\vec{t})}(1,\vec{\gamma})=0
\end{cases}
$ follows directly from:

$\prod(\vec{\gamma}(\vec{s}))=\begin{cases}
1 & \text{if } ,\delta_{(p,\vec{t})}(1,\vec{\gamma})=1\\
\neq 1 & \text{if } ,\delta_{(p,\vec{t})}(1,\vec{\gamma})=0
\end{cases} $, for any $\vec{s}\in \Lambda^{n}$  s.t.  $\delta_{(p,\vec{t})}(1,\vec{s})=1$

Suppose the statement holds for all $\vert (p, \vec{t}) \vert = n-1$. We now consider the case where $\vert p \vert = n$.

If $\delta_{(p,\vec{t})}(1,\vec{\gamma})=1$,  then every consecutive block $V$satisfies $\prod_V\vec{\gamma}=t_V$, fix such a block $V$. By the previous argument and the induction hypothesis at length $n-1$, we obtain $\sum_{\delta_{(p,\vec{t})}(1,\vec \theta)=1}\nu_{\vec\gamma(\vec \theta)}(\vec{g})=\sum_{\delta_{(p^{\prime},\vec{t}^{\prime})}(1,{\vec {s}}^{\prime})=1}\nu_{\vec{\gamma}^{\prime}(\vec{s}^{\prime})}(\vec{g}^{\prime})=1$, where $\delta_{(p^{\prime},\vec{t}^{\prime})}(1,\vec {\gamma}^{\prime})=1 $ and $\vert(p^{\prime},\vec{t}^{\prime})\vert=n-1$. 
 
 If $\delta_{(p,\vec{t})}(1,\vec{\gamma})=0$, Take a consecutive block $V$.  If  $\prod_V\vec{\gamma}\neq t_V$, by the previous argument, we have $\sum_{\delta_{(p,\vec{t})}(1,\vec \theta)=1}\nu_{\vec\gamma(\vec \theta)}(\vec{g})=0$ directly. On the other hand, if $\prod_V\vec{\gamma}=t_V$, by the previous argument and the induction hypothesis at size $n-1$ , we obtain  $\sum_{\delta_{(p,\vec{t})}(1,\vec \theta)=1}\nu_{\vec\gamma(\vec \theta)}(\vec{g})=\sum_{\delta_{(p^{\prime},\vec{t}^{\prime})}(1,{\vec {x}}^{\prime})=1}\nu_{\vec{\gamma}^{\prime}(\vec{x}^{\prime})}(\vec{g}^{\prime})=0$,   where         $\delta_{(p^{\prime},\vec{t}^{\prime})}(1,\vec {\gamma}^{\prime})=0 $ and $\vert(p^{\prime},\vec{t}^{\prime})\vert=n-1$ .

then:
When $\delta_{(p,\vec{t})}(1,\vec {\gamma})=0$:
$$(\omega_{\vec \gamma}\otimes \id))[u(g_1)\ot\dots\ot u(g_n)(T_{(p,\vec{t})}\ot id)]=\sum_{\delta_{(p,\vec{t})}(1,\vec {s})=1}\nu_{\vec{\gamma}(\vec{s})}(\vec{g})=0=(\omega_{\vec \gamma}\otimes \id))[(T_{(p,\vec{t})}\ot id)\epsilon]$$
When $\delta_{(p,\vec{t})}(1,\vec {\gamma})=1$: 
$$ (\omega_{\vec \gamma}\otimes \id))[u(g_1)\ot\dots\ot u(g_n)(T_{(p,\vec{t})}\ot id)]=\sum_{\delta_{(p,\vec{t})}(1,\vec {s})=1}\nu_{\vec{\gamma}(\vec{s})}(\vec{g})=1=(\omega_{\vec \gamma}\otimes \id))[(T_{(p,\vec{t})}\ot id)\epsilon]$$From the above, we deduce that   $T_{(p,\vec{t})}\in Mor(\epsilon,u(g_1)\ot\dots\ot u(g_n))$
\end{proof}

\section{Construction of the Concrete Rigid $C^*$-Tensor Category $\mathcal C_{\Gamma,\Lambda}$}

\subsection{Construction of Vertical Composition}
\begin{definition}[Intuition: vertical concatenation and precomposition]
Given $(p,\vec{t})\in NC_{\Lambda}(k,l)$ and $(q,\vec{f})\in NC_{\Lambda}(l,m)$,
we \emph{visualize} their vertical concatenation by placing $q$ below $p$ and
gluing the lower points of $p$ to the corresponding upper points of $q$.
This produces a three-layer picture with upper, middle and lower rows
labelled by $[k]$, $[l]$ and $[m]$ respectively, in which the blocks of $p$
and of $q$ are drawn as non-crossing strings. We refer to this three-layer
picture as the \emph{precomposition} of $q$ and $p$.
\end{definition}

The previous definition is purely pictorial and meant to provide intuition.
We now give a precise combinatorial notion of connected components in this
precomposition.

 \begin{definition}
Fix $p\in {\rm NC}(k,l)$  and $q\in{\rm NC}(l,m)$ and define an equivalence relation $\mathcal{R}_{p,q}$ on $\{1,\dots,k\}\sqcup\{1,\dots,l\}\sqcup\{1,\dots,m\}$ as follows: 
 $(a,b)\in \mathcal{R}_{p,q}$ if and only if there exists a finite sequence of blocks $B_1,\dots,B_n\in p\sqcup q$ such that:
\begin{itemize}
    \item $a \in B_1$ and $b \in B_n$,
    \item $B_i \cap B_{i+1} \ne \emptyset$ for all $1 \leq i < n$,
\end{itemize}
If $(a,b)\in\mathcal{R}_{p,q}$, a finite sequence $B_1,\dots,B_n$ as above is called a \textit{joining sequence} from $a$ to $b$. A \textbf{$(p,q)$-connected component} is an equivalence class $\Ccal\subseteq[k]\sqcup[l]\sqcup[m]$ of the relation $\mathcal{R}_{p,q}$. We use the notation:
$$\Ccal_p:=\{B\in p\mid B\cap\mathcal{C}\neq\emptyset \}\text{ and }\Ccal_q:=\{B\in q\mid B\cap\mathcal{C}\neq\emptyset \}.$$

We denote by
\[
K(q,p)
:=
\{\,\mathcal C\subseteq [k]\sqcup [l]\sqcup [m]
\mid
\mathcal C \text{ is a }(p,q)\text{-connected component}\,\}
\]
the set of all \((p,q)\)-connected components.

For each \((p,q)\)-connected component \(\mathcal C\in K(q,p)\) with
\(\mathcal C\cap[l]\neq\emptyset\), 

we define $H_{\mathcal C}
:=
[\min(\mathcal C\cap[l]),\,\max(\mathcal C\cap[l])].$

\begin{remark}
By construction, any $(p,q)$-connected component $\mathcal{C}$ is the union
of all blocks of $p$ and $q$ that meet $\mathcal{C}$, that is
$$
  \mathcal{C}
  \;=\;
  \bigcup_{B\in \mathcal{C}_p\sqcup \mathcal{C}_q} B.
$$

\end{remark}

Moreover, these connected components are classified into the following types:

\begin{itemize}
    \item\textbf{upper-half} (resp. \textbf{lower-half}): $\mathcal{C}\cap [k]\neq \emptyset$ , $\mathcal{C}\cap [l]\neq \emptyset$ and $\mathcal{C}\cap [m]=\emptyset$  (resp. $\mathcal{C}\cap [m]\neq \emptyset$ , $\mathcal{C}\cap [l]\neq \emptyset$ and $\mathcal{C}\cap [k]=\emptyset$)
    
    \item\textbf{through}: $\mathcal{C}\cap [m]\neq \emptyset$ , $\mathcal{C}\cap [l]\neq \emptyset$ and $\mathcal{C}\cap [k]\neq\emptyset$
    \item\textbf{cycle}: $\mathcal{C}\cap [k]= \emptyset$ , $\mathcal{C}\cap [l]\neq \emptyset$ and $\mathcal{C}\cap [m]=\emptyset$

    \item\textbf{upper-trivial} (resp. \textbf{lower-trivial}): $\mathcal{C}\cap [k]\neq \emptyset$ , $\mathcal{C}\cap [l]= \emptyset$ and $\mathcal{C}\cap [m]=\emptyset$  (resp.$\mathcal{C}\cap [m]\neq \emptyset$ , $\mathcal{C}\cap [l]=\emptyset$ and $\mathcal{C}\cap [k]=\emptyset$) 
    
\end{itemize}
A $(p,q)$-connected component $\mathcal C$ is called \emph{non-trivial} if it is not upper-trivial or lower-trivial; equivalently, if
$$
\mathcal C\cap [l]\neq\emptyset.
$$

\end{definition}
\begin{remark}
Let $\mathcal C$ be a non-trivial $(p,q)$-connected component. Then every block
$D\in \mathcal C_p\sqcup \mathcal C_q$ meets $[l]$.
Indeed, if $D\cap [l]=\emptyset$, then $D$ cannot intersect any block of the other partition, and it is also disjoint from all other blocks of the same partition. Thus $D$ is isolated and forms a trivial connected component by itself, contradicting the non-triviality of $\mathcal C$. Therefore $\min(D\cap [l])$ and $\max(D\cap [l])$ are well defined for all $D\in \mathcal C_p\sqcup \mathcal C_q$.
\end{remark}
 \begin{definition}\label{def: Entrance}
 Let $\Ccal$ be a non-trivial $(p,q)$ connected component,
$\mathcal{C} \cap [l]=\{c_1<\dots<c_n\}$ is called \textit{underlying set of $\mathcal{C}$ on $[l]$}. A consecutive interval $E=\{c_k + 1,\dots, c_{k+1} - 1\}$ is called an \textit{entrance of $\mathcal{C}$} if it is non-empty and for every $x\in\{c_k + 1,\dots, c_{k+1} - 1\}$ there exists a block $B_x\in p\sqcup q$ such that:
\begin{itemize}
    \item $x \in B_x$, and
    \item $B_x$ isn't \emph{nested in}  $\mathcal{C}$.
\end{itemize}
A  block $B\in p$ (resp.\ $B\in q$) is said to be \emph{nested in} $\Ccal$ if $B\subset [l]$ and 
there exists $D\in\Ccal_p$ (resp.\ $D\in\Ccal_q$)  $
\min \{D\cap[l]\} < \min B \le \max B < \max\{D\cap[l]\}.$
We denote the set of all such blocks by $\mathcal{N}_{\Ccal}(q,p)$.

\end{definition}

\begin{lemma}\label{Entrance}
Let $\Ccal$ be a  non-trivial $(p,q)$ connected component 

$$S_p(\mathcal{C}):=\{\,x\in[\min{\mathcal{C} \cap [l]},\max{\mathcal{C} \cap [l]}]\mid \forall D\in\Ccal_p,\ (x<\min \{D\cap[l]\} \vee x>\max\{D\cap[l]\}\,\}$$
and
$$S_q(\mathcal{C}):=\{\,x\in[\min{\mathcal{C} \cap [l]},\max{\mathcal{C} \cap [l]}]\mid \forall D\in\Ccal_q,\ (x<\min \{D\cap[l]\} \vee x>\max\{D\cap[l]\}\,\}$$.
 Then a consecutive interval $E$ is an entrance of $\Ccal$ if and only if
$E\subseteq S_p \sqcup S_q$.

\begin{proof}
We prove the necessity. Fix $x\in E$.

If the $p$ blcok $B_x$ containing $x$ is of single layer, by Definition \ref{def: Entrance},  there is no block $D\in\Ccal_p$ satisfying
$\min \{D\cap[l]\} < \min B_x \le x \le \max B_x < \max\{D\cap[l]\}$,
i.e.\ $B_x$ is not nested in $\Ccal$.

Assume by contradiction that $x\notin S_p$.
$\exists\,D'\in\Ccal_p$ such that $\min \{D'\cap[l]\} \le x \le \max \{D'\cap[l]\}$.
Since $D'\neq B_x$ and blocks in a partition are disjoint, $x\in B_x$ implies $x\notin D'$.
Hence in fact $\min \{D'\cap[l]\} < x < \max \{D'\cap[l]\}$.

We claim that $B_x$ and $D'$ must cross, contradicting the fact that $p$ is noncrossing.
First, $D'$ cannot be nested in $B_x$. Indeed, if $\min B_x < \min \{D'\cap[l]\}  \le \max \{D'\cap[l]\} < \max B_x$,
then with $a=\min B_x$, $c=x\in B_x$ and $b=\min \{D'\cap[l]\} $, $d=\max \{D'\cap[l]\}\in D'$ , we get
$a<b<c<d$, hence $B_x$ and $D'$ cross. Second, $B_x$ is not nested in $D'$, by Definition \ref{def: Entrance}. Therefore neither block is nested in the other,
while $\mathrm{span}(B_x)\cap \mathrm{span}(D')\neq\varnothing$ intersect since $x\in[\min B_x,\max B_x]\cap(b,d)$.
Consequently either
$\min B_x<b<\max B_x<d$
or
$b<\min B_x<d<\max B_x$,
and in either case $B_x$ and $D'$ cross, a contradiction. Thus $x\in S_p$. This proves $E\subseteq S_p\cup S_q$. 

If the $p$ blcok $B_x$ ($B_x\notin \Ccal_p$) containing $x$ is of  two layer, by non-crossing property of $p$, there doesn't exist any $D\in\Ccal_p$ satisfying $\min \{D\cap[l]\} < \min\{ B_x\cap[l]\} \le x \le \max \{B_x\cap[l]\} < \max\{D\cap[l]\}$. The proof then proceeds exactly as in the case where \(B_x\) is a one-layer block, with \(\min B_x\) and \(\max B_x\) replaced throughout by \(\min(B_x\cap [l])\) and \(\max(B_x\cap [l])\), respectively.

The sufficiency is immediate from the
definition of the entrance.

To conclude, we show that $
 S_p\cap S_q=\emptyset.$ First observe that
$S_p(\Ccal)\cap(\Ccal\cap[l])=\varnothing$ and $S_q(\Ccal)\cap(\Ccal\cap[l])=\varnothing$.
Indeed, if $x\in \Ccal\cap[l]$ and $B_x\in\Ccal_p$ is the $p$-block containing $x$, then
$\min(B_x\cap[l])\le x\le \max(B_x\cap[l])$, so $x\notin S_p(\Ccal)$; the proof for $S_q(\Ccal)$ is identical.

We now prove $S_p(\Ccal)\cap S_q(\Ccal)=\varnothing$.
Suppose $x\in S_p(\Ccal)\cap S_q(\Ccal)$. Then $x\notin \Ccal\cap[l]$, hence
$c_k<x<c_{k+1}$ for some consecutive points $c_k,c_{k+1}\in \Ccal\cap[l]$.
Since $x\in S_p(\Ccal)\cap S_q(\Ccal)$, every block in $\Ccal_p\sqcup\Ccal_q$ lies entirely on one side of $x$, i.e.
for every $D\in \Ccal_p\sqcup\Ccal_q$, either $\max(D\cap[l])<x$ or $x<\min(D\cap[l])$.
Hence no joining sequence can pass from the left of $x$ to the right of $x$.
In particular, there is no joining sequence from $c_k$ to $c_{k+1}$, contradicting that $c_k,c_{k+1}$ belong to the same $(p,q)$-connected component $\Ccal$.
Therefore $S_p(\Ccal)\cap S_q(\Ccal)=\varnothing$.

Together with the already proved inclusion $E\subseteq S_p(\Ccal)\cup S_q(\Ccal)$, we obtain
$E\subseteq S_p(\Ccal)\sqcup S_q(\Ccal)$.
\end{proof}
\end{lemma}

\begin{definition}
Let $E$ be an entrance of the $(p,q)$-connected component $\Ccal$.
If $E\subseteq S_p$ (resp.\ $E\subseteq S_q$), then $E$ is called an \emph{upper} (resp.\ \emph{lower}) entrance of $\Ccal$.
We denote the collection of all upper (resp.\ lower) entrances of $\Ccal$ by
$\mathrm{Ent}^{\uparrow}(\Ccal)$
(resp.\ $\mathrm{Ent}^{\downarrow}(\Ccal)$).
\end{definition}

 \begin{corollary}\label{cor: entrance-one-side-nested}
Let $\mathcal C$ be a non-trivial $(p,q)$-connected component, and let $E$
be an entrance of $\mathcal C$. For $x\in E$, denote by $B_p(x)$
(resp.\ $B_q(x)$) the unique block of $p$ (resp.\ $q$) containing $x$. Then for every $x\in E$, at least one of the two blocks $B_p(x),B_q(x)$ is
nested in $\mathcal C$.

More precisely,
\[
x\in S_p(\Ccal)\setminus S_q(\Ccal) \;\Longrightarrow\; B_q(x)\in \mathcal N_{\mathcal C}(q,p),
\qquad
x\in S_q(\Ccal)\setminus S_p(\Ccal) \;\Longrightarrow\; B_p(x)\in \mathcal N_{\mathcal C}(q,p).
\]
\end{corollary}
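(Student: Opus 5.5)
The plan is to deduce this directly from Lemma~\ref{Entrance} and the definitions of $S_p(\Ccal)$, $S_q(\Ccal)$ and of nesting in $\Ccal$.

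Fix $x\in E$. By Lemma~\ref{Entrance}, $x$ lies in exactly one of $S_p(\Ccal)$, $S_q(\Ccal)$. So the first assertion follows from the refined one, and it suffices to treat $x\in S_p(\Ccal)\setminus S_q(\Ccal)$; the other case is symmetric, with $p$ and $q$ interchanged.

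Assume $x\in S_p(\Ccal)\setminus S_q(\Ccal)$.

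\textbf{Step 1: a block of $\Ccal_q$ strictly straddles $x$.} Since $x\notin S_q(\Ccal)$, there is $D\in\Ccal_q$ with
\[
\min(D\cap[l])\le x\le \max(D\cap[l]).
\]
Every point of $E$ lies strictly between two consecutive points of $\Ccal\cap[l]$, so $x\notin\Ccal$, and hence $x\notin D$. Therefore both inequalities are strict.

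\textbf{Step 2: the block $B:=B_q(x)$ satisfies $B\notin\Ccal_q$.} Indeed $x\in B$ and $x\notin\Ccal$, while $\Ccal$ is the union of the blocks in $\Ccal_p\sqcup\Ccal_q$.

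\textbf{Step 3: $B$ is a single-layer block contained in $[l]$.} Suppose instead that $B$ meets $[m]$. Then $B$ is a through-block of $q$ containing the point $x$, with
\[
\min(D\cap[l])<x<\max(D\cap[l]).
\]
Since $D$ has points of $[l]$ on both sides of $x$, the string from $x$ down to the lower row would cross $D$, contradicting the noncrossing property of $q$. Hence $B\subset[l]$.

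\textbf{Step 4: $B$ is nested in $D$.} By the noncrossing property of $q$, either $B$ is nested inside $D$, or $D$ lies inside a single gap of $B$. In the second case, two consecutive points $b_1<b_2$ of $B$ satisfy
\[
b_1<\min(D\cap[l])\quad\text{and}\quad \max(D\cap[l])<b_2 .
\]
The aim is to rule this case out using $x\in S_p(\Ccal)$ together with connectivity. The points of $D\cap[l]$ belong to $\Ccal$, so $\Ccal\cap[l]$ contains points on both sides of $x$. I expect to argue that $\Ccal$ cannot escape the gap $(b_1,b_2)$ through $q$-blocks, by noncrossing of $q$ with $B$. It also cannot escape through $p$-blocks: because $x\in S_p(\Ccal)$, no $p$-block of $\Ccal$ straddles $x$. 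Then all of $\Ccal\cap[l]$ lies in $(b_1,b_2)$. In that situation, $x$ would not lie strictly between consecutive points of $\Ccal\cap[l]$ in a way compatible with $x\in B$, and the containment of $D$ in a gap of $B$ must be reconciled with $x\in B$ and $\min(D\cap[l])<x$. Since $x\in B$ lies strictly inside $D$'s range, it cannot lie in that gap configuration with $b_1<\min D$, which contradicts noncrossing directly. So in fact only nesting is possible:
\[
\min(D\cap[l])<\min B\le\max B<\max(D\cap[l]).
\]
This is exactly the statement $B_q(x)\in\mathcal N_{\Ccal}(q,p)$.

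The main obstacle is Step 4, which is the clean noncrossing case analysis between $B$ and $D$. The case to handle carefully is where $B$ has points outside $D$'s span on both sides while still containing $x$. I will show this forces a crossing pattern $a<b<c<d$ in the style of the proof of Lemma~\ref{Entrance}, taking
\[
a=\min B,\quad b=\min(D\cap[l]),\quad c=x,\quad d=\max(D\cap[l]).
\]
The other side is handled the same way with $\max B$.
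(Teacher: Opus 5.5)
Your proposal is correct and follows essentially the paper's route. Lemma~\ref{Entrance} puts $x$ in exactly one of $S_p(\Ccal),S_q(\Ccal)$, and then $x\notin S_q(\Ccal)$ forces $B_q(x)$ to be nested by the $a<b<c<d$ crossing argument. The paper uses that argument only implicitly, as the contrapositive of the necessity part of Lemma~\ref{Entrance}; your Steps 1--3 and your closing crossing argument with $\min B$ and $\max B$ make it explicit.

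The connectivity detour in the middle of Step 4 is not needed, and as written it is not an argument, so cut it. Since $x\in B$ lies strictly between $\min(D\cap[l])$ and $\max(D\cap[l])$, any point of $B$ outside that range already produces a crossing.
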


\begin{proof}
Fix $x\in E$. Since $E$ is an entrance, Lemma~\ref{Entrance} yields
$E\subseteq S_p(\Ccal)\sqcup S_q(\Ccal)$, 
this implies that exactly one of the two possibilities
$x\in S_p, x\in S_q$
holds.

If $x\in S_p\setminus S_q$, then the $p$-block through $x$ is the non-nested
one, hence the other block through $x$, namely $B_q(x)$, must be nested in
$\mathcal C$. Thus $
B_q(x)\in \mathcal N_{\mathcal C}.$

Similarly, if $x\in S_q\setminus S_p$, then the $q$-block through $x$ is the
non-nested one, hence the other block through $x$, namely $B_p(x)$, must be
nested in $\mathcal C$. Therefore $B_p(x)\in \mathcal N_{\mathcal C}.$
 In either case, at least one of the two blocks containing $x$ is nested in
$\mathcal C$.
\end{proof}

\begin{definition}
Let $\Ccal$ be a non-trivial $(p,q)$-connected component and define
$$X_{u}(\Ccal)=\{p^{\prime} \in \NC(k,l)\mid B\in p^{\prime}, \forall B\in\mathcal{C}_{p}   \} \text { and }X_{d}(\Ccal)=\{q^{\prime} \in \NC(l,m)\mid B\in q^{\prime},\forall B\in\mathcal{C}_{q}   \}.$$

When the connected component $\Ccal$ is clear from the context, we simply write
$X_u$ and $X_d$ instead of $X_u(\Ccal)$ and $X_d(\Ccal)$.

\end{definition}

\begin{lemma}
Let $\Ccal$ be a $(p,q)$-connected component, then it can be viewed as $(p^{\prime}, q^{\prime})$-connected component, for any $p^{\prime}\in X_u=\{b \in NC(k,l)\mid B\in b, \forall B\in\mathcal{C}_{p}   \}$ and $q^{\prime}\in X_d=\{b \in NC(l,m)\mid B\in b,\forall B\in\mathcal{C}_{q}   \}$.

\begin{proof}

Let $\mathcal C'$ be the $(p',q')$-connected component containing $\mathcal C$.
It suffices to show that $\mathcal C'\subseteq \mathcal C$.
Let $y\in\mathcal C'$ and choose $x\in\mathcal C\subseteq\mathcal C'$.
Then $(x,y)\in \mathcal R_{p',q'}$.
If $\{x,y\}\subseteq B$ for some block $B\in p'$ (resp.\ $B\in q'$), then, since there is a unique
$p'$-block (resp.\ $q'$-block) containing $x$ and the unique $p$-block (resp.\ $q$-block) containing $x$
lies in $\mathcal C_p$ (resp.\ $\mathcal C_q$), the assumption $p'\in X_u$ (resp.\ $q'\in X_d$) implies
that $B\in \mathcal C_p\subset p'$ (resp.\ $B\in \mathcal C_q\subset q'$). In particular, $y\in\mathcal C$.
Finally, by induction on the length of a $(p',q')$-chain connecting $x$ to $y$, we conclude that
$y\in\mathcal C$. Hence $\mathcal C'=\mathcal C$.
\end{proof}
\end{lemma}

\begin{corollary}
Let $E$ be an entrance of $\Ccal$ as a $(p,q)$-connected component. Then $E$ is also  an entrance of $\Ccal$ as a $(p^{\prime},q^{\prime})$-connected component, where $p^{\prime} \in X_u$ and $q^{\prime} \in X_d$.

\begin{proof}

By the previous lemma, $\Ccal$ is also a $(p',q')$-connected component, and moreover
its associated families of component-blocks remain $\Ccal_p$ and $\Ccal_q$.
Hence the sets $S_p$ and $S_q$ occurring in the entrance criterion are unchanged.
The conclusion therefore follows from Lemma \ref{Entrance}.
\end{proof}
\end{corollary}

\begin{proposition}\label{lem:two-sided-middle-decomposition}
Let \(\mathcal C\) be a non-trivial \((p,q)\)-connected component with
\[
\mathcal C\cap[l]\neq\varnothing,
\qquad
H_{\mathcal C}:=
[\min(\mathcal C\cap[l]),\max(\mathcal C\cap[l])].
\]

Define
\[
\mathrm{Th}_p(\mathcal C)
:=
\{\,B\in \mathcal C_p : B\cap[k]\neq\varnothing,\ B\cap[l]\neq\varnothing\,\},
\qquad
\mathrm{Out}_p^{\downarrow}(\mathcal C)
:=
\{\,D\in \mathcal C_p : D \text{ is a lower outer block}\,\},
\]
and
\[
\mathrm{Th}_q(\mathcal C)
:=
\{\,B\in \mathcal C_q : B\cap[l]\neq\varnothing,\ B\cap[m]\neq\varnothing\,\},
\qquad
\mathrm{Out}_q^{\uparrow}(\mathcal C)
:=
\{\,D\in \mathcal C_q : D \text{ is an upper outer block}\,\}.
\]

For \(B\in \mathrm{Th}_p(\mathcal C)\cup \mathrm{Th}_q(\mathcal C)\), set $I(B):=[\min(B\cap[l]),\max(B\cap[l])],$
and for \(D\in \mathrm{Out}_p^{\downarrow}(\mathcal C)\cup \mathrm{Out}_q^{\uparrow}(\mathcal C)\), set $I(D):=\operatorname{span}(D).$

Define
\[
\mathrm{FreeOut}_p^{\downarrow}(\mathcal C)
:=
\{\,D\in \mathrm{Out}_p^{\downarrow}(\mathcal C):
I(D)\not\subset I(B)\ \text{for all }B\in \mathrm{Th}_p(\mathcal C)\,\},
\]
\[
\mathrm{FreeOut}_q^{\uparrow}(\mathcal C)
:=
\{\,D\in \mathrm{Out}_q^{\uparrow}(\mathcal C):
I(D)\not\subset I(B)\ \text{for all }B\in \mathrm{Th}_q(\mathcal C)\,\}.
\]

Then
\[
H_{\mathcal C}
=
\left(\bigsqcup_{B\in \mathrm{Th}_p(\mathcal C)} I(B)\right)
\sqcup
\left(\bigsqcup_{D\in \mathrm{FreeOut}_p^{\downarrow}(\mathcal C)} I(D)\right)
\sqcup
\left(\bigsqcup_{E\in \mathrm{Ent}^{\uparrow}(\mathcal C)} E\right),
\]
and
\[
H_{\mathcal C}
=
\left(\bigsqcup_{B\in \mathrm{Th}_q(\mathcal C)} I(B)\right)
\sqcup
\left(\bigsqcup_{D\in \mathrm{FreeOut}_q^{\uparrow}(\mathcal C)} I(D)\right)
\sqcup
\left(\bigsqcup_{E\in \mathrm{Ent}^{\downarrow}(\mathcal C)} E\right).
\]
\end{proposition}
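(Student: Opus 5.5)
By the symmetry $p\leftrightarrow q$, $[k]\leftrightarrow[m]$, it suffices to prove the first decomposition. The plan is to show three things about the pieces: each lies in $H_{\mathcal C}$, they are pairwise disjoint, and every point of $H_{\mathcal C}$ lies in one of them. Throughout, for a block $D\in\mathcal C_p$ we write $J(D):=[\min(D\cap[l]),\max(D\cap[l])]$. By the remark after the definition of connected components, $J(D)$ is well defined for every such $D$, and $J(D)\subseteq H_{\mathcal C}$.

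\emph{Step 1: the blocks of $\mathcal C_p$.} Every $D\in\mathcal C_p$ meets $[l]$, so it is either a through-block ($D\in\mathrm{Th}_p(\mathcal C)$) or a lower single-layer block. In the latter case $J(D)=\operatorname{span}(D)$. By non-crossing of $p$, the family $\{J(D):D\in\mathcal C_p\}$ is laminar: any two of these intervals are disjoint or nested.

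\begin{itemize}
\item Two distinct through-blocks of $p$ can never be nested in $[l]$: if $J(B')\subset J(B)$, then $B'$ must reach $[k]$ while staying inside the arc of $B$, which forces a crossing. Hence the intervals $I(B)$, $B\in\mathrm{Th}_p(\mathcal C)$, are pairwise disjoint.
\item Call a lower single-layer block $D\in\mathcal C_p$ maximal if it lies under no other such block of $\mathcal C_p$ and under no $I(B)$ with $B\in\mathrm{Th}_p(\mathcal C)$. The maximal blocks should be exactly $\mathrm{FreeOut}_p^{\downarrow}(\mathcal C)$.
\item This identification is the delicate point, since ``lower outer block'' is defined with respect to \emph{all} lower blocks of $p$, not just those in $\mathcal C_p$. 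I must show that if $D\in\mathcal C_p$ is maximal among $\mathcal C_p$, then no single-layer $C\in p\setminus\mathcal C_p$ covers it. The argument: such a $C$ would satisfy $J(C)\supsetneq J(D)$. Since $C\notin\mathcal C$, the whole component $\mathcal C$ would then have to be trapped under $C$. Indeed, any $q$-path leaving $J(C)$ would have to pass through points of $[l]$ outside $C$, and non-crossing of $q$ together with the arc of $C$ on the $p$-side prevents this.
\item This trapping contradicts either the presence of a through-block (which must reach $[k]$) or the maximality of $D$. I will need to check this carefully in the case where $\mathcal C$ has no $p$-through-block. In that case I expect that $\mathcal C$ cannot be properly covered by $C$ anyway, because $H_{\mathcal C}$ is spanned by the extreme blocks of $\mathcal C$.
\end{itemize}

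This gives the disjointness of the first two families and the fact that every $J(D)$, $D\in\mathcal C_p$, is contained in their union.

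\emph{Step 2: the remaining points.} Now take $x\in H_{\mathcal C}$ not covered by any $J(D)$ with $D\in\mathcal C_p$. By definition this means exactly $x\in S_p(\mathcal C)$. By Lemma \ref{Entrance}, $S_p\cap S_q=\varnothing$ and $S_p\cap(\mathcal C\cap[l])=\varnothing$. So the maximal consecutive runs of $S_p$ lie strictly between consecutive points $c_i<c_{i+1}$ of $\mathcal C\cap[l]$.

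Each such run must fill the whole gap $\{c_i+1,\dots,c_{i+1}-1\}$. Suppose instead that $S_p$ stops before $c_{i+1}-1$ at some point $y$ of the gap. Then $y\in J(D)$ for some $D\in\mathcal C_p$, and the laminar structure puts $y$ under an outer interval $J(D)$ whose endpoint lies in $\mathcal C\cap[l]$ strictly inside the gap, which is impossible. Consequently each run equals a gap, and by the criterion of Lemma \ref{Entrance} it is precisely an upper entrance. Conversely, each upper entrance $E$ satisfies $E\subseteq S_p$ by definition, so $E$ is disjoint from every $J(D)$ and in particular from the first two families.

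Putting Steps 1 and 2 together: $H_{\mathcal C}$ is the disjoint union of $\bigcup_{D\in\mathcal C_p}J(D)$ and $S_p(\mathcal C)$. The first set equals the disjoint union of the $I(B)$ over $\mathrm{Th}_p(\mathcal C)$ and the $I(D)$ over $\mathrm{FreeOut}_p^{\downarrow}(\mathcal C)$, and the second equals the disjoint union of the upper entrances. This is the claimed decomposition.

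I expect the main obstacle to be the comparison in Step 1 between outerness relative to all of $p$ and maximality within $\mathcal C_p$. If the trapping argument becomes messy, the fallback is to prove directly that every $x\in H_{\mathcal C}\setminus S_p$ lies in $I(B)$ or in $\operatorname{span}(D)$ for some $D$ of the stated types. This would use only the fact that such a covering $C$ would separate $c_i$ from $c_{i+1}$, exactly as in the last paragraph of the proof of Lemma \ref{Entrance}.
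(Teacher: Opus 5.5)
Your overall architecture matches the paper's proof: laminarity of $\{J(D):D\in\mathcal C_p\}$, pairwise disjointness, covering, and the splitting of $S_p(\mathcal C)$ into whole gaps between consecutive points of $\mathcal C\cap[l]$. Your Step~2 even supplies a point the paper leaves implicit, namely that every point of $S_p$ lies in an upper entrance.

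\textbf{The gap.} The problem is the step you flagged, and it is false, not just delicate. The trapping argument fails because $p$-blocks are drawn between the rows $[k]$ and $[l]$, while $q$-blocks are drawn between $[l]$ and $[m]$. A $q$-block therefore never crosses a $p$-block, and non-crossingness of $q$ imposes nothing relative to the arc of $C$. In particular, a block of $\mathcal C_q$ can join a point under $C$ to a point outside $J(C)$.

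\textbf{Counterexample.} Take $k=m=0$, $l=5$, $p=\{\{1,4\},\{2\},\{3\},\{5\}\}$, $q=\{\{2,5\},\{1\},\{3\},\{4\}\}$ and $\mathcal C=\{2,5\}$. The block $\{2\}\in\mathcal C_p$ is maximal in $\mathcal C_p$ but lies under $\{1,4\}\notin\mathcal C_p$. Read ``lower outer'' as in Definition~\ref{boundary product}, that is, against all lower single-layer blocks of $p$. Then
\begin{itemize}
\item $\mathrm{Th}_p(\mathcal C)=\varnothing$,
\item $\mathrm{FreeOut}_p^{\downarrow}(\mathcal C)=\{\{5\}\}$,
\item $\mathrm{Ent}^{\uparrow}(\mathcal C)=\{\{3,4\}\}$.
\end{itemize}
So the right-hand side of the first decomposition is $\{3,4,5\}\neq[2,5]=H_{\mathcal C}$.

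Your fallback breaks on the same example at $x=2$. The block $\{1,4\}$ does not separate $2$ from $5$, since they are joined by the $q$-block $\{2,5\}$ running underneath it. The separation argument at the end of Lemma~\ref{Entrance} needs that no block of $\mathcal C_p$ \emph{and} no block of $\mathcal C_q$ straddles the point. A $p$-block outside $\mathcal C$ gives neither.

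\textbf{How to repair it.} No argument can close your step with the literal definition. The proposition does hold, and your proof goes through, once outerness is taken relative to the component:
\begin{itemize}
\item $\mathrm{Out}_p^{\downarrow}(\mathcal C)$ is the set of lower single-layer blocks of $\mathcal C_p$ that are maximal among the lower single-layer blocks of $\mathcal C_p$ (the relative boundary $\partial^{\downarrow}$ of that family);
\item $\mathrm{Out}_q^{\uparrow}(\mathcal C)$ is defined dually.
\end{itemize}
This is what the paper's own proof silently uses when it picks ``an outer lower single-layer block $D'\in\mathrm{Out}_p^{\downarrow}(\mathcal C)$ with $I(D)\subset I(D')$''. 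The proof of Lemma~\ref{lem:gluing-step} uses the same relative notion: ``otherwise there would exist a lower $p$-block $V\in\mathcal C'_p$ \dots''. On the example above, that lemma in fact asserts $\{2\}\in\mathrm{FreeOut}_p^{\downarrow}(\{2,5\})$ while $\{2\}$ lies strictly under $\{1,4\}$.

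With the relative definition your delicate step becomes a tautology and should be deleted. What remains is a complete proof:
\begin{itemize}
\item Disjointness follows from laminarity and your crossing argument. Use that argument also to rule out a through-block sitting under a lower single-layer block, which you did not state explicitly.
\item Any $J(D)$ with $D\in\mathcal C_p$ that meets a gap $\{c_i+1,\dots,c_{i+1}-1\}$ contains $[c_i,c_{i+1}]$, because its endpoints lie in $\mathcal C\cap[l]$. Hence $S_p$ is a union of full gaps, each an upper entrance by Lemma~\ref{Entrance}.
\end{itemize}
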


\begin{proof}
We prove the $p$-decomposition. The proof for $q$ is identical after interchanging
$p$ and $q$, lower and upper, and $\mathrm{Ent}^{\uparrow}(\mathcal C)$ and $\mathrm{Ent}^{\downarrow}(\mathcal C)$.

We first check that the three families are pairwise disjoint and that each displayed union is disjoint.

If $B_1,B_2\in  \mathrm{Th}_p^{\uparrow}(\mathcal C)$ are distinct, then the intervals
$I(B_1)$ and $I(B_2)$ are disjoint. Indeed, if they were not, then by noncrossingness one of them
would contain the other, which is impossible for two distinct through-blocks of the same noncrossing partition.

If $D_1,D_2\in \mathrm{FreeOut}_p^{\downarrow}(\mathcal C)$ are distinct, then $I(D_1)$ and $I(D_2)$ are disjoint,
since distinct lower outer blocks are disjoint and cannot be nested.

Distinct entrances are disjoint by definition.

Now let $B\in \mathrm{Th}_p^{\uparrow}(\mathcal C)$ and $D\in \mathrm{FreeOut}_p^{\downarrow}(\mathcal C)$.
Then $I(B)\cap I(D)=\varnothing$, because otherwise noncrossingness would force
either $I(D)\subset I(B)$ or $I(B)\subset I(D)$. The first possibility is excluded by the definition
of free outer blocks, and the second is impossible since $D$ is single-layer while $B$ is through.

Next, if $E\in \mathrm{Ent}^{\uparrow}(\mathcal C)$, then $E\subset S_p$, if $x\in I(B)$ for some $B\in \mathrm{Th}_p^{\uparrow}(\mathcal C)$, then by definition
\[
x\in [\min(B\cap[l]),\max(B\cap[l])],
\]
hence $x\notin S_p$.
Similarly, if $x\in I(D)$ for some $D\in \mathrm{FreeOut}_p^{\downarrow}(\mathcal C)$, then
\[
x\in I(D)=\operatorname{span}(D),
\]
so $x\notin S_p$.
Since every $E\in \mathrm{Ent}^{\uparrow}(\mathcal C)$ is contained in $S_p$, it follows that
\[
E\cap I(B)=\varnothing,\qquad E\cap I(D)=\varnothing.
\]

Therefore entrances are disjoint from both through-block intervals and free outer spans.

It remains to prove that the three families cover $I_{\mathcal C}$.
Fix $x\in I_{\mathcal C}$.

If there exists $B\in \mathrm{Th}_p^{\uparrow}(\mathcal C)$ such that $x\in I(B)$, then $x$ belongs to the first family.

Assume now that
\[
x\notin I(B)\qquad\text{for every }B\in \mathrm{Th}_p^{\uparrow}(\mathcal C).
\]
If there exists a lower single-layer block $D\in \mathcal C_p$ such that $x\in I(D)$, choose an outer lower
single-layer block $D'\in \mathrm{Out}_p^{\downarrow}(\mathcal C)$ with
\[
I(D)\subset I(D').
\]
Then $x\in I(D')$. Since $x$ does not belong to any through-block interval, $D'$ cannot be $p$-covered.
Hence $D'\in \mathrm{FreeOut}_p^{\downarrow}(\mathcal C)$, and $x$ belongs to the second family.

Finally, assume that $x$ belongs neither to a through-block interval nor to the span of any free outer lower block.
We claim that $x\in S_p$.
Otherwise there exists a block $B\in \mathcal C_p$ with $B\cap[l]\neq\varnothing$ such that
\[
x\in [\min(B\cap[l]),\max(B\cap[l])].
\]
If $B$ is a through-block, then $x$ belongs to a through-block interval, contradiction.
If $B$ is a lower single-layer block, choose an outer lower single-layer block
$D\in \mathrm{Out}_p^{\downarrow}(\mathcal C)$ such that
\[
[\min(B\cap[l]),\max(B\cap[l])] \subset \operatorname{span}(D).
\]
If $D$ were $p$-covered, say $\operatorname{span}(D)\subset I(B')$ for some
$B'\in \mathrm{Th}_p^{\uparrow}(\mathcal C)$, then
\[
x\in [\min(B\cap[l]),\max(B\cap[l])] \subset \operatorname{span}(D)\subset I(B'),
\]
so $x$ would belong to a through-block interval, contradiction.
Hence $D$ is not $p$-covered, i.e. $D\in \mathrm{FreeOut}_p^{\downarrow}(\mathcal C)$.
Therefore
\[
x\in [\min(B\cap[l]),\max(B\cap[l])] \subset \operatorname{span}(D),
\]
so $x$ belongs to the span of a free outer lower block, again a contradiction.
Thus $x\in S_p$ and $x$ belongs to the third family.

This proves the $p$-decomposition. The $q$-decomposition is proved in the same way.
\end{proof}

\begin{definition}

We say that $x\notin\mathcal{C}$ is \emph{inner} the entrance $E$ of the connected component $\mathcal{C}$ if there exist $p^{\prime} \in X_{u}$ and $q^{\prime}\in X_{d} $ and a finite sequence $B_i\in p'\sqcup q'$ satisfying:

(a) $x\in B_1$;
(b) $B_i\cap B_{i+1}\neq\emptyset$ for $1\le i<n$;
(c) $B_i\cap E=\emptyset$ for $1\le i\le n-1$;
(d) $B_n\cap E\neq\emptyset$;
(e) $B_i\in\mathcal N_{\mathcal C}(p',q')$ for $1\le i\le n$.

\emph{Remark.} The case $n=1$ is allowed; then condition (c) is vacuous.

We denote the set of all such inner points by $I(E)$.
Note that the definition of $I(E)$ is \emph{probing} $E$ via auxiliary pairs $(p',q')\in X_u\times X_d$. We then return to the original pair $(p,q)$ and define the \emph{inner blocks} of $E$ by
$
I_E^{b}(q,p):=\{\,B\in \mathcal N_{\mathcal C}(q,p)\mid B\cap I(E)\neq\emptyset\,\},
$
namely the blocks of $(p,q)$ that are nested in $\mathcal C$ and contain at least one inner point.

\emph{Remark.} It follows from the Corollary \ref{cor: entrance-one-side-nested} that $E\subset I(E)$.

\end{definition}

\begin{definition}
Let $\Ccal$ be a $(p,q)$-connected component. For $p'\in X_u$ and $q'\in X_d$, set
\begingroup
\setlength{\abovedisplayskip}{.25\baselineskip}%
\setlength{\belowdisplayskip}{.25\baselineskip}%
\setlength{\abovedisplayshortskip}{.15\baselineskip}%
\setlength{\belowdisplayshortskip}{.15\baselineskip}%
\setlength{\jot}{0pt}%
\small
\[
\begin{aligned}
M_E(q',p')&:=\{\,x\in I(E)\mid \exists\,n\ge1,\ \exists\,B_1,\dots,B_n\in\mathcal N_{\Ccal}(q',p') \\
&\hspace{2.2em}\text{s.t. } x\in B_1,\ B_n\cap E\neq\emptyset,\ B_i\cap B_{i+1}\neq\emptyset\ (1\le i<n)\,\},\\
M_E^{b}(q',p')&:=\{\,B\in \mathcal N_{\Ccal}(q',p')\mid B\cap M_E(q',p')\neq\emptyset\,\},\\
\mathcal M_E^{b}(q',p')
&:=\bigsqcup_{X\in\{p',q'\}}\{\,D\in X:\exists\,B\in X\cap M_E^{b}(q',p')\ \text{s.t. } 
\mathrm{span}(D)\subset\mathrm{span}(B)\,\}\ \sqcup\ M_E^{b}(q',p'),\\
\widetilde{\mathcal M}_E^{b}(q,p)
&:=\{\, (D,\mathrm{col}(D)) : D\in \mathcal M_E^{b}(q,p)\,\}.
\end{aligned}
\]
\endgroup
\end{definition}

\begin{remark}
By definition, the entrance $E$ is contained in the complement of $\mathcal C\cap [l]$ inside the middle layer.
Moreover,
\[
M_E(q,p)\cap (\mathcal C\cap [l])=\varnothing.
\]
Indeed, if $x\in M_E(q,p)\cap (\mathcal C\cap [l])$, then by the definition of $M_E(q,p)$ there exist
$n\ge 1$ and blocks $B_1,\dots,B_n\in \mathcal N_{\mathcal C}(q,p)$ such that
\[
x\in B_1,\qquad B_n\cap E\neq\varnothing,\qquad B_i\cap B_{i+1}\neq\varnothing\ \ (1\le i<n).
\]
Hence $x$ is connected to a point of $E$ by a joining sequence. Since $x\in \mathcal C$, this would force
$E$ to meet the same $(p,q)$-connected component $\mathcal C$, contradicting the fact that $E$ is defined in the complement of $\mathcal C$.
\end{remark}
\begin{corollary}

 E is an entrance of $C$(viewed as a $(p',q')$-connected component). Then the inner points set of $E$   is independent with the choice of $p^{\prime}$ and $q^{\prime}$.

\end{corollary}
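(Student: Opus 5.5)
The plan is to read the statement as follows. The inner point set $I(E)$ is defined by quantifying over auxiliary pairs in $X_u\times X_d$, so we must show it does not change if we start from any $(p',q')\in X_u\times X_d$ in place of $(p,q)$. The only data entering the definition of $I(E)$ are:
\begin{enumerate}
\item the component $\mathcal C$ itself;
\item its families of blocks $\mathcal C_p,\mathcal C_q$;
\item the entrance $E$;
\item the ranges $X_u(\mathcal C),X_d(\mathcal C)$ of auxiliary pairs;
\item the nesting sets $\mathcal N_{\mathcal C}(\cdot,\cdot)$.
\end{enumerate}
The proof will check that each of these is unchanged when $(p,q)$ is replaced by $(p',q')$. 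Once that is done, the defining conditions (a)--(e) for $x\in I(E)$ are literally the same statement in both settings.

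\textbf{Step 1.} By the lemma preceding the corollary on entrances, $\mathcal C$ is also a $(p',q')$-connected component. Moreover, its component blocks are still $\mathcal C_p$ and $\mathcal C_q$. Indeed, every block of $p'$ (resp.\ $q'$) meeting $\mathcal C$ is the unique block through a point of $\mathcal C$. Since $\mathcal C_p\subset p'$, that block must belong to $\mathcal C_p$ (resp.\ $\mathcal C_q$). In particular $\mathcal C\cap[l]$ and $H_{\mathcal C}$ are unchanged.

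\textbf{Step 2.} Since $X_u(\mathcal C)$ and $X_d(\mathcal C)$ are defined purely in terms of $\mathcal C_p$ and $\mathcal C_q$, Step 1 gives $X_u,X_d$ computed from $(p',q')$ equal to those computed from $(p,q)$. By the corollary on entrances (through Lemma~\ref{Entrance}, since $S_p(\mathcal C)$ and $S_q(\mathcal C)$ depend only on $\mathcal C_p,\mathcal C_q$), $E$ is an entrance of $\mathcal C$ in both settings, with the same upper/lower type.

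\textbf{Step 3.} For every auxiliary pair $(p'',q'')\in X_u\times X_d$, the set $\mathcal N_{\mathcal C}(q'',p'')$ is determined by $(p'',q'')$ together with $\mathcal C_p,\mathcal C_q$, because nesting is tested against $D\in\mathcal C_p$ or $D\in\mathcal C_q$. Hence it is the same regardless of whether $\mathcal C$ was originally viewed as a $(p,q)$- or a $(p',q')$-component. So the family of admissible chains in conditions (a)--(e) is identical, and $I(E)$ coincides in the two settings.

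The main obstacle is Step 1, specifically the claim that $\mathcal C_{p'}=\mathcal C_p$. The lemma only states that $\mathcal C$ remains a component. So I would spell out the uniqueness-of-block argument above: a point of $\mathcal C$ lies in exactly one block of $p'$, and that block is already in $\mathcal C_p\subset p'$. Together with the fact that $\mathcal C$ is a union of component blocks, this gives equality of the block families.

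If instead the statement is intended with a fixed pair (chains only in $\mathcal N_{\mathcal C}(q',p')$ for one chosen $(p',q')$), the same steps reduce the claim to showing that any chain through nested blocks from $x$ to $E$ can be rerouted. I would then attempt this by using the non-crossing property to confine nested blocks inside the gap of $\mathcal C$ containing $E$.
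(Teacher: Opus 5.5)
Your argument is correct and is the reasoning the paper leaves implicit. The paper states this corollary without proof, as an immediate consequence of the preceding lemma and corollary: $\mathcal C_{p'}=\mathcal C_p$ and $\mathcal C_{q'}=\mathcal C_q$, so $X_u$, $X_d$, the entrance $E$ and every nesting set $\mathcal N_{\mathcal C}(q'',p'')$ are unchanged, and the existential definition of $I(E)$ therefore yields the same set.

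You should drop your fallback paragraph, because the fixed-pair version is false. The paper's own example shows this for the green component and $E=\{2\}$: $5\in I_{\{2\}}$, witnessed for instance by merging $\{5\}$ with the blue block $\{4,6\}$ of $p$. But $5\notin M_{\{2\}}(q,p)$, so the existential reading you adopted is the only one under which the statement holds.
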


\begin{figure}[htbp]
\centering
\begin{tikzpicture}[
    x=0.42cm,
    y=0.72cm,
    pt/.style={circle,fill=black,inner sep=1.05pt},
    toplab/.style={font=\scriptsize},
    midlab/.style={font=\tiny},
    botlab/.style={font=\scriptsize},
    rowlab/.style={font=\small},
    gcomp/.style={
        draw=teal!70!black,
        line width=0.9pt,
        line cap=round,
        line join=round,
        rounded corners=1.8pt
    },
    bcomp/.style={
        draw=blue!75!black,
        line width=0.9pt,
        line cap=round,
        line join=round,
        rounded corners=1.8pt
    },
    rcomp/.style={
        draw=red!75!black,
        line width=0.9pt,
        line cap=round,
        line join=round,
        rounded corners=1.8pt
    },
    pcomp/.style={
        draw=violet!75!black,
        line width=0.9pt,
        line cap=round,
        line join=round,
        rounded corners=1.8pt
    }
]

\newcommand{\usingle}[2]{%
  \draw[#1]
    (#2-0.16,0.03)
      .. controls (#2-0.16,0.21) and (#2+0.16,0.21) ..
    (#2+0.16,0.03);
}
\newcommand{\lsingle}[2]{%
  \draw[#1]
    (#2-0.16,-0.03)
      .. controls (#2-0.16,-0.21) and (#2+0.16,-0.21) ..
    (#2+0.16,-0.03);
}

\foreach \i in {1,...,24}{
    \coordinate (M\i) at (\i,0);
}

\coordinate (T1) at (0.30,2.45);
\coordinate (T2) at (0.95,2.45);
\coordinate (T3) at (1.60,2.45);
\coordinate (T4) at (18.45,2.45);
\coordinate (T5) at (19.55,2.45);

\coordinate (B1) at (0.30,-2.45);
\coordinate (B2) at (0.95,-2.45);
\coordinate (B3) at (1.60,-2.45);
\coordinate (B4) at (2.25,-2.45);

\coordinate (BotMid) at ($(B1)!0.5!(B4)$);

\node[rowlab] at (-1.2,  2.45) {$[k]$};
\node[rowlab] at (-1.2,  0.00) {$[l]$};
\node[rowlab] at (-1.2, -2.45) {$[m]$};

\draw[gcomp] (T1) -- ($(T1)+(0,-0.32)$);
\draw[gcomp] (T2) -- ($(T2)+(0,-0.32)$);
\draw[gcomp] (T3) -- ($(T3)+(0,-0.32)$);
\draw[gcomp] ($(T1)+(0,-0.32)$) -- ($(T3)+(0,-0.32)$);
\draw[gcomp] ($(T2)+(0,-0.32)$) -- ($(T2)+(0,-0.88)$)
             -- ($(M1)+(0,1.55)$) -- (M1);

\draw[gcomp] (M3)  -- ($(M3)+(0,1.45)$);
\draw[gcomp] (M7)  -- ($(M7)+(0,1.45)$);
\draw[gcomp] (M12) -- ($(M12)+(0,1.45)$);
\draw[gcomp] ($(M3)+(0,1.45)$) -- ($(M12)+(0,1.45)$);

\draw[gcomp] (M8)  -- ($(M8)+(0,0.95)$);
\draw[gcomp] (M10) -- ($(M10)+(0,0.95)$);
\draw[gcomp] (M11) -- ($(M11)+(0,0.95)$);
\draw[gcomp] ($(M8)+(0,0.95)$) -- ($(M11)+(0,0.95)$);

\draw[gcomp] (M15) -- ($(M15)+(0,1.18)$);
\draw[gcomp] (M18) -- ($(M18)+(0,1.18)$);
\draw[gcomp] ($(M15)+(0,1.18)$) -- ($(M18)+(0,1.18)$);

\draw[gcomp] (M16) -- ($(M16)+(0,0.80)$);
\draw[gcomp] (M17) -- ($(M17)+(0,0.80)$);
\draw[gcomp] ($(M16)+(0,0.80)$) -- ($(M17)+(0,0.80)$);

\draw[bcomp] (M2)  -- ($(M2)+(0,1.82)$);
\draw[bcomp] (M13) -- ($(M13)+(0,1.82)$);
\draw[bcomp] ($(M2)+(0,1.82)$) -- ($(M13)+(0,1.82)$);

\draw[bcomp] (M4) -- ($(M4)+(0,0.86)$);
\draw[bcomp] (M6) -- ($(M6)+(0,0.86)$);
\draw[bcomp] ($(M4)+(0,0.86)$) -- ($(M6)+(0,0.86)$);

\usingle{bcomp}{9}

\coordinate (PtopL) at ($(T4)+(0,-0.32)$);
\coordinate (PtopR) at ($(T5)+(0,-0.32)$);
\coordinate (PtopM) at ($(PtopL)!0.5!(PtopR)$);

\draw[bcomp] (T4) -- (PtopL);
\draw[bcomp] (T5) -- (PtopR);
\draw[bcomp] (PtopL) -- (PtopR);

\coordinate (Pbot14) at ($(M14)+(0,1.45)$);
\coordinate (Pbot19) at ($(M19)+(0,1.45)$);
\coordinate (Pbot24) at ($(M24)+(0,1.45)$);

\draw[bcomp] (M14) -- (Pbot14);
\draw[bcomp] (M19) -- (Pbot19);
\draw[bcomp] (M24) -- (Pbot24);
\draw[bcomp] (Pbot14) -- (Pbot24);
\draw[bcomp] (PtopM) -- (Pbot19);

\draw[bcomp] (M21) -- ($(M21)+(0,0.80)$);
\draw[bcomp] (M22) -- ($(M22)+(0,0.80)$);
\draw[bcomp] ($(M21)+(0,0.80)$) -- ($(M22)+(0,0.80)$);

\usingle{rcomp}{5}

\draw[pcomp] (M20) -- ($(M20)+(0,1.12)$);
\draw[pcomp] (M23) -- ($(M23)+(0,1.12)$);
\draw[pcomp] ($(M20)+(0,1.12)$) -- ($(M23)+(0,1.12)$);

\draw[gcomp] (M1) -- ($(M1)+(0,-1.58)$);
\draw[gcomp] (M7) -- ($(M7)+(0,-1.58)$);
\draw[gcomp] (M8) -- ($(M8)+(0,-1.58)$);
\draw[gcomp] ($(M1)+(0,-1.58)$) -- ($(M8)+(0,-1.58)$);

\draw[gcomp] (B1) -- ($(B1)+(0,0.40)$);
\draw[gcomp] (B4) -- ($(B4)+(0,0.40)$);
\draw[gcomp] ($(B1)+(0,0.40)$) -- ($(B4)+(0,0.40)$);

\draw[gcomp] ($(BotMid)+(0,0.40)$) -- ($(BotMid)+(0,0.82)$)
             -- ($(M7)+(0,-1.58)$);

\lsingle{gcomp}{3}

\draw[gcomp] (M10) -- ($(M10)+(0,-1.40)$);
\draw[gcomp] (M16) -- ($(M16)+(0,-1.40)$);
\draw[gcomp] ($(M10)+(0,-1.40)$) -- ($(M16)+(0,-1.40)$);

\draw[gcomp] (M11) -- ($(M11)+(0,-1.10)$);
\draw[gcomp] (M12) -- ($(M12)+(0,-1.10)$);
\draw[gcomp] (M15) -- ($(M15)+(0,-1.10)$);
\draw[gcomp] ($(M11)+(0,-1.10)$) -- ($(M15)+(0,-1.10)$);

\draw[gcomp] (M17) -- ($(M17)+(0,-0.82)$);
\draw[gcomp] (M18) -- ($(M18)+(0,-0.82)$);
\draw[gcomp] ($(M17)+(0,-0.82)$) -- ($(M18)+(0,-0.82)$);

\draw[bcomp] (B2) -- ($(B2)+(0,0.27)$);
\draw[bcomp] (B3) -- ($(B3)+(0,0.27)$);
\draw[bcomp] ($(B2)+(0,0.27)$) -- ($(B3)+(0,0.27)$);

\draw[bcomp] (M2) -- ($(M2)+(0,-0.95)$);
\draw[bcomp] (M4) -- ($(M4)+(0,-0.95)$);
\draw[bcomp] (M6) -- ($(M6)+(0,-0.95)$);
\draw[bcomp] ($(M2)+(0,-0.95)$) -- ($(M6)+(0,-0.95)$);

\draw[bcomp] (M9)  -- ($(M9)+(0,-1.82)$);
\draw[bcomp] (M21) -- ($(M21)+(0,-1.82)$);
\draw[bcomp] ($(M9)+(0,-1.82)$) -- ($(M21)+(0,-1.82)$);

\draw[bcomp] (M13) -- ($(M13)+(0,-0.76)$);
\draw[bcomp] (M14) -- ($(M14)+(0,-0.76)$);
\draw[bcomp] ($(M13)+(0,-0.76)$) -- ($(M14)+(0,-0.76)$);

\lsingle{bcomp}{19}

\draw[bcomp] (M22) -- ($(M22)+(0,-0.82)$);
\draw[bcomp] (M24) -- ($(M24)+(0,-0.82)$);
\draw[bcomp] ($(M22)+(0,-0.82)$) -- ($(M24)+(0,-0.82)$);

\lsingle{rcomp}{5}
\lsingle{pcomp}{20}
\lsingle{pcomp}{23}

\draw[gcomp,decorate,decoration={brace,mirror,amplitude=3.6pt}]
    (7,-2.00) -- (8,-2.00);
\node[font=\scriptsize] at (7.5,-2.30) {entrance $\{7,8\}$} ;

\foreach \P/\lbl in {
    T1/{1^+},T2/{2^+},T3/{3^+},T4/{4^+},T5/{5^+}
}{
    \node[pt] at (\P) {};
    \node[toplab,above=2pt] at (\P) {$\lbl$};
}

\foreach \i in {1,...,24}{
    \node[pt] at (M\i) {};
    \node[midlab,below=2pt] at (M\i) {\(\i\)};
}

\foreach \P/\lbl in {
    B1/{1^-},B2/{2^-},B3/{3^-},B4/{4^-}
}{
    \node[pt] at (\P) {};
    \node[botlab,below=2pt] at (\P) {$\lbl$};
}

\end{tikzpicture}
\captionof{figure}{ Bi-layer non-crossing partition  partitions \(p\) and \(q\). \iffalse The blue connected component has entrance \(\{7,8\}\),\fi The singleton blocks are represented by small arcs.}
\label{fig:bilayer-nc-example}
\end{figure}

\smallskip

\noindent\emph{Comment.}
In this example, there are in fact five \((p,q)\)-connected components:
\[
\mathcal C_{\mathrm g}
=
\{1^+,2^+,3^+,1,3,7,8,10,11,12,15,16,17,18,1^-,4^-\},
\]
\[
\mathcal C_{\mathrm b}
=
\{4^+,5^+,2,4,6,9,13,14,19,21,22,24\},
\]
\[
\mathcal C_{\mathrm r}
=
\{5\},
\qquad
\mathcal C_{\mathrm v}
=
\{20,23\},
\qquad
\mathcal C_{0}
=
\{2^-,3^-\}.
\]

For the green connected component \(\mathcal C_{\mathrm g}\), there are two upper entrances:
$\{2\}$, $\{13,14\}$,
and one lower entrance $\{9\}$.
\
Their corresponding inner-point sets are
\[
I_{\{2\}}=\{2,4,5,6\},
\qquad
I_{\{9\}}=\{9\},
\qquad
I_{\{13,14\}}=\{13,14\},
\]
and
\[
M_{\{2\}}(q,p)=\{2,4,6\},
\qquad
M_{\{9\}}(q,p)=\{9\},
\qquad
M_{\{13,14\}}(q,p)=\{13,14\}.
\]

For the blue connected component \(\mathcal C_{\mathrm b}\), the displayed entrance is
\[
E=\{7,8\}.
\]
The corresponding inner-point set is
\[
I_E
=
\{3,7,8,10,11,12,15,16,17,18,20,23\},
\]
and
\[
M_E(q,p)
=
\{3,7,8,10,11,12,15,16,17,18,20,23\}.
\].

 For simplicity, we will abuse notation and write $p \in \NC_{\Lambda}(k,l)$ for $(p, \vec{t}) \in \NC_{\Lambda}(k,l)$ and $q \in \NC_{\Lambda}(l,m)$ for $(q, \vec{f}) \in NC_{\Lambda}(l,m)$ in what follows.
 By the definitions of \( T_q \) and \( T_p \), the condition \( T_q \circ T_p \neq 0 \) implies that there exist vectors \( \vec{r} \in \Gamma^k \), \( \vec{d} \in \Gamma^m \), and \( \vec{s} \in \Gamma^l \) such that $\delta_q(\vec{s}, \vec{d}) \cdot \delta_p(\vec{r}, \vec{s}) \neq 0.$  
This is equivalent to the condition that $
\theta_p^{\vec{r}} \cap \Omega_q^{\vec{d}} \neq \emptyset,$ 
where for each \( \vec{r} \in \Gamma^k \), the set \( \theta_p^{\vec{r}} \) and  $\Omega_q^{\vec{d}}$ is defined as  
$\theta_p^{\vec{r}} = \left\{ \vec{s} \in \Lambda^l \mid \delta_p(\vec{r}, \vec{s})=1 \right\}$, $\Omega_q^{\vec{d}} = \left\{ \vec{s} \in \Lambda^l \mid \delta_q(\vec{s}, \vec{d})=1 \right\}$

\begin{definition}

\textbf{1)From $(p,q)$ to a bipartite multigraph.}

List two block maps 
$$
\pi_p:[k]\sqcup[l]\to p,\qquad \pi_q:[l]\sqcup[m]\to q.
$$
Define the bipartite multigraph
$$
\mathcal{G}=(\mathcal{V},\mathcal{E}),\mathcal{V}:=[p\cap M^{b}_E(q,p)] \sqcup[q\cap M^{b}_E(q,p)],         \mathcal{E}:=\{e_i:\ i\in M_E(q,p)\},\partial(e_i)=\{\pi_p(i),\pi_q(i)\}.
$$$$
\mathcal{G}=(\mathcal{V},\mathcal{E}),\mathcal{V}:=[p\cap M^{b}_E(q,p)] \sqcup[q\cap M^{b}_E(q,p)],\qquad
\mathcal{E}:=\{e_i:\ i\in M_E(q,p)\},\qquad
\partial(e_i)=\{\pi_p(i),\pi_q(i)\}.
$$
Here $\mathcal V$ is the vertex set and $\mathcal E$ is the edge set of the bipartite multigraph $\mathcal G$. The vertices on the $p$-side are the blocks in $p\cap M_E^b(q,p)$, while those on the $q$-side are the blocks in $q\cap M_E^b(q,p)$. For each $i\in M_E(q,p)$, the symbol $e_i$ denotes the edge corresponding to $i$, and $\partial(e_i)$ is its set of endpoints, namely the two vertices $\pi_p(i)$ and $\pi_q(i)$.

\textbf{2)Cycles s in $\mathcal{G}$ and their length.}

A \emph{cycle} is a sequence
\[
\mathbf{i}=(i_1,\dots,i_{2t})\in M_E(q,p)^{2t}
\]
with pairwise distinct entries, where \(t\ge2\), such that:

(a) Alternating shared endpoint:
set $i_{2t+1}:=i_1$. For every $k=1,\dots,t$, we have
\[
\pi_p(i_{2k-1})=\pi_p(i_{2k})\in p\cap M_E^b(q,p),
\qquad
\pi_q(i_{2k})=\pi_q(i_{2k+1})\in q\cap M_E^b(q,p).
\]

(b) Side-wise vertex distinctness:
the blocks
\[
\pi_p(i_1)=\pi_p(i_2),\ \pi_p(i_3)=\pi_p(i_4),\ \dots,\ \pi_p(i_{2t-1})=\pi_p(i_{2t})
\]
are pairwise distinct in $p\cap M_E^b(q,p)$, and the blocks
\[
\pi_q(i_2)=\pi_q(i_3),\ \pi_q(i_4)=\pi_q(i_5),\ \dots,\ \pi_q(i_{2t})=\pi_q(i_1)
\]
are pairwise distinct in $q\cap M_E^b(q,p)$.

Equivalently,
\[
\pi_p(i_1)\xleftrightarrow{i_1}\pi_q(i_2)
\xleftrightarrow{i_2}\pi_p(i_3)
\xleftrightarrow{i_3}\pi_q(i_4)
\;\cdots\;
\xleftrightarrow{i_{2t}}\pi_p(i_1)
\]
is a closed alternating loop. Its length is $\ell(\mathbf{i})=2t$.

\textbf{3) The nested structure of $p$-blocks \iffalse along a loop\fi}

\emph{P-side blocks used by the cycle.}
Given the cycle   $\mathbf{i}=(i_1,\dots,i_{2t})$, define
$$
B_p(\mathbf{i})\ :=\ \big\{\, \pi_{p}(i_{2k-1})=\pi_{p}(i_{2k})\ :\ k=1,\dots,t \,\big\}\ \subseteq p\cap M_E^{b}(q,p)\subseteq [l].
$$

\emph{Label interval of a $p$-block.}
For each $B\in B_p(\mathbf{i})$, let
$$
J(B)\ :=\ \big[\,\min\{\,i_{2k-1},\,i_{2k}\,\},\ \max\{\,i_{2k-1},\,i_{2k}\,\}\big]
\quad\text{(where }\pi_p(i_{2k-1})=\pi_p(i_{2k})=B\text{)}.
$$
\emph{Nested ordered by interval inclusion.}
For $B,C\in B_p(\mathbf{i})$, write
$$
C\ \preceq_{\mathbf{i}}\ B\ \iff\ J(C)\subseteq J(B),
$$
and write $C\ \prec_{\mathbf{i}}\ B\ \iff C\ \preceq_{\mathbf{i}}\ B$ and $C\neq B$.

\medskip

\noindent\emph{Cover relation and children.}
For \(B,C\in B_p(\mathbf{i})\), we say that $B$ \emph{covers} $C$ (relative to $\mathbf{i}$) if
$$
C\ \triangleleft_{\mathbf{i}}\ B\ \iff\ C\prec_{\mathbf{i}} B\ \text{ and there is no }D\in B_P(\mathbf{i})\text{ with }C\prec_{\mathbf{i}} D\prec_{\mathbf{i}} B.
$$
Define the children set $\mathrm{Ch}_p(B)$ of $B$ by
$\mathrm{Ch}_{p,\mathbf{i}}(B)\ :=\ \{\, C\in B_p(\mathbf{i})\ :\ C\ \triangleleft_{\mathbf{i}}\ B \,\}$
Since $p$ is noncrossing,  any two intervals in the family $\{\,J(B): B\in B_p(\mathbf{i})\,\}$ are either disjoint or one contains the other. Consequently, for each $B\in B_p(\mathbf{i})$, the set of its strict upper containers is totally ordered by $\preceq_{\mathbf{i}}$.

\emph{Ancestor set .}
For $B\in B_p(\mathbf{i})$, define  $
\mathrm{Anc}_{p,\mathbf{i}}(B)\ :=\ \{\, C\in B_p(\mathbf{i})\ :\ B\ \prec_{\mathbf{i}}\ C \,\}.$

\emph{q-side analogue.} Everything stated for the $P$-side also works the same on the $q$-side: replace $p$ by $q$, $\pi_p$ by $\pi_q$, $B_p(\mathbf{i})$ by $B_q(\mathbf{i})=\{\pi_q(i_{2k})=\pi_q(i_{2k+1}):\,k=1,\dots,t\}$, and $J(\cdot)$ by $J_q(\cdot)$ (built from the pairs $(i_{2k},i_{2k+1})$); make the same replacements for the relations $\prec_{\mathbf{i}},\ \triangleleft_{\mathbf{i}}$ and the operators $\mathrm{Ch},\ \mathrm{Anc}$.

\textbf{4)Interior/exterior labels induced by the cycle.}

Label set of the cycle:

$\mathrm{Lab}(\mathbf{i})=\{i_1,\dots,i_{2t}\}\subset M_E(q,p)\subset[l],
y_1<y_2<\cdots<y_{2t}\ \text{is the increasing rearrangement of }\mathrm{Lab}(\mathbf{i}).$

Right-crossing count for $x\in[l]$:
$
\rho_{\mathbf{i}}(x)\ :=\ \bigl|\{\, y\in \mathrm{Lab}(\mathbf{i}):\ y>x\,\}\bigr|.
$

Interior / exterior:
$$
\mathrm{Int}(\mathbf{i})=\{\,x\in [l]-\mathrm{Lab}(\mathbf{i}):\ \rho_{\mathbf{i}}(x)\ \text{is odd}\,\},\qquad
\mathrm{Ext}(\mathbf{i})=\{\,x\in[l]-\mathrm{Lab}(\mathbf{i}):\ \rho_{\mathbf{i}}(x)\ \text{is even}\,\},
$$

\end{definition}

\begin{figure}[htbp]
\centering
\begin{tikzpicture}[
    x=0.42cm,
    y=0.72cm,
    >=Latex,
    pt/.style={circle,fill=black,inner sep=1.05pt},
    toplab/.style={font=\scriptsize},
    midlab/.style={font=\tiny},
    botlab/.style={font=\scriptsize},
    rowlab/.style={font=\small},
    cycarrow/.style={draw=red!80!black,line width=0.9pt,->},
    gcomp/.style={
        draw=teal!70!black,
        line width=0.9pt,
        line cap=round,
        line join=round,
        rounded corners=1.8pt
    },
    bcomp/.style={
        draw=blue!75!black,
        line width=0.9pt,
        line cap=round,
        line join=round,
        rounded corners=1.8pt
    },
    rcomp/.style={
        draw=red!75!black,
        line width=0.9pt,
        line cap=round,
        line join=round,
        rounded corners=1.8pt
    },
    pcomp/.style={
        draw=violet!75!black,
        line width=0.9pt,
        line cap=round,
        line join=round,
        rounded corners=1.8pt
    }
]

\newcommand{\usingle}[2]{%
  \draw[#1]
    (#2-0.16,0.03)
      .. controls (#2-0.16,0.21) and (#2+0.16,0.21) ..
    (#2+0.16,0.03);
}
\newcommand{\lsingle}[2]{%
  \draw[#1]
    (#2-0.16,-0.03)
      .. controls (#2-0.16,-0.21) and (#2+0.16,-0.21) ..
    (#2+0.16,-0.03);
}

\foreach \i in {1,...,24}{
    \coordinate (M\i) at (\i,0);
}

\coordinate (T1) at (0.30,2.45);
\coordinate (T2) at (0.95,2.45);
\coordinate (T3) at (1.60,2.45);
\coordinate (T4) at (18.45,2.45);
\coordinate (T5) at (19.55,2.45);

\coordinate (B1) at (0.30,-2.45);
\coordinate (B2) at (0.95,-2.45);
\coordinate (B3) at (1.60,-2.45);
\coordinate (B4) at (2.25,-2.45);

\coordinate (BotMid) at ($(B1)!0.5!(B4)$);

\node[rowlab] at (-1.2,  2.45) {$[k]$};
\node[rowlab] at (-1.2,  0.00) {$[l]$};
\node[rowlab] at (-1.2, -2.45) {$[m]$};

\draw[gcomp] (T1) -- ($(T1)+(0,-0.32)$);
\draw[gcomp] (T2) -- ($(T2)+(0,-0.32)$);
\draw[gcomp] (T3) -- ($(T3)+(0,-0.32)$);
\draw[gcomp] ($(T1)+(0,-0.32)$) -- ($(T3)+(0,-0.32)$);
\draw[gcomp] ($(T2)+(0,-0.32)$) -- ($(T2)+(0,-0.88)$)
             -- ($(M1)+(0,1.55)$) -- (M1);

\draw[gcomp] (M3)  -- ($(M3)+(0,1.45)$);
\draw[gcomp] (M7)  -- ($(M7)+(0,1.45)$);
\draw[gcomp] (M12) -- ($(M12)+(0,1.45)$);
\draw[gcomp] ($(M3)+(0,1.45)$) -- ($(M12)+(0,1.45)$);

\draw[gcomp] (M8)  -- ($(M8)+(0,0.95)$);
\draw[gcomp] (M10) -- ($(M10)+(0,0.95)$);
\draw[gcomp] (M11) -- ($(M11)+(0,0.95)$);
\draw[gcomp] ($(M8)+(0,0.95)$) -- ($(M11)+(0,0.95)$);

\draw[gcomp] (M15) -- ($(M15)+(0,1.18)$);
\draw[gcomp] (M18) -- ($(M18)+(0,1.18)$);
\draw[gcomp] ($(M15)+(0,1.18)$) -- ($(M18)+(0,1.18)$);

\draw[gcomp] (M16) -- ($(M16)+(0,0.80)$);
\draw[gcomp] (M17) -- ($(M17)+(0,0.80)$);
\draw[gcomp] ($(M16)+(0,0.80)$) -- ($(M17)+(0,0.80)$);

\draw[bcomp] (M2)  -- ($(M2)+(0,1.82)$);
\draw[bcomp] (M13) -- ($(M13)+(0,1.82)$);
\draw[bcomp] ($(M2)+(0,1.82)$) -- ($(M13)+(0,1.82)$);

\draw[bcomp] (M4) -- ($(M4)+(0,0.86)$);
\draw[bcomp] (M6) -- ($(M6)+(0,0.86)$);
\draw[bcomp] ($(M4)+(0,0.86)$) -- ($(M6)+(0,0.86)$);

\usingle{bcomp}{9}

\coordinate (PtopL) at ($(T4)+(0,-0.32)$);
\coordinate (PtopR) at ($(T5)+(0,-0.32)$);
\coordinate (PtopM) at ($(PtopL)!0.5!(PtopR)$);

\draw[bcomp] (T4) -- (PtopL);
\draw[bcomp] (T5) -- (PtopR);
\draw[bcomp] (PtopL) -- (PtopR);

\coordinate (Pbot14) at ($(M14)+(0,1.45)$);
\coordinate (Pbot19) at ($(M19)+(0,1.45)$);
\coordinate (Pbot24) at ($(M24)+(0,1.45)$);

\draw[bcomp] (M14) -- (Pbot14);
\draw[bcomp] (M19) -- (Pbot19);
\draw[bcomp] (M24) -- (Pbot24);
\draw[bcomp] (Pbot14) -- (Pbot24);
\draw[bcomp] (PtopM) -- (Pbot19);

\draw[bcomp] (M21) -- ($(M21)+(0,0.80)$);
\draw[bcomp] (M22) -- ($(M22)+(0,0.80)$);
\draw[bcomp] ($(M21)+(0,0.80)$) -- ($(M22)+(0,0.80)$);

\usingle{rcomp}{5}

\draw[pcomp] (M20) -- ($(M20)+(0,1.12)$);
\draw[pcomp] (M23) -- ($(M23)+(0,1.12)$);
\draw[pcomp] ($(M20)+(0,1.12)$) -- ($(M23)+(0,1.12)$);

\draw[gcomp] (M1) -- ($(M1)+(0,-1.58)$);
\draw[gcomp] (M7) -- ($(M7)+(0,-1.58)$);
\draw[gcomp] (M8) -- ($(M8)+(0,-1.58)$);
\draw[gcomp] ($(M1)+(0,-1.58)$) -- ($(M8)+(0,-1.58)$);

\draw[gcomp] (B1) -- ($(B1)+(0,0.40)$);
\draw[gcomp] (B4) -- ($(B4)+(0,0.40)$);
\draw[gcomp] ($(B1)+(0,0.40)$) -- ($(B4)+(0,0.40)$);

\draw[gcomp] ($(BotMid)+(0,0.40)$) -- ($(BotMid)+(0,0.82)$)
             -- ($(M7)+(0,-1.58)$);

\lsingle{gcomp}{3}

\draw[gcomp] (M10) -- ($(M10)+(0,-1.40)$);
\draw[gcomp] (M16) -- ($(M16)+(0,-1.40)$);
\draw[gcomp] ($(M10)+(0,-1.40)$) -- ($(M16)+(0,-1.40)$);

\draw[gcomp] (M11) -- ($(M11)+(0,-1.10)$);
\draw[gcomp] (M12) -- ($(M12)+(0,-1.10)$);
\draw[gcomp] (M15) -- ($(M15)+(0,-1.10)$);
\draw[gcomp] ($(M11)+(0,-1.10)$) -- ($(M15)+(0,-1.10)$);

\draw[gcomp] (M17) -- ($(M17)+(0,-0.82)$);
\draw[gcomp] (M18) -- ($(M18)+(0,-0.82)$);
\draw[gcomp] ($(M17)+(0,-0.82)$) -- ($(M18)+(0,-0.82)$);

\draw[bcomp] (B2) -- ($(B2)+(0,0.27)$);
\draw[bcomp] (B3) -- ($(B3)+(0,0.27)$);
\draw[bcomp] ($(B2)+(0,0.27)$) -- ($(B3)+(0,0.27)$);

\draw[bcomp] (M2) -- ($(M2)+(0,-0.95)$);
\draw[bcomp] (M4) -- ($(M4)+(0,-0.95)$);
\draw[bcomp] (M6) -- ($(M6)+(0,-0.95)$);
\draw[bcomp] ($(M2)+(0,-0.95)$) -- ($(M6)+(0,-0.95)$);

\draw[bcomp] (M9)  -- ($(M9)+(0,-1.82)$);
\draw[bcomp] (M21) -- ($(M21)+(0,-1.82)$);
\draw[bcomp] ($(M9)+(0,-1.82)$) -- ($(M21)+(0,-1.82)$);

\draw[bcomp] (M13) -- ($(M13)+(0,-0.76)$);
\draw[bcomp] (M14) -- ($(M14)+(0,-0.76)$);
\draw[bcomp] ($(M13)+(0,-0.76)$) -- ($(M14)+(0,-0.76)$);

\lsingle{bcomp}{19}

\draw[bcomp] (M22) -- ($(M22)+(0,-0.82)$);
\draw[bcomp] (M24) -- ($(M24)+(0,-0.82)$);
\draw[bcomp] ($(M22)+(0,-0.82)$) -- ($(M24)+(0,-0.82)$);

\lsingle{rcomp}{5}
\lsingle{pcomp}{20}
\lsingle{pcomp}{23}

\draw[gcomp,decorate,decoration={brace,mirror,amplitude=3.6pt}]
    (7,-2.00) -- (8,-2.00);
\node[font=\scriptsize] at (7.5,-2.30) {entrance $\{7,8\}$};

\foreach \P/\lbl in {
    T1/{1^+},T2/{2^+},T3/{3^+},T4/{4^+},T5/{5^+}
}{
    \node[pt] at (\P) {};
    \node[toplab,above=2pt] at (\P) {$\lbl$};
}

\foreach \i in {1,...,24}{
    \node[pt] at (M\i) {};
    \node[midlab,below=2pt] at (M\i) {\(\i\)};
}

\foreach \P/\lbl in {
    B1/{1^-},B2/{2^-},B3/{3^-},B4/{4^-}
}{
    \node[pt] at (\P) {};
    \node[botlab,below=2pt] at (\P) {$\lbl$};
}


\draw[red!80!black,line width=0.8pt] (M7)  circle [radius=0.23];
\draw[red!80!black,line width=0.8pt] (M8)  circle [radius=0.23];
\draw[red!80!black,line width=0.8pt] (M11) circle [radius=0.23];
\draw[red!80!black,line width=0.8pt] (M12) circle [radius=0.23];

\draw[cycarrow]
  ($(M7)+(0,0.42)$)
  to[bend left=18]
  node[midway,above,font=\scriptsize,red!80!black] {$p$}
  ($(M12)+(0,0.42)$);

\draw[cycarrow]
  ($(M12)+(0,-0.42)$)
  to[bend right=18]
  node[midway,below,font=\scriptsize,red!80!black] {$q$}
  ($(M11)+(0,-0.42)$);

\draw[cycarrow]
  ($(M11)+(0,0.42)$)
  to[bend left=18]
  node[midway,above,font=\scriptsize,red!80!black] {$p$}
  ($(M8)+(0,0.42)$);

\draw[cycarrow]
  ($(M8)+(0,-0.42)$)
  to[bend right=18]
  node[midway,below,font=\scriptsize,red!80!black] {$q$}
  ($(M7)+(0,-0.42)$);

\end{tikzpicture}
\caption{
Bi-layer non-crossing partitions \(p\) and \(q\).
The cycle \(\mathbf{i}=(7,12,11,8)\) is highlighted in red; it satisfies
\(\pi_p(7)=\pi_p(12)\), \(\pi_q(12)=\pi_q(11)\),
\(\pi_p(11)=\pi_p(8)\), and \(\pi_q(8)=\pi_q(7)\).
The singleton blocks are represented by small arcs.
}
\label{fig:bilayer-nc-example-cycle}
\end{figure}

\begin{lemma}\label{one-side nesting}
Let $\mathbf{i}=(i_1,\dots,i_{2t})$ be a cycle with $\ell(\mathbf{i})=2t\ge 4$. Then there exist $k\neq r$ such that $J(V_k)\subsetneq J(V_r)$ or $J(W_k)\subsetneq J(W_r)$.
\end{lemma}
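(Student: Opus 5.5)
We argue by contradiction, using only that $p$ and $q$ are noncrossing and that a cycle is one closed alternating loop. Write $V_k:=\pi_p(i_{2k-1})=\pi_p(i_{2k})$ and $W_k:=\pi_q(i_{2k})=\pi_q(i_{2k+1})$ for $k=1,\dots,t$, with $i_{2t+1}=i_1$. By condition (b) in the definition of a cycle, the $V_k$ are pairwise distinct blocks of $p$ and the $W_k$ are pairwise distinct blocks of $q$. So $J(V_k)$ is the interval spanned by the pair $\{i_{2k-1},i_{2k}\}$, and $J_q(W_k)$ is the interval spanned by $\{i_{2k},i_{2k+1}\}$. We assume that no strict inclusion occurs on either side and aim for a contradiction.

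\emph{Step 1: the intervals on each side are pairwise disjoint.} Take $k\neq r$ and write $J(V_k)=[a,b]$ and $J(V_r)=[c,d]$, with $a,b\in V_k$ and $c,d\in V_r$. The labels of the cycle are pairwise distinct, so the four endpoints are distinct. If the intervals met without one containing the other, we would have $a<c<b<d$ or $c<a<d<b$. Then the distinct blocks $V_k,V_r$ of $p$ would cross, contradicting $p\in\NC(k,l)$. Hence the two intervals are either disjoint or strictly nested. Since we assume no strict nesting, the family $\{J(V_k)\}_k$ is pairwise disjoint. The same argument applied to $q$ shows that $\{J_q(W_k)\}_k$ is pairwise disjoint.

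\emph{Step 2: both pairings are forced.} Let $y_1<\dots<y_{2t}$ be the increasing rearrangement of $\mathrm{Lab}(\mathbf i)$. The pairs $\{i_{2k-1},i_{2k}\}$ form a perfect matching of $\mathrm{Lab}(\mathbf i)$, and their spanning intervals are pairwise disjoint. A perfect matching of $2t$ points of a line whose spanning intervals are pairwise disjoint must pair consecutive points. (Inductively, the partner of $y_1$ must be $y_2$, since otherwise $y_2$ would lie inside $[y_1,\text{partner}]$.) Hence the $p$-pairs are exactly $\{y_1,y_2\},\{y_3,y_4\},\dots,\{y_{2t-1},y_{2t}\}$. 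By the same reasoning, the $q$-pairs $\{i_{2k},i_{2k+1}\}$ are these same sets.

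\emph{Step 3: contradiction with $t\ge 2$.} Consider the graph on $\mathrm{Lab}(\mathbf i)$ whose edges are the $p$-pairs together with the $q$-pairs. By the alternating condition (a), this graph is the single closed loop $i_1-i_2-\cdots-i_{2t}-i_1$, which is connected. By Step 2, however, the $p$-matching and the $q$-matching coincide, so the graph splits into $t$ components $\{y_{2j-1},y_{2j}\}$. Since $t\ge 2$, the graph is disconnected. Concretely, $\{i_1,i_2\}=\{y_{2j-1},y_{2j}\}$ is also a $q$-pair, so $i_3=i_1$ or $i_2\in\{i_1,i_3\}$ is repeated, contradicting the distinctness of the entries. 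Therefore some strict inclusion $J(V_k)\subsetneq J(V_r)$ or $J(W_k)\subsetneq J(W_r)$ must hold.

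The main point requiring care is Step 1. There one has to check that the endpoints of $J(V_k)$ really belong to the block $V_k$, so that partial overlap yields an honest crossing in $p$ (respectively $q$). This holds because each $J$ is built from two labels of the same block.
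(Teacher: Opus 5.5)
Your proof is correct and follows essentially the same route as the paper. Under the negated hypothesis, noncrossingness makes the intervals on each side pairwise disjoint. This forces both the $p$-pairs and the $q$-pairs to be $\{y_1,y_2\},\dots,\{y_{2t-1},y_{2t}\}$, which contradicts the pairwise distinctness of the entries of $\mathbf i$ once $t\ge 2$. The only cosmetic difference is in the last step: you locate the $p$-pair $\{i_1,i_2\}$ among the $q$-pairs (forcing $i_1=i_3$), whereas the paper locates a $q$-pair $\{i_{2k},i_{2k+1}\}$ with $k<t$ among the $p$-pairs.
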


\begin{proof}
Let $\mathrm{Lab}(\mathbf{i})=\{i_1,\dots,i_{2t}\}=\{y_1<\cdots<y_{2t}\}$. Assume towards a contradiction that there do not exist $k\neq r$ such that $J(V_k)\subsetneq J(V_r)$ or $J(W_k)\subsetneq J(W_r)$.

Since $p$ is non-crossing, any two intervals among $J(V_1),\dots,J(V_t)$ are either disjoint or one contains the other. By assumption, no proper containment occurs, so they are pairwise disjoint. Each interval $J(V_k)$ has endpoints in $\{y_1,\dots,y_{2t}\}$ and contains at least two points of this set. As there are $t$ such intervals and $2t$ points in total, each $J(V_k)$ contains exactly two points of $\mathrm{Lab}(\mathbf{i})$. Therefore
$\{J(V_1),\dots,J(V_t)\}=\{[y_1,y_2],[y_3,y_4],\dots,[y_{2t-1},y_{2t}]\}$.

Applying the same argument to $q$, we get
$\{J(W_1),\dots,J(W_t)\}=\{[y_1,y_2],[y_3,y_4],\dots,[y_{2t-1},y_{2t}]\}$.
Hence for every $k\in\{1,\dots,t\}$ there exists $r\in\{1,\dots,t\}$ such that $J(W_k)=J(V_r)$. Since the entries of $\mathbf{i}$ are pairwise distinct, this implies
$\{i_{2k},i_{2k+1}\}=\{i_{2r-1},i_{2r}\}$.

We now show that this is impossible when $t\ge2$.

If $k<t$, then $2k+1\le 2t-1$, so $\{i_{2k},i_{2k+1}\}$ does not involve the cyclic convention $i_{2t+1}=i_1$. Hence, if
$\{i_{2k},i_{2k+1}\}=\{i_{2r-1},i_{2r}\}$,
then by pairwise distinctness of the entries of $\mathbf{i}$ we must have
$\{2k,2k+1\}=\{2r-1,2r\}$,
which is impossible.
\end{proof}

\begin{lemma}\label{lemma:even many in between }

If $x\neq y\in [l]$ and  $\pi_p(x)=\pi_p(y)\notin B_p(\mathbf{i}) $, then there exists even many points in $\mathrm{Lab}(\mathbf{i})$ strictly between $x$ and $y$.
Analogous statements hold for the $\pi_q$ and the $B_q(\mathbf{i})$.

\begin{proof}

 Suppose there exists a point $z\in \mathrm{Lab}([\mathbf{i}])$ such that $x<z<y$. By $\pi_p(x)\neq\pi_p(z)\in B_p(\mathbf{i}) $ and the non-crossing property of $p$, we obtain $\pi_p(z)\subset(b_t, b_{t+1})$, where $\pi_p(x)=\{b_1<b_2<\cdots<b_k\}\in p$ and $x\le b_t<b_{t+1}\le y$. Since $\pi_p(z)\subset(b_t, b_{t+1})$, there exists $i_{2s}$ such that $\pi_p(z)=\pi_p(i_{2s})=\pi_p(i_{2s+1})\in p$ so they are actually two differents points $i_{2s}, i_{2s+1}\in\mathrm{Lab}([\mathbf{i}])$ stricly between $x$ and $y$.

\end{proof}
\end{lemma}

\begin{corollary}\label{crossing account}
$\rho_{\mathbf{i}}(x)$ is even, $\forall x\in \mathcal C\cap [l]$. 
\end{corollary}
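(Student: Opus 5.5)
We reduce the claim to a parity count on the cycle labels. Fix $x\in\mathcal C\cap[l]$. The cycle $\mathbf{i}$ consists of points of $M_E(q,p)$. By the Remark following the definition of $M_E$, we have $M_E(q,p)\cap(\mathcal C\cap[l])=\varnothing$, so $x\notin\mathrm{Lab}(\mathbf{i})$. Moreover, none of the blocks $\pi_p(i_j)$ or $\pi_q(i_j)$ lies in $\mathcal C_p\sqcup\mathcal C_q$, since these blocks are nested in $\mathcal C$ and do not belong to $\mathcal C$. Hence the $p$-block $\pi_p(x)$ satisfies $\pi_p(x)\notin B_p(\mathbf{i})$, and likewise $\pi_q(x)\notin B_q(\mathbf{i})$.

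The strategy is to connect $x$ to a point lying outside the whole span of $\mathrm{Lab}(\mathbf{i})$, where $\rho_{\mathbf{i}}$ is trivially even (it equals $0$ or $2t$). We then propagate parity along a joining sequence inside $\mathcal C$.

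\textbf{Propagation step.} Suppose $y,z\in\mathcal C\cap[l]$ lie in a common block $D\in\mathcal C_p\sqcup\mathcal C_q$. Then Lemma~\ref{lemma:even many in between } applies, with $p$ or $q$ according to whether $D$ is a $p$-block or a $q$-block, because $D\notin B_p(\mathbf{i})\cup B_q(\mathbf{i})$. It gives an even number of points of $\mathrm{Lab}(\mathbf{i})$ strictly between $y$ and $z$, and neither $y$ nor $z$ is itself a label. Therefore $\rho_{\mathbf{i}}(y)\equiv\rho_{\mathbf{i}}(z)\pmod 2$. Since any two points of $\mathcal C\cap[l]$ are joined by a joining sequence of blocks of $\mathcal C$, consecutive blocks share a middle point (a point of $[k]$ or $[m]$ lies in only one block), and every block of a non-trivial component meets $[l]$ (Remark after the classification of components), the parity of $\rho_{\mathbf{i}}$ is constant on $\mathcal C\cap[l]$.

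\textbf{Anchoring step.} It remains to exhibit one point of $\mathcal C\cap[l]$ with even $\rho_{\mathbf{i}}$. Every label $i_j$ lies in $M_E(q,p)$, and its blocks are nested in $\mathcal C$: there is $D\in\mathcal C_p\sqcup\mathcal C_q$ with $\min(D\cap[l])<i_j<\max(D\cap[l])$. Hence
\[
\min(\mathcal C\cap[l])<\min\mathrm{Lab}(\mathbf{i})\quad\text{and}\quad\max\mathrm{Lab}(\mathbf{i})<\max(\mathcal C\cap[l]).
\]
For $x_0:=\max(\mathcal C\cap[l])$ this gives $\rho_{\mathbf{i}}(x_0)=0$, which is even. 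Combined with the propagation step, this proves the corollary.

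\textbf{Main obstacle.} The delicate point is justifying that the blocks $\pi_p(i_j),\pi_q(i_j)$ are genuinely nested in $\mathcal C$, so that they are distinct from the blocks of $\mathcal C$. This should follow from the definition of $M_E(q,p)$ via $\mathcal N_{\mathcal C}$. However, for a label $i_j$ whose $p$-block is nested, we must still check that its $q$-block (also a vertex of $\mathcal G$) is not in $\mathcal C_q$. I would argue this by contradiction: if $\pi_q(i_j)\in\mathcal C_q$, then $i_j\in\mathcal C$, which contradicts the Remark $M_E\cap\mathcal C=\varnothing$.
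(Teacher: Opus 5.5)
Your proof is correct and follows the paper's argument: you anchor the parity at a point of $\mathcal C\cap[l]$ lying to the right of every label, where $\rho_{\mathbf i}=0$, and then propagate it along a joining sequence inside $\mathcal C$ using Lemma~\ref{lemma:even many in between }. Along the way you also justify three points the paper leaves implicit, all via nestedness of the label blocks and the Remark $M_E(q,p)\cap(\mathcal C\cap[l])=\varnothing$: that such an anchor point exists, that no point of $\mathcal C\cap[l]$ is a label, and that no block of $\mathcal C$ lies in $B_p(\mathbf i)\cup B_q(\mathbf i)$.
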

\begin{proof} 

Choose $y\in \mathcal C\cap [l]$ such that $i_k<y$ for all $1\le k\le 2t$.
Then
\[
\rho_{\mathbf i}(y)=0,
\]
hence $\rho_{\mathbf i}(y)$ is even.

Fix $x\in \mathcal C\cap [l]$. Since $x,y\in \mathcal C$, there exists a joining sequence
\[
D_1,\dots,D_m\in p\sqcup q
\]
from $x$ to $y$. Since $x,y\in [l]$, we may choose points
\[
x=z_0,z_1,\dots,z_n=y
\]
in $\mathcal C\cap [l]$ such that for each $0\le r\le n-1$, either
\[
\pi_p(z_r)=\pi_p(z_{r+1})\notin B_p(\mathbf i),
\qquad\text{or}\qquad
\pi_q(z_r)=\pi_q(z_{r+1})\notin B_q(\mathbf i).
\] 
Applying Lemma~\ref{lemma:even many in between } to each consecutive pair $(z_r,z_{r+1})$, we obtain
\[
\rho_{\mathbf i}(z_r)\equiv \rho_{\mathbf i}(z_{r+1})\pmod 2
\qquad (0\le r\le n-1).
\]
Therefore
\[
\rho_{\mathbf i}(x)=\rho_{\mathbf i}(z_0)\equiv \rho_{\mathbf i}(z_n)=\rho_{\mathbf i}(y)\equiv 0\pmod 2.
\]
Thus $\rho_{\mathbf i}(x)$ is even.
\end{proof}

\begin{proposition}\label{prop:gluing pair}
 Let $p\in X_u(\mathcal C)$ and $q\in X_d(\mathcal C)$, and let $\mathcal{G}$ be the associated bipartite multigraph.
Assume that $\mathcal{G}$ contains a cycle $\mathbf{i}$ of length at least $4$.
Then there exist blocks $B,C\in B_p(\mathbf i)$ (resp.\ $B,C\in B_q(\mathbf i)$) such that $\mathrm{span}(C)\subsetneq \mathrm{span}(B),$
and 
$\nexists\,U\in\mathcal C_p\ \text{(resp.\ $U\in\mathcal C_q$)}\ \text{with }
\mathrm{span}(C)\subsetneq \mathrm{span}(U)\subsetneq \mathrm{span}(B).$

 \begin{proof}

By Lemma \ref{one-side nesting}, without loss of generality, we may assume there exist $C_0,B_0\in B_p(\mathbf i)$ such that $C_0\prec_{\mathbf i}B_0$.
Hence the ancestor set
$\mathrm{Anc}_p(C_0):=\{\,D\in B_p(\mathbf i):\ C_0\prec_{\mathbf i}D\,\}$
is nonempty. Since it is finite, we may choose 
$B:=\max_{\prec_{\mathbf i}}\mathrm{Anc}_p(C_0).$
Consider the chain of blocks 
$\{\,D\in B_p(\mathbf i):\ C_0\preceq_{\mathbf i}D\prec_{\mathbf i}B\,\}.
$
It is nonempty, finite, and totally ordered by $\prec_{\mathbf i}$.
Let $C$ be the maximal element of this chain.
Then $C\triangleleft_{\mathbf i}B$, i.e. $C\prec_{\mathbf{i}} B\ \text{ and there is no }D\in B_P(\mathbf{i})\text{ with }C\prec_{\mathbf{i}} D\prec_{\mathbf{i}}B$   . Indeed, there doesn't exist any $U\in\mathcal C_p\ \text{with }
\mathrm{span}(C)\subsetneq \mathrm{span}(U)\subsetneq \mathrm{span}(B).$ 
Assume towards a contradiction that there exists
\(U\in \mathcal C_p\) such that
\[
\mathrm{span}(C)\subsetneq \mathrm{span}(U)\subsetneq \mathrm{span}(B).
\]
Since \(C\triangleleft_{\mathbf i}B\), we know there doesn't exist any $B'\in B_p(\mathbf i)$ such that $$\mathrm{span}(U)\subsetneq \mathrm{span}(B')\subsetneq \mathrm{span}(B)$$
We have the points strictly between $\max(U)<\max(B)$, contributes no label of $\mathbf i$.

Now \(B=\max_{\prec_{\mathbf i}}\mathrm{Anc}_p(C_0)\) means that \(B\) has no strict
upper container in \(B_p(\mathbf i)\). Therefore, for every \(D\in B_p(\mathbf i)\),
either \(J(D)\subseteq J(B)\) or \(J(D)\cap J(B)=\varnothing\).

We count the cycle labels strictly to the right of \(\max(U)\), grouping them according
to the \(p\)-side cycle blocks in \(B_p(\mathbf i)\). The block \(B\) contributes exactly one such label,
namely \(\max(B)\). For every other block \(D\in B_p(\mathbf i)\setminus\{B\}\), the
number of labels of \(D\) lying strictly to the right of \(\max(U)\) is either \(0\) or
\(2\). Indeed, \(D\) is either disjoint from \(B\), in which case both labels of \(D\)
lie on the same side of \(\max(U)\), or \(J(D)\subsetneq J(B)\), in which case the two
labels of \(D\) again lie simultaneously to the right of \(\max(U)\) or not.

Hence the total number of cycle labels strictly to the right of \(\max(U)\) is odd.
Equivalently,
$\rho_{\mathbf i}(\max(U))$
is odd. On the other hand, by Corollary~\ref{crossing account}, 
$\rho_{\mathbf i}(\max(U))$
is even. This is a contradiction.

\end{proof}
\end{proposition}

\begin{corollary}
Assume that there is no pair of blocks $B,C$ in either $p\cap M_E^{b}(q,p)$ or $q\cap M_E^{b}(q,p)$
satisfying the above property. Then every cycle in $\mathcal{G}$
has length at most $2$.

\end{corollary}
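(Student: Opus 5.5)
This corollary is the contrapositive of Proposition~\ref{prop:gluing pair}, and the plan is simply to run that contraposition carefully. Assume that $\mathcal G$ contains a cycle $\mathbf i=(i_1,\dots,i_{2t})$ with $\ell(\mathbf i)=2t\ge 4$. By construction, every vertex of $\mathcal G$ lies in $p\cap M_E^b(q,p)$ or $q\cap M_E^b(q,p)$. Hence $B_p(\mathbf i)\subseteq p\cap M_E^b(q,p)$ and $B_q(\mathbf i)\subseteq q\cap M_E^b(q,p)$.

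The first step is to check that the hypotheses of Proposition~\ref{prop:gluing pair} hold. That proposition is stated for $p\in X_u(\mathcal C)$ and $q\in X_d(\mathcal C)$, and $p,q$ themselves trivially belong to $X_u(\mathcal C)$ and $X_d(\mathcal C)$. The bipartite multigraph in the proposition is the same $\mathcal G$ built from $M_E(q,p)$. So the proposition applies directly to the cycle $\mathbf i$.

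The proposition then produces blocks $B,C\in B_p(\mathbf i)$, or $B,C\in B_q(\mathbf i)$, such that
\[
\mathrm{span}(C)\subsetneq\mathrm{span}(B),
\]
and there is no $U\in\mathcal C_p$ (resp. $U\in\mathcal C_q$) with
\[
\mathrm{span}(C)\subsetneq\mathrm{span}(U)\subsetneq\mathrm{span}(B).
\]
By the inclusions noted in the first paragraph, $B$ and $C$ form a pair in $p\cap M_E^b(q,p)$ or in $q\cap M_E^b(q,p)$ with exactly this property. This contradicts the standing assumption. Therefore every cycle has length less than $4$.

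Finally, the definition of a cycle requires pairwise distinct entries and $t\ge 2$, so under this convention cycles have length at least $4$. The conclusion is therefore that $\mathcal G$ has no cycles in that sense. The only remaining closed configurations are the degenerate ones of length at most $2$: two edges $e_i,e_j$ that share both endpoints $\pi_p(i)=\pi_p(j)$ and $\pi_q(i)=\pi_q(j)$. Such length-$2$ configurations are always permitted. This gives the stated bound.

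The only real obstacle is bookkeeping. One must make sure that the phrase ``the above property'' is read as exactly the conclusion of Proposition~\ref{prop:gluing pair}, including the requirement that no block of $\mathcal C_p$ (resp. $\mathcal C_q$) sits strictly between $C$ and $B$. One must also confirm that the blocks it produces are vertices of $\mathcal G$, which holds by construction. No new combinatorics is needed.
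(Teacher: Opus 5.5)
Your proposal is correct and matches the paper: the paper states this corollary without proof as the contrapositive of Proposition~\ref{prop:gluing pair}, and the points to check are the ones you check, namely that $p\in X_u(\mathcal C)$, $q\in X_d(\mathcal C)$ trivially and that the produced blocks lie in $B_p(\mathbf i)\subseteq p\cap M_E^{b}(q,p)$ (resp.\ $B_q(\mathbf i)\subseteq q\cap M_E^{b}(q,p)$). Your remark is also the right reading: with the convention $t\ge 2$ the conclusion says $\mathcal G$ has no cycles at all, and the ``length~$2$'' configurations are just double edges between the same $p$-block and $q$-block.
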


\begin{definition}
  Let $(p,\mathrm{col}_p)\in \NC(k,l)$ and $(q,\mathrm{col}_q)\in \NC(l,m)$
be a pair of colored partitions satisfying the assumptions of Proposition \ref{prop:gluing pair}.
We define new colored partitions$
(\bar p,\overline{\mathrm{col}}_p)\in NC(k,\ell), (\bar q,\overline{\mathrm{col}}_q)\in NC(\ell,m)
$
as follows.

Define the intermediate family $\mathcal F_{C,B}
:=\Bigl\{\,V\in p \;:\; \mathrm{span}(C)\subsetneq \mathrm{span}(V)\subsetneq \mathrm{span}(B)\Bigr\}.
$
(As observed above, no block in $\mathcal F_{C,B}$ belongs to $\Ccal_p\sqcup \Ccal_q$.)
Set
\[
\overline{B}
:= B\ \cup\ C\ \cup\ \bigcup_{V\in \mathcal F_{C,B}} V,
\qquad
\bar p
:=\Bigl(p\setminus\bigl(\{B,C\}\cup \mathcal F_{C,B}\bigr)\Bigr)\ \cup\ \{\overline{B}\}.
\]
Define the new coloring $\overline{\mathrm{col}}$ on $\bar p$ by $\overline{\mathrm{col}}(\overline{B}) := \mathrm{col}(B)$,
$\bar{\mathrm{col}}(D):=\mathrm{col}(D)\ \ \text{for all }D\in \bar p\setminus\{\overline{B}\}.$
The construction on the $\bar{q}$-side is completely analogous. Finally, $\bar p$ (resp.\ $\bar q$) is well-defined:

\emph{(Partition property.)}
By construction, we replace a family of blocks of $p$ by their union $\overline B$ and keep all other blocks unchanged.
Since distinct blocks of a partition are disjoint, the union $\overline B$ is disjoint from every unchanged block,
and unchanged blocks remain pairwise disjoint. Moreover, every point of $[k]\sqcup[l]$ belongs either to an unchanged
block of $p$ or to one of the merged blocks, hence it belongs to exactly one block of $\bar p$.
Therefore $\bar p$ is a partition of $[k]\sqcup[l]$ (and similarly $\bar q$ is a partition of $[l]\sqcup[m]$).

\emph{(Noncrossingness.)}
Let $D$ be any unchanged block of $p$. If $D$ is disjoint from $\mathrm{span}(B)$, then $D$ is trivially noncrossing
with $\overline B$. If $D$ meets $\mathrm{span}(B)$, then by noncrossingness of $p$ its span is either contained in
$\mathrm{span}(B)$ or contains it; in either case $\mathrm{span}(D)$ is nested with $\mathrm{span}(\overline B)
=\mathrm{span}(B)$. Hence no new crossing can be created by replacing the merged family with $\overline B$.
Thus $\bar p$ is noncrossing. The same argument applies to $\bar q$.

Consequently, $(\bar p, \bar{col})\in NC(k,l)$ and $(\bar q,\bar{col})\in NC(l,m)$.

\end{definition}

\begin{property}
 $\bar{p}\in X_u$ and $\bar{q} \in X_d$
 \begin{proof}
 By the previous argument, every $U\in\mathcal C_p$ (resp.\ $U\in\mathcal C_q$) satisfies
$U\notin \{B,C\}\cup \mathcal F_{C,B}.$
Hence, by the definition of $\bar p$ (resp.\ $\bar q$), the block $U$ is unchanged and in particular $
U\in \bar p \quad (\text{resp.\ } U\in \bar q).$

\end{proof}
 
\end{property}

\begin{corollary}
 $\mathcal F_{C,B}\subsetneq I_E^{b}(q,p)$
 \end{corollary}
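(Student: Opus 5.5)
Our plan is to show two things: every block $V\in\mathcal F_{C,B}$ lies in $I_E^{b}(q,p)$, and the inclusion is strict because $C$ (and likewise $B$) belongs to $I_E^{b}(q,p)$ but not to $\mathcal F_{C,B}$. Strictness costs nothing: $C\in B_p(\mathbf i)\subseteq p\cap M_E^b(q,p)$, so $C$ is nested in $\mathcal C$ and meets $M_E(q,p)\subseteq I(E)$. Hence $C\in I_E^b(q,p)$. On the other hand $C\notin\mathcal F_{C,B}$, since $\mathrm{span}(C)\subsetneq\mathrm{span}(V)$ fails for $V=C$.

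For the inclusion, fix $V\in\mathcal F_{C,B}$, so $\mathrm{span}(C)\subsetneq\mathrm{span}(V)\subsetneq\mathrm{span}(B)$. We need three facts about $V$.
\begin{enumerate}
\item $V$ is a single-layer block contained in $[l]$. Since $B\in M_E^b$ is nested in $\mathcal C$, we have $B\subset[l]$, and $\mathrm{span}(V)\subsetneq\mathrm{span}(B)$ forces $V\subset[l]$.
\item $V$ is nested in $\mathcal C$. $B$ is nested in some $D\in\mathcal C_p$ with $\min(D\cap[l])<\min B\le\max B<\max(D\cap[l])$. Then $\min V>\min B$ and $\max V<\max B$, so $V$ is also nested in $D$, i.e.\ $V\in\mathcal N_{\mathcal C}(q,p)$.
\item $V$ contains an inner point of $E$. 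This is the real content.
\end{enumerate}

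For (3) we use the auxiliary pair $(\bar p,\bar q)\in X_u\times X_d$ supplied by the Property above. Recall that the definition of $I(E)$ allows probing through any $(p',q')\in X_u\times X_d$. Let $x\in V$. In $\bar p$, $x$ lies in the merged block $\overline B=B\cup C\cup\bigcup_{\mathcal F_{C,B}}V'$. We claim $\overline B$ is nested in $\mathcal C$ for $\bar p$. Indeed $\mathrm{span}(\overline B)=\mathrm{span}(B)$, it is single-layer, and the block $D\in\mathcal C_p$ witnessing nesting of $B$ is unchanged in $\bar p$ (since $D\notin\{B,C\}\cup\mathcal F_{C,B}$). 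Next, $B$ meets $M_E(q,p)$, so there is a chain $B=B_1,\dots,B_n$ in $\mathcal N_{\mathcal C}(q,p)$ with $B_n\cap E\neq\varnothing$. We replace every occurrence of $B$, $C$, or a member of $\mathcal F_{C,B}$ by $\overline B$, leaving the other $p$-blocks and all $q$-blocks unchanged. Unchanged blocks keep their nesting witnesses, so we obtain a joining sequence in $\mathcal N_{\mathcal C}(\bar q,\bar p)$ from $x\in\overline B$ to a block meeting $E$.

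We then truncate this sequence at the first block meeting $E$, which gives condition (c) in the definition of inner point. So $x\in I(E)$, hence $V\cap I(E)\neq\varnothing$, and therefore $V\in I_E^b(q,p)$. We use that $V$ is nested in $\mathcal C$ for the original pair, which was shown in (2). Combined with the first paragraph, this gives $\mathcal F_{C,B}\subsetneq I_E^b(q,p)$.

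The main obstacle is the bookkeeping in the rerouting step: checking that each block in the modified chain is nested in $\mathcal C$ relative to $(\bar p,\bar q)$ rather than $(p,q)$. The key point is that $\mathcal C_{\bar p}=\mathcal C_p$, so nesting witnesses are preserved. The Corollary that inner points are independent of the choice of $(p',q')$ would make this immediate: membership in $I(E)$ may be certified by $(\bar p,\bar q)$ and then used for $(p,q)$.
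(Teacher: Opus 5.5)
Your argument is correct, and it is the one the paper leaves implicit. The paper states this corollary without proof, immediately after the Property $\bar p\in X_u$, $\bar q\in X_d$, and that Property is exactly what lets you probe $E$ through the merged block $\overline B$; your strictness argument via $C\in B_p(\mathbf i)\subseteq p\cap M_E^b(q,p)$ is also right.

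The one check you skip is that each $x\in V$ satisfies $x\notin\mathcal C$, which the definition of an inner point requires. This holds because $V\notin\mathcal C_p$ by the choice of $(C,B)$ in Proposition~\ref{prop:gluing pair}. Also, the chain witnessing $B\in M_E^b(q,p)$ may start at the $q$-block of a point of $B$, so you should prepend $B$ to it.
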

 
 \begin{property}
 $|\bar{p}\cap M^{b}_E(\bar{q},\bar{p})|\leq|p\cap M^{b}_E(q,p)|$, $|\bar{q}\cap M^{b}_E(\bar{q},\bar{p})|\leq|q\cap M^{b}_E(q,p)|$ and $| M^{b}_E(\bar{q},\bar{p})|<|M^{b}_E(q,p)|$
 \begin{proof}
 By the previous property, $\bar p\in X_u$ and $\bar q\in X_d$. Hence $\Ccal$ is also a
$(\bar p,\bar q)$-connected component, and $E$ remains an entrance for $(\bar p,\bar q)$.
On the $p$-side, $\bar p$ is obtained from $p$ by merging the family $\{B,C\}\cup \mathcal F_{C,B}$
into the single block $\overline B$. All blocks outside this family are unchanged, hence their membership in
$M_E^b$ is unchanged. Inside the modified family, the block $B$ is replaced by $\overline B$, whereas the distinct
block $C\in M_E^b(q,p)$ is absorbed into $\overline B$ and therefore disappears as a separate block. Thus at least
one $M_E^b$-block is lost on the $p$-side, and so $|\bar p\cap M_E^b(\bar q,\bar p)|<|p\cap M_E^b(q,p)|.$
Similarly,     $|\bar q\cap M_E^b(\bar q,\bar p)|\le |q\cap M_E^b(q,p)|.$
Consequently,
$|M_E^b(\bar q,\bar p)|<|M_E^b(q,p)|.$

 \end{proof}
 \end{property}

\begin{lemma}
 Assume every cycle $\mathbf i$ in bipartite multigraph $\mathcal{G}$ has at most length $2$ and there is no pair of blocks $B,C$ in either $B_p(\mathbf i)$ or $B_q(\mathbf i)$
satisfying the property in Proposition \ref{prop:gluing pair}. Then there exist  $V\in p\cap M_E^{b}(q,p)$ and $V'\in q\cap M_E^{b}(q,p)$ such that one of the following conditions holds:

 \begin{itemize}
\item $\min V=\min V'$ with $\max V\in V'$ or $\max V'\in V$
\item $\max V=\max V'$ with $\min V'\in V$ or $\min V\in V'$
\end{itemize}

 \begin{proof}

Take a longest simple path $V_1-\cdots-V_{n-1}-V_n$ in $\mathcal G$.(By a \emph{path} in $\mathcal G$ we mean no repeated vertices, disregarding edge multiplicities.
) WLOG, assume
$V:=V_{n-1}\in q\cap M_E^{b}(q,p)$.

If $\pi_p(\min V)=\pi_p(\max V)=:U$, then $U$ is the unique $p$-neighbour of $V$, and $U=V_n$:
indeed, if $U'\neq U$ satisfies $U'\cap V\neq\varnothing$, then noncrossingness forces
$\mathrm{span}(U')\subsetneq \mathrm{span}(U)$. $\{\min V,\max V\}$ forms a $2$-cycle, by Corollary \ref{crossing account}, we have $\mathcal C\cap[\min V,\max V]=\varnothing$. Then there doesn't exist any $B\in\mathcal C_p\ \text{with }
\mathrm{span}(U')\subsetneq \mathrm{span}(B)\subsetneq \mathrm{span}(U)$,
which contradicts the non-existence of such a nested pair. 
Otherwise, suppose that $\min U<\min V$ or $\max V<\max U$.
Then $\pi_q(\min U)\neq V$ or $\pi_q(\max U)\neq V$, so $U$ has a $q$-neighbour $W\neq V$.
Since $U=V_n$, if $W\notin \{V_1,\dots,V_{n-1}\}$, then $V_1-\cdots-V_n-W$
is a longer simple path, contradicting the maximality of $V_1-\cdots-V_n$.
Hence necessarily $W=V_j$ for some $j\le n-2$.
But then
$V_j-V_{j+1}-\cdots-V_n-V_j$
is a cycle of length at least $4$, contradicting the assumption that every cycle in $\mathcal G$
has length at most $2$.
Therefore we must have $\min U=\min V$ and $\max U=\max V$, and we are done.

On the other hand, assume that$
U_1=\pi_p(\min V)\neq \pi_p(\max V)=U_2.$
Suppose that both
\[
\min U_1<\min V
\qquad\text{and}\qquad
\max V<\max U_2
\]
hold. Then $U_1$ and $U_2$ are two distinct $p$-neighbours of $V$.

We first claim that neither $U_1$ nor $U_2$ can be equal to $V_n$.
Indeed, assume for instance that $U_1=V_n$.
Since $\min U_1<\min V$, we have $\pi_q(\min U_1)\neq V$, so $U_1$ has a $q$-neighbour
$W\neq V$.
If $W\notin\{V_1,\dots,V_n\}$, then
\[
V_1-\cdots-V_n-W
\]
is a simple path longer than $V_1-\cdots-V_n$, contradicting the maximality of the latter.
Hence $W=V_i$ for some $i\le n-3$.
But then the subpath from $V_i$ to $V_n$, together with the extra edge $V_n-W$,
forms a cycle of length at least $4$, contradicting the assumption that every cycle in
$\mathcal G$ has length at most $2$.
Thus $U_1\neq V_n$. The same argument shows that $U_2\neq V_n$.

Next, we show that at most one of $U_1,U_2$ can lie on the path $V_1-\cdots-V_n$.
Indeed, if for instance $U_1=V_i$ for some $i\le n-3$, then the subpath from $V_i$ to
$V=V_{n-1}$, together with the extra edge $U_1\!-\!V$, forms a cycle of length at least $4$,
again a contradiction.
Hence among the two distinct $p$-neighbours $U_1,U_2$ of $V$, at most one can coincide with
the p-neighbour $V_{n-2}$.
Therefore there exists
\[
\widetilde U\in\{U_1,U_2\}\setminus\{V_{n-2}\}.
\]
Since $\widetilde U\neq V_n$ and $\widetilde U$ cannot coincide with any $V_i$ for $i\le n-3$,
we obtain
\[
\widetilde U\notin\{V_1,\dots,V_n\}.
\]

Now choose a $q$-neighbour $\widetilde W$ of $\widetilde U$ distinct from $V$.
Such a neighbour exists because either $\widetilde U=U_1$ and then $\min U_1<\min V$, or
$\widetilde U=U_2$ and then $\max V<\max U_2$.
If $\widetilde W\in\{V_1,\dots,V_n\}$, say $\widetilde W=V_i$, then necessarily $i\le n-3$,
and the subpath from $V_i$ to $V$, together with the two extra edges
\[
V-\widetilde U
\qquad\text{and}\qquad
\widetilde U-\widetilde W,
\]
forms a cycle of length at least $4$, again impossible.
Therefore $\widetilde W\notin\{V_1,\dots,V_n\}$, and hence
\[
V_1-\cdots-V-\widetilde U-\widetilde W
\]
is a simple path strictly longer than $V_1-\cdots-V_n$, a contradiction.

Therefore the case
\[
\min U_1<\min V
\qquad\text{and}\qquad
\max V<\max U_2
\]
cannot occur.

\end{proof}
\end{lemma}

 \begin{definition}\label{def:truncation-aligned-pair}
Let $p\in \NC(k,l)$ and $q\in \NC(l,m)$, and fix an adjacent pair $(V,V')$
satisfying one of the  conditions of the previous lemma.

We first define a new pair of \emph{underlying} partitions $(p',q')$.

Assume for instance that
\[
V\in p\cap M_E^{b}(q,p),\qquad
V'\in q\cap M_E^{b}(q,p),\qquad
\max V=\max V'
\quad\text{and}\quad
\min V'\in V.
\]
Write
\[
V=\{v_1<\cdots<v_n\},
\]
and let $k$ be such that $v_k=\min V'$. Define
\[
V_l:=\{v_1,\dots,v_{k-1}\},
\qquad
V_r:=\{v_k,\dots,v_n\}.
\]
Then we set
\[
p':=(p\setminus\{V\})\cup\{V_l,V_r\},
\qquad
q':=q.
\]

The other three cases are defined similarly: one splits the
block whose endpoint properly contains the corresponding endpoint of the other
block, and leaves the other partition unchanged.
\end{definition}
 
 \begin{definition}\label{def:recoloring-truncation}
Assume now that $(p,col_p)\in \NC_\Lambda(k,l)$ and
$(q,col_q)\in \NC_\Lambda(l,m)$, and let $(p',q')$ be obtained from
Definition~\ref{def:truncation-aligned-pair}.

In the situation
\[
V\in p\cap M_E^{b}(q,p),\qquad
V'\in q\cap M_E^{b}(q,p),\qquad
\max V=\max V',
\quad
\min V'\in V,
\]
with
\[
V=V_l\sqcup V_r
\]
as above, let $\mathcal S$ be the family of $p$-blocks lying strictly between
$\max(V_l)$ and $\min(V_r)=\min(V')$ in the sense of
Definition~\ref{blocks}. Let $g$
be the ordered product of the labels of the relative outer blocks of
$\mathcal S$ (See Definition \ref{boundary product}).

Since the interval $[\min V,\max V]$ decomposes into the interval of $V_l$, the
spans of those relative outer blocks, and the interval $[\min V',\max V']$, we
define
\[
\gamma_l:=col_p(V)\,col_q(V')\,g^{-1},
\qquad
\gamma_r:=col_q(V')^{-1}.
\]
We then define the new colors by
\[
col'_p(V_l):=\gamma_l,
\qquad
col'_p(V_r):=\gamma_r,
\]
and keep all other labels unchanged:
\[
col'_p(B):=col_p(B)\quad(B\in p,\ B\neq V),
\qquad
col'_q(C):=col_q(C)\quad(C\in q).
\]

The other three endpoint-alignment cases are defined analogously, by the same
principle: the block that is split receives one new label copied from the
aligned block on the other side (with the appropriate inverse when coming from
$q$), and the remaining new label is determined by the corresponding
factorization of the original block label.
\end{definition}
 
 \begin{lemma}\label{lem:truncation-preserves-middle-constraints}
Let $(p,col_p)$ and $(q,col_q)$ be as above, and let
$(p',col'_p)$, $(q',col'_q)$ be obtained from
Definitions~\ref{def:truncation-aligned-pair}
and~\ref{def:recoloring-truncation}. Then:

\begin{enumerate}
    \item One has
    \[
    |M_E^{b}(q',p')|<|M_E^{b}(q,p)|.
    \]

    \item For every choice of external labels $\vec r,\vec d$,
    \[
    \theta_{p}^{\vec r}\cap\Omega_{q}^{\vec d}
    =
    \theta_{p'}^{\vec r}\cap\Omega_{q'}^{\vec d}.
    \]
\end{enumerate}
\end{lemma}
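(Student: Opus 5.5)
The plan is to treat the two assertions separately, working throughout in the model case of Definition~\ref{def:truncation-aligned-pair}: $\max V=\max V'$, $\min V'=v_k\in V$, with $V$ split into $V_l\sqcup V_r$. The other three alignment cases follow by the symmetric argument: reflect left/right, or interchange the roles of $p$ and $q$, replacing $\delta_p(\vec r,\cdot)$ by $\delta_q(\cdot,\vec d)$.

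For (1), I first note that $p'\in X_u(\mathcal C)$ and $q'=q\in X_d(\mathcal C)$. Since $V\in M_E^b(q,p)$ is nested in $\mathcal C$, it is not in $\mathcal C_p$, so every block of $\mathcal C_p$ is kept. Hence $\mathcal C$ remains a $(p',q')$-component with the same entrance $E$, by the lemma and corollary on $X_u,X_d$. In $p'$, the block $V_r$ meets $V'$ only in points of $V'$: it has $\min V_r=\min V'$ and $\max V_r=\max V'$, so it is a leaf-like pair. I then argue that splitting $V$ cuts the edge set of $\mathcal G$ at $v_k$. The vertices $V_r$ and $V'$ can no longer link $M_E$-points of $V_l$ to the part of the graph inside $\operatorname{span}(V')$, except through $V_r\cup V'$.

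The intended bookkeeping is that $V_r$ together with $V'$ forms a closed piece. One of them, say $V'$, stops being counted in $M_E^b(q',p')$ as a separate contributing vertex. Alternatively, the merged piece decreases the count, because the edge chain from the points of $V_r$ to $E$ must pass through $V_l$. I expect I may need to redefine the count via $\mathcal M_E^b$ or use the strict count of $p$-side plus $q$-side vertices. The cleanest route I would try first is to show that $V_r$ and all blocks nested inside $\operatorname{span}(V')$ leave $M_E^b(q',p')$, while no new block enters it. Since this set contains at least $V'$, the strict inequality follows.

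For (2), I compare the constraints of Definition~\ref{block relations} on $\vec s$. The $q$-constraints are identical because $q'=q$. The $p$-constraints differ only in that the $V$-condition is replaced by the $V_l$- and $V_r$-conditions. Using the interval decomposition
\[
[\min V,\max V]=\mathrm{Int}(V_l)\sqcup\bigsqcup_{S\in\partial^{\downarrow}(\mathcal S)}\operatorname{span}(S)\sqcup[\min V',\max V'],
\]
I factor $\prod_{\min V}^{\max V}\vec s$ into three pieces. The first is the product over $\mathrm{Int}(V_l)$, which the $V_l$-condition fixes in the single-layer case. The second is $g$, obtained from the block conditions of the relative outer blocks of $\mathcal S$. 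The third is the $V'$-product, which equals $col_q(V')^{-1}$ from the $q$-side condition for $V'$, with the sign convention of Definition~\ref{block relations}.

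The $V_r$-condition $\prod_{V_r}\vec s=\gamma_r$ is then exactly the $V'$-condition, so it is automatic on $\Omega_q^{\vec d}$. The $V_l$-condition with $\gamma_l=col_p(V)col_q(V')g^{-1}$ is then equivalent to the $V$-condition, given the other constraints. This yields equality of the two intersections. If $V$ is a through-block, the same computation applies with prefix products $\prod_{1}^{\max}$, after checking that the relevant prefix is still determined.

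I expect the main obstacle to be (1). The definitions of $M_E$ and $M_E^b$ are delicate, and one must verify that the new blocks do not create new joining chains to $E$. I also anticipate that matching the inverse conventions in $\gamma_l,\gamma_r$ with the $q$-side relation may require a careful sign check.
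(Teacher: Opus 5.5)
Your argument for (2) is the paper's. On either intersection, the $V_r$-condition is literally the $q$-condition on $V'$, because the two intervals coincide and $\gamma_r=col_q(V')^{-1}$. Once the conditions on the relative outer blocks of $\mathcal S$ hold, the factorization $\prod_{[\min V,\max V]}\vec s=\bigl(\prod_{\mathrm{Int}(V_l)}\vec s\bigr)\,g\,\bigl(\prod_{[\min V',\max V']}\vec s\bigr)$ makes the $V$-condition equivalent to the $V_l$-condition. Your through-block remark is vacuous, since $V\in\mathcal N_{\mathcal C}(q,p)$ forces $V\subset[l]$.

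For (1), the paper gives one sentence: $V_r$ has the same endpoints as $V'$ and so ``no longer contributes'', while $V$ is traded for $V_l$. By itself that bookkeeping gives only $\le$. The strict decrease comes from $V'$ itself dropping out, which is exactly your ``cleanest route''. So your approach is right, and you can discard the hedging about redefining the count. To make it a proof you need three facts. You assume the first silently when you say that chains to $E$ must pass through $V_l$.
\begin{enumerate}
\item[(i)] $E\cap[\min V',\max V']=\varnothing$. For $x$ in this interval, $B_p(x)$ is $V$ or lies in a gap of $V$, and $B_q(x)$ is $V'$ or lies in a gap of $V'$. So both blocks through $x$ are nested in $\mathcal C$, and $x$ lies in no entrance.
\item[(ii)] $[\min V',\max V']$ is a union of $\mathcal R_{p',q'}$-classes. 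By non-crossingness, a $p'$-block other than $V_r$ meeting it lies in a gap of $V_r$; note that $\max V_l<\min V'$, so $V_l$ does not meet it. Likewise, a $q$-block other than $V'$ meeting it lies in a gap of $V'$.
\item[(iii)] $\mathcal N_{\mathcal C}(q',p')=(\mathcal N_{\mathcal C}(q,p)\setminus\{V\})\cup\{V_l,V_r\}$ with $V_l,V_r\subset V$. Hence every nested chain for $(p',q')$ becomes one for $(p,q)$ after replacing $V_l,V_r$ by $V$. This gives $M_E(q',p')\subseteq M_E(q,p)$, so no new block can enter.
\end{enumerate}
Together these give $M_E(q',p')\cap[\min V',\max V']=\varnothing$. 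Therefore $M_E^b(q',p')\subseteq\bigl(M_E^b(q,p)\setminus\{V,V'\}\bigr)\cup\{V_l\}$, which is the strict inequality.

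One small correction: nestedness as defined does not exclude $V\in\mathcal C_p$. The fact that $V\notin\mathcal C_p$, and hence $p'\in X_u$, follows instead from $V\cap M_E(q,p)\neq\varnothing$ together with $M_E(q,p)\cap\mathcal C=\varnothing$.
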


\begin{proof}
We prove the displayed case
\[
V\in p,\qquad V'\in q,\qquad \max V=\max V',
\qquad \min V'\in V,
\]
the other three cases being analogous.

For (1), the only change in the underlying pair is that the block $V$ is
replaced by $V_l$ and $V_r$, while $q$ is unchanged. By construction, the block
$V_r$ has the same endpoint interval as $V'$, hence it no longer contributes a
new boundary block. Thus the old boundary block $V$ is replaced only by $V_l$,
and therefore
\[
|M_E^{b}(q',p')|<|M_E^{b}(q,p)|.
\]

For (2), let $\vec s\in \Lambda^l$. All block conditions except the one on $V$
are unchanged. Thus it is enough to compare the old condition on $V$ with the
new conditions on $V_l$ and $V_r$.

Since $V'$ is unchanged and
\[
[\min V',\max V']=[\min V_r,\max V_r],
\]
the condition coming from $q$ is
\[
\prod_{i\in[\min V_r,\max V_r]} s_i
=
col_q(V')^{-1}
=
col'_p(V_r).
\]
On the other hand, by construction of $g$, the interval $[\min V,\max V]$
decomposes as the interval of $V_l$, followed by the spans of the relative
outer blocks of $\mathcal S$, followed by the interval of $V_r$. Hence
\[
\prod_{i\in[\min V,\max V]} s_i
=
\left(\prod_{i\in[\min V_l,\max V_l]} s_i\right)\,
g\,
\left(\prod_{i\in[\min V_r,\max V_r]} s_i\right).
\]
Therefore the old condition
\[
\prod_{i\in[\min V,\max V]} s_i=col_p(V)
\]
is equivalent, using
\[
\prod_{i\in[\min V_r,\max V_r]} s_i=col_q(V')^{-1},
\]
to
\[
\prod_{i\in[\min V_l,\max V_l]} s_i
=
col_p(V)\,col_q(V')\,g^{-1}
=
\gamma_l
=
col'_p(V_l).
\]

So the single old condition on $V$ is equivalent to the two new conditions on
$V_l$ and $V_r$. Since all other block conditions are unchanged, we obtain
\[
\theta_{p}^{\vec r}\cap\Omega_{q}^{\vec d}
=
\theta_{p'}^{\vec r}\cap\Omega_{q'}^{\vec d}.
\]
This proves the lemma.
\end{proof}

\begin{proposition}\label{constant entrance }
Fix $p_0\in \NC(k,l)$ and $q_0\in \NC(l,m)$, and let $\mathcal{C}$ be a 
$(p_0,q_0)$–connected component with entrance $E$. 
For any $(p,col_p)\in NC_{\Lambda}(k,l)$ with $p\in X_u(\mathcal{C})$ and any 
$(q,col_q)\in NC_{\Lambda}(l,m)$ with $q\in X_d(\mathcal{C})$, the product 
$\prod \vec{s}_{|E}$ is constant over all $\vec s\in \mathcal{S}_E(q,p) $
where $\mathcal{S}_E(q,p)
=\bigl\{\vec y\in\Lambda^{l}\,\big|\,
\prod_{i\in [\min B,\max B]} y_i=\operatorname{col}(B)^{\varepsilon(B)}\ 
\forall B\in\mathcal{M}^b_E(q,p)
\bigr\}$, $\varepsilon(B):=
\begin{cases}
+1,& B\in p,\\
-1,& B\in q.
\end{cases}$
Moreover, the value of this constant depends only on $\widetilde{\mathcal{M}}^b_E(q,p)$,
viewed as the family of ordered pairs $(B,\mathrm{col}(B))$ consisting of a block and 
its assigned color.

\end{proposition}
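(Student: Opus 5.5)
I would argue by induction on $|M_E^b(q,p)|$, using the two reduction moves already set up: the gluing move of Proposition~\ref{prop:gluing pair} (passing to $(\bar p,\bar q)$) and the truncation move of Definitions~\ref{def:truncation-aligned-pair} and~\ref{def:recoloring-truncation}. The invariant I want each move to preserve is twofold. First, the constraint set $\mathcal S_E(q,p)$, or at least the set of values $\prod \vec s_{|E}$ attained on it, should be unchanged. Second, the data $\widetilde{\mathcal M}^b_E$ of the new pair should be a function of $\widetilde{\mathcal M}^b_E(q,p)$ alone. For truncation, Lemma~\ref{lem:truncation-preserves-middle-constraints}(2) gives the first point; the same block-by-block computation shows that the single condition on $V$ is equivalent to the pair of conditions on $V_l,V_r$ modulo the condition on $V'$, so $\mathcal S_E$ itself is preserved. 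The new labels $\gamma_l,\gamma_r$ are defined from $\mathrm{col}_p(V)$, $\mathrm{col}_q(V')$ and the boundary product $g$ of blocks of $\mathcal S$; those blocks are nested under $V$, hence lie in $\mathcal M_E^b$. So the new coloured family is determined by the old one.

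For gluing, the merged block $\overline B$ keeps the label of $B$, and the condition on $\overline B$ coincides with the old condition on $B$, since $\mathrm{span}(\overline B)=\mathrm{span}(B)$. Removing the constraints for $C$ and $\mathcal F_{C,B}$ can only enlarge $\mathcal S_E$, so I must check that the dropped constraints were redundant for the product over $E$. The plan here is to show that the cycle $\mathbf i$ through $B$ and $C$ forces the interval condition on $C$ as a consequence of the $q$-side conditions along the cycle together with the condition on $B$. This is obtained by telescoping interval products around the alternating loop, using Corollary~\ref{crossing account} to control which labels fall inside $J(C)$. Alternatively, if redundancy fails, I would instead prove that $\prod\vec s_{|E}$ does not depend on the coordinates freed by the merge. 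In either version, the Property $|M^b_E(\bar q,\bar p)|<|M^b_E(q,p)|$ lets the induction proceed.

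The base case should be when $M_E^b$ consists only of blocks meeting $E$ directly, ultimately a single $p$-block or $q$-block whose interval is exactly $E$, or $E$ covered by relative outer blocks. For such a block the constraint reads $\prod_{i\in E} y_i=\mathrm{col}(B)^{\pm1}$, or $E$ decomposes as in Proposition~\ref{lem:two-sided-middle-decomposition} into spans of blocks with fixed interval products. In that case the product over $E$ is the ordered product of the corresponding colours and is visibly a function of $\widetilde{\mathcal M}^b_E$. To reach this base case I use the dichotomy already established: if $\mathcal G$ has a cycle of length $\ge4$ or a nested pair, glue; otherwise the last lemma before Definition~\ref{def:truncation-aligned-pair} provides an aligned pair $(V,V')$, and we truncate. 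Each step strictly decreases $|M_E^b|$, so the process terminates.

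I expect the main obstacle to be the gluing step, that is, proving that deleting the constraints of $C$ and $\mathcal F_{C,B}$ does not change the value of $\prod\vec s_{|E}$ on $\mathcal S_E$. This requires a careful parity and telescoping argument along the cycle, and it is the only place where the non-emptiness of $\mathcal S_E$ (i.e.\ consistency of the colours) may have to be assumed. A secondary issue is verifying in all four truncation cases that the split block's new colour involves only blocks already in $\mathcal M^b_E$, so that the final constant depends only on $\widetilde{\mathcal M}^b_E(q,p)$ and not on $p_0,q_0$.
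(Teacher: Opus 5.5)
Your plan is essentially the paper's proof. The paper also inducts on $|M_E^b(q,p)|$. It glues to $(\bar p,\bar q)$ when a nested pair as in Proposition~\ref{prop:gluing pair} exists, and otherwise truncates an aligned pair $(V,V')$. In the truncation case it uses $\mathcal S_E(q,p)=\mathcal S_E(q',p')$, just as you do.

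The one correction concerns the step you call the main obstacle. In the gluing case you need no redundancy or telescoping argument along the cycle. Gluing only removes constraints, so $\mathcal S_E(q,p)\subseteq\mathcal S_E(\bar q,\bar p)$. The induction hypothesis applied to $(\bar p,\bar q)$ already makes $\prod\vec s_{|E}$ constant on the larger set, hence on the smaller one. This is exactly the paper's argument.

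So drop the plan of proving that the constraints on $C$ and $\mathcal F_{C,B}$ are implied by the others. It is unnecessary, and it is false in general: removing the edge $e_C$ can disconnect its endpoints in the constraint graph. Your fallback, that the product over $E$ does not depend on the freed coordinates, is precisely what the induction hypothesis gives.

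Your explicit tracking that the new $\widetilde{\mathcal M}^b_E$ is determined by the old one is a welcome addition. The paper leaves the ``moreover'' clause implicit.
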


 \begin{proof} We proceed by induction on $|M_{E}^{b}(q,p)|$. 
When $|M_{E}^{b}(q,p)| = 1$, the only element in $M_{E}^{b}(q,p)=\mathcal{M}^b_E(q,p)$ must be the consecutive block $E$ (otherwise, we would have $|M_{E}^{b}(q,p)| > 1$). It follows that for any $\vec{s} \in \mathcal{S}_E(q,p)$, we have $\prod\vec{s}_ {\restriction E} \equiv {\mathrm{col}(E)}^{\varepsilon(B)}$.

 Suppose the statement holds for all $p \in X_u$ and $q \in X_d$ with $|M_{E}^{b}(q,p)|\le n$
 
 We now consider the case where $|M_{E}^{b}(q,p)|=n+1$. If there is no pair of blocks $B,C$ in either $p\cap M_E^{b}(q,p)$ or $q\cap M_E^{b}(q,p)$
satisfying the property in Proposition    \ref{prop:gluing pair}, then every cycle in bipartite multigraph $\mathcal{G}$ has at most length $2$ and there exists $p'\in X_u$ and $q'\in X_d$ such that $|M_{E}^{b}({q}^{\prime},{p}^{\prime})|\leq n$, then by the induction hypothesis, $\prod{\vec{s}_{\restriction{E}}\equiv h_{E}({q}}^{\prime},{p}^{\prime})$, for  any $\vec{s}\in\mathcal{S}_E(q^{\prime},p^{\prime})$. By Lemma \ref{lem:truncation-preserves-middle-constraints}, we have $\mathcal{S}_E(q,p)=\mathcal{S}_E(q^{\prime},p^{\prime})$, hence $\prod{\vec{s}_{\restriction{E}}\equiv h_{E}({q}}^{\prime},{p}^{\prime})$, for any $\vec{s}\in\mathcal{S}_E(q,p)$.

If there is a pair of blocks $B,C$ in either $p\cap M_E^{b}(q,p)$ or $q\cap M_E^{b}(q,p)$
satisfying the property in Proposition    \ref{prop:gluing pair} ,  there exist $\bar{p}\in X_u$ and $\bar{q}\in X_d$ and we have $|M_{E}^{b}(\bar{q},\bar{p})|\leq n$, then by the induction hypothesis, 
 $\prod\vec{s}_{\restriction{E}}\equiv h_{E}(\bar{q},\bar{p})$, for  any $\vec{s} \in \mathcal{S}_E(\bar{q},\bar{p})$. Since $\mathcal{S}_E(q,p)\subseteq \mathcal{S}_E(\bar{q},\bar{p})$,
 we have $\prod\vec{s}_{\restriction{E}}\equiv h_{E}({\bar{q}},{\bar{p}})$, for any $\vec{s}\in\mathcal{S}_E(q,p)$. 
 \medskip

\end{proof}

 \begin{definition}

If \(\mathrm{Th}_p(\mathcal C)\neq\emptyset\), we denote by
\(T_l^{\uparrow}\) and \(T_r^{\uparrow}\) the minimal and maximal elements of
\(\mathrm{Th}_p(\mathcal C)\) with respect to  order \(\lhd\)(See Definition \ref{through block order}), respectively.
If \(\mathrm{Th}_q(\mathcal C)\neq\emptyset\), we denote by
\(T_l^{\downarrow}\) and \(T_r^{\downarrow}\) the minimal and maximal elements of
\(\mathrm{Th}_q(\mathcal C)\) with respect to \(\lhd\), respectively.

If \(\mathcal C\) is upper-half (resp. lower-half), we simply write
\(T_l,T_r\) for \(T_l^{\uparrow},T_r^{\uparrow}\) (resp.
\(T_l^{\downarrow},T_r^{\downarrow}\)). If \(\mathcal C\) is of through type, both
pairs \(T_l^{\uparrow},T_r^{\uparrow}\) and \(T_l^{\downarrow},T_r^{\downarrow}\)
are defined.
\end{definition}

\begin{definition}\label{def:frame}
For each \((p,q)\)-connected component \(\mathcal C\in K(q,p)\) with
\(\mathcal C\cap[l]\neq\emptyset\), 

The (middle-row) interval
$
F_{\mathcal{C}} := [\min( T_l\cap[l]),\max( T_r\cap[l])]\subseteq [l]
$ is called the \emph{frame} of the upper-half (resp.\ lower-half) component
$\mathcal{C}$.

We say that
$\mathcal{C}$ is an \emph{outer upper-half} (resp.\ \emph{outer lower-half})
component if there is no distinct upper-half (resp.\ lower-half) component
$\mathcal{C}'$ such that  $
F_{\mathcal{C}}\subseteq F_{\mathcal{C}'}.
$
\end{definition}

\begin{corollary}\label{cor: constant interval}
Let $(p,\vec{t})\in \NC_{\Lambda}(k,l)$ and $(q,\vec{z})\in \NC_{\Lambda}(l,m)$ such that $T_{(q,\vec{z})} \circ T_{(p,\vec{t})} \neq 0$
\begin{enumerate}

\item Let \(\mathcal C\) be a through connected component. The restricted products
\[
\vec s\longmapsto \prod \vec{s}_{\restriction[\max(V_r^{\uparrow}) + 1,\ \max(\mathcal{C}\cap [l])]},
\qquad
\vec s\longmapsto \prod \vec{s}_{\restriction[\max(V_r^{\downarrow}) + 1,\ \max(\mathcal{C}\cap [l])]}
\]
are constant on \(\bigcup_{\vec r,\vec d}(\theta_p^{\vec r}\cap\Omega_q^{\vec d})\); denote their values by \(h\) and \(\mu\), respectively.

\item  Let $\mathcal{C}$ be a upper-half(or lower-half)connected component. We have $\prod \vec{s}_{\restriction{F_{\mathcal{C}}}}\equiv f_{\mathcal{C}}$, for all $\vec{s} \in \bigcup_{\vec{r}, \vec{d}} (\theta_p^{\vec{r}} \cap \Omega_q^{\vec{d}})$.
\end{enumerate}
\end{corollary}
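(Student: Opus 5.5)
The plan is to reduce both assertions to Proposition~\ref{constant entrance } by decomposing the relevant middle-row intervals, using Proposition~\ref{lem:two-sided-middle-decomposition}.

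\textbf{Part (2).} Take $\mathcal C$ upper-half; the lower-half case is symmetric. Since $\mathcal C$ has no lower points, every $q$-block of $\mathcal C_q$ is an upper single-layer block. First I would show that the frame $F_{\mathcal C}$ is covered, in left-to-right order, by three kinds of pieces:
\begin{itemize}
\item the intervals $I(D)$ for $D\in\mathrm{FreeOut}_q^{\uparrow}(\mathcal C)$;
\item the lower entrances $E\in\mathrm{Ent}^{\downarrow}(\mathcal C)$;
\item possibly some upper entrances lying between through-blocks of $p$.
\end{itemize}
This is the $q$-decomposition of Proposition~\ref{lem:two-sided-middle-decomposition}, with $\mathrm{Th}_q(\mathcal C)=\emptyset$, restricted to $F_{\mathcal C}\subseteq H_{\mathcal C}$. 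The endpoints $\min(T_l\cap[l])$ and $\max(T_r\cap[l])$ should be endpoints of outer $q$-blocks, because $\mathcal C$ is connected and $T_l,T_r$ are its extreme through blocks.

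Now fix $\vec s\in\theta_p^{\vec r}\cap\Omega_q^{\vec d}$. Each factor of $\prod\vec s_{\restriction F_{\mathcal C}}$ over a free outer $q$-block $D$ is forced to equal $\mathrm{col}(D)^{-1}$, by Definition~\ref{block relations} for a single-layer upper block of $q$. Each factor over an entrance $E$ is constant, by Proposition~\ref{constant entrance }: the set $\bigcup_{\vec r,\vec d}(\theta_p^{\vec r}\cap\Omega_q^{\vec d})$ lies in $\mathcal S_E(q,p)$, because every block in $\mathcal M^b_E(q,p)$ is single-layer, so its condition does not involve $\vec r$ or $\vec d$. The ordered product of these constants gives $f_{\mathcal C}$.

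\textbf{Part (1).} For a through component I would argue in the same way on the interval $[\max(V_r^{\uparrow})+1,\max(\mathcal C\cap[l])]$, here reading $V_r^{\uparrow}$ as $T_r^{\uparrow}$. To the right of the last $p$-through block of $\mathcal C$, the $p$-decomposition of Proposition~\ref{lem:two-sided-middle-decomposition} contains only free outer lower $p$-blocks and upper entrances. This is because any further through block would contradict the maximality of $T_r^{\uparrow}$. Free outer lower $p$-blocks $D$ contribute $\mathrm{col}(D)$, and upper entrances contribute constants by Proposition~\ref{constant entrance }. This gives $h$. The symmetric $q$-decomposition to the right of $T_r^{\downarrow}$ gives $\mu$, with factors $\mathrm{col}(D)^{-1}$.

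\textbf{Main obstacle.} The delicate step is checking that the pieces tile the target interval exactly. For the frame in part (2), a $q$-block of $\mathcal C$ could straddle $\min(T_l\cap[l])$; I would rule this out using noncrossingness of $p\sqcup q$ against $T_l$. The same issue arises in part (1) at the point $\max(T_r^{\uparrow}\cap[l])$. A second check is that the hypothesis $T_q\circ T_p\neq0$ is needed only to make the set nonempty.
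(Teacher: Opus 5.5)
Your part (1) matches the paper's argument and is fine. There you use the $p$-decomposition, in which $I(T_r^{\uparrow})$ is itself a piece ending at $\max(T_r^{\uparrow}\cap[l])$. So the tail is automatically an exact union of free outer lower $p$-spans and upper entrances, and no straddling can occur.

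Part (2), however, has a genuine gap: the $q$-pieces need not tile $F_{\mathcal C}$, and the straddling you hope to exclude really occurs. There is no noncrossing constraint between a $p$-block and a $q$-block, since they lie on opposite sides of the middle row. Hence a free outer upper $q$-block $D$ of $\mathcal C$ may satisfy $\min D<\min(T_l\cap[l])\le\max D$. Here is a concrete example.
\begin{itemize}
\item Take $k=1$, $l=2$, $m=0$ and $p=\{\{1^+,2\},\{1\}\}$, with the lower singleton coloured $\lambda\in\Lambda$ and the through block coloured $1$.
\item Take $q=\{\{1,2\}\}$ coloured $1$. Both boundary conditions hold, and $\vec r=(1)$, $\vec s=(\lambda,\lambda^{-1})$ shows $T_q\circ T_p\neq 0$.
\item Then $\mathcal C=\{1^+,1,2\}$ is upper-half with $F_{\mathcal C}=\{2\}$. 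But the only $q$-piece of $H_{\mathcal C}=[1,2]$ is $[1,2]$ itself.
\end{itemize}
You also cannot switch to the $p$-decomposition inside $F_{\mathcal C}$. Its pieces $I(T)$, $T\in\mathrm{Th}_p(\mathcal C)$, have products that depend on $\vec r$ and are not constant in general. Separately, upper entrances lie in $S_p$, hence inside spans of $q$-blocks of $\mathcal C$. They are not pieces of the $q$-decomposition, so listing them alongside the $q$-pieces double counts.

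The missing idea, used in the paper, is to apply the two decompositions on different intervals and divide. Write $H_{\mathcal C}=[c,d]$ and $F_{\mathcal C}=[a,b]$.
\begin{itemize}
\item The $q$-decomposition covers all of $[c,d]$ by free outer upper $q$-spans and lower entrances, so $\gamma:=\prod_{[c,d]}\vec s$ is constant.
\item The flanks $[c,a-1]$ and $[b+1,d]$ are exact unions of $p$-pieces, because $a$ and $b$ are endpoints of the pieces $I(T_l)$ and $I(T_r)$.
\item The flanks contain no through-block interval. So they are tiled by free outer lower $p$-spans and upper entrances, and $\alpha:=\prod_{[c,a-1]}\vec s$ and $\beta:=\prod_{[b+1,d]}\vec s$ are constant.
\end{itemize}
Hence $\prod_{F_{\mathcal C}}\vec s=\alpha^{-1}\gamma\beta^{-1}$ is constant. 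In the example above this gives $s_2=\lambda^{-1}\cdot 1$.
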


\begin{proof}

 \textup{(1)} By Lemma \ref{lem:two-sided-middle-decomposition} and the definition of $V_r^{\uparrow}$, any
$x\in [\max(V_r^{\uparrow}\cap [l])+1,\max(\Ccal\cap[l])]$ either lies in an upper
entrance of $\Ccal$ or in $\mathrm{span}(D)$ for some lower outer block
$D\in\Ccal_p$. By Definition \ref{Entrance}, the upper entrances of $\Ccal$ and the spans of lower outer blocks in $\Ccal_p$ are pairwise disjoint, and each
of them is a consecutive integer interval. Hence they form a partition of $[\max(V_r^{\uparrow}\cap [l])+1,\max(\Ccal\cap[l])]$ into disjoint consecutive
intervals. We denote this family by $\mathcal{J}$. We thus endow the family $\mathcal{J}$ with a strict total order $\prec$ by
declaring that, for intervals $I,J$ in this family, $I\prec J$ if and only if $\max I < \max J$.
By Proposition \ref{constant entrance }(1), for each upper entrance $E\in\mathcal{J}$, let
$h_E\in\Lambda$ be the element given by Proposition \ref{constant entrance }(1). For each lower outer block
$D\in\Ccal_p$ with $\mathrm{span}(D)\in\mathcal{J}$, we already have
a label $t_D\in\Lambda$ coming from the colouring of $p$. We define a family $(u_I)_{I\in\mathcal{J}}$ in $\Lambda$ by
$u_I:=h_E$ if $I=E$ is an upper entrance, and $u_I:=t_D$ if $I=\mathrm{span}(D)$ for a lower outer block $D$.
Write $\mathcal{J}=\{I_1\prec\cdots\prec I_s\}$. We then define
$h:=\prod_{I\in\mathcal{J}}^{\prec}u_I=u_{I_1}\cdots u_{I_s}$, that is, $h$ is the product of the entrance-labels $h_E$ and the lower-outer-block labels $t_D$, taken in increasing $\prec$ order. By definition of $\bigcup_{\vec r,\vec d}(\theta_p^{\vec r}\cap\Omega_q^{\vec d})$, we have $\bigcup_{\vec r,\vec d}(\theta_p^{\vec r}\cap\Omega_q^{\vec d})\subset \mathcal S_E(q,p)$ for any entrance $E$, hence
$\prod \vec s_{\restriction[\max(V_r^{\uparrow}\cap [l])+1,\max(\mathcal C\cap [l])]}\equiv h$
for all $\vec s\in \bigcup_{\vec r,\vec d}(\theta_p^{\vec r}\cap\Omega_q^{\vec d})$.
The remaining part of the proof of (1) is completely analogous.

\textup{(2)}
Without loss of generality, assume that $\mathcal C$ is an upper-half
connected component; the lower-half case is symmetric. Write
$F_{\mathcal C}=[a,b]$, where $a:=\min(V_l\cap [l])$ and
$b:=\max(V_r\cap [l])$, and let $c:=\min(\mathcal C\cap[l])$ and
$d:=\max(\mathcal C\cap[l])$. Then
$[c,a-1]\sqcup F_{\mathcal C}\sqcup [b+1,d]=[c,d]$,
omitting empty intervals if necessary.

By Lemma \ref{lem:two-sided-middle-decomposition} and by the definition of the
frame, every point of $[c,a-1]$ or $[b+1,d]$ either lies in the span of a no $p$-covered lower
outer block or belongs to an upper entrance of $\mathcal C$. Hence each of the intervals $[c,a-1]$ and $[b+1,d]$ is partitioned
into subintervals of these two types.

Likewise, every point of $[c,d]$ either lies in the span of an no $q$-covered upper outer block, or belongs to a upper entrance of $\mathcal C$.
Thus $[c,d]$ is partitioned into subintervals of these two types.

If $I$ is the span of a lower outer block $B$ of $p$, then
$p\in \NC_{\Lambda}(k,l)$ implies $\prod_I \vec s\,'=col_p(B),  \forall\vec s\,'\in \bigcup_{\vec{r}, \vec{d}} (\theta_p^{\vec{r}} \cap \Omega_q^{\vec{d}})$.
If $I$ is the span of an upper outer block $D$ of $q$, then
$q\in \NC_{\Lambda}(l,m)$ implies $\prod_I \vec s\,'=col_p(D),  \forall\vec s\,'\in \bigcup_{\vec{r}, \vec{d}} (\theta_p^{\vec{r}} \cap \Omega_q^{\vec{d}})$.
Finally, if $I$ is an upper entrance or a lower entrance of the connected
component $\mathcal C$, then Proposition \ref{constant entrance } yields $\prod_I \vec s = h_E(q,p),
\text{for any } \vec s\in \bigcup_{\vec{r}, \vec{d}} (\theta_p^{\vec{r}} \cap \Omega_q^{\vec{d}})\subsetneq\mathcal S_E(q,p)$.

Therefore there exist $\alpha,\beta,\gamma\in \Lambda$ such that
\[
\prod_{[c,a-1]} \vec s\,'=\alpha,\qquad
\prod_{[b+1,d]} \vec s\,'=\beta,\qquad
\prod_{[c,d]} \vec s\,'=\gamma
\]
for every
\[
\vec s\,'\in \bigcup_{\vec r,\vec d}(\theta_p^{\vec r}\cap \Omega_q^{\vec d}).
\]
Indeed, each of these intervals is partitioned into subintervals on which the product is constant,
so the product over the whole interval is constant as well.

Since
\[
[c,d]=[c,a-1]\sqcup F_{\mathcal C}\sqcup [b+1,d]
\]
is an ordered decomposition into consecutive intervals, we obtain
\[
\prod_{[c,d]} \vec s\,'
=
\left(\prod_{[c,a-1]} \vec s\,'\right)
\left(\prod_{F_{\mathcal C}} \vec s\,'\right)
\left(\prod_{[b+1,d]} \vec s\,'\right).
\]
Hence
\[
\prod_{F_{\mathcal C}} \vec s\,'=\alpha^{-1}\gamma\beta^{-1},
\]
which shows that the product over $F_{\mathcal C}$ is independent of $\vec s\,'$.
\end{proof}

\begin{definition}\label{gap}
Let \(V\in \mathrm{Th}(\pi)\). If there exists \(W\in \mathrm{Th}(\pi)\) such that \(W \lhd V\), we define the
\emph{left-adjacent through-block} of \(V\) to be the unique block \(V'\in \mathrm{Th}(\pi)\) such that
\[
V'\lhd V
\qquad\text{and}\qquad
\nexists\,W\in \mathrm{Th}(\pi)\ \text{with}\ V'\lhd W \lhd V.
\]
Otherwise, we say that \(V\) has no left-adjacent through-block.
Otherwise, we say that \(V\) has no left-adjacent through-block.

If the left-adjacent through-block \(V'\) of \(V\) exists, set
\[
\ell^{\uparrow}_{\pi}(V):=\max(V'\cap[m]),
\qquad
\ell^{\downarrow}_{\pi}(V):=\max(V'\cap[n]);
\]
otherwise set
\[
\ell^{\uparrow}_{\pi}(V):=0,
\qquad
\ell^{\downarrow}_{\pi}(V):=0.
\]

We then define
\[
\mathcal P_{\pi}^{\uparrow}(V)
:=
\{\,B\in \pi:\ B\text{ is an upper single-layer block and }
\ell_{\pi}^{\uparrow}(V)<\max(B\cap[m])<\min(V\cap[m])\,\},
\]
\[
\mathcal P_{\pi}^{\downarrow}(V)
:=
\{\,B\in \pi:\ B\text{ is a lower single-layer block and }
\ell_{\pi}^{\downarrow}(V)<\max(B\cap[n])<\min(V\cap[n])\,\}.
\]

\end{definition}

\begin{corollary}\label{colour of components}
Let \(p\in \NC_{\Lambda}(k,l)\) and \(q\in \NC_{\Lambda}(l,m)\) such that
\(T_q\circ T_p\neq 0\) and \(\theta_p^{\vec r}\cap \Omega_q^{\vec d}\neq\varnothing\).
Let \(\mathcal C\) be a \((p,q)\)-connected component.

\begin{enumerate}
\item If \(\mathcal C\) is of upper-half type, let \(T_l,T_r\in \mathrm{Th}(p)\) be,
respectively, the leftmost and rightmost through-blocks of \(\mathcal C\), and let
\(T_l'\) be the left-adjacent through-block of \(T_l\), if it exists. Write
\[
t_{\mathcal C}:=t_{T_r},
\qquad
t'_{\mathcal C}:=
\begin{cases}
t_{T_l'},& \text{if }T_l'\text{ exists},\\
1_\Lambda,& \text{otherwise}.
\end{cases}
\]
Set
\[
g_{\mathcal C}:=\mathcal T^{\downarrow}(\mathcal P_{p}^{\downarrow}(T_l)),
\qquad
\alpha_{\mathcal C}:=\mathcal T^{\uparrow}(\mathcal P_{p}^{\uparrow}(T_l)).
\]
Equivalently, if
\[
\partial^\downarrow(\mathcal P_{p}^{\downarrow}(T_l))=\{W_1\prec\cdots\prec W_t\},
\qquad
\partial^\uparrow(\mathcal P_{p}^{\uparrow}(T_l))=\{U_1\prec\cdots\prec U_s\},
\]
then
\[
g_{\mathcal C}=\mathrm{col}(W_1)\cdots \mathrm{col}(W_t),
\qquad
\alpha_{\mathcal C}=\bigl(\mathrm{col}(U_1)\cdots \mathrm{col}(U_s)\bigr)^{-1}.
\]
Thus \(g\) is the ordered product of the labels of the relative lower outer blocks of \(p\)
lying between \(T_l'\) and \(T_l\), while \(\alpha\) is the inverse of the ordered product
of the labels of the relative upper outer blocks of \(p\) lying between \(T_l'\) and \(T_l\).
Then
\[
\prod_{\mathcal C\cap[k]} \vec r
=
\Bigl(t_{\mathcal C}\, f_{\mathcal C}^{-1}\, g_{\mathcal C}^{-1}\, (t'_{\mathcal C})^{-1}\, \alpha_{\mathcal C}\Bigr)^{-1}.
\]

\item If \(\mathcal C\) is of lower-half type, let \(T_l,T_r\in \mathrm{Th}(q)\) be,
respectively, the leftmost and rightmost through-blocks of \(\mathcal C\), and let
\(V_l'\) be the left-adjacent through-block of \(V_l\), if it exists. Write
\[
\iota_{\mathcal C}:=\iota_{V_r},
\qquad
\iota'_{\mathcal C}:=
\begin{cases}
\iota_{V_l'},& \text{if }V_l'\text{ exists},\\
1_\Lambda,& \text{otherwise}.
\end{cases}
\]
Set
\[
\beta_{\mathcal C}:=\mathcal T^{\downarrow}(\mathcal P_{q}^{\downarrow}(V_l)),
\qquad
b_{\mathcal C}:=\mathcal T^{\uparrow}(\mathcal P_{q}^{\uparrow}(V_l)).
\]
Equivalently, if
\[
\partial^\downarrow(\mathcal P_{q}^{\downarrow}(V_l))=\{W_1\prec\cdots\prec W_t\},
\qquad
\partial^\uparrow(\mathcal P_{q}^{\uparrow}(V_l))=\{U_1\prec\cdots\prec U_s\},
\]
then
\[
\beta_{\mathcal C}=\mathrm{col}(W_1)\cdots \mathrm{col}(W_t),
\qquad
b_{\mathcal C}=\bigl(\mathrm{col}(U_1)\cdots \mathrm{col}(U_s)\bigr)^{-1}.
\]
Thus \(\beta\) is the ordered product of the labels of the relative lower outer blocks of \(q\)
lying between \(T_l'\) and \(T_l\), while \(b\) is the inverse of the ordered product of the
labels of the relative upper outer blocks of \(q\) lying between \(T_l'\) and \(T_l\).
Then
\[
\prod_{\mathcal C\cap[m]} \vec d
=
\beta^{-1}_{\mathcal C}\,(\iota'_{\mathcal C})^{-1}\, b_{\mathcal C}\, f_{\mathcal C}\,\iota_{\mathcal C}.
\]

\item If \(\mathcal C\) is of through type, let
\(T_r^{\uparrow}\in\mathrm{Th}(p)\) and \(T_r^{\downarrow}\in\mathrm{Th}(q)\)
be, respectively, the rightmost upper and lower through-blocks of \(\mathcal C\).
Let \(h_{\mathcal C}\) be the constant on

$[\max(T_r^{\uparrow}\cap[l])+1,\ \max(\mathcal C\cap[l])],$
and \(\mu_{\mathcal C}\) the constant on

$[\max(T_r^{\downarrow}\cap[l])+1,\ \max(\mathcal C\cap[l])],$
as in Corollary~\ref{cor: constant interval}. Write
\[
t_{\mathcal C}:=t_{V_r^{\uparrow}},
\qquad
\iota_{\mathcal C}:=\iota_{V_r^{\downarrow}}.
\]
Then
\[
\Bigl(\prod_{j=1}^{\max(\mathcal C\cap[k])} r_j\Bigr)(t_{\mathcal C} h_{\mathcal C} \mu^{-1}_{\mathcal C}\iota_{\mathcal C})
=
\prod_{j=1}^{\max(\mathcal C\cap[m])} d_j.
\]

\item If \(\mathcal C\) is of upper trivial type (resp. lower trivial type), then
\(\vec r\) (resp. \(\vec d\)) satisfies the condition induced by the corresponding
block of \(p\) (resp. of \(q\)).
\end{enumerate}
\end{corollary}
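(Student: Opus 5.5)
The plan is to fix $\vec s\in\theta_p^{\vec r}\cap\Omega_q^{\vec d}$, which exists by assumption, and unwind the block relations of Definition~\ref{block relations} for $p$ (with input $\vec r$, output $\vec s$) and for $q$ (with input $\vec s$, output $\vec d$). The goal in each case is to isolate the external product $\prod_{\mathcal C\cap[k]}\vec r$ or $\prod_{\mathcal C\cap[m]}\vec d$ as a product of block labels and the middle-row constants $f_{\mathcal C}$, $h_{\mathcal C}$, $\mu_{\mathcal C}$ from Corollary~\ref{cor: constant interval}. Case (4) is immediate: a trivial component is a single single-layer block of $p$ or $q$ not touching $[l]$, and the claim is just its own block relation. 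So the work lies in cases (1)--(3).

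\textbf{Case (1), upper-half component.} The through-blocks $T_l,T_r\in\mathrm{Th}(p)$ give two through-block relations:
\[
\prod_{v=1}^{\max(T_r\cap[l])}s_v=\Bigl(\prod_{u=1}^{\max(T_r\cap[k])}r_u\Bigr)t_{T_r},
\qquad
\prod_{v=1}^{\max(T_l'\cap[l])}s_v=\Bigl(\prod_{u=1}^{\max(T_l'\cap[k])}r_u\Bigr)t_{T_l'},
\]
with the convention that the second relation is trivial if $T_l'$ does not exist.

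Dividing the first relation by the second expresses the middle product over $[\max(T_l'\cap[l])+1,\max(T_r\cap[l])]$ in terms of the $r$-product over $[\max(T_l'\cap[k])+1,\max(T_r\cap[k])]$, conjugated by the prefix. I then split each range.

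On the middle row, the range decomposes as the spans of the relative lower outer blocks in $\mathcal P_p^{\downarrow}(T_l)$, followed by the frame $F_{\mathcal C}$. Using noncrossingness and the definition of $T_l'$, no other through-block of $p$ and no middle point of $\mathcal C$ lies left of $\min(T_l\cap[l])$ in this range. Each outer block contributes its label by the single-layer relation, so the middle product equals $g_{\mathcal C}f_{\mathcal C}$.

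On the upper row, the range decomposes as the spans of the relative upper outer blocks in $\mathcal P_p^{\uparrow}(T_l)$, whose upper relations give $\prod r=\mathrm{col}^{-1}$ and hence together contribute $\alpha_{\mathcal C}$, followed by $\mathrm{Int}(\mathcal C\cap[k])$. The noncrossing property says the latter is exactly the stretch $[\min(T_l\cap[k]),\max(T_r\cap[k])]$, with nothing else interleaved except blocks nested inside it, which are absorbed into the interval product.

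Substituting these decompositions and cancelling the common prefix, the conjugations drop out once one uses the boundary-type telescoping of the prefixes. Solving for $\prod_{\mathcal C\cap[k]}\vec r$ should give the stated formula.

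\textbf{Case (2), lower-half component.} This is the mirror argument using $q$'s through-block relations. Here the middle row is the input and $\vec d$ is the output, which accounts for the reversed order and the absence of an overall inverse in the formula.

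\textbf{Case (3), through component.} I use the through relations of $T_r^{\uparrow}$ in $p$ and $T_r^{\downarrow}$ in $q$, which share the middle row. I extend both prefixes to $\max(\mathcal C\cap[l])$ via the constants $h_{\mathcal C}$ and $\mu_{\mathcal C}$ from Corollary~\ref{cor: constant interval}(1). Equating the two expressions for $\prod_{v\le\max(\mathcal C\cap[l])}s_v$ gives the claimed identity after noting $\max(\mathcal C\cap[k])=\max(T_r^{\uparrow}\cap[k])$ and $\max(\mathcal C\cap[m])=\max(T_r^{\downarrow}\cap[m])$.

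\textbf{Main obstacle.} I expect the hardest part to be the interval bookkeeping in cases (1) and (2). One must show that the middle gap between $T_l'$ and $T_l$ is tiled exactly by the relative outer blocks of $\mathcal P_p^{\downarrow}(T_l)$, with no entrance or other component intruding; I would argue this by noncrossingness against $T_l'$ and $T_l$. One must also track the prefix conjugations so that the final word has exactly the stated order $t_{\mathcal C}f_{\mathcal C}^{-1}g_{\mathcal C}^{-1}(t'_{\mathcal C})^{-1}\alpha_{\mathcal C}$. For that I would first check the case where $T_l'$ does not exist, then induct along $\lhd$.
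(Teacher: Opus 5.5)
Your route is correct and is essentially the one the paper intends. The paper states this corollary without a separate proof, but exactly your manoeuvre (divide the through-block relation of a block by that of its left-adjacent through-block, then tile the two gaps by relative outer single-layer blocks) is carried out in the proof of Lemma~\ref{Equation}. Your arguments for (3) and (4) are complete as written. Two things in (1), and symmetrically in (2), should be corrected.

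\textbf{No conjugation arises.} Set $a'=\max(T_l'\cap[l])$, $b'=\max(T_l'\cap[k])$, $a=\max(T_r\cap[l])$ and $b=\max(T_r\cap[k])$. Dividing on the left gives directly
\[
\prod_{v=a'+1}^{a}s_v=\Bigl(\prod_{v=1}^{a'}s_v\Bigr)^{-1}\prod_{v=1}^{a}s_v=(t'_{\mathcal C})^{-1}\Bigl(\prod_{u=b'+1}^{b}r_u\Bigr)t_{\mathcal C}.
\]
The two tilings turn this into $g_{\mathcal C}f_{\mathcal C}=(t'_{\mathcal C})^{-1}\alpha_{\mathcal C}\bigl(\prod_{\mathcal C\cap[k]}\vec r\bigr)t_{\mathcal C}$, and solving gives exactly the stated formula. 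So the induction along $\lhd$ and the "telescoping" you anticipate are unnecessary. The inverse and the order in $\alpha_{\mathcal C}$ come out automatically: each upper single-layer block $U$ satisfies $\prod_U\vec r=\mathrm{col}(U)^{-1}$, and $\prec$ lists such blocks by decreasing $\max$.

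\textbf{The gap claim is false but unneeded.} You assert that no middle point of $\mathcal C$ lies in $[a'+1,\min(T_l\cap[l])-1]$. This fails in general: a lower single-layer block of $p$ in that gap can be attached to $T_l$ by an upper single-layer block of $q$. Other components, and entrances of $\mathcal C$, can also meet the gap.

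None of this matters for the proof. The only fact needed is that no through-block of $p$ lies strictly between $T_l'$ and $T_l$. Hence every point of the gap lies in a lower single-layer block of $p$ contained in the gap. So the gap is tiled by $\partial^{\downarrow}(\mathcal P_p^{\downarrow}(T_l))$, whichever components those blocks belong to.
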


\begin{definition}\label{def: colour}
Let $(p,\vec{t} )\in \NC_{\Lambda}(k,l)$ and $(q,\vec{s})\in \NC_{\Lambda}(l,m)$ with
$T_{(p,\vec{t} )} \circ T_{(q,\vec{z})} \neq 0$. For every no-cycle $(p,q)$-connected component $\Ccal$ we define its label(colour) by
$$
\mathrm{lab}(\Ccal)
:=
\begin{cases}
t_{\Ccal}\cdot f_{\Ccal}^{-1}\cdot g^{-1}_{\mathcal C}\cdot (t_{\Ccal}')^{-1}\cdot \alpha_{\mathcal C},
& \text{if $\Ccal$ is of upper-half type},\\[0.3em]
\beta^{-1}_{\mathcal C}\cdot (\iota_{\Ccal}')^{-1}\cdot b_{\mathcal C}\cdot f_{\Ccal}\cdot \iota_{\Ccal},
& \text{if $\Ccal$ is of lower-half type},\\[0.3em]
t_{\Ccal}\,h_{\Ccal}\,\mu_{\Ccal}^{-1}\,\iota_{\Ccal},
& \text{if $\Ccal$ is of through type},\\[0.3em]
\text{label of the corresponding block in $p$},
& \text{if $\Ccal$ is of upper trivial type},\\[0.3em]
\text{label of the corresponding block in $q$},
& \text{if $\Ccal$ is of lower trivial type}.
\end{cases}
$$
Here $f_{\Ccal}$, $h_{\Ccal}$ and $\mu_{\Ccal}$ are as in Corollary~\ref{cor: constant interval}, while $t_{\Ccal}$, $t_{\Ccal}'$, $g$, $\alpha$, $\beta$ and $b$ are as in Corollary~\ref{colour of components}.
\end{definition}

\begin{remark}\label{rem:label-represented-by-middle-vector}
Whenever
\[
\vec x\in \bigcup_{\vec r,\vec d}\bigl(\theta_p^{\vec r}\cap\Omega_q^{\vec d}\bigr),
\]
the factors \(f_{\Ccal}, g, b, h_{\Ccal}, \mu_{\Ccal}\) appearing in
Definition~\ref{def: colour} may be replaced by the corresponding interval
products of \(\vec x\), where the relevant intervals are exactly those given in
Corollary~\ref{cor: constant interval} and Corollary~\ref{colour of components}.
In particular, the label \(\mathrm{lab}(\Ccal)\) may be computed using any such
vector \(\vec x\), and the resulting value is independent of the choice of
\(\vec x\).
\end{remark}

 \begin{definition}
 We now define a new coloured   non-crossing partition $(q\cdot p,col_{q\cdot p})$ on
$[k]\sqcup[m]$ as follows. Let $\{\Ccal_\alpha\}$ be the family of
$(p,q)$-connected components. For each $\alpha$ set
$
B_\alpha
\;:=\;
\Ccal_\alpha\cap\bigl([k]\sqcup[m]\bigr).
$
We let $q\cdot p$ be the collection of all non-empty sets $B_\alpha$:
$$
q\cdot p
\;:=\;
\{\,B_\alpha\subseteq [k]\sqcup[m] : \Ccal_\alpha \text{ a $(p,q)$-connected
component},\ B_\alpha\neq\emptyset\,\}.
$$
 The colouring  is induced from the labels of
$(p,q)$-connected components defined in Definition~6.32: if
$B=\Ccal\cap([k]\sqcup[m])$ for a $(p,q)$-connected component $\Ccal$, then we set $col_{q\cdot p}(B):= \mathrm{lab}(\Ccal),
$ where $\mathrm{lab}(\Ccal)\in\Lambda$ is as in Definition    \ref{colour of components}.
\end{definition}

 \begin{lemma}\label{lem:vertical-composition-NCgh}
Let $\vec g\in \Gamma^k$, $\vec h'\in \Gamma^l$, and $\vec h\in \Gamma^m$.
If $p\in \NC(\vec g,\vec h')$ and $q\in \NC(\vec h',\vec h)$, then
$q\cdot p\in \NC(\vec g,\vec h)$.
\end{lemma}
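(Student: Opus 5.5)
The definition of $\NC(\vec g,\vec h)$ here is the $\Gamma$-part of Definition~\ref{bi-coloured}(2): for every block $V$ we need $\prod \vec g|_{V_+}=\prod \vec h|_{V_-}$, where the products are of the restricted subvectors taken in left-to-right order. So the plan is to verify this block condition for each block $B_\alpha=\mathcal C_\alpha\cap([k]\sqcup[m])$ of $q\cdot p$, one $(p,q)$-connected component at a time. Trivial components are immediate, since $B_\alpha$ is then a single block of $p$ with no middle points (or of $q$), and its condition is already part of the hypothesis. Cycle components give no block of $q\cdot p$.

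For a non-trivial component $\mathcal C$, write $\mathcal C\cap[k]=\{a_1<\dots<a_u\}$, $\mathcal C\cap[l]=\{c_1<\dots<c_n\}$ and $\mathcal C\cap[m]=\{b_1<\dots<b_v\}$. The claim to prove is
\[
g_{a_1}\cdots g_{a_u}=h'_{c_1}\cdots h'_{c_n}=h_{b_1}\cdots h_{b_v},
\]
and it follows from the two equalities
\[
\prod \vec g|_{\mathcal C\cap[k]}=\prod \vec h'|_{\mathcal C\cap[l]},\qquad \prod \vec h'|_{\mathcal C\cap[l]}=\prod \vec h|_{\mathcal C\cap[m]},
\]
with the empty product equal to $1$. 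By the $p\leftrightarrow q$ symmetry it suffices to prove the first. For this I use only $\mathcal C_p$: the blocks of $p$ meeting $\mathcal C$ partition both $\mathcal C\cap[k]$ and $\mathcal C\cap[l]$.

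The key step is a noncrossing reordering lemma. Let $\mathcal F$ be a family of blocks of a noncrossing $\pi\in\NC(k,l)$, and let $X=\bigcup_{D\in\mathcal F}D\cap[k]$ and $Y=\bigcup_{D\in\mathcal F}D\cap[l]$. Suppose that for every $D\in\mathcal F$ we have $\prod\vec g|_{D_+}=\prod\vec h'|_{D_-}$, and suppose the single-layer blocks of $\mathcal F$ have trivial product. Then $\prod\vec g|_X=\prod\vec h'|_Y$. I would prove this by induction on $|\mathcal F|$ by peeling blocks. Pick a block $D\in\mathcal F$ whose points in $X\sqcup Y$ are consecutive within $X$ or within $Y$; this is the analogue of Lemma~\ref{consecutive block}, using minimal span restricted to $X\sqcup Y$. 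If $D$ is single-layer, it contributes a consecutive factor equal to $1$ in one of the two products and can be deleted. If $D$ is a through-block, I use the structure of through-blocks instead. They are linearly ordered by $\lhd$ (Definition~\ref{through block order}), and between consecutive through-blocks only single-layer blocks occur. Because single-layer blocks are nested or disjoint, the word $\prod\vec g|_X$ factors as
\[
(\text{upper singles})\,(T_1)_+\,(\text{upper singles})\,(T_2)_+\cdots
\]
and the lower word factors in the same way. Each single-layer factor reduces to $1$ by the inner induction on nesting, the same peeling as in Lemma~\ref{consecutive block}. Each $(T_i)_+$ factor then matches the corresponding $(T_i)_-$ factor by hypothesis.

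Applying the lemma to $\mathcal F=\mathcal C_p$ requires that every single-layer block $D\in\mathcal C_p$ satisfies $\prod\vec g|_{D_+}=1$ or $\prod\vec h'|_{D_-}=1$. This is exactly the hypothesis $p\in\NC(\vec g,\vec h')$ applied to $D$, whose other side is empty.

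The main obstacle is that $\mathcal C_p$ is not all of $p$: between its blocks on the middle row sit points belonging to other components. I must check that in the word $\prod \vec h'|_{\mathcal C\cap[l]}$ the interleaving of $\mathcal C_p$-blocks is still noncrossing. That holds automatically, because a subfamily of a noncrossing partition is noncrossing on the union of its points. So the peeling argument applies verbatim to the restricted point sets, and the gaps simply disappear. The words are products of the restricted subvectors, not interval products $\prod_A$, so gaps never contribute. Once I see this, the lemma needs no information about entrances or nested components.
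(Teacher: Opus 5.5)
Your argument is correct, and it takes a different route from the paper. The paper gives no internal proof: it invokes the proof of \cite[Theorem~4.11]{FS18} together with the composition formula \cite[Proposition~4.6]{FS18}, specialised to $C=\NC$.

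You instead verify the block condition directly, one $(p,q)$-connected component at a time. You split it into $\prod\vec g|_{\mathcal C\cap[k]}=\prod\vec h'|_{\mathcal C\cap[l]}$, which uses only $\mathcal C_p$, and its mirror image for $q$. You prove each equality by deleting single-layer blocks of trivial product from the restricted words. Your route is self-contained, so it does not depend on matching the decoration conventions of \cite{FS18} with Definition~\ref{bi-coloured}(2). It also shows that none of the entrance and nesting machinery of Section~4 is needed, because the $\Gamma$-condition uses restricted products $\prod(\vec a|_A)$ rather than interval products $\prod_A\vec a$. The citation, on the other hand, is short, and it also supplies the fact that $q\cdot p$ is again noncrossing. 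You should state that fact explicitly, since membership in $\NC(\vec g,\vec h)$ presupposes $q\cdot p\in\NC(k,m)$. It is the standard closure of $\NC$ under vertical composition.

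One step should be phrased more carefully. The displayed factorisation (upper singles)$(T_1)_+$(upper singles)$(T_2)_+\cdots$ is not literally right. Single-layer blocks can sit between two points of the same through-block, for example a lower block $\{2,3\}$ inside $(T_1)_-=\{1,4\}$.

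The clean version runs as follows:
\begin{enumerate}
\item Take a single-layer block of $\mathcal F$ of minimal span. It is consecutive in the restricted word, because noncrossingness forbids a through-block from having a point strictly inside the span of a single-layer block on the same row.
\item Delete that block, whose factor is $1$, and iterate.
\item When no single-layer blocks remain, the two words are $\prod_i\prod\vec g|_{(T_i)_+}$ and $\prod_i\prod\vec h'|_{(T_i)_-}$, with the $T_i$ taken in $\lhd$-order, and they agree factor by factor by hypothesis.
\end{enumerate}
Note also that for non-trivial $\mathcal C$ the family $\mathcal C_p$ contains no upper single-layer blocks at all, since such a block would form an upper-trivial component by itself. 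So on the $[k]$ side there is nothing to peel.
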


\begin{proof}
This is a special case of the argument in the proof of
\cite[Theorem~4.11]{FS18}, using the composition formula of
\cite[Proposition~4.6]{FS18}, specialised to $C=\NC$.
\end{proof}

 \begin{proposition}
Suppose $(p,col_p)\in \NC_{\Lambda}(k,l)$ and $(q,col_q)\in \NC_{\Lambda}(l,m)$ with
$T_q \circ T_p \neq 0$. Then $(q\cdot p,col_{q\cdot p})\in NC_{\Lambda}(k,m)$.
\end{proposition}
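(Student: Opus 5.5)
The plan is to verify the two defining requirements of $\NC_\Lambda(k,m)$ in turn: that $q\cdot p$ is a non-crossing partition of $[k]\sqcup[m]$, and that the colouring $\mathrm{col}_{q\cdot p}$ satisfies the boundary condition $\prod^{\prec}_{V\in\partial(q\cdot p)}\mathrm{col}_{q\cdot p}(V)=1$.

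The first requirement is classical. Composition of non-crossing partitions along the middle row stays non-crossing, as in the argument cited in Lemma~\ref{lem:vertical-composition-NCgh}. Cycle components contribute no block, and every other component $\mathcal C$ contributes exactly $\mathcal C\cap([k]\sqcup[m])$. So only the colour condition remains. To handle it, we use the hypothesis $T_q\circ T_p\neq 0$ to pick $\vec r,\vec d$ and a middle vector $\vec s\in\theta_p^{\vec r}\cap\Omega_q^{\vec d}$. By Remark~\ref{rem:label-represented-by-middle-vector}, every label $\mathrm{lab}(\mathcal C)$ can be computed from interval products of $\vec s$, and by Corollary~\ref{colour of components} each label equals the corresponding block relation of Definition~\ref{block relations} evaluated at $(\vec r,\vec d)$. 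Put differently, $\delta_{(q\cdot p,\mathrm{col}_{q\cdot p})}(\vec r,\vec d)=1$.

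The boundary product is then rewritten as a telescoping product of these block relations. List the outer blocks of $q\cdot p$ in the order $\prec$: upper single-layer blocks by decreasing maximum, then the through-block if present, then lower single-layer blocks by increasing maximum. These correspond, respectively, to outer upper-trivial or upper-half components, the leftmost-outer through component, and outer lower-trivial or lower-half components.

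We treat the case where a through block exists first. By Corollary~\ref{colour of components}(3), its label $\lambda$ satisfies $(\prod_{j\le \max(\mathcal C\cap[k])}r_j)\,\lambda=\prod_{j\le\max(\mathcal C\cap[m])}d_j$.
\begin{itemize}
\item The upper outer blocks of $q\cdot p$ tile $[\max(\mathcal C\cap[k])+1,k]$ in $\prec$ order. Their relations give $\prod_{j>\max(\mathcal C\cap[k])}r_j=(\prod^{\prec}\mathrm{col})^{-1}$, with the products read right to left.
\item The lower outer blocks tile $[\max(\mathcal C\cap[m])+1,m]$ and give $\prod_{j>\max}d_j=\prod^{\prec}\mathrm{col}$.
\item The boundary conditions of $p$ and $q$, restricted to the rows and read through $\vec s$, give $\prod_{[k]}\vec r=\prod_{[l]}\vec s=\prod_{[m]}\vec d$. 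Strictly, this is the identity $\prod_{[k]}\vec r\cdot(\prod^{\prec}_{\partial p}t)=\prod_{[l]}\vec s$; boundary condition of $p$ means the outer blocks of $p$ tile the rows compatibly, and similarly for $q$.
\end{itemize}
Combining the three products yields $(\prod^{\prec}_{\text{upper}})\,\lambda\,(\prod^{\prec}_{\text{lower}})=1$.

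When there is no through block, the same identity $\prod_{[k]}\vec r=\prod_{[m]}\vec d$ applies. The outer upper blocks tile $[k]$, giving the factor $\prod_{[k]}\vec r=(\prod^{\prec}_{\text{upper}})^{-1}$, and the outer lower blocks tile $[m]$, giving $\prod_{[m]}\vec d=\prod^{\prec}_{\text{lower}}$. This gives the condition directly.

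The main obstacle is justifying the tiling claims for half-type components. An outer upper-half component $\mathcal C$ has $\mathcal C\cap[k]$ spread across several $p$-blocks, possibly with upper single-layer blocks of $p$ nested between them. We must check that $\prod_{\mathcal C\cap[k]}\vec r$, taken over the whole interval $\mathrm{Int}$, equals $\mathrm{lab}(\mathcal C)^{-1}$. This is precisely the identity of Corollary~\ref{colour of components}(1); the correction factors $g_{\mathcal C}$ and $\alpha_{\mathcal C}$ exist exactly to absorb the nested single-layer pieces. We also need that consecutive outer components abut without gaps, which follows from non-crossingness of $q\cdot p$ together with Proposition~\ref{lem:two-sided-middle-decomposition}. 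I expect most of the work to lie in carefully checking the ordering conventions, namely the inverse and right-to-left reading in $\mathcal T^{\uparrow}$ against $\prec$, so that the telescoping matches.
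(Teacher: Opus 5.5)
Your argument is correct, and it takes a genuinely different and much shorter route than the paper.

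\textbf{The paper's route.} It works on the middle row. It isolates the rightmost through component $\mathcal T_0$ and lists the outer upper- and lower-half components to its right. It expands their labels as $t_i f_i^{-1}g_{i-1}^{-1}t_{i-1}^{-1}\alpha_{i-1}$ and $\beta_{j-1}^{-1}\iota_{j-1}^{-1}b_{j-1}\phi_j\iota_j$, using the adjacency identities $T_l(\mathcal C_{i+1})'=T_r(\mathcal C_i)$. It then telescopes the boundary word of $q\cdot p$ and matches the remainder against the tilings of $[w+1,l]$ and $[w'+1,l]$ by frames and gaps. Finally it invokes the boundary conditions of $p$ and $q$ separately, as $\gamma_1^{-1}t_{u_n}h=1$ and $\mu^{-1}\iota_{d_n}\gamma_2=1$.

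\textbf{Your route.} You work on the outer rows. Corollary~\ref{colour of components} gives $\delta_{q\cdot p}(\vec r,\vec d)=1$ at a single witness. You then use the general identity
\[
\prod^{\prec}_{\partial\pi}\mathrm{col}=\Bigl(\prod_{[k]}\vec r\Bigr)^{-1}\prod_{[m]}\vec d,
\]
valid for any coloured non-crossing $\pi$ and any solution of its block relations. The paper itself uses this identity, without detail, in proving $T_p\otimes T_q=T_{p\otimes q}$. Combined with $\prod_{[k]}\vec r=\prod_{[l]}\vec s=\prod_{[m]}\vec d$, this gives the boundary condition at once.

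\textbf{Comparison.} Your version removes the Case 1/Case 2 split, the left-adjacent through-block identifications, and the $\gamma_1,\gamma_2$ bookkeeping. It uses Definition~\ref{def: colour} only through the fact that the labels make the block relations of $q\cdot p$ hold. You are right that the $\mathrm{Int}$-convention in Definition~\ref{block relations} is what makes nested material harmless. What the paper's computation buys is an explicit picture of how the boundary word of $q\cdot p$ splits into those of $p$ and $q$. Both arguments depend equally on Corollary~\ref{colour of components}, which the paper states without proof.

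\textbf{Two small corrections.}
\begin{itemize}
\item The through-block lying in $\partial(q\cdot p)$ is the \emph{rightmost} one, not the ``leftmost-outer'' one. This is because $\mathrm{span}(B)=[1,\max B_+]\sqcup[1,\max B_-]$ for through-blocks. Only for the rightmost one do the outer upper blocks tile $[\max(\mathcal C\cap[k])+1,k]$, which is what your computation actually uses.
\item That consecutive outer blocks abut needs only that $q\cdot p$ is a non-crossing partition of $[k]\sqcup[m]$. Proposition~\ref{lem:two-sided-middle-decomposition}, which concerns the middle row, plays no role there.
\end{itemize}
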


\begin{proof}

\textbf{Case 1.}
Suppose that $\mathcal T_0$ is the rightmost through connected component, in the sense that $\mathcal T_0$ intersects $[k]$, $[l]$, and $[m]$, and for every other connected component $\mathcal T$ intersecting all three intervals, $\max(\mathcal T\cap[l])\le \max(\mathcal T_0\cap[l]).$
Since through-blocks admit no nesting, noncrossingness implies that this is equivalent to $\max(\mathcal T\cap[j])\le \max(\mathcal T_0\cap[j])(j=k,m).$

\medskip
\noindent\textbf{Step 1: Outer upper-half and lower-half components to the right of $\mathcal T_0$.}
Denote by $\mathcal C_1,\dots,\mathcal C_{u_n}$ (resp.\ $\mathcal O_1,\dots,\mathcal O_{d_n}$)
all outer upper-half (resp.\ lower-half) connected components to the right of $\mathcal T_0$,
i.e.\ all upper-half connected components $\mathcal C$ (resp.\ lower-half connected
components $\mathcal O$) such that there is no distinct upper-half connected component
$\mathcal C'$ (resp.\ lower-half connected component $\mathcal O'$) with
$F_{\mathcal C}\subseteq F_{\mathcal C'}$ (resp.\ $F_{\mathcal O}\subseteq F_{\mathcal O'}$),
and such that
\[
\max(\mathcal T_0\cap[l])<\max(\mathcal C\cap[l])
\quad
(\text{resp.\ }\max(\mathcal T_0\cap[l])<\max(\mathcal O\cap[l])).
\]
List them so that $\max(\mathcal C_i\cap[l])<\max(\mathcal C_{i+1}\cap[l]) (1\le i\le u_n-1),$ and $\max(\mathcal O_j\cap[l])<\max(\mathcal O_{j+1}\cap[l])(1\le j\le d_n-1).$

\medskip
\noindent\textbf{Step 2: Labels of the outer upper(lower)-half components.}
(a) For each $1\le i\le u_n$, let $T_l(\mathcal C_i),T_r(\mathcal C_i)$ be respectively
the leftmost and rightmost upper through-blocks of $\mathcal C_i$, and let
$T_r^{\uparrow}(\mathcal T_0)$ be the rightmost upper through-block of $\mathcal T_0$. Write $t_i:=t_{T_r(\mathcal C_i)}(1\le i\le u_n),t_0:=t_{T_r^{\uparrow}(\mathcal T_0)}.$
Also write $f_i:=f_{\mathcal C_i}(1\le i\le u_n),$ where $f_{\mathcal C_i}$ is the constant attached to the frame $F_{\mathcal C_i}$ in Corollary~\ref{cor: constant interval}.

For each $0\le i\le u_n-1$, set  $g_i:=\mathcal T^\downarrow\bigl(\mathcal P_{p}^{\downarrow}(T_l(\mathcal C_{i+1}))\bigr), \alpha_i:=\mathcal T^\uparrow\bigl(\mathcal P_{p}^{\uparrow}(T_l(\mathcal C_{i+1}))\bigr).$
Thus $g_i$ and $\alpha_i$ are exactly the lower and upper boundary contributions,
respectively, attached to the leftmost upper through-block $V_l(\mathcal C_{i+1})$.
Equivalently, if

$\partial^\downarrow\bigl(\mathcal P_{p}^{\downarrow}(T_l(\mathcal C_{i+1}))\bigr)
=\{W_1\prec\cdots\prec W_s\},$ and  $\partial^\uparrow\bigl(\mathcal P_{p}^{\uparrow}(T_l(\mathcal C_{i+1}))\bigr)
=\{U_1\prec\cdots\prec U_t\},$
then
\[
g_i=\mathrm{col}(W_1)\cdots \mathrm{col}(W_s),
\qquad
\alpha_i=\bigl(\mathrm{col}(U_1)\cdots \mathrm{col}(U_t)\bigr)^{-1}.
\]

We claim that $T_l(\mathcal C_{i+1})'=T_r(\mathcal C_i)(1\le i\le u_n-1),$
and also $T_l(\mathcal C_1)'=T_r^{\uparrow}(\mathcal T_0).$  Indeed, since $\mathcal C_i$ and $\mathcal C_{i+1}$ are outer upper-half connected components,
their frames are disjoint, hence $\max F_{\mathcal C_i}<\min F_{\mathcal C_{i+1}}.$
Equivalently, $\max\bigl(T_r(\mathcal C_i)\cap[l]\bigr)
<
\min\bigl(T_l(\mathcal C_{i+1})\cap[l]\bigr).$
Thus $T_r(\mathcal C_i)\lhd T_l(\mathcal C_{i+1}).$ If there were a through-block $W\in\mathrm{Th}(p)$ such that $T_r(\mathcal C_i)\lhd W\lhd T_l(\mathcal C_{i+1}),$
then the connected component $\mathcal D$ containing $W$ would be an upper-half connected
component lying to the right of $\mathcal T_0$, and its frame would lie strictly between
$F_{\mathcal C_i}$ and $F_{\mathcal C_{i+1}}$.
If $\mathcal D$ is outer upper-half, this contradicts the fact that
$\mathcal C_i$ and $\mathcal C_{i+1}$ are consecutive in the ordered family
$\{\mathcal C_1,\dots,\mathcal C_{u_n}\}$.
If $\mathcal D$ is not outer, let $\mathcal E$ be the outer upper-half connected component
with $F_{\mathcal D}\subseteq F_{\mathcal E}$; then $F_{\mathcal E}$ still lies strictly
between $F_{\mathcal C_i}$ and $F_{\mathcal C_{i+1}}$, again a contradiction.

Therefore there is no through-block strictly between
$T_r(\mathcal C_i)$ and $T_l(\mathcal C_{i+1})$ in the order $\lhd$.
By Definition~\ref{gap}, the left-adjacent through-block of $T_l(\mathcal C_{i+1})$
is exactly $T_r(\mathcal C_i)$, that is, $T_l'(\mathcal C_{i+1})=T_r(\mathcal C_i).$ The proof of $T_l'(\mathcal C_1)=T_r^{\uparrow}(\mathcal T_0)$
is identical.

Hence, by Corollary~\ref{colour of components}(1) and Definition~\ref{def: colour}, $\mathrm{lab}(\mathcal C_i)
=t_i\,f_i^{-1}\,g_{i-1}^{-1}\,t_{i-1}^{-1}\,\alpha_{i-1}(1\le i\le u_n).$
In particular, $\mathrm{lab}(\mathcal C_{u_n})=t_{u_n}f_{u_n}^{-1}g_{u_n-1}^{-1}t_{u_n-1}^{-1}\alpha_{u_n-1},\dots,\mathrm{lab}(\mathcal C_1)=t_1f_1^{-1}g_0^{-1}t_0^{-1}\alpha_0.$ 

(b) The lower-half case is entirely analogous. For each $1\le j\le d_n$, let
$T_l(\mathcal O_j),T_r(\mathcal O_j)$ be respectively the leftmost and rightmost
lower through-blocks of $\mathcal O_j$, and let
$T_r^{\downarrow}(\mathcal T_0)$ be the rightmost lower through-block of $\mathcal T_0$.
Write $\iota_j:=\iota_{T_r(\mathcal O_j)}\ (1\le j\le d_n),\iota_0:=\iota_{T_r^{\downarrow}(\mathcal T_0)},$
and let $\phi_j:=f_{\mathcal O_j}$.

For each $0\le j\le d_n-1$, define $\beta_j:=\mathcal T^\downarrow\bigl(\mathcal P_{q}^{\downarrow}(T_l(\mathcal O_{j+1}))\bigr), b_j:=\mathcal T^\uparrow\bigl(\mathcal P_{q}^{\uparrow}(T_l(\mathcal O_{j+1}))\bigr).$
Then, by the same argument as in {\rm(a)}, $T_l'(\mathcal O_{j+1})=T_r(\mathcal O_j)\quad(1\le j\le d_n-1),T_l'(\mathcal O_1)=T_r^{\downarrow}(\mathcal T_0),$
and therefore, by Corollary~\ref{colour of components}(2) and
Definition~\ref{def: colour}, $\mathrm{lab}(\mathcal O_j)
=\beta_{j-1}^{-1}\,\iota_{j-1}^{-1}\,b_{j-1}\,\phi_j\,\iota_j(1\le j\le d_n).$

\medskip
\noindent\textbf{Step 3}
Notice that, by Corollary~\ref{colour of components} and Definition~\ref{def: colour},
$(\alpha_i)^{-1}$ is exactly the ordered product of the labels of the upper boundary
blocks of $q\cdot p$ induced by trivial connected components lying between
$\mathcal C_i$ and $\mathcal C_{i-1}$ in the boundary order, while
$\beta_i$ is exactly the ordered product of the labels of the lower boundary
blocks of $q\cdot p$ induced by trivial connected components lying between
$\mathcal O_{i-1}$ and $\mathcal O_i$ in the boundary order. Moreover,
\[
\mathrm{lab}(\mathcal T_0)=t_0h_0\mu_0^{-1}\iota_0.
\]

Let $\gamma_1$ (resp.\ $\gamma_2$) be the ordered product of the labels of all boundary
blocks of $q\cdot p$ strictly larger (resp.\ strictly smaller) than
$\mathcal C_{u_n}$ (resp.\ $\mathcal O_{d_n}$) in the boundary order. Then the full
boundary product can be written as
\begin{align*}
\prod_{\partial(q\cdot p)}^{\prec}\mathrm{col}
&=
\gamma_1^{-1}\,
\mathrm{lab}(\mathcal C_{u_n})\,(\alpha_{u_n-1})^{-1}\,
\mathrm{lab}(\mathcal C_{u_n-1})\cdots
(\alpha_1)^{-1}\,\mathrm{lab}(\mathcal C_1)\,(\alpha_0)^{-1} \\
&\qquad\cdot\,
\mathrm{lab}(\mathcal T_0)\,
\beta_0\,\mathrm{lab}(\mathcal O_1)\,
\beta_1\cdots
\mathrm{lab}(\mathcal O_{d_n-1})\,\beta_{d_n-1}\,
\mathrm{lab}(\mathcal O_{d_n})\,\gamma_2 .
\end{align*}
Substituting the expressions for the labels gives
\begin{align*}
\prod_{\partial(q\cdot p)}^{\prec}\mathrm{col}
&=
\gamma_1^{-1}\,
\bigl(t_{u_n}f_{u_n}^{-1}g_{u_n-1}^{-1}t_{u_n-1}^{-1}\alpha_{u_n-1}\bigr)
(\alpha_{u_n-1})^{-1}
\bigl(t_{u_n-1}f_{u_n-1}^{-1}g_{u_n-2}^{-1}t_{u_n-2}^{-1}\alpha_{u_n-2}\bigr)
\cdots \\
&\qquad\cdots
(\alpha_0)^{-1}\,
\bigl(t_0h_0\mu_0^{-1}\iota_0\bigr)\,
\beta_0\,
\bigl(\beta_0^{-1}\iota_0^{-1}b_0\phi_1\iota_1\bigr)\,
\beta_1\cdots
\bigl(\beta_{d_n-1}^{-1}\iota_{d_n-1}^{-1}b_{d_n-1}\phi_{d_n}\iota_{d_n}\bigr)\,
\gamma_2 \\
&=
\gamma_1^{-1}\,
t_{u_n}f_{u_n}^{-1}g_{u_n-1}^{-1}f_{u_n-1}^{-1}\cdots
f_1^{-1}g_0^{-1}h_0\mu_0^{-1}
b_0\phi_1b_1\phi_2\cdots b_{d_n-1}\phi_{d_n}\iota_{d_n}\,
\gamma_2.
\end{align*}

\medskip \noindent\textbf{Step 4: } 
Set $w:=\max\bigl(T_r^{\uparrow}(\mathcal T_0)\cap[l]\bigr).$
For $1\le i\le u_n$, put $a_i:=\min\bigl(T_l(\mathcal C_i)\cap[l]\bigr), b_i:=\max\bigl(T_r(\mathcal C_i)\cap[l]\bigr),$
and set $b_0:=w$. Then $[w+1,l]
=G_0\sqcup F_{\mathcal C_1}\sqcup G_1
\sqcup F_{\mathcal C_2}
\sqcup\cdots\sqcup
G_{u_n-1}
\sqcup F_{\mathcal C_{u_n}}
\sqcup G_{u_n},$
where $G_i:=[\,b_i+1,a_{i+1}-1\,](0\le i\le u_n-1),$ and $G_{u_n}:=[\,b_{u_n}+1,l\,]$. Here, as usual, the product over an empty interval is understood to be $1$.
By Definition\ref{def:frame}, $F_{\mathcal C_i}=[\,a_i,b_i\,](1\le i\le u_n).$

Let $\vec s\in\theta_p^{\vec r}\cap\Omega_q^{\vec d}$.
By the definition of $g_i$ and the adjacency relations $T_l(\mathcal C_{i+1})'=T_r(\mathcal C_i)
(1\le i\le u_n-1),T_l(\mathcal C_1)'=T_r^{\uparrow}(\mathcal T_0),$
we have $\prod_{I_i}\vec s=g_i
(0\le i\le u_n-1).$
On the other hand, by Corollary~\ref{cor: constant interval}, one has
$\prod_{F_{\mathcal C_i}}\vec s=f_i(1\le i\le u_n).$
Finally, set $h:=\prod_{G_{u_n}}\vec s=
\prod_{[\,b_{u_n}+1,l\,]}\vec s.$
Therefore,,
$\prod_{[w+1,l]}\vec s=
g_0\,f_1\,g_1\,f_2\,\cdots\,g_{u_n-1}\,f_{u_n}\,h.$

The proof for the interval $[w'+1,l]$ is identical, with upper and lower halves
exchanged. More precisely, using the notation from Step 2(b), set
$\mu:=\prod_{[\,\max(T_r(\mathcal O_{d_n})\cap[l])+1,l\,]}\vec s .$

Then, for $\vec s\in\theta_p^{\vec r}\cap\Omega_q^{\vec d}$, we have $\prod_{[w'+1,l]}\vec s
=b_0\,\phi_1\,b_1\,\phi_2\cdots b_{d_n-1}\,\phi_{d_n}\,\mu .$

\medskip \noindent\textbf{Step 5: Comparing the products} Let  $x:=\max(\mathcal T_0\cap[k]),x':=\max(\mathcal T_0\cap[m]).$ By Corollary~\ref{colour of components}(3), applied to the through component $\mathcal T_0$, we have  $\Bigl(\prod_{j=1}^{x} r_j\Bigr)\,t_0h_0\mu_0^{-1}\iota_0 = \prod_{j=1}^{x'} d_j,$ equivalently, $\Bigl(\prod_{j=1}^{x} r_j\Bigr)\,t_0h_0\mu_0^{-1} = \Bigl(\prod_{j=1}^{x'} d_j\Bigr)\,\iota_0^{-1},$ for every $\vec r,\vec d$ such that $\theta_p^{\vec r}\cap\Omega_q^{\vec d}\neq\varnothing$. On the other hand, since $[w+1,l]$ is partitioned by the frames $F_{\mathcal C_1},\dots,F_{\mathcal C_{u_n}}$ together with the spans of the lower outer blocks of $p$ between them, and since $[w'+1,l]$ is partitioned by the frames $F_{\mathcal O_1},\dots,F_{\mathcal O_{d_n}}$ together with the spans of the upper outer blocks of $q$ between them, we obtain, for every $\vec s\in\theta_p^{\vec r}\cap\Omega_q^{\vec d}$, \[\Bigl(\prod_{j=1}^{x} r_j\Bigr)\,t_0\,g_0\,f_1\,g_1\,f_2\cdots g_{u_n-1}f_{u_n}\,h = \prod \vec s_{\restriction[1,l]} = \Bigl(\prod_{j=1}^{x'} d_j\Bigr)\,\iota_0^{-1}\,b_0\,\phi_1\,b_1\,\phi_2\cdots b_{d_n-1}\phi_{d_n}\,\mu. \] 

Combining the two displayed equalities, we get  $f_{u_n}^{-1}g_{u_n-1}^{-1}f_{u_n-1}^{-1}\cdots f_1^{-1}g_0^{-1}\,h_0\,\mu_0^{-1}\, b_0\,\phi_1\,b_1\,\phi_2\cdots b_{d_n-1}\phi_{d_n} = h\,\mu^{-1}.$

\noindent\textbf{Step 5: Conclusion via the boundary conditions of \iffalse$p$ and $q$.\fi}
Therefore the ordered product of the labels of all boundary blocks of $q\cdot p$ is $\gamma_1^{-1}\,t_{u_n}\,h\,\mu^{-1}\,\iota_{d_n}\,\gamma_2.$

By the boundary condition for $(p,\operatorname{col}_p)$, we have $\gamma_1^{-1}\,t_{u_n}\,h=1,$ and by the boundary condition for $(q,\operatorname{col}_q)$, we have $\mu^{-1}\,\iota_{d_n}\,\gamma_2=1.$
Therefore $\gamma_1^{-1}\,t_{u_n}\,h\,\mu^{-1}\,\iota_{d_n}\,\gamma_2=1.$
Thus the ordered product of the labels of all boundary blocks of $q\cdot p$ is equal to $1$.
Hence $(q\cdot p,\operatorname{col}_{q\cdot p})\in \NC_\Lambda(k,m)$.

\medskip
\noindent\textbf{Case 2.} Assume that there is no through connected component.

The proof is identical to that of \textbf{Case 1}, except that all quantities attached to the rightmost through connected component lose their meaning and are replaced by $1$. In particular, the identity $\Bigl(\prod_{j=1}^{x} r_j\Bigr)\,t_0h_0\mu_0^{-1}\iota_0=\prod_{j=1}^{x'} d_j$
is replaced by the trivial identity $1=1$.

Likewise, in \textbf{Step 5}, the quantities $w$ and $w'$ are no longer defined. The corresponding argument is carried out directly on the whole middle interval $[1,l]$ (instead of the tails $[w+1,l]$ and $[w'+1,l]$). All remaining steps are unchanged.
\end{proof}

\begin{corollary}

Let $\vec{g} \in \Lambda^k$, ${\vec{h}}^{\prime}, \in \Lambda^l$, and $\vec h, \in \Lambda^m$. If $(p,col_p) \in \NC_{\Lambda}(\vec{g}, {\vec{h}}^{\prime})$ and $(q ,col_q)\in \NC_{\Lambda}({\vec{h}}^{\prime}, {\vec h})$, then their composition $(q \cdot p, col_{q\cdot p}  ) \in\NC_{\Lambda}(\vec{g}, \vec h)$.

\end{corollary}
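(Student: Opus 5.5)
The corollary combines two previously established facts: the $\Gamma$-part, which concerns the underlying uncoloured partition, and the $\Lambda$-part, which concerns the colouring and the boundary condition. By Definition~\ref{bi-coloured}(3), membership of $(q\cdot p,\mathrm{col}_{q\cdot p})$ in $\NC_{\Lambda}(\vec g,\vec h)$ requires two things. First, $(q\cdot p,\mathrm{col}_{q\cdot p})\in \NC_\Lambda(k,m)$, which means the boundary condition holds. Second, for every block $W$ of $q\cdot p$ the block relation $\prod \vec g|_{W_+}=\prod \vec h|_{W_-}$ holds. I would verify these separately. (The tuples $\vec g,\vec h',\vec h$ should lie in $\Gamma^k,\Gamma^l,\Gamma^m$, as in Lemma~\ref{lem:vertical-composition-NCgh}; the $\Lambda$ in the statement is a typographical slip.)

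For the block relations, forget the colourings. Since $(p,\mathrm{col}_p)\in\NC_\Lambda(\vec g,\vec h')$, every block $V\in p$ satisfies $\prod\vec g|_{V_+}=\prod\vec h'|_{V_-}$, so $p\in\NC_\Gamma(\vec g,\vec h')$. Likewise $q\in\NC_\Gamma(\vec h',\vec h)$. The underlying partition of $q\cdot p$ is the set of traces $\Ccal\cap([k]\sqcup[m])$ of the $(p,q)$-connected components, which is exactly the usual vertical composition of noncrossing partitions after discarding cycle components. Lemma~\ref{lem:vertical-composition-NCgh} therefore gives $q\cdot p\in\NC_\Gamma(\vec g,\vec h)$, and this is precisely the required block relation for every block of $q\cdot p$.

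For the boundary condition, I would apply the preceding proposition, which gives $(q\cdot p,\mathrm{col}_{q\cdot p})\in\NC_\Lambda(k,m)$ provided $T_q\circ T_p\neq0$. That nonvanishing hypothesis is also what makes the colouring well defined: Definition~\ref{def: colour} uses the constants of Corollary~\ref{cor: constant interval}, which are evaluated on a nonempty set $\bigcup_{\vec r,\vec d}(\theta_p^{\vec r}\cap\Omega_q^{\vec d})$. So the composition, and hence the statement, is implicitly understood under the standing assumption $T_q\circ T_p\neq 0$ made throughout this subsection, and under it the proposition applies directly.

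The main obstacle is making this hypothesis explicit. If the statement is meant unconditionally, one must either define the composition to be zero when $T_q\circ T_p=0$ or argue that the definition of $\mathrm{col}_{q\cdot p}$ makes sense regardless. I would adopt the convention that the composed coloured partition is only considered when $T_q\circ T_p\neq0$, since this is the situation relevant to the categorical composition $T_q\circ T_p$. Granting that convention, the two ingredients combine immediately to give $(q\cdot p,\mathrm{col}_{q\cdot p})\in\NC_\Lambda(\vec g,\vec h)$.
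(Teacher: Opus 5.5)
Your proposal is correct and follows the paper's approach. The paper gives no separate proof: the corollary is placed directly after Lemma~\ref{lem:vertical-composition-NCgh}, which gives the $\Gamma$-block relations for $q\cdot p$, and after the proposition giving the boundary condition under $T_q\circ T_p\neq 0$, and it is exactly the combination of these two results that you describe. Your remarks are also accurate readings of the paper's intent: the tuples should lie in $\Gamma^k,\Gamma^l,\Gamma^m$, and the standing hypothesis $T_q\circ T_p\neq 0$ is needed for $\mathrm{col}_{q\cdot p}$ to be defined.
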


\subsection{Stability under Vertical Composition}

\begin{lemma}\label{Equation}
Let $p\in \NC_{\Lambda}(k,l)$ and $q\in \NC_{\Lambda}(l,m)$ with
$T_q\circ T_p\neq 0$. Then $\theta_p^{\vec r}\cap\Omega_q^{\vec d}\neq\varnothing$
if and only if the system $\mathcal{E}_{\vec r,\vec d}^{p,q}$ admits a solution.
, where the $\mathcal{E}_{\vec r,\vec d}^{p,q}$ is defined as follows:
{\small
\[
\mathcal{E}_{\vec r,\vec d}^{p,q}:=
\left\{
\begin{aligned}
\prod_{i=\min V^{-}}^{\max V^{-}} x_i
&= g_V^{-1}(t_{V'})^{-1}\alpha_V
   \Bigl(\prod_{i=\min V^{-}}^{\max V^{-}} r_i\Bigr)t_V
:= \kappa_{\vec r,\vec d}(V)
\quad (V\in\mathrm{Th}(p)),\\[-0.2em]
\prod_{i=\min V^{+}}^{\max V^{+}} x_i
&= b_V^{-1}\iota_{V'}\beta_V
   \Bigl(\prod_{i=\min V^{+}}^{\max V^{+}} d_i\Bigr)(\iota_V)^{-1}
:= \kappa_{\vec r,\vec d}(V)
\quad (V\in\mathrm{Th}(q)),\\[-0.2em]
\prod_{i=\min V}^{\max V} x_i
&= \mathrm{col}_q(V^+)^{-1}:=\kappa_{\vec r,\vec d}(V)
\quad (\text{$V$ upper single-layer block of $q$}),\\[-0.2em]
\prod_{i=\min V}^{\max V} x_i
&= \mathrm{col}_p(V^{-}):=\kappa_{\vec r,\vec d}(V)
\quad (\text{$V$ lower single-layer block of $p$}).
\end{aligned}
\right.
\]}
 For \(V\in \mathrm{Th}(p)\) (resp. \(V\in \mathrm{Th}(q)\)), \(t_V\) (resp. \(\iota_V\)) denotes its label, and \(t_{V'}\) (resp. \(\iota_{V'}\)) denotes the label of its left-adjacent through-block \(V'\). For $V\in\mathrm{Th}(p)$, $g_V=\mathcal{T}^{\downarrow}(\mathcal{P}_{p}^{\downarrow}(V))$, $\alpha_V=\mathcal T^{\uparrow}(\mathcal{P}_{p}^{\uparrow}(V))$. For  $V\in\mathrm{Th}(q)$, $\beta_V=\mathcal{T}^{\downarrow}(\mathcal{P}_{q}^{\downarrow}(V))$, $b_V=\mathcal T^{\uparrow}(\mathcal{P}_{q}^{\uparrow}(V))$.
 
\end{lemma}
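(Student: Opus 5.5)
The strategy is to show that, for fixed $\vec r,\vec d$, the set $\theta_p^{\vec r}\cap\Omega_q^{\vec d}$ coincides with the solution set of $\mathcal{E}_{\vec r,\vec d}^{p,q}$ in the unknown $\vec x\in\Lambda^l$. The lemma then follows trivially: one set is nonempty exactly when the other is. Concretely, I would rewrite the defining conditions $\delta_p(\vec r,\vec x)=1$ and $\delta_q(\vec x,\vec d)=1$ of Definition~\ref{block relations} block by block. The aim is to turn each of them into an equivalent condition on a product of consecutive middle coordinates.

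For single-layer blocks nothing needs to be done. A lower single-layer block $V$ of $p$ contributes $\prod_{i=\min V}^{\max V}x_i=\mathrm{col}_p(V)$. An upper single-layer block $V$ of $q$ contributes $\bigl(\prod_{i=\min V}^{\max V}x_i\bigr)\mathrm{col}_q(V)=1$, which is the same as $\prod_{i=\min V}^{\max V}x_i=\mathrm{col}_q(V)^{-1}$. These are exactly the last two lines of $\mathcal E$. Upper single-layer blocks of $p$ and lower single-layer blocks of $q$ involve only $\vec r$ or $\vec d$. The hypothesis $T_q\circ T_p\neq0$ is used at this point: I will take it to mean that these external constraints, together with the through-block constraints of $p$ (resp. $q$) restricted to the upper row (resp. lower row), are consistent. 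This condition belongs to the choice of $\vec r,\vec d$, and it is built into the constants $\kappa$ through $\alpha_V$ and $b_V$.

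The main step concerns the through-blocks. Order $\mathrm{Th}(p)=\{V_1\lhd\cdots\lhd V_n\}$ as in Definition~\ref{through block order}. The condition for $V_j$ is $\prod_{v\le\max V_j^-}x_v=\bigl(\prod_{u\le\max V_j^+}r_u\bigr)t_{V_j}$, and I would take successive quotients of consecutive conditions. By noncrossingness, the middle interval $[\max V_{j-1}^-+1,\max V_j^-]$ decomposes in order into two pieces. The first is the spans of the relative lower outer blocks in $\mathcal P_p^{\downarrow}(V_j)$, whose product is $g_{V_j}$ once the lower single-layer equations hold. The second is the interval $[\min V_j^-,\max V_j^-]$. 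Similarly, the upper interval $[\max V_{j-1}^++1,\max V_j^+]$ splits into the upper outer blocks of $\mathcal P_p^\uparrow(V_j)$, with product $\alpha_{V_j}^{-1}$, followed by $[\min V_j^+,\max V_j^+]$. Dividing the $j$-th equation by the $(j-1)$-th therefore gives
\[
g_{V_j}\prod_{i=\min V_j^-}^{\max V_j^-}x_i=t_{V_{j-1}}^{-1}\,\alpha_{V_j}^{-1}\Bigl(\prod_{i=\min V_j^+}^{\max V_j^+}r_i\Bigr)t_{V_j}.
\]
I will have to check the order of factors carefully here. I expect the stated formula to come out after conjugation bookkeeping, possibly with a different order of $\alpha_V$ and $t_{V'}^{-1}$, which I would fix by carefully tracking the cumulative prefix. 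The reverse direction is telescoping: the system of differences together with the single-layer equations recovers each cumulative equation, using $t_{V_0}=1$ and an empty prefix for $V_1$. The $q$-side is handled identically with inverses, which produces $b_V^{-1}\iota_{V'}\beta_V(\cdots)\iota_V^{-1}$.

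I expect the main obstacle to be justifying the interval decomposition. The claim is that the points strictly between consecutive through-blocks on the middle row are covered exactly by the spans of the relative outer lower single-layer blocks of $p$, and not by any entrance. This holds because, for $p$ alone, every middle point between $V_{j-1}^-$ and $V_j^-$ lies in some lower single-layer block of $p$, and the relative outer ones tile the gap. I would prove this directly from noncrossingness in the same way as Proposition~\ref{lem:two-sided-middle-decomposition}. With that established, both directions of the equivalence are formal, and the lemma follows.
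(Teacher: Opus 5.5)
Your route is the paper's. You rewrite $\delta_p(\vec r,\vec x)=1$ and $\delta_q(\vec x,\vec d)=1$ block by block, divide the cumulative condition of a through-block by that of its left-adjacent through-block, and tile the gaps by relative outer single-layer blocks to produce $g_V,\alpha_V$ (resp.\ $b_V,\beta_V$). Your explicit telescoping argument for the converse improves on the paper, which leaves it implicit. Two points need correcting.

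\emph{First, the sign of $\alpha_V$.} The upper gap contributes $\alpha_V$, not $\alpha_V^{-1}$, because $\mathcal T^{\uparrow}$ already contains the inversion. Each upper single-layer block $B$ of $p$ imposes $\prod_B\vec r=\mathrm{col}_p(B)^{-1}$. The order $\prec$ on upper single-layer blocks is by decreasing $\max$, so $B_1\prec\cdots\prec B_m$ lists the relative outer blocks of $\mathcal P_p^{\uparrow}(V)$ from right to left. The left-to-right product of $\vec r$ over the gap is therefore $\mathrm{col}_p(B_m)^{-1}\cdots\mathrm{col}_p(B_1)^{-1}=\mathcal T^{\uparrow}(\mathcal P_p^{\uparrow}(V))=\alpha_V$. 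The quotient is then directly $g_V\prod_{V^-}\vec x=t_{V'}^{-1}\alpha_V\bigl(\prod_{V^+}\vec r\bigr)t_V$, with the factors already in the stated order. No conjugation bookkeeping or reordering is needed.

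\emph{Second, the external constraints.} The hypothesis $T_q\circ T_p\neq 0$ cannot do what you ask of it. It only says that \emph{some} choice of outer and middle labels satisfies all block conditions. It says nothing about whether the given $\vec r$ satisfies the conditions of the upper single-layer blocks of $p$, or the given $\vec d$ those of the lower single-layer blocks of $q$. These conditions are not part of $\mathcal E^{p,q}_{\vec r,\vec d}$, yet your converse direction needs them twice. You need them to turn $\alpha_V\prod_{V^+}\vec r$ back into the product of $\vec r$ over $[\max (V')^{+}+1,\max V^{+}]$. You also need them simply to conclude $\delta_p(\vec r,\vec x)=1$.

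Without them the stated equivalence is false. Take $k=2$, $l=m=1$. Let $p$ consist of the upper singleton $\{1\}\subset[k]$ and the through-block joining $2\in[k]$ to $1\in[l]$, both coloured $1$, and let $q$ be the identity through-block coloured $1$. Then $\mathcal E^{p,q}_{\vec r,\vec d}$ reads $x_1=r_2=d_1$, whereas $\theta_p^{\vec r}\cap\Omega_q^{\vec d}\neq\varnothing$ additionally forces $r_1=1$.

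What your argument actually proves is the following. For $(\vec r,\vec d)$ satisfying those single-layer conditions, $\theta_p^{\vec r}\cap\Omega_q^{\vec d}$ equals the solution set of $\mathcal E^{p,q}_{\vec r,\vec d}$. The forward inclusion needs no extra hypothesis, since membership in $\theta_p^{\vec r}\cap\Omega_q^{\vec d}$ already forces these conditions. The paper's proof uses the same restriction tacitly when it replaces the upper gap product by $\alpha_V$.

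The restricted version is all that is used later. In Proposition~\ref{Solution}, the hypothesis $\delta_{q\cdot p}(\vec r,\vec d)=1$ contains exactly these conditions through the upper- and lower-trivial components. The solution count only concerns pairs with $\theta_p^{\vec r}\cap\Omega_q^{\vec d}\neq\varnothing$. So state the condition as an explicit hypothesis rather than attributing it to $T_q\circ T_p\neq 0$.
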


\begin{proof}
By Definition~\ref{block relations}, the condition
$\theta_p^{\vec r}\cap\Omega_q^{\vec d}\neq\varnothing$
is equivalent to the existence of $\vec x\in\Lambda^l$ satisfying all block
relations induced by $(p,\vec r)$ and $(q,\vec d)$ on the middle level $[l]$.
We rewrite these block relations in terms of the interval products introduced above.
Throughout, the product over an empty interval is understood to be $e$.

\medskip

\noindent
\textbf{Through-blocks of $p$.}
Let $V\in\mathrm{Th}(p)$, and let $V'$ be its left-adjacent through-block, if it exists.
If $V'$ does not exist, we use the convention   $\ell(V)=0,t_{V'}=e.$
Otherwise, set $\ell(V):=\max(V'\cap[l]).$
The block relations for $V$ and $V'$ give $\prod_{i=1}^{\max V^-}x_i
=\Bigl(\prod_{i=1}^{\max V^+}r_i\Bigr)t_V,$
$\prod_{i=1}^{\ell(V)}x_i
=
\Bigl(\prod_{i=1}^{\max (V')^+}r_i\Bigr)t_{V'}.$
Dividing these two relations yields $\prod_{i=L(V)+1}^{\max V^-}x_i=(t_{V'})^{-1}
\Bigl(\prod_{i=\max (V')^+ +1}^{\max V^+}r_i\Bigr)t_V.$
By the definitions of $g_V$ and $\alpha_V$, this becomes $g_V\prod_{V^-}\vec x
=(t_{V'})^{-1}\alpha_V
\Bigl(\prod_{V^+}\vec r\Bigr)t_V.$
Hence $\prod_{V^-}\vec x
=g_V^{-1}(t_{V'})^{-1}\alpha_V
\Bigl(\prod_{V^+}\vec r\Bigr)t_V.$

\medskip

\noindent
\textbf{Through-blocks of $q$.}
The proof is identical, with the roles of upper and lower rows exchanged, and with
\[
(p,\vec r,t_V,g_V,\alpha_V,V^-,V^+)
\quad\text{replaced by}\quad
(q,\vec d,\iota_V,b_V,\beta_V,V^+,V^-).
\]
Thus, for every $V\in\mathrm{Th}(q)$, one obtains $\prod_{V^+}\vec x
=b_V^{-1}\iota_{V'}\beta_V
\Bigl(\prod_{V^-}\vec d\Bigr)(\iota_V)^{-1}.$
\noindent
\textbf{Single-layer blocks.}
Finally, if $V$ is an upper single-layer block of $q$, respectively a lower
single-layer block of $p$, the block relations give directly
\[
\prod_V\vec x=\mathrm{col}(V)^{-1},
\qquad\text{respectively}\qquad
\prod_V\vec x=\mathrm{col}(V).
\]

Collecting the through-block and single-layer relations gives exactly the system
$\mathcal E_{\vec r,\vec d}^{p,q}$. Therefore
$\theta_p^{\vec r}\cap\Omega_q^{\vec d}\neq\varnothing$
if and only if $\mathcal E_{\vec r,\vec d}^{p,q}$ admits a solution.
\end{proof}

\begin{definition}\label{(p^{-}, q^{+})}
\[
p^{-}:=\{\,V^{-}: V\in p,\ V^{-}\neq\varnothing\,\},
\qquad
q^{+}:=\{\,V^{+}: V\in q,\ V^{+}\neq\varnothing\,\}.
\]

\end{definition}
\begin{tikzpicture}[
    x=0.42cm,
    y=0.62cm,
    pt/.style={circle,fill=black,inner sep=0.9pt},
    toplab/.style={font=\scriptsize},
    midlab/.style={font=\tiny},
    rowlab/.style={font=\scriptsize},
    gcomp/.style={
        draw=teal!70!black,
        line width=0.75pt,
        line cap=round,
        line join=round,
        rounded corners=1.4pt
    },
    bcomp/.style={
        draw=blue!75!black,
        line width=0.75pt,
        line cap=round,
        line join=round,
        rounded corners=1.4pt
    },
    rcomp/.style={
        draw=red!75!black,
        line width=0.75pt,
        line cap=round,
        line join=round,
        rounded corners=1.4pt
    },
    pcomp/.style={
        draw=violet!75!black,
        line width=0.75pt,
        line cap=round,
        line join=round,
        rounded corners=1.4pt
    }
]

\newcommand{\usingle}[2]{%
  \draw[#1]
    (#2-0.14,0.03)
      .. controls (#2-0.14,0.18) and (#2+0.14,0.18) ..
    (#2+0.14,0.03);
}
\newcommand{\lsingle}[2]{%
  \draw[#1]
    (#2-0.14,-0.03)
      .. controls (#2-0.14,-0.18) and (#2+0.14,-0.18) ..
    (#2+0.14,-0.03);
}

\foreach \i in {1,...,24}{
    \coordinate (M\i) at (\i,0);
}

\node[rowlab] at (-0.8,0) {$[l]$};

\node[font=\large] at (12.5, 1.95) {$p^{-}$};
\node[font=\large] at (12.5,-2.05) {$q^{+}$};


\usingle{gcomp}{1}

\draw[gcomp] (M3)  -- ($(M3)+(0,1.18)$);
\draw[gcomp] (M7)  -- ($(M7)+(0,1.18)$);
\draw[gcomp] (M12) -- ($(M12)+(0,1.18)$);
\draw[gcomp] ($(M3)+(0,1.18)$) -- ($(M12)+(0,1.18)$);

\draw[gcomp] (M8)  -- ($(M8)+(0,0.78)$);
\draw[gcomp] (M10) -- ($(M10)+(0,0.78)$);
\draw[gcomp] (M11) -- ($(M11)+(0,0.78)$);
\draw[gcomp] ($(M8)+(0,0.78)$) -- ($(M11)+(0,0.78)$);

\draw[gcomp] (M15) -- ($(M15)+(0,0.96)$);
\draw[gcomp] (M18) -- ($(M18)+(0,0.96)$);
\draw[gcomp] ($(M15)+(0,0.96)$) -- ($(M18)+(0,0.96)$);

\draw[gcomp] (M16) -- ($(M16)+(0,0.66)$);
\draw[gcomp] (M17) -- ($(M17)+(0,0.66)$);
\draw[gcomp] ($(M16)+(0,0.66)$) -- ($(M17)+(0,0.66)$);

\draw[bcomp] (M2)  -- ($(M2)+(0,1.45)$);
\draw[bcomp] (M13) -- ($(M13)+(0,1.45)$);
\draw[bcomp] ($(M2)+(0,1.45)$) -- ($(M13)+(0,1.45)$);

\draw[bcomp] (M4) -- ($(M4)+(0,0.70)$);
\draw[bcomp] (M6) -- ($(M6)+(0,0.70)$);
\draw[bcomp] ($(M4)+(0,0.70)$) -- ($(M6)+(0,0.70)$);

\usingle{bcomp}{9}

\coordinate (Pbot14) at ($(M14)+(0,1.18)$);
\coordinate (Pbot19) at ($(M19)+(0,1.18)$);
\coordinate (Pbot24) at ($(M24)+(0,1.18)$);

\draw[bcomp] (M14) -- (Pbot14);
\draw[bcomp] (M19) -- (Pbot19);
\draw[bcomp] (M24) -- (Pbot24);
\draw[bcomp] (Pbot14) -- (Pbot24);

\draw[bcomp] (M21) -- ($(M21)+(0,0.66)$);
\draw[bcomp] (M22) -- ($(M22)+(0,0.66)$);
\draw[bcomp] ($(M21)+(0,0.66)$) -- ($(M22)+(0,0.66)$);

\usingle{rcomp}{5}

\draw[pcomp] (M20) -- ($(M20)+(0,0.90)$);
\draw[pcomp] (M23) -- ($(M23)+(0,0.90)$);
\draw[pcomp] ($(M20)+(0,0.90)$) -- ($(M23)+(0,0.90)$);


\draw[gcomp] (M1) -- ($(M1)+(0,-1.28)$);
\draw[gcomp] (M7) -- ($(M7)+(0,-1.28)$);
\draw[gcomp] (M8) -- ($(M8)+(0,-1.28)$);
\draw[gcomp] ($(M1)+(0,-1.28)$) -- ($(M8)+(0,-1.28)$);

\lsingle{gcomp}{3}

\draw[gcomp] (M10) -- ($(M10)+(0,-1.12)$);
\draw[gcomp] (M16) -- ($(M16)+(0,-1.12)$);
\draw[gcomp] ($(M10)+(0,-1.12)$) -- ($(M16)+(0,-1.12)$);

\draw[gcomp] (M11) -- ($(M11)+(0,-0.90)$);
\draw[gcomp] (M12) -- ($(M12)+(0,-0.90)$);
\draw[gcomp] (M15) -- ($(M15)+(0,-0.90)$);
\draw[gcomp] ($(M11)+(0,-0.90)$) -- ($(M15)+(0,-0.90)$);

\draw[gcomp] (M17) -- ($(M17)+(0,-0.66)$);
\draw[gcomp] (M18) -- ($(M18)+(0,-0.66)$);
\draw[gcomp] ($(M17)+(0,-0.66)$) -- ($(M18)+(0,-0.66)$);

\draw[bcomp] (M2) -- ($(M2)+(0,-0.78)$);
\draw[bcomp] (M4) -- ($(M4)+(0,-0.78)$);
\draw[bcomp] (M6) -- ($(M6)+(0,-0.78)$);
\draw[bcomp] ($(M2)+(0,-0.78)$) -- ($(M6)+(0,-0.78)$);

\draw[bcomp] (M9)  -- ($(M9)+(0,-1.45)$);
\draw[bcomp] (M21) -- ($(M21)+(0,-1.45)$);
\draw[bcomp] ($(M9)+(0,-1.45)$) -- ($(M21)+(0,-1.45)$);

\draw[bcomp] (M13) -- ($(M13)+(0,-0.62)$);
\draw[bcomp] (M14) -- ($(M14)+(0,-0.62)$);
\draw[bcomp] ($(M13)+(0,-0.62)$) -- ($(M14)+(0,-0.62)$);

\lsingle{bcomp}{19}

\draw[bcomp] (M22) -- ($(M22)+(0,-0.66)$);
\draw[bcomp] (M24) -- ($(M24)+(0,-0.66)$);
\draw[bcomp] ($(M22)+(0,-0.66)$) -- ($(M24)+(0,-0.66)$);

\lsingle{rcomp}{5}
\lsingle{pcomp}{20}
\lsingle{pcomp}{23}

\draw[gcomp,decorate,decoration={brace,mirror,amplitude=2.8pt}]
    (7,-1.62) -- (8,-1.62);
\node[font=\tiny] at (7.5,-1.88) {entrance $\{7,8\}$};

\foreach \i in {1,...,24}{
    \node[pt] at (M\i) {};
    \node[midlab,below=1.5pt] at (M\i) {\(\i\)};
}

\end{tikzpicture}

\begin{definition}[The graph associated with $(p^{-},q^{+})$]
Let $V_{p,q}:=\{0,1,\dots,l\}$. For $B\in p^{-}\sqcup q^{+}$, set
$\ell(B):=\min B-1,\ r(B):=\max B$, and let $e_B$ be the unoriented edge with
endpoint set $\partial e_B:=\{\ell(B),r(B)\}$. Set
\[
\overline G_{p,q}:=(V_{p,q},\overline E_{p,q}),
\qquad
\overline E_{p,q}:=\{\,e_B:B\in p^{-}\sqcup q^{+}\,\}.
\]

A \emph{path} in $\overline G_{p,q}$ is a sequence
$\rho=(e_{B_1},\dots,e_{B_N})$ such that there exist pairwise distinct vertices
$v_0,\dots,v_N\in V_{p,q}$ with $\partial e_{B_i}=\{v_{i-1},v_i\}(1\le i\le N).$ In particular, the blocks $B_1,\dots,B_N$ are pairwise distinct. A \emph{simple cycle} in $\overline G_{p,q}$ is a sequence
$\gamma=(e_{B_1},\dots,e_{B_N})$ such that there exist vertices
$v_0,\dots,v_N\in V_{p,q}$ with $v_0=v_N$, $v_0,\dots,v_{N-1}$ pairwise distinct,
and $\partial e_{B_i}=\{v_{i-1},v_i\}(1\le i\le N).$

We also use the oriented double
\[
G_{p,q}:=(V_{p,q},E_{p,q},s,t),
\qquad
E_{p,q}:=\{B^{+},B^{-}:B\in p^{-}\sqcup q^{+}\},
\]
where $B^{+}:\ell(B)\to r(B)$ and $B^{-}:r(B)\to\ell(B)$. Thus
\[
s(B^{+})=\ell(B),\quad t(B^{+})=r(B),
\qquad
s(B^{-})=r(B),\quad t(B^{-})=\ell(B).
\]

A \emph{directed lift} of a path
$\rho=(e_{B_1},\dots,e_{B_N})$ is a sequence
$\widetilde\rho=(B_1^{\sigma_1},\dots,B_N^{\sigma_N})$, with
$\sigma_i\in\{+,-\}$, such that $t(B_i^{\sigma_i})=s(B_{i+1}^{\sigma_{i+1}})(1\le i<N).$

If $\rho$ is a simple cycle, we require in addition $t(B_N^{\sigma_N})=s(B_1^{\sigma_1}).$

\end{definition}

\begin{definition}[Gain graph associated to $\mathcal E^{p,q}_{\vec r,\vec d}$]
For each $B\in p^{-}\sqcup q^{+}$, denote by
$\kappa_{\vec r,\vec d}(B)\in\Lambda$ the coefficient prescribed by the corresponding
equation in $\mathcal E^{p,q}_{\vec r,\vec d}$.

Define
$\Phi_{\vec r,\vec d}:E_{p,q}\to\Lambda$ by
\[
\Phi_{\vec r,\vec d}(B^{+}):=\kappa_{\vec r,\vec d}(B),
\qquad
\Phi_{\vec r,\vec d}(B^{-}):=\kappa_{\vec r,\vec d}(B)^{-1}.
\]
Set
\[
\mathcal G(\mathcal E^{p,q}_{\vec r,\vec d}):=(V^{p,q},E^{p,q},s,t,\Phi_{\vec r,\vec d}).
\]
When no confusion is possible, we simply write $\mathcal G=(V,E,s,t,\Phi)$.
For a directed lift
$\widetilde\rho=(B_1^{\sigma_1},\dots,B_N^{\sigma_N})$, define its gain by $\Phi(\widetilde\rho):=\prod_{i=1}^N\Phi(B_i^{\sigma_i}).$
A simple cycle $\gamma$ in $\overline G_{p,q}$ is \emph{balanced} if
$\Phi(\widetilde\gamma)=1_\Lambda$ for one, equivalently every, directed lift
$\widetilde\gamma$ of $\gamma$. We say that $\mathcal G$ is \emph{balanced} if
every simple cycle in $\overline G_{p,q}$ is balanced.

A \emph{potential function} on $\mathcal G$ is a map $\pi:V\to\Lambda$ such that
$\pi(s(e))^{-1}\pi(t(e))=\Phi(e)$ for all $e\in E$. We write
$\mathrm{Pot}_0(\mathcal G)$ for the set of such $\pi$ with $\pi(0)=1_\Lambda$.

For $x\in\{1,\dots,l\}$, say that $B\in p^{-}\sqcup q^{+}$ \emph{crosses} $x$
if $\min B\le x\le\max B$. For
$\gamma=(B_1^{\sigma_1},\dots,B_N^{\sigma_N})$, define
\[
\chi_{p^{-}}^{\gamma}(x)
:=\#\{\,i:\ B_i\in p^{-},\ \min B_i\le x\le\max B_i\,\},
\chi_{q^{+}}^{\gamma}(x)
:=\#\{\,i:\ B_i\in q^{+},\ \min B_i\le x\le\max B_i\,\}.
\]
\end{definition}

\begin{figure}[htbp]
\centering
\begin{tikzpicture}[
    scale=0.93,
    transform shape,
    x=0.37cm,
    y=0.82cm,
    vtx/.style={circle,fill=black,inner sep=0.75pt},
    num/.style={font=\tiny},
    tag/.style={font=\scriptsize},
    gcomp/.style={draw=teal!70!black,line width=0.78pt},
    bcomp/.style={draw=blue!75!black,line width=0.78pt},
    rcomp/.style={draw=red!75!black,line width=0.78pt},
    pcomp/.style={draw=violet!75!black,line width=0.78pt}
]

\foreach \i in {0,...,24}{
    \coordinate (V\i) at (\i,0);
    \node[vtx] at (V\i) {};
}

\draw[black!35] (V0) -- (V24);

\foreach \i in {0,...,24}{
    \pgfmathtruncatemacro{\odd}{mod(\i,2)}
    \ifnum\odd=0
        \node[num,below=1.8pt] at (V\i) {$\i$};
    \else
        \node[num,below=6.2pt] at (V\i) {$\i$};
    \fi
}

\node[tag,left] at (-0.7,1.80) {$p^{-}$};
\node[tag,left] at (-0.7,-1.80) {$q^{+}$};

\newcommand{\uarc}[4]{%
  \draw[#1]
    (V#2) .. controls +(0,#4) and +(0,#4) .. (V#3);
}
\newcommand{\darc}[4]{%
  \draw[#1]
    (V#2) .. controls +(0,-#4) and +(0,-#4) .. (V#3);
}

\uarc{gcomp}{0}{1}{0.32}      
\uarc{bcomp}{1}{13}{1.72}     
\uarc{gcomp}{2}{12}{1.38}     
\uarc{bcomp}{3}{6}{0.62}      
\uarc{rcomp}{4}{5}{0.26}      
\uarc{gcomp}{7}{11}{0.80}     
\uarc{bcomp}{8}{9}{0.28}      
\uarc{bcomp}{13}{24}{1.58}    
\uarc{gcomp}{14}{18}{0.72}    
\uarc{gcomp}{15}{17}{0.43}    
\uarc{pcomp}{19}{23}{0.72}    
\uarc{bcomp}{20}{22}{0.43}    

\darc{gcomp}{0}{8}{1.32}      
\darc{bcomp}{1}{6}{0.76}      
\darc{gcomp}{2}{3}{0.26}      
\darc{rcomp}{4}{5}{0.23}      
\darc{bcomp}{8}{21}{1.72}     
\darc{gcomp}{9}{16}{1.12}     
\darc{gcomp}{10}{15}{0.80}    
\darc{bcomp}{12}{14}{0.36}    
\darc{gcomp}{16}{18}{0.43}    
\darc{bcomp}{18}{19}{0.23}    
\darc{pcomp}{19}{20}{0.23}    
\darc{bcomp}{21}{24}{0.48}    
\darc{pcomp}{22}{23}{0.23}    

\end{tikzpicture}
\caption{The underlying graph associated with the middle restrictions \((p^{-},q^{+})\).
Edges coming from \(p^{-}\) are drawn above the line, while edges coming from
\(q^{+}\) are drawn below it.}

\end{figure}

\begin{lemma}[Balancedness and potentials]\label{lem:balanced-pot}
Let $\mathcal G=(V,E,s,t,\Phi)$ be a gain graph.
Then $\mathrm{Pot}(\mathcal G)\neq\emptyset$ if and only if $\mathcal G$ is balanced.
\end{lemma}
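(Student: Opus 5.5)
This is the standard gain-graph fact, and I will prove it by building potentials along spanning trees.

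\emph{Necessity.} Let $\pi$ be a potential and $\widetilde\gamma=(B_1^{\sigma_1},\dots,B_N^{\sigma_N})$ a directed lift of a simple cycle, with vertices $v_0,\dots,v_N=v_0$. By definition of a potential, $\Phi(B_i^{\sigma_i})=\pi(v_{i-1})^{-1}\pi(v_i)$. The product $\Phi(\widetilde\gamma)$ therefore telescopes to $\pi(v_0)^{-1}\pi(v_N)=1_\Lambda$.

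Before this, I must check that the potential equation is consistent on the two orientations of an edge. Because $\Phi(B^-)=\Phi(B^+)^{-1}$ and $s,t$ are swapped, the equation for $B^-$ is just the inverse of the equation for $B^+$. So it suffices to impose it on one orientation per unordered edge $e_B$.

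The same inverse relation explains the parenthetical ``one, equivalently every'' in the definition of balance. Reversing a lift inverts its gain. Cyclically rotating the lift conjugates its gain. Both operations preserve the property of being $1_\Lambda$.

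\emph{Sufficiency.} Suppose $\mathcal G$ is balanced, and treat each connected component of $\overline G_{p,q}$ separately. In each component, choose a spanning tree $T$ rooted at some vertex: at $0$ for the component containing $0$, so that the potential lands in $\mathrm{Pot}_0$, and at an arbitrary vertex otherwise. Set $\pi(\text{root})=1_\Lambda$. For any other vertex $v$, define $\pi(v)$ as the gain of the directed lift of the unique tree path from the root to $v$. Tree edges then satisfy the potential equation by construction, since the gain of a path to a child is the gain to its parent times the gain of the connecting edge.

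For a non-tree edge $e_B$ with endpoints $u,w$, I form a cycle from the tree path $u\to w$ followed by $e_B$ traversed back from $w$ to $u$. Its gain is $\pi(u)^{-1}\pi(w)\Phi(B^{\sigma})^{-1}$. If this cycle is simple, balance forces the gain to be $1_\Lambda$, which gives $\pi(u)^{-1}\pi(w)=\Phi(B^\sigma)$, as required.

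The main obstacle is bookkeeping for degenerate cases: multi-edges, where two blocks give the same endpoint pair, and loops. I would handle them as follows.
\begin{itemize}
\item \emph{Loops.} A loop would need $\ell(B)=r(B)$, i.e. $\min B-1=\max B$, which is impossible for a nonempty $B$. So $\overline G_{p,q}$ has no loops.
\item \emph{Parallel edges.} If $e_B$ is parallel to the tree edge $e_{B'}$, the closed walk is the 2-cycle $(e_{B'},e_B)$. This has pairwise distinct vertices $v_0,v_1$ and $v_2=v_0$, so it is a simple cycle in the paper's sense.
\item \emph{All other non-tree edges.} The tree path has distinct vertices, and appending $e_B$ returns to the start without repeating a vertex. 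So the cycle is simple in every case, and balance applies directly.
\end{itemize}

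Finally, I extend $\pi$ to the whole vertex set by taking the union over components, and I note that the root normalization gives $\pi(0)=1_\Lambda$.
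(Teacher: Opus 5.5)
Your proof is correct, and it takes a different route from the paper's. The paper states this lemma without proof. The closest argument is in the proof of the subsequent counting lemma. There the potential on a component is defined as the gain of \emph{any} path from a chosen root, and well-definedness is justified by the claim that balancedness forces every closed walk to have gain $1_\Lambda$.

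That path-independence route leaves a step implicit, because balancedness is only imposed on simple cycles. An arbitrary closed walk has to be reduced to simple ones, for instance by splitting at a repeated vertex and deleting the inner closed sub-walk. This is legitimate even for nonabelian $\Lambda$, since that sub-walk contributes a contiguous factor of the gain.

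Your spanning-tree construction sidesteps this. You only invoke balance on fundamental cycles, which are simple in the paper's sense. This includes the $2$-cycles formed by a $p^{-}$-block and a $q^{+}$-block with the same endpoints, which are the only possible parallel edges in $\overline G_{p,q}$. You also supply the telescoping necessity direction and the check that the equations for $B^{-}$ follow from those for $B^{+}$; the paper writes out neither.

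Nothing is lost relative to the paper's formulation. Once a potential exists, the same telescoping shows that the gain of every path from $u$ to $w$ equals $\pi(u)^{-1}\pi(w)$, so path-independence comes for free.

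One small remark: your exclusion of loops is specific to $\overline G_{p,q}$. For an abstract gain graph, a loop is itself a simple cycle of length one, and balance gives exactly its potential equation, so your argument goes through unchanged.
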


\begin{lemma}\label{p+q}
For every simple  cycle $\gamma$ in $\overline G_{p,q}$ and every $x\in\{1,\dots,l\}$,
the integer $\chi_{p^{-}}^{\gamma}(x)+\chi_{q^{+}}^{\gamma}(x)$ is even.
\end{lemma}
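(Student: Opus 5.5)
We will prove the statement by a crossing-parity argument on the real line. Realize the vertex set $V_{p,q}=\{0,1,\dots,l\}$ as points on $\mathbb R$. Each edge $e_B$ is then the segment $[\min B-1,\max B]$. A block $B$ crosses $x\in\{1,\dots,l\}$, i.e.\ $\min B\le x\le\max B$, exactly when the half-integer point $x-\tfrac12$ lies strictly inside $(\min B-1,\max B)$. So $\chi_{p^-}^{\gamma}(x)+\chi_{q^+}^{\gamma}(x)$ is the number of edges of $\gamma$ whose segment covers $x-\tfrac12$, with no distinction between $p^-$ and $q^+$ edges. The claim is therefore purely about closed walks in a graph whose vertices are points of a line.

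Fix a directed lift $\widetilde\gamma=(B_1^{\sigma_1},\dots,B_N^{\sigma_N})$ with vertex sequence $v_0,v_1,\dots,v_N=v_0$. Define the signed count
\[
\sum_{i=1}^N\Bigl(\mathbf 1[v_{i-1}<x-\tfrac12<v_i]-\mathbf 1[v_i<x-\tfrac12<v_{i-1}]\Bigr).
\]
This sum telescopes to $0$: walking along the line from $v_0$ back to $v_0$, every rightward passage over the point $x-\tfrac12$ is matched by a leftward one. Formally, the $i$-th term equals $\mathbf 1[v_i>x-\tfrac12]-\mathbf 1[v_{i-1}>x-\tfrac12]$, since $x-\tfrac12$ is never a vertex. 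Each term is $\pm1$ when $e_{B_i}$ covers $x-\tfrac12$ and $0$ otherwise. Hence the number of covering edges is congruent mod $2$ to the signed sum, which is $0$. This proves the evenness.

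The one point to check carefully is that orientations do not matter. An edge traversed as $B^{-}$ goes from $r(B)$ to $\ell(B)$ and crosses the same point leftward, so the identity $\mathbf 1[\cdot]$-difference holds for both orientations. Each block is counted once in $\chi$, matching its single occurrence in a simple cycle; a simple cycle may use the same two endpoints twice via distinct blocks, but those are distinct summands, so this is harmless. No non-crossing property of $p$ or $q$ is needed. The only genuine obstacle is the reinterpretation $\min B\le x\le\max B\iff \min B-1<x-\tfrac12<\max B$, which is immediate from integrality.
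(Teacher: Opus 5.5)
Your proof is correct and follows essentially the paper's argument: the paper uses the cut indicator $\varepsilon_x(v)=\mathbf 1_{\{v\ge x\}}$ (the same as your half-integer cut at $x-\tfrac12$), shows that $B$ crosses $x$ exactly when the endpoints of $e_B$ lie on opposite sides of the cut, and concludes by summing $\varepsilon_x(v_{k-1})+\varepsilon_x(v_k)$ modulo $2$ around the closed walk, where you instead telescope the signed differences. Neither version needs simplicity of the cycle or noncrossingness, so your remark that those hypotheses play no role matches the paper's proof as well.
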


\begin{proof}
Define the cut indicator $\varepsilon_x(v):=\mathbf 1_{\{v\ge x\}} (v\in V_{p,q}).$
We claim that for any block \(B\in p^{-}\sqcup q^{+}\),
$\min B\le x\le \max B\iff\varepsilon_x(\ell(B))\neq \varepsilon_x(r(B)).$
Indeed, since \(\ell(B)=\min B-1\) and \(r(B)=\max B\), $\varepsilon_x(\ell(B))\neq \varepsilon_x(r(B))
\iff
\ell(B)<x\le r(B)
\iff
\min B\le x\le \max B.$

Write $\gamma=(e_{B_1},\dots,e_{B_N}),$
and choose vertices \(v_0,\dots,v_N\in V_{p,q}\) such that
$v_N=v_0,\partial e_{B_k}=\{v_{k-1},v_k\}(1\le k\le N).$
Since $\partial e_{B_k}=\{\ell(B_k),r(B_k)\},$
for each \(k\) we have $B_k\text{ crosses }x
\iff
\varepsilon_x(\ell(B_k))\neq \varepsilon_x(r(B_k))
\iff
\varepsilon_x(v_{k-1})\neq \varepsilon_x(v_k).$ 
Therefore
\[
\chi_{p^{-}}^{\gamma}(x)+\chi_{q^{+}}^{\gamma}(x)
=
\#\{\,k\in\{1,\dots,N\}:\ \varepsilon_x(v_{k-1})\neq \varepsilon_x(v_k)\,\}
=: \Delta.
\]
For each \(k=1,\dots,N\), define
\[
\delta_k:=
\begin{cases}
1,& \varepsilon_x(v_{k-1})\neq \varepsilon_x(v_k),\\
0,& \varepsilon_x(v_{k-1})=\varepsilon_x(v_k).
\end{cases}
\]
Then \(\Delta=\sum_{k=1}^N\delta_k\), and since
\(\varepsilon_x(v_{k-1}),\varepsilon_x(v_k)\in\{0,1\}\),$\delta_k\equiv \varepsilon_x(v_{k-1})+\varepsilon_x(v_k)\pmod 2.$
Hence$\Delta
\equiv
\sum_{k=1}^N\bigl(\varepsilon_x(v_{k-1})+\varepsilon_x(v_k)\bigr)
\pmod 2.$
But
\[
\sum_{k=1}^N \varepsilon_x(v_{k-1})=\sum_{j=0}^{N-1}\varepsilon_x(v_j),
\qquad
\sum_{k=1}^N \varepsilon_x(v_k)=\sum_{j=1}^{N}\varepsilon_x(v_j).
\]
Since \(v_N=v_0\), every term \(\varepsilon_x(v_j)\) for \(0\le j\le N-1\) appears twice in the total sum. Therefore $\Delta\equiv 0\pmod 2.$
So \(\Delta\) is even.
\end{proof}

\begin{corollary}\label{mod2}
Let $\gamma$ be a simple cycle in $\overline G_{p,q}$, and let $\mathcal C$ be a
$(p^{-},q^{+})$-connected component. Then $\chi_{p^{-}}^{\gamma}(a)\equiv \chi_{p^{-}}^{\gamma}(b)\pmod 2
\qquad(a,b\in\mathcal C).$
\end{corollary}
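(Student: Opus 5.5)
Since Lemma~\ref{p+q} already gives $\chi_{p^{-}}^{\gamma}(x)+\chi_{q^{+}}^{\gamma}(x)\equiv 0\pmod 2$ for every $x\in[l]$, we have $\chi_{p^{-}}^{\gamma}(x)\equiv\chi_{q^{+}}^{\gamma}(x)\pmod 2$. It therefore suffices to show that the parity of $\chi_{p^{-}}^{\gamma}$ does not change when one passes from a point to another point of the same $(p^{-},q^{+})$-connected component. We will use the connectivity of $\mathcal C$ exactly as in Corollary~\ref{crossing account}. Given $a,b\in\mathcal C$, there is a chain $a=z_0,z_1,\dots,z_n=b$ in $\mathcal C$ such that each consecutive pair $z_r,z_{r+1}$ lies in a common block $D_r\in p^{-}\sqcup q^{+}$. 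It is then enough to prove that $\chi_{p^{-}}^{\gamma}(z_r)\equiv\chi_{p^{-}}^{\gamma}(z_{r+1})$ whenever $z_r,z_{r+1}$ lie in a common block.

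First suppose $D:=D_r\in p^{-}$, and let $x<y$ be two points of $D$. For a block $B\in p^{-}$ appearing in $\gamma$ (the $B_i$ are pairwise distinct), non-crossingness of $p^{-}$ implies one of three things: either $B=D$, or $[\min B,\max B]$ contains both $x$ and $y$ or neither of them, or $B$ is nested strictly between two consecutive points of $D$. In this last case $B$ may cross exactly one of $x,y$ only if $B$ lies in $(x,y)$ and contains neither; but then $B$ crosses neither $x$ nor $y$ in the sense $\min B\le\cdot\le\max B$. More precisely, if $B\neq D$ and $\min B\le x\le\max B$, then $x\notin B$ forces $B$ to surround $x$. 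Non-crossing with $D\ni x,y$ then forces $B$ to surround $y$ too, unless $\mathrm{span}(B)\subset\mathrm{span}(D)$ lies in a gap of $D$, and a gap of $D$ cannot contain $x$. Hence every $B\in p^{-}$ crosses $x$ iff it crosses $y$, so in fact $\chi_{p^{-}}^{\gamma}(x)=\chi_{p^{-}}^{\gamma}(y)$.

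Next suppose $D\in q^{+}$. The same argument applied to $q^{+}$ gives $\chi_{q^{+}}^{\gamma}(x)=\chi_{q^{+}}^{\gamma}(y)$, and then Lemma~\ref{p+q} converts this into $\chi_{p^{-}}^{\gamma}(x)\equiv\chi_{p^{-}}^{\gamma}(y)\pmod 2$. Chaining along $z_0,\dots,z_n$ yields the claim.

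The main obstacle is the nesting case analysis in the first step: one must check carefully that a block $B\neq D$ of the same non-crossing partition cannot surround exactly one of two points of $D$. This follows since $B$ surrounding $x\in D$ but not $y\in D$ would place an endpoint of $B$ strictly between $x$ and $y$ while the other lies outside $[x,y]$, which is a crossing. The degenerate case $B=D$ counts for both points, so it contributes equally.
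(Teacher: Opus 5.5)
Your proof is correct and follows the paper's argument. Points of a common $p^{-}$-block give $\chi_{p^{-}}^{\gamma}(a)=\chi_{p^{-}}^{\gamma}(b)$ directly by noncrossingness; points of a common $q^{+}$-block give equality of $\chi_{q^{+}}^{\gamma}$, which Lemma~\ref{p+q} converts into the parity statement for $\chi_{p^{-}}^{\gamma}$; and you then chain along the component. The paper does exactly this, treating the $q^{+}$ case first, and your explicit check that a block $B\neq D$ of a noncrossing partition cannot surround exactly one point of $D$ supplies the justification the paper leaves implicit.
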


\begin{proof}
Write $\gamma=(e_{B_1},\dots,e_{B_N})$. First suppose that $a,b$ belong to the
same block $D\in q^{+}$. By noncrossingness of $q^{+}$, for every
$B_i\in q^{+}$ one has $\min B_i\le a\le \max B_i
\Longleftrightarrow\min B_i\le b\le \max B_i.$ Hence $\chi_{q^{+}}^{\gamma}(a)=\chi_{q^{+}}^{\gamma}(b).$
By Lemma~\ref{p+q},
\[
\chi_{p^{-}}^{\gamma}(a)+\chi_{q^{+}}^{\gamma}(a)\equiv 0\pmod 2,
\qquad
\chi_{p^{-}}^{\gamma}(b)+\chi_{q^{+}}^{\gamma}(b)\equiv 0\pmod 2.
\]
Thus
\[
\chi_{p^{-}}^{\gamma}(a)-\chi_{p^{-}}^{\gamma}(b)
\equiv
-\bigl(\chi_{q^{+}}^{\gamma}(a)-\chi_{q^{+}}^{\gamma}(b)\bigr)
\equiv 0\pmod 2.
\]

On the other hand, if $a,b$ belong to the same block $D\in p^{-}$, then by
noncrossingness of $p^{-}$, for every $B_i\in p^{-}$ one has $\min B_i\le a\le \max B_i\Longleftrightarrow
\min B_i\le b\le \max B_i,$
and therefore
\[
\chi_{p^{-}}^{\gamma}(a)=\chi_{p^{-}}^{\gamma}(b).
\]

Finally, if $a,b\in\mathcal C$, then there exists a chain
\[
a=a_0,a_1,\dots,a_M=b
\]
such that $a_{j-1}$ and $a_j$ belong to a common block of $p^{-}$ or of $q^{+}$
for every $j$. Applying the two cases above along the chain gives $\chi_{p^{-}}^{\gamma}(a)\equiv \chi_{p^{-}}^{\gamma}(b)\pmod 2.$
\end{proof}

\begin{lemma}\label{outer cycle}
Let $p\in \NC_{\Lambda}(k,l)$ and $q\in \NC_{\Lambda}(l,m)$ with
$T_q\circ T_p\neq 0$. If $|K(q,p)|=1$, then $\overline G_{p,q}$ has a unique
simple cycle. Its edges are exactly those indexed by the outer blocks of
$p^{-}\sqcup q^{+}$.
\end{lemma}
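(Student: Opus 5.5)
The plan is to encode a simple cycle by which $p^{-}$-edges and which $q^{+}$-edges it uses. We then show with the parity results (Lemma~\ref{p+q} and Corollary~\ref{mod2}) that the only possible choice is the set of outer blocks on each side. Existence of this cycle will use the hypothesis $|K(q,p)|=1$ to exclude common cut vertices.

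\emph{Step 1 (the hypothesis on the middle row).} Since $|K(q,p)|=1$, the single $(p,q)$-connected component contains all of $[l]$. Restricting joining sequences to the middle row, $[l]$ is then a single $(p^{-},q^{+})$-connected component. Corollary~\ref{mod2} therefore gives, for every simple cycle $\gamma$, that $x\mapsto\chi^{\gamma}_{p^{-}}(x)$ has constant parity on all of $[l]$. By Lemma~\ref{p+q}, $\chi^{\gamma}_{q^{+}}(x)$ then has the same constant parity.

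\emph{Step 2 (uniqueness).} Let $\gamma$ be a simple cycle and let $S\subset p^{-}$ be the set of blocks whose edges $\gamma$ uses.
\begin{itemize}
\item \emph{Ancestor chains.} Order the blocks of the noncrossing partition $p^{-}$ by nesting of spans. For $x\in D\in p^{-}$, the blocks $B$ with $\min B\le x\le\max B$ are exactly $D$ and its ancestors, since descendants of $D$ lie strictly inside gaps of $D$. Hence $\chi^{\gamma}_{p^{-}}(x)=\#\bigl(S\cap(\{D\}\cup\mathrm{Anc}(D))\bigr)$ for such $x$.
\item \emph{Odd case.} If the parity is odd everywhere, each outer block must lie in $S$. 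Inductively, every child of a used block has parity $1+\mathbf 1_S(\text{child})$, which forces the child to be unused. Grandchildren then again see exactly one used ancestor, so they are unused as well. Thus $S$ is exactly the set of outer blocks of $p^{-}$.
\item \emph{Even case.} If the parity is even everywhere, the same induction gives $S=\varnothing$. By Step~1 the $q^{+}$-side is then also empty, so $\gamma$ has no edges, which is absurd.
\end{itemize}
Applying the same argument to $q^{+}$ (odd parity, by Lemma~\ref{p+q}) shows that the $q^{+}$-edges of $\gamma$ are exactly the outer blocks of $q^{+}$. Hence $\gamma$ is uniquely determined as an edge set.

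\emph{Step 3 (existence).} The outer blocks $O_1,\dots,O_a$ of $p^{-}$, ordered left to right, have spans tiling $[1,l]$. Their edges $\{\min O_i-1,\max O_i\}$ therefore form a path $0=v_0<v_1<\dots<v_a=l$. Similarly, the outer blocks of $q^{+}$ give a path $0=w_0<\dots<w_b=l$. Going along the first path and back along the second gives a closed walk. It is a simple cycle unless some vertex $0<v<l$ lies on both paths. In that case no block of $p^{-}$ or $q^{+}$ meets both $[1,v]$ and $[v+1,l]$, so $[l]$ would split into two $(p^{-},q^{+})$-components, contradicting Step~1. The degenerate case $a=b=1$ yields a $2$-cycle made of the two distinct edges $e_{O}$ with $O\in p^{-}$ and $e_{O'}$ with $O'\in q^{+}$, which is allowed.

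The main obstacle I expect is the ancestor-chain claim in Step~2. One must check carefully that for $x\in D$, no descendant of $D$ satisfies $\min B\le x\le\max B$, and that exactly the ancestors of $D$ do. This follows from noncrossingness together with the fact that $x\in D$ cannot lie in the span of a block nested in a gap of $D$. Once this is settled, the parity induction is routine.
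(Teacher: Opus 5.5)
Your proposal is correct and follows essentially the paper's route. Existence comes from the two outer-block paths from $0$ to $l$, which can meet only at $0$ and $l$ because $[l]$ is a single $(p^{-},q^{+})$-component; uniqueness comes from the constant parity given by Corollary~\ref{mod2} and Lemma~\ref{p+q}, which forces the edge set to be exactly the outer blocks.

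The only difference is in how the parity argument is run. The paper fixes the parity as odd by evaluating at the maximal vertex $x_\gamma$ of $\gamma$, where $\chi^{\gamma}_{p^{-}}(x_\gamma)=1$, and excludes inner blocks by choosing a used inner block with largest right endpoint. You instead treat both parities by induction on nesting depth, and discard the even case because it forces $\gamma$ to have no edges.
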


\begin{proof}
Let $B_1,\dots,B_r$ and $C_1,\dots,C_s$ be the outer blocks of $p^{-}$ and
$q^{+}$, respectively, listed from left to right. Since the spans of the outer
blocks of a noncrossing partition are pairwise disjoint and cover $[1,l]$, we
have
\[
\ell(B_1)=0,\qquad r(B_i)=\ell(B_{i+1})\ (1\le i<r),\qquad r(B_r)=l,
\]
and similarly
\[
\ell(C_1)=0,\qquad r(C_j)=\ell(C_{j+1})\ (1\le j<s),\qquad r(C_s)=l.
\]
Hence the outer blocks of $p^{-}$ form the simple path
\[
\rho_p:\quad
0=\ell(B_1),\ r(B_1)=\ell(B_2),\ \dots,\ r(B_{r-1})=\ell(B_r),\ r(B_r)=l,
\]
and the outer blocks of $q^{+}$ form the simple path
\[
\rho_q:\quad
0=\ell(C_1),\ r(C_1)=\ell(C_2),\ \dots,\ r(C_{s-1})=\ell(C_s),\ r(C_s)=l.
\]

We claim that these two paths meet only at $0$ and $l$. Indeed, if
$x\in\{1,\dots,l-1\}$ were a common vertex, then no block of $p^{-}$ crosses
$x$, and no block of $q^{+}$ crosses $x$. Hence $\{1,\dots,x\}$ and
$\{x+1,\dots,l\}$ would lie in different $(p^{-},q^{+})$-connected components,
contradicting $|K(q,p)|=1$. Therefore the two outer paths form a simple cycle.

It remains to prove uniqueness. Let $\gamma=(e_{D_1},\dots,e_{D_N})$
be any simple cycle in $\overline G_{p,q}$, and let $x_\gamma$ be the maximal
vertex appearing in $\gamma$. Since $\gamma$ is simple, exactly two edges of
$\gamma$ are incident with $x_\gamma$. By maximality of $x_\gamma$, both have
right endpoint $x_\gamma$. At most one of them comes from $p^{-}$ and at most
one from $q^{+}$, since two distinct blocks of the same partition cannot have
the same right endpoint. Hence $\chi_{p^{-}}^{\gamma}(x_\gamma)=1,\chi_{q^{+}}^{\gamma}(x_\gamma)=1.$
Since $|K(q,p)|=1$, all points of $[l]$ lie in the same
$(p^{-},q^{+})$-connected component. By Corollary~\ref{mod2},
\[
\chi_{p^{-}}^{\gamma}(x)\equiv 1\pmod 2
\qquad(1\le x\le l).
\]
Then Lemma~\ref{p+q} gives also
\[
\chi_{q^{+}}^{\gamma}(x)\equiv 1\pmod 2
\qquad(1\le x\le l).
\]

We now show that every outer block belongs to $\gamma$. Let $B$ be an outer
block of $p^{-}$ and set $b:=\max B$. Then $B$ is the unique block of $p^{-}$
crossing $b$. Hence $\chi_{p^{-}}^{\gamma}(b)$ is either $0$ or $1$, according
as $e_B$ does not belong to $\gamma$ or belongs to $\gamma$. Since
$\chi_{p^{-}}^{\gamma}(b)$ is odd, we get $e_B\in\gamma$. The same argument
shows that every outer block of $q^{+}$ belongs to $\gamma$.

It remains to rule out inner blocks.
Suppose first that $\gamma$ contains an inner block $D\in p^{-}$.
Choose such a block with maximal right endpoint, and set $d:=\max D$.
Let $B$ be the unique outer block of $p^{-}$ such that $\operatorname{span}(D)\subsetneq \operatorname{span}(B).$
By the previous paragraph, $e_B\in\gamma$.
Now both $B$ and $D$ cross $d$.
Moreover, by maximality of $d$, no other inner block of $p^{-}$ belonging to $\gamma$
crosses $d$, and no outer block of $p^{-}$ other than $B$ crosses $d$.

Hence $\chi_{p^{-}}^{\gamma}(d)=2,$ contradicting the fact that $\chi_{p^{-}}^{\gamma}(d)$ is odd.
Thus $\gamma$ contains no inner block of $p^{-}$.
By the same argument, $\gamma$ contains no inner block of $q^{+}$.

Therefore every simple cycle has exactly the outer blocks as its edges. Since
the outer blocks already form a simple cycle, this cycle is unique.
\end{proof}

\begin{lemma}[Potential functions and solutions]
Let $\mathcal G(\mathcal E^{p,q}_{\vec r,\vec d})$ be the gain graph associated to $(p^{-},q^{+})$, and let $\mathrm{Sol}$
be the solution set of the system $\mathcal E^{p,q}_{\vec r,\vec d}$.
Then there is a canonical bijection $\mathrm{Pot}_0(\mathcal G)\leftrightarrow \mathrm{Sol}$:
\[
F(\pi)=(x_k)_{k=1}^l,\ \ x_k=\pi(k-1)^{-1}\pi(k),
\qquad
G(x)=\pi,\ \ \pi(0)=1_\Lambda,\ \ \pi(m)=\prod_{k=1}^m x_k\ (1\le m\le l).
\]
\end{lemma}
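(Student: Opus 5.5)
The plan is to verify directly that $F$ and $G$ are well defined, i.e.\ that they land in the stated sets, and then that they are mutually inverse; the latter is a telescoping computation. Throughout I use that every equation of $\mathcal E^{p,q}_{\vec r,\vec d}$ has the form $\prod_{i=\min B}^{\max B}x_i=\kappa_{\vec r,\vec d}(B)$ for a unique $B\in p^{-}\sqcup q^{+}$. This holds because, by Lemma~\ref{Equation}, the index set of each equation is the consecutive interval $[\min B,\max B]$ attached to a block of $p^{-}$ or $q^{+}$. For through-blocks one should read the interval as $[\ell(V)+1,\max V^-]$ as in the proof of that lemma. I will fix this reading once, namely that the edge $e_B$ has endpoints $\ell(B)=\min B-1$ and $r(B)=\max B$ with exactly this interval, and use it uniformly.

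\emph{Step 1 ($F$ lands in $\mathrm{Sol}$).} Let $\pi\in\mathrm{Pot}_0(\mathcal G)$ and set $x_k=\pi(k-1)^{-1}\pi(k)$. For any block $B$, the product over $[\min B,\max B]$ telescopes:
\[
\prod_{k=\min B}^{\max B}x_k=\pi(\min B-1)^{-1}\pi(\max B)=\pi(s(B^{+}))^{-1}\pi(t(B^{+}))=\Phi(B^{+})=\kappa_{\vec r,\vec d}(B).
\]
Hence $F(\pi)$ satisfies every equation of the system.

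\emph{Step 2 ($G$ lands in $\mathrm{Pot}_0$).} Let $x\in\mathrm{Sol}$ and define $\pi(0)=1_\Lambda$ and $\pi(m)=x_1\cdots x_m$. Then $\pi(\ell(B))^{-1}\pi(r(B))=\prod_{k=\min B}^{\max B}x_k=\kappa(B)=\Phi(B^{+})$. Inverting this identity gives the condition for $B^{-}$ as well, since $\Phi(B^{-})=\kappa(B)^{-1}$ and $s,t$ are swapped. So $\pi$ is a potential normalized at $0$.

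\emph{Step 3 (mutual inverses).} Telescoping gives $F(G(x))_k=\pi(k-1)^{-1}\pi(k)=x_k$. In the other direction, $G(F(\pi))(m)=\prod_{k\le m}\pi(k-1)^{-1}\pi(k)=\pi(0)^{-1}\pi(m)=\pi(m)$, using $\pi(0)=1_\Lambda$. Therefore $F$ and $G$ are inverse bijections.

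\emph{Main obstacle.} The computation itself is routine. The only delicate point is bookkeeping: one must check that the left-hand sides in $\mathcal E^{p,q}_{\vec r,\vec d}$ really are products over $[\ell(B)+1,r(B)]$ for the edge endpoints used in $\overline G_{p,q}$. For through-blocks this interval is taken relative to the left-adjacent through-block, with the $g_V,\alpha_V$ corrections absorbed into $\kappa$. I would handle this by verifying the dictionary block-type by block-type against the proof of Lemma~\ref{Equation} before running the telescoping argument.
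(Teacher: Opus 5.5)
Your proposal is correct and matches the paper's proof. Both directions rest on the telescoping identity $\prod_{k=\min B}^{\max B}\pi(k-1)^{-1}\pi(k)=\pi(\ell(B))^{-1}\pi(r(B))$, followed by checking that $F\circ G$ and $G\circ F$ are identities; you are slightly more careful than the paper in writing $\kappa_{\vec r,\vec d}(B)$ rather than $\mathrm{col}(B)$ and in treating the reversed edges $B^{-}$.

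One point in your bookkeeping remark needs tidying. As stated in Lemma~\ref{Equation}, the through-block equation is already over $[\min V^{-},\max V^{-}]$, because the gap after the left-adjacent through-block has been absorbed into $\kappa$ via $g_V$. So the relevant interval is $[\ell(B)+1,r(B)]$ with $\ell(B)=\min B-1$ in the graph's sense, not $[\max(V'\cap[l])+1,\max V^{-}]$ as the notation $\ell(V)$ from that lemma's proof would suggest. This is in fact the reading your computation uses.
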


\begin{proof}
Let $\pi\in\mathrm{Pot}_0(\mathcal G)$ and put $x_k=\pi(k-1)^{-1}\pi(k)$. For any block $B\in p^{-}\sqcup q^{+}$,
telescoping gives
\[
\prod_{k=\min B}^{\max B}x_k=\pi(\ell(B))^{-1}\pi(r(B))=\Phi(B^{+})=\mathrm{col}(B),
\]
hence $F(\pi)\in\mathrm{Sol}$.

Conversely, let $x=(x_k)_{k=1}^l\in\mathrm{Sol}$ and define $\pi(0)=1_\Lambda$, $\pi(m)=\prod_{k=1}^m x_k$.
Then for any block $B$,
\[
\pi(\ell(B))^{-1}\pi(r(B))
=
\Bigl(\prod_{k=1}^{\ell(B)}x_k\Bigr)^{-1}\Bigl(\prod_{k=1}^{r(B)}x_k\Bigr)
=
\prod_{k=\min B}^{\max B}x_k
=
\mathrm{col}(B)
=
\Phi(B^{+}),
\]
so $\pi\in\mathrm{Pot}_0(\mathcal G)$.

Finally, $G(F(\pi))(m)=\prod_{k=1}^m(\pi(k-1)^{-1}\pi(k))=\pi(m)$ and
$F(G(x))_k=\pi(k-1)^{-1}\pi(k)=x_k$, hence $F$ and $G$ are inverse bijections.
\end{proof}

\begin{lemma}\label{|K(q,p)|=1}
Assume that $p\in \NC_{\Lambda}(k,l)$ and $q\in \NC_{\Lambda}(l,m)$ satisfy
$T_q\circ T_p\neq 0$ and $|K(q,p)|=1$.
Let $\mathcal G(\mathcal E^{p,q}_{\vec r,\vec d})$ be the gain graph associated
with $\mathcal E^{p,q}_{\vec r,\vec d}$. By Lemma~\ref{outer cycle}, $\overline G_{p,q}$ has a unique simple cycle;
denote it by $\gamma_{\mathrm{out}}$. Let $\widetilde\gamma_{\mathrm{out}}$ be
the directed lift of $\gamma_{\mathrm{out}}$ following the order in which the
boundary word is read in the definition of $\delta_{q\cdot p}(\vec r,\vec d)$.
Then $\delta_{q\cdot p}(\vec r,\vec d)=1
\quad\Longrightarrow\quad
\Phi_{\vec r,\vec d}(\widetilde\gamma_{\mathrm{out}})=1_{\Lambda}.$ Equivalently, under the condition $\delta_{q\cdot p}(\vec r,\vec d)=1$, the $\mathcal{E}_{\vec r,\vec d}^{p,q}$ has solution 
.

\end{lemma}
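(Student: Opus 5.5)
We combine Lemma~\ref{lem:balanced-pot}, Lemma~\ref{outer cycle} and the potential-function lemma: $\mathcal E^{p,q}_{\vec r,\vec d}$ has a solution iff $\mathrm{Pot}_0(\mathcal G)\neq\emptyset$, iff $\mathcal G$ is balanced. Since $|K(q,p)|=1$, the only simple cycle is $\gamma_{\mathrm{out}}$. So both assertions of the statement reduce to showing $\Phi_{\vec r,\vec d}(\widetilde\gamma_{\mathrm{out}})=1_\Lambda$ whenever $\delta_{q\cdot p}(\vec r,\vec d)=1$.

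We first describe $\gamma_{\mathrm{out}}$ explicitly. By Lemma~\ref{outer cycle}, its edges are the outer blocks $B_1,\dots,B_r$ of $p^-$ and $C_1,\dots,C_s$ of $q^+$. We choose the directed lift that runs from $0$ to $l$ along the $p^-$-path with positive orientations and returns from $l$ to $0$ along the $q^+$-path with negative orientations. Its gain is
\[
\Phi(\widetilde\gamma_{\mathrm{out}})=\kappa(B_1)\cdots\kappa(B_r)\,\kappa(C_s)^{-1}\cdots\kappa(C_1)^{-1}.
\]
Every outer block of $p^-$ is either the lower part of a through-block of $p$ or a lower outer single-layer block. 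The same holds for $q^+$.

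The main step is to evaluate $\kappa(B_1)\cdots\kappa(B_r)$ by telescoping. For a through-block $V$, the coefficient $\kappa(V)=g_V^{-1}t_{V'}^{-1}\alpha_V(\prod_{V^+}\vec r)t_V$ was obtained in Lemma~\ref{Equation} by dividing consecutive prefix relations. So when we multiply these coefficients left to right, the factors $g_V^{-1}$ cancel against the labels of the lower outer single-layer blocks lying between $V'$ and $V$; these blocks occur as edges of the path and contribute exactly $g_V$. The factors $t_{V'}$ cancel against the preceding $t_{V'}$. What remains is
\[
\Big(\prod_{\text{upper outer blocks of }p\text{ left of the last through-block}}\text{labels}\Big)^{-1}\Big(\prod_{j\le \max(T_r\cap[k])} r_j\Big)\,t_{T_r}\,(\text{lower outer labels right of }T_r),
\]
together with the $\alpha$-factors. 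The $q$-side gives the analogous expression with $\vec d$ and the $\iota$'s. If $p$ (or $q$) has no through-blocks, the boundary condition of $p$ alone makes its contribution trivial.

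Next we compare the two sides using $\delta_{q\cdot p}(\vec r,\vec d)=1$. Since $|K(q,p)|=1$, the partition $q\cdot p$ consists of a single block $\mathcal C\cap([k]\sqcup[m])$ with label $\mathrm{lab}(\mathcal C)$. If this block is of through type, Definition~\ref{block relations} gives $\prod_{j\le \max}r_j\cdot\mathrm{lab}(\mathcal C)=\prod_{j\le\max}d_j$. We substitute $\mathrm{lab}(\mathcal C)=t_{\mathcal C}h_{\mathcal C}\mu_{\mathcal C}^{-1}\iota_{\mathcal C}$. Using the boundary conditions of $p$ and $q$, as in Step 5 of the proof that $q\cdot p\in\NC_\Lambda(k,m)$, the upper-outer and lower-outer factors cancel, and the two telescoped products coincide. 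This gives gain $1$. The half, cycle and trivial types are handled in the same way with the corresponding formulas from Definition~\ref{def: colour}.

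The main obstacle is this last bookkeeping. Definition~\ref{def: colour} was built from constants $h_{\mathcal C},\mu_{\mathcal C},f_{\mathcal C}$ that Corollary~\ref{cor: constant interval} defines only on solution vectors, which is circular when we want to prove that a solution exists. To avoid this, we rewrite $h_{\mathcal C}$ and $\mu_{\mathcal C}$ purely via labels of blocks and entrance constants from Proposition~\ref{constant entrance }. These depend only on colourings, not on $\vec r,\vec d$, and are well defined because $T_q\circ T_p\neq0$ provides some solution vector. With that rewriting, the identity becomes a formal equality of words in the labels.
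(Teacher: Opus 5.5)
Your proposal is correct, but it closes the argument differently from the paper.

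\textbf{The paper's route.} The paper never evaluates the full gain. It splits $\widetilde\gamma_{\mathrm{out}}$ into the left, middle and right pieces of the $p^{-}$-path together with the $q^{+}$-path. By the same telescoping you use, it shows that under $\delta_{q\cdot p}(\vec r,\vec d)=1$ the middle piece has gain $f_{\mathcal C}$, and it notes that the other pieces carry only block labels. It then compares with a pair $(\vec r',\vec d')$ for which a solution exists, which is available since $T_q\circ T_p\neq 0$. There the gain is $1_\Lambda$ by balancedness, hence it is $1_\Lambda$ at $(\vec r,\vec d)$ as well.

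\textbf{Your route.} You use the reference solution only to rewrite $h_{\mathcal C},\mu_{\mathcal C},f_{\mathcal C}$ as words in block labels, and then identify the gain outright. In the through case it equals $\bigl(\prod_{j\le k}r_j\bigr)\,\mathrm{lab}(\mathcal C)\,\bigl(\prod_{j\le m}d_j\bigr)^{-1}$, and in the upper-half case it equals $\bigl(\prod_{j\le k}r_j\bigr)\,\mathrm{lab}(\mathcal C)$. This costs more bookkeeping, but it gives the converse for free: the gain is $1_\Lambda$ if and only if $\delta_{q\cdot p}(\vec r,\vec d)=1$. It also handles the through case explicitly, which the paper only declares analogous. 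The paper's comparison trick buys brevity.

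\textbf{What to state explicitly.} To make your computation airtight, say what $|K(q,p)|=1$ forces.
\begin{itemize}
\item $p$ has no upper single-layer blocks and $q$ has no lower single-layer blocks, since each would be a trivial component.
\item $\mathcal C\cap[l]=[l]$, so $\mathcal C$ has no entrances.
\end{itemize}
Consequently $\alpha_V=\beta_V=1$. The sets $T\cap[k]$, $T\in\mathrm{Th}(p)$, tile $[k]$, and likewise the sets $T\cap[m]$, $T\in\mathrm{Th}(q)$, tile $[m]$. The upper-label and $\alpha$-factors in your displayed remainder are therefore simply absent. The constants $h_{\mathcal C},\mu_{\mathcal C},f_{\mathcal C}$ are plain products of labels of free outer blocks, so no entrance constants enter. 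In the through case no boundary condition is needed at all. The boundary conditions of $p$ and $q$ are only relevant when one side has no through-blocks.
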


\begin{proof}
There are two possible types of non-trivial $(p,q)$-connected components $\mathcal C$ with
$\mathcal C\cap [l]\neq\varnothing$:
\begin{enumerate}
\item[(1)] $\mathcal C$ is a half-connected component, i.e. an upper-half or lower-half connected component;
\item[(2)] $\mathcal C$ is a through connected component.
\end{enumerate}

We prove the upper-half case; the lower-half case is analogous. Let
\(T_1,\dots,T_r\) be the elements of \(\mathrm{Th}_p(\mathcal C)\), listed from
left to right. Since \(\mathcal C\) is upper-half and \(|K(q,p)|=1\), we have
$\mathrm{Th}_q(\mathcal C)
=\mathrm{Ent}^{\uparrow}(\mathcal C)=\mathrm{Ent}^{\downarrow}(\mathcal C)
=\varnothing.$
Thus, by Proposition~\ref{lem:two-sided-middle-decomposition},
\[
H_{\mathcal C}
=
\left(\bigsqcup_{T\in \mathrm{Th}_p(\mathcal C)} I(T)\right)
\sqcup
\left(\bigsqcup_{D\in \mathrm{FreeOut}_p^{\downarrow}(\mathcal C)} I(D)\right),
\qquad
H_{\mathcal C}
=
\bigsqcup_{D\in \mathrm{FreeOut}_q^{\uparrow}(\mathcal C)} I(D).
\]

On the \(p^{-}\)-side, let \(T_1,\dots,T_t\) be the elements of \(\mathrm{Th}_p(\mathcal C)\), listed
from left to right. Put \(T_i^-:=T_i\cap[l]\in p^{-}\), and define $\mathcal A_{\mathcal C}^{p}:=\{\,T^-:T\in\mathrm{Th}_p(\mathcal C)\,\}\sqcup
\mathrm{FreeOut}_p^{\downarrow}(\mathcal C)\subset p^{-}.$
List the elements of \(\mathcal A_{\mathcal C}^{p}\) from left to right as $A_1\prec\cdots\prec A_N.$ By Proposition~\ref{lem:two-sided-middle-decomposition}, $H_{\mathcal C}
=\bigsqcup_{j=1}^{N} I(A_j).$ 

Hence  $\ell(A_1)=\min(\mathcal C\cap[l])-1, r(A_j)=\ell(A_{j+1})(1\le j<N), r(A_N)=\max(\mathcal C\cap[l]),$
so the edges \(e_{A_1},\dots,e_{A_N}\) form a path
$\rho^{p}:=(e_{A_1},\dots,e_{A_N})$
in \(\overline G_{p,q}\). Write $A_{\nu_i}=T_i^-(1\le i\le t).$

Define the subpaths $\rho^{L}:=(e_{A_1},\dots,e_{A_{\nu_1-1}}),\rho^{M}:=(e_{A_{\nu_1}},\dots,e_{A_{\nu_t}}),
\rho^{R}:=(e_{A_{\nu_t+1}},\dots,e_{A_N}).$
Empty paths are allowed. 
 Thus \(\rho^{p}\) is the concatenation of $\rho^{L},\rho^{M},\rho^{R}$. For their directed lifts with the left-to-right
orientation, \[\Phi_{\vec r,\vec d}(\widetilde\rho^{L})=
\prod_{j<\nu_1}\kappa_{\vec r,\vec d}(A_j),
\qquad
\Phi_{\vec r,\vec d}(\widetilde\rho^{M})
=
\prod_{\nu_1\le j\le\nu_t}\kappa_{\vec r,\vec d}(A_j),
\qquad
\Phi_{\vec r,\vec d}(\widetilde\rho^{R})
=
\prod_{j>\nu_t}\kappa_{\vec r,\vec d}(A_j),
\]
with the convention that empty products are \(1_\Lambda\).

On the \(q^{+}\)-side, put $\mathcal A_{\mathcal C}^{q}:=
\mathrm{FreeOut}_q^{\uparrow}(\mathcal C)
\subset q^{+}.$
List its elements from left to right as $Q_1\prec\cdots\prec Q_M.$
By Proposition~\ref{lem:two-sided-middle-decomposition}, $H_{\mathcal C}
=\bigsqcup_{j=1}^{M} I(Q_j).$
Hence $\ell(Q_1)=\min(\mathcal C\cap[l])-1,r(Q_j)=\ell(Q_{j+1})(1\le j<M),r(Q_M)=\max(\mathcal C\cap[l]).$
Thus the edges \(e_{Q_1},\dots,e_{Q_M}\) form a path $\rho^{q}:=(e_{Q_1},\dots,e_{Q_M})$
in \(\overline G_{p,q}\). In \(\overline G_{p,q}\). For its right-to-left directed lift, $\Phi_{\vec r,\vec d}(\widetilde\rho^q)
=
(\prod_{j=1}^{M}\kappa_{\vec r,\vec d}(Q_j))^{-1}.$ Hence $\widetilde\gamma_{\mathrm{out}}$ is the concatenation of $\widetilde\rho^{L}, \widetilde\rho^{M}, \widetilde\rho^{R}, \widetilde\rho^q$. Therefore $\Phi_{\vec r,\vec d}(\widetilde\gamma_{\mathrm{out}})=\Phi_{\vec r,\vec d}(\widetilde\rho^{L})\Phi_{\vec r,\vec d}(\widetilde\rho^{M})\Phi_{\vec r,\vec d}(\widetilde\rho^{R})\Phi_{\vec r,\vec d}(\widetilde\rho^q)$

We show that \(\widetilde\gamma_{\mathrm{out}}\) is balanced in
\(\mathcal G(\mathcal E^{p,q}_{\vec r,\vec d})\). Since \(|K(q,p)|=1\), the
upper boundary contributions attached to the through-blocks of \(p\) are
trivial, that is, $\alpha_V=1 (V\in\mathrm{Th}_p(\mathcal C)).$ Using \(\delta_{q\cdot p}(\vec r,\vec d)=1\) and
Corollary~\ref{colour of components}(1), a direct computation gives $\Phi_{\vec r,\vec d}(\widetilde\rho^{M})=f_{\mathcal C}.$

On the other hand, by Corollary~\ref{cor: constant interval}(2), there exist
boundary data \((\vec r',\vec d')\) and
\(\vec x\in\theta_p^{\vec r'}\cap\Omega_q^{\vec d'}\) such that $\prod_{F_{\mathcal C}}\vec x
=f_{\mathcal C}.$
Equivalently, $\Phi_{\vec r',\vec d'}(\widetilde\rho^{M})=f_{\mathcal C}=\Phi_{\vec r,\vec d}(\widetilde\rho^{M}).$

Moreover, the coefficients along the left part, the right part, and the
\(q^{+}\)-side path are unchanged when passing from
\((\vec r,\vec d)\) to \((\vec r',\vec d')\). Hence $\Phi_{\vec r,\vec d}(\widetilde\rho^{L})
=\Phi_{\vec r',\vec d'}(\widetilde\rho^{L}),\Phi_{\vec r,\vec d}(\widetilde\rho^{R})
=\Phi_{\vec r',\vec d'}(\widetilde\rho^{R}),$
and $\Phi_{\vec r,\vec d}(\widetilde\rho^{q})
=\Phi_{\vec r',\vec d'}(\widetilde\rho^{q}).$
Therefore $\Phi_{\vec r,\vec d}(\widetilde\gamma_{\mathrm{out}})=\Phi_{\vec r',\vec d'}(\widetilde\gamma_{\mathrm{out}}).$

Since \(\vec x\in\theta_p^{\vec r'}\cap\Omega_q^{\vec d'}\), the system
\(\mathcal E^{p,q}_{\vec r',\vec d'}\) admits a solution. Hence the associated
gain graph is balanced, and in particular $\Phi_{\vec r',\vec d'}(\widetilde\gamma_{\mathrm{out}})=1_\Lambda.$
Consequently, $\Phi_{\vec r,\vec d}(\widetilde\gamma_{\mathrm{out}})=1_\Lambda.$

The lower-half case is completely parallel, with the roles of \(p^{-}\) and
\(q^{+}\) exchanged and with Corollary~\ref{colour of components}(2) replacing
Corollary~\ref{colour of components}(1). The through case is handled in the
same way, applying Proposition~\ref{lem:two-sided-middle-decomposition} to the
two sides of \(H_{\mathcal C}\). In both cases the same gain comparison, together
with \(\delta_{q\cdot p}(\vec r,\vec d)=1\), yields $\Phi_{\vec r,\vec d}(\widetilde\gamma_{\mathrm{out}})=1_\Lambda.$

\end{proof}

\begin{lemma}[Counting normalized potentials on a balanced gain graph]
Let $\mathcal G=(V,E,s,t,\Phi)$ be a gain graph with gains in the finite group $\Lambda$.
Assume that $\mathcal G$ is \emph{balanced}, i.e.\ $\Phi(\gamma)=1_\Lambda$ for every simple directed cycle $\gamma$.
Let $c=c(\mathcal G)$ be the number of connected components of the underlying (undirected) graph, and
let $C_1,\dots,C_c$ be these components with $0\in C_1$.
Then $\mathrm{Pot}_0(\mathcal G)\neq\emptyset$ and
$|\mathrm{Pot}_0(\mathcal G)|=|\Lambda|^{c-1}.$
More precisely, choosing roots $r_1:=0\in C_1$ and $r_j\in C_j$ for $j\ge2$, the map
\[
\Theta_0:\mathrm{Pot}_0(\mathcal G)\longrightarrow \Lambda^{c-1},
\qquad
\Theta_0(\pi):=\bigl(\pi(r_2),\dots,\pi(r_c)\bigr)
\]
is a bijection. In particular, if $\mathcal G$ is not balanced, then $\mathrm{Pot}_0(\mathcal G)=\emptyset$.
\end{lemma}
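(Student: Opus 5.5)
The plan is to work component by component, using a spanning forest in each component of the underlying undirected graph. First I will show that a potential is determined by its value at one vertex of each component. If $\pi,\pi'$ are potentials and $e$ is an edge, then
\[
\pi(s(e))^{-1}\pi(t(e))=\Phi(e)=\pi'(s(e))^{-1}\pi'(t(e)).
\]
Hence $\pi(t(e))\pi'(t(e))^{-1}=\pi(s(e))\pi'(s(e))^{-1}$. It is cleaner to propagate directly: along an edge, $\pi(t(e))=\pi(s(e))\Phi(e)$ and $\pi(s(e))=\pi(t(e))\Phi(e)^{-1}$, which is consistent with $\Phi(B^-)=\Phi(B^+)^{-1}$. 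So along any directed walk $v_0\to v_1\to\cdots\to v_N$ we get $\pi(v_N)=\pi(v_0)\Phi(\widetilde\rho)$. Since each component $C_j$ is connected, every vertex of $C_j$ is joined to $r_j$ by a path. It follows that $\pi|_{C_j}$ is determined by $\pi(r_j)$, and therefore $\Theta_0$ is injective; note that $\pi(r_1)=\pi(0)=1_\Lambda$ is fixed by the normalization.

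For surjectivity, fix $(\lambda_2,\dots,\lambda_c)\in\Lambda^{c-1}$ and set $\lambda_1:=1_\Lambda$. In each $C_j$ I choose a spanning tree $\mathcal T_j$ rooted at $r_j$. For $v\in C_j$ I define $\pi(v):=\lambda_j\Phi(\widetilde\rho_v)$, where $\rho_v$ is the unique path in $\mathcal T_j$ from $r_j$ to $v$, lifted in the direction from $r_j$ to $v$. By construction the potential equation holds on every tree edge.

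The main step is to verify the equation on a non-tree edge $e$ joining $u$ and $w$ in $C_j$. The tree paths $\rho_u,\rho_w$ together with $e$ form a closed walk: $\rho_u$, then $e$, then $\rho_w$ reversed. After cancelling the common initial segment of $\rho_u$ and $\rho_w$, which contributes $g g^{-1}$, this becomes a simple cycle through $e$. If $u=w$, i.e. $e$ is a loop (possible only when $\ell(B)=r(B)$, which does not occur here since $\ell(B)<r(B)$), there is nothing to check. Parallel edges give a cycle of length $2$. Balancedness says the gain of this simple cycle is $1_\Lambda$, and since reversing a cycle inverts its gain, this holds for either orientation. Unwinding the cycle gives $\pi(u)^{-1}\pi(w)=\Phi(e)$ with the appropriate orientation.

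Hence $\pi\in\mathrm{Pot}_0(\mathcal G)$ and $\Theta_0(\pi)=(\lambda_2,\dots,\lambda_c)$, so $\Theta_0$ is a bijection and $|\mathrm{Pot}_0(\mathcal G)|=|\Lambda|^{c-1}$; in particular $\mathrm{Pot}_0(\mathcal G)\ne\emptyset$. For the final assertion, suppose a potential $\pi$ exists and $\gamma$ is a simple cycle. Telescoping the potential equation along a directed lift of $\gamma$ gives $\Phi(\widetilde\gamma)=\pi(v_0)^{-1}\pi(v_0)=1_\Lambda$, so an unbalanced $\mathcal G$ has $\mathrm{Pot}_0(\mathcal G)=\emptyset$. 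This is also the forward direction of Lemma \ref{lem:balanced-pot}. The only delicate point is the reduction of the closed walk to a simple cycle, and this is handled by the common-prefix cancellation in the tree.
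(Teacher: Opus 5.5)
Your proof is correct and follows the paper's route: normalize at a root of each component, propagate gains along paths (which gives injectivity of $\Theta_0$), and realize an arbitrary $(\lambda_2,\dots,\lambda_c)$ by left-multiplying the component potentials independently. The one difference is the consistency check. The paper defines $\pi$ through arbitrary paths and simply asserts that balancedness forces every closed walk to have trivial gain, which it does not prove. Your spanning-tree construction only needs the fundamental cycles of non-tree edges, and these are genuinely simple, so your version is the more complete argument.
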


\begin{proof}
Fix $j\in\{1,\dots,c\}$ and a root $r_j\in C_j$.
Balancedness implies that the gain of any closed walk in $C_j$ equals $1_\Lambda$.
Hence we may define $\pi^{(0)}_j:C_j\to\Lambda$ by $\pi^{(0)}_j(r_j)=1_\Lambda$ and
$\pi^{(0)}_j(v):=\Phi(P_{r_j\to v})$, where $P_{r_j\to v}$ is any path from $r_j$ to $v$.
This is well-defined, and satisfies $\bigl(\pi^{(0)}_j(u)\bigr)^{-1}\pi^{(0)}_j(v)=\Phi(e)$ for every edge $e:u\to v$ in $C_j$.

\noindent\emph{Surjectivity of $\Theta_0$.}
Fix $(g_2,\dots,g_c)\in\Lambda^{c-1}$ and define $\pi:V\to\Lambda$ by
$\pi|_{C_1}:=\pi^{(0)}_1,\pi|_{C_j}:=g_j\,\pi^{(0)}_j\ \ (j=2,\dots,c).$ Then $\pi(0)=1_\Lambda$, the edge relation holds on each $C_j$, hence
$\pi\in\mathrm{Pot}_0(\mathcal G)$ and $\Theta_0(\pi)=(g_2,\dots,g_c)$.
\noindent\emph{Injectivity of $\Theta_0$.}
Let $\pi,\pi'\in\mathrm{Pot}_0(\mathcal G)$ with $\Theta_0(\pi)=\Theta_0(\pi')$.
Fix $v\in C_j$ and choose a path $P$ from $r_j$ to $v$. Multiplying the edge relations along $P$ yields $\pi(v)=\pi(r_j)\,\Phi(P)$, $\pi'(v)=\pi'(r_j)\,\Phi(P).$
If $j=1$, then $\pi(r_1)=\pi'(r_1)=\pi(0)=\pi'(0)=1_\Lambda$; if $j\ge2$, then $\pi(r_j)=\pi'(r_j)$ by assumption.
Hence $\pi(v)=\pi'(v)$ for all $v$, so $\pi=\pi'$ and $\Theta_0$ is bijective.
\end{proof}

\begin{corollary}[Number of solutions]\label{Number of solutions}
If the system \(\mathcal E_{\vec r,\vec d}^{p,q}\) admits a solution (for the parameters \(\vec r,\vec d\)),
then the solution set has cardinality
$\#\mathrm{Sol}\bigl(\mathcal E_{\vec r,\vec d}^{p,q}\bigr)=|\Lambda|^{\,c-1}.$
In particular, whenever \(\mathcal E_{\vec r,\vec d}^{p,q}\) is solvable, the number of solutions depends only on \(c\)
(and \(|\Lambda|\)), not on \(\vec r,\vec d\).
\end{corollary}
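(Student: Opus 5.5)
The corollary follows by chaining the two preceding lemmas. The solution set of $\mathcal E^{p,q}_{\vec r,\vec d}$ is identified with $\mathrm{Pot}_0(\mathcal G(\mathcal E^{p,q}_{\vec r,\vec d}))$, and the latter is then counted via the lemma on normalized potentials of a balanced gain graph.

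First, we invoke the lemma on potential functions and solutions. The maps
\[
F(\pi)=\bigl(\pi(k-1)^{-1}\pi(k)\bigr)_{k=1}^{l},\qquad G(x)(m)=\prod_{k=1}^m x_k,
\]
are mutually inverse bijections between $\mathrm{Pot}_0(\mathcal G)$ and $\mathrm{Sol}(\mathcal E^{p,q}_{\vec r,\vec d})$. One point should be checked: the lemma is phrased with $\mathrm{col}(B)$, but its telescoping argument uses only $\Phi(B^{+})=\kappa_{\vec r,\vec d}(B)$. It therefore applies verbatim to all four kinds of equations in Lemma~\ref{Equation}, namely through-blocks of $p$ and $q$ and single-layer blocks of $p^{-}$ and $q^{+}$. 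Every equation of $\mathcal E^{p,q}_{\vec r,\vec d}$ is an interval-product equation over $\mathrm{Int}(B)$ for some $B\in p^{-}\sqcup q^{+}$, and conversely every edge of $\overline G_{p,q}$ comes from such an equation. Hence
\[
\#\mathrm{Sol}=\#\mathrm{Pot}_0(\mathcal G).
\]

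Second, suppose $\mathcal E^{p,q}_{\vec r,\vec d}$ has a solution. Then $\mathrm{Pot}_0(\mathcal G)\neq\emptyset$, and Lemma~\ref{lem:balanced-pot} shows that $\mathcal G$ is balanced. This can also be seen directly: any potential forces every closed walk to have gain $1_\Lambda$. The counting lemma then gives a bijection
\[
\Theta_0:\mathrm{Pot}_0(\mathcal G)\to\Lambda^{c-1},
\]
where $c$ is the number of connected components of the underlying graph $\overline G_{p,q}$ on the vertex set $\{0,\dots,l\}$. Hence $\#\mathrm{Sol}=|\Lambda|^{c-1}$.

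For the final claim, the underlying undirected graph $\overline G_{p,q}$ is built only from the endpoints $\ell(B)=\min B-1$ and $r(B)=\max B$ of the blocks of $p^{-}\sqcup q^{+}$. The parameters $\vec r,\vec d$ enter only through the gains $\kappa_{\vec r,\vec d}$. So $c$ depends only on the pair $(p,q)$, and not on $(\vec r,\vec d)$.

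Nothing here is a real obstacle. The only care needed is in the first step: the gain graph's edge set must correspond exactly to the equations of $\mathcal E^{p,q}_{\vec r,\vec d}$. In particular, through-blocks of $p$ use the interval $\mathrm{Int}(V^-)$, so their edge is $\{\min V^- -1,\max V^-\}$, which is consistent with the definition of $\overline G_{p,q}$.
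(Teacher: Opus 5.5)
Your proposal is correct and is the argument the paper intends. The paper states this corollary without a separate proof, as the composition of the bijection $\mathrm{Pot}_0(\mathcal G)\leftrightarrow\mathrm{Sol}$ with the counting lemma $|\mathrm{Pot}_0(\mathcal G)|=|\Lambda|^{c-1}$, with balancedness coming from the existence of a solution; your direct telescoping argument is a clean way to get it. Your remark that $c$ is the number of components of $\overline G_{p,q}$ is exactly the justification for the final claim: that graph is determined by $(p,q)$ alone, while $(\vec r,\vec d)$ enters only through the gains.
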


\begin{lemma}[Existence of a gluing step]\label{lem:gluing-step}
Let $p\in \NC_{\Lambda}(k,l)$ and $q\in \NC_{\Lambda}(l,m)$ with
$T_q\circ T_p\neq 0$. Assume that $|K(q,p)|\ge 2$ and that the intervals
$H_{\mathcal C}$, $\mathcal C\in K(q,p)$, are not pairwise disjoint. For every block $X$ of $\mathcal C_p$ or $\mathcal C_q$ satisfying
$X\cap[l]\neq\varnothing$, define $I_X:=[\min(X\cap[l]),\max(X\cap[l])].$ Then there exist distinct connected components
$\mathcal C,\mathcal C'\in K(q,p)$ such that
\[
H_{\mathcal C}\cap H_{\mathcal C'}\neq\varnothing,
\qquad
x:=\min(H_{\mathcal C}\cap H_{\mathcal C'})=\min H_{\mathcal C'}.
\]
Moreover, one of the following two situations occurs.

\medskip

\noindent
\textup{(p-case)} Let $B:=\pi_p(x)$. Then $
B\in \mathrm{FreeOut}_p^{\downarrow}(\mathcal C').$ (See Proposation \ref{lem:two-sided-middle-decomposition})
There exists a block $D\in \mathcal C_p$ such that $\operatorname{span}(B)\subsetneq I_D,$
and such that there is no block $V\in p$ with $$\operatorname{span}(B)\subsetneq \operatorname{span}(V)\subsetneq I_D.$$

\medskip

\noindent
\textup{(q-case)} Let $B:=\pi_q(x)$. Then $B\in \mathrm{FreeOut}_q^{\uparrow}(\mathcal C').$
There exists a block $D\in \mathcal C_q$ such that $\operatorname{span}(B)\subsetneq I_D,$
and such that there is no block $V\in q$ with
\[
\operatorname{span}(B)\subsetneq \operatorname{span}(V)\subsetneq I_D.
\]
\end{lemma}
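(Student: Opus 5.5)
The plan is to exploit the laminar (nested-or-disjoint) structure of the intervals $H_{\mathcal C}$ and to select an extremal overlapping pair. Two components $\mathcal C,\mathcal C'$ are taken, and the first step is to show that if $H_{\mathcal C}\cap H_{\mathcal C'}\neq\varnothing$ then one interval strictly contains a point of the other in such a way that noncrossingness of $p$ and $q$ forces nesting. Since the components are disjoint and the blocks of $p$ and of $q$ are noncrossing, a point $y\in\mathcal C'\cap[l]$ lying in $H_{\mathcal C}$ must sit strictly between two consecutive points $c_k<c_{k+1}$ of $\mathcal C\cap[l]$. The joining sequence from $c_k$ to $c_{k+1}$ then consists of blocks whose $I_X$ together cover $[c_k,c_{k+1}]$. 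Because any block through a point of $\mathcal C'$ can neither cross nor coincide with those blocks, it is forced inside. From this I conclude $H_{\mathcal C'}\subsetneq H_{\mathcal C}$ or the reverse.

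Among all overlapping pairs, I will choose $\mathcal C'$ so that $H_{\mathcal C'}$ is minimal with respect to inclusion among intervals contained in some other $H_{\mathcal C}$. I then fix $\mathcal C$ to be a component with $H_{\mathcal C'}\subsetneq H_{\mathcal C}$ that is minimal with this property. With this choice, $x=\min(H_{\mathcal C}\cap H_{\mathcal C'})=\min H_{\mathcal C'}$ holds automatically.

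Next I treat the point $x\in\mathcal C'$ and the two blocks $\pi_p(x)$ and $\pi_q(x)$. By the covering argument above, there is a block $D\in\mathcal C_p\sqcup\mathcal C_q$ whose $I_D$ contains $x$ in its interior gap. Say $D\in\mathcal C_p$; this will be the p-case, and the q-case is symmetric. Set $B:=\pi_p(x)$. Noncrossingness gives $\operatorname{span}(B)\subsetneq I_D$. The block $B$ cannot be a through-block, since otherwise a through-block of $p$ would lie inside $I_D$ under a point set of $D\cap[k]$, which would create a crossing with $D$ (if $D$ is a through-block) or contradict $D\subset[l]$. Hence $B$ is a lower single-layer block of $p$. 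To obtain $B\in\mathrm{FreeOut}_p^{\downarrow}(\mathcal C')$, I must show that $B$ is outer among the blocks of $p$ in $\mathcal C'$ and is not covered by a through-block of $\mathcal C'$. Both facts follow because $x=\min H_{\mathcal C'}$: any block of $\mathcal C'_p$ whose span contains $\operatorname{span}(B)$ would contain a point less than or equal to $x$, so its minimum would be $x$, which is impossible since $x\in B$.

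The main obstacle is the final assertion that no $V\in p$ satisfies $\operatorname{span}(B)\subsetneq\operatorname{span}(V)\subsetneq I_D$. The approach I would try is to assume such a $V$ exists, pass to a maximal one, and note that $V$ lies in some component $\mathcal C''$. If $\mathcal C''=\mathcal C'$, this contradicts $x=\min H_{\mathcal C'}$. If $\mathcal C''=\mathcal C$, then $D$ could be replaced by $V$, and iterating this replacement terminates because spans strictly shrink. If $\mathcal C''$ is a third component, then $H_{\mathcal C'}\cap H_{\mathcal C''}\neq\varnothing$ and $H_{\mathcal C''}\subsetneq H_{\mathcal C}$, which contradicts the minimal choice of $\mathcal C$. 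The delicate point is verifying that $H_{\mathcal C''}$ really sits strictly between $H_{\mathcal C'}$ and $H_{\mathcal C}$, since a $q$-block of $\mathcal C''$ could a priori reach outside $I_D$. I would rule this out using the laminarity established in the first step.
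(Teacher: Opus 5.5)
\textbf{There is a genuine gap: your first step (laminarity) is false, and the rest of your argument depends on it.}

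The intervals $H_{\mathcal C}$ need not be nested-or-disjoint. Your covering argument only traps the block through $y$ that belongs to the \emph{same} partition as the chain block $X$ with $y\in I_X$. The block through $y$ from the other partition need not meet any block of the chain, so it can leave $[c_k,c_{k+1}]$.

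\emph{A four-point example.} Take $k=m=0$, $l=4$, $p=\{\{1,3\},\{2\},\{4\}\}$, $q=\{\{1\},\{2,4\},\{3\}\}$, with all colours equal to $1$ (so $T_q\circ T_p\neq 0$). The only components are $\{1,3\}$ and $\{2,4\}$, with intervals $[1,3]$ and $[2,4]$, which overlap but are not nested. The paper's own Figure~\ref{fig:bilayer-nc-example} shows the same phenomenon: $H_{\mathcal C_{\mathrm g}}=[1,18]$ and $H_{\mathcal C_{\mathrm b}}=[2,24]$. Hence your choice of $\mathcal C'$ (minimal among intervals contained in another $H_{\mathcal C}$) can be vacuous.

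\emph{The final step also fails.} More seriously, the third-component case cannot be refuted by the minimality of $\mathcal C$. The $q$-blocks of $\mathcal C''$ may push $H_{\mathcal C''}$ outside $H_{\mathcal C}$. Concretely, take $k=m=0$, $l=11$, trivial colours, and:
\begin{itemize}
\item $p$ with blocks $\{1,10\}$, $\{2,9\}$, $\{11\}$ and singletons $\{3\},\dots,\{8\}$;
\item $q$ with block $\{9,11\}$ and singletons $\{1\},\dots,\{8\},\{10\}$.
\end{itemize}
Your rules allow $\mathcal C'=\{3\}$ and $\mathcal C=\{1,10\}$. Indeed $[1,10]$ is inclusion-minimal over $[3,3]$, the only other candidate being the incomparable $[2,11]$. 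Then the only admissible $D$ is $\{1,10\}$, and $V=\{2,9\}$ satisfies $\operatorname{span}(B)=\{3\}\subsetneq\operatorname{span}(V)\subsetneq I_D$. So the conclusion genuinely fails for the $\mathcal C$ you fixed.

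\textbf{The missing idea is not to fix $\mathcal C$ before $D$.} The following is the paper's argument.
\begin{enumerate}
\item Take any overlapping pair and relabel it so that $x$, the larger of the two left endpoints, equals $\min H_{\mathcal C'}$. No nesting is needed for this.
\item Apply your covering argument. It validly justifies a step the paper merely asserts: it gives a block $D_0$ of $\mathcal C_p$ or $\mathcal C_q$ having $x$ in an interior gap. Hence $\operatorname{span}(B)\subsetneq I_{D_0}$, where $B$ is the block through $x$ in the same partition. Your check that $B\in\mathrm{FreeOut}_p^{\downarrow}(\mathcal C')$ is correct.
\item Choose $D$ among \emph{all} blocks of that partition with $\operatorname{span}(B)\subsetneq I_D$, taking $I_D$ inclusion-minimal. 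Then \emph{redefine} $\mathcal C$ as the component containing $D$.
\end{enumerate}
This closes every case:
\begin{itemize}
\item minimality of $I_D$ excludes every intermediate $V$;
\item $x\in I_D\subseteq H_{\mathcal C}$ gives $x=\min(H_{\mathcal C}\cap H_{\mathcal C'})$;
\item $\mathcal C\neq\mathcal C'$, because $\min(D\cap[l])<x=\min H_{\mathcal C'}$.
\end{itemize}
Your ``replace $D$ by $V$'' step is exactly this idea, but you only allow it when $V$ stays in $\mathcal C$. It must be allowed to move $\mathcal C$ as well.
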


\begin{proof}
Since the intervals $H_{\mathcal C}$, $\mathcal C\in K(q,p)$, are not pairwise
disjoint, there exist distinct connected components
$\mathcal O,\mathcal O'\in K(q,p)$ such that $H_{\mathcal O}\cap H_{\mathcal O'}\neq\varnothing.$
Write $H_{\mathcal O}=[a,b], H_{\mathcal O'}=[c,d].$
Then $H_{\mathcal O}\cap H_{\mathcal O'}
=
[\max\{a,c\},\,\min\{b,d\}],$
so that $x:=\min(H_{\mathcal O}\cap H_{\mathcal O'})=\max\{a,c\}.$
Since $x=a$ or $x=c$, after relabelling $\mathcal O$ and $\mathcal O'$ if necessary,
we may assume that $x=\min H_{\mathcal O'}.$
Set $\mathcal C':=\mathcal O'.$

We first show that
\[
\pi_q(x)=V_l^{\downarrow}(\mathcal C')
\qquad\text{and}\qquad
\pi_p(x)=V_l^{\uparrow}(\mathcal C')
\]
cannot hold simultaneously. Indeed, otherwise the \(p\)-block and the \(q\)-block
containing \(x\) would both be through blocks. Since through blocks admit no
nesting in a noncrossing partition, this would force
\[
\max H_{\mathcal O}<x,
\]
contradicting \(x\in H_{\mathcal O}\cap H_{\mathcal O'}\).

Hence at least one of \(\pi_p(x)\) and \(\pi_q(x)\) is not a through block.

Assume first that \(\pi_p(x)\) is not a through block. Then \(\pi_p(x)\) is a lower
\(p\)-block of \(\mathcal C'\), and in fact outer: otherwise there would exist a lower
\(p\)-block \(V\in \mathcal C'_p\) with \(\operatorname{span}{\pi_p(x)}\subsetneq I_V\), which would force
\(\min(D\cap [l])<x\), contradicting \(x=\min H_{\mathcal C'}\).

We claim that \(\pi_p(x)\) is not \(p\)-covered (See Proposation \ref{lem:two-sided-middle-decomposition}). Otherwise, by
Proposition~\ref{lem:two-sided-middle-decomposition}, there would exist
\(T\in \mathcal T_p^{\uparrow}(\mathcal C')\) such that $\operatorname{span}(\pi_p(x))\subset I_T.$
Since \(x\in \pi_p(x)\cap [l]\), we have \(x\in \operatorname{span}(\pi_p(x))\subset I_T\). If \(x\in T\),
then \(T=\pi_p(x)\), contradicting the assumption that \(\pi_p(x)\) is not a through
block. Hence \(x\notin T\), and therefore
\[
\min(T\cap [l])<x,
\]
contradicting the fact that \(x\) is the leftmost point of \(\mathcal C'\cap [l]\).
Thus 
\[
\pi_p(x)\in \mathrm{FreeOut}_p^{\downarrow}(\mathcal C').
\]

The case where \(\pi_q(x)\) is not a through block is symmetric, and yields
\[
\pi_q(x)\in \mathrm{FreeOut}_q^{\uparrow}(\mathcal C').
\]

Consequently, $\pi_p(x)\in \mathrm{FreeOut}_p^{\downarrow}(\mathcal C')$ or $\pi_q(x)\in \mathrm{FreeOut}_q^{\uparrow}(\mathcal C').$

We next claim that either there exists \(D_0\in O_p\) such that
\[
\operatorname{span}(\pi_p(x))
\subsetneq I_{D_0}
=
[\min(D_0\cap [l]),\,\max(D_0\cap [l])]
\quad\text{and}\quad
\pi_p(x)\in \mathrm{FreeOut}_p^{\downarrow}(\mathcal C'),
\]
or there exists \(D_0\in O_q\) such that
\[
\operatorname{span}(\pi_q(x))
\subsetneq I_{D_0}
=
[\min(D_0\cap [l]),\,\max(D_0\cap [l])]
\quad\text{and}\quad
\pi_q(x)\in \mathrm{FreeOut}_q^{\uparrow}(\mathcal C').
\]

Indeed, by \(x\in H_{\mathcal O}\cap H_{\mathcal O'}\), either there exists
\(D_0\in O_p\) such that $\operatorname{span}(\pi_p(x))\subsetneq I_{D_0},$
or there exists \(D_0\in O_q\) such that $\operatorname{span}(\pi_q(x))\subsetneq I_{D_0}.$

Assume first that there exists \(D_0\in O_p\) with $\operatorname{span}(\pi_p(x))\subsetneq I_{D_0}.$
By the noncrossing property, \(\pi_p(x)\) cannot be a through block. Hence, by the
previous paragraph,
$\pi_p(x)\in \mathrm{FreeOut}_p^{\downarrow}(\mathcal C').$
The case of \(q\) is symmetric. This proves the claim.

Now assume that the first alternative in the claim holds, and set
\[
B:=\pi_p(x).
\]
Since \(\operatorname{span}(B)\subsetneq I_{D_0}\), there exists at least one \(p\)-block
\(D\) such that $\operatorname{span}(B)\subsetneq I_D.$

Choose such a \(p\)-block \(D\) so that \(I_D\) is minimal with respect to inclusion
among all \(p\)-blocks satisfying this property. Let \(\mathcal C\) be the connected
component containing \(D\).

Then \(D\notin \mathcal C'_p\). Indeed, otherwise \(D\) would be a \(p\)-block of
\(\mathcal C'\) s.t. $I_{D}$ strictly contains \(\operatorname{span}(B)\), contradicting $B\in \mathrm{FreeOut}_p^{\downarrow}(\mathcal C').$
Hence $\mathcal C\neq \mathcal C'.$

By the minimality of \(I_D\), there is no \(p\)-block \(V\) such that
\[
\operatorname{span}(B)\subsetneq \operatorname{span}(V)\subsetneq I_D.
\]
Thus we are in the \textup{(p-case)} of the lemma. If instead the second alternative
in the claim holds, the same argument with \(q\) in place of \(p\) yields the
\textup{(q-case)}. This proves the lemma.

\end{proof}

\begin{corollary}\label{cor:gluing-consequences}
Keep the notation of Lemma~\ref{lem:gluing-step}. In the \textup{(p-case)}, define
\[
\bar p:=(p\setminus\{B,D\})\cup\{D\cup B\},
\]
and colour the new block $D\cup B$ by $\operatorname{col}_p(D)$; then
\[
\bar p\in \NC_{\Lambda}(k,l).
\]
In the \textup{(q-case)}, define
\[
\bar q:=(q\setminus\{B,D\})\cup\{D\cup B\},
\]
and colour the new block $D\cup B$ by $\operatorname{col}_q(D)$; then
\[
\bar q\in \NC_{\Lambda}(l,m).
\]

Moreover, the only connected components affected by this operation are
$\mathcal C$ and $\mathcal C'$, which are merged into one. More precisely,
\[
K(q,\bar p)
=
\bigl(K(q,p)\setminus\{\mathcal C,\mathcal C'\}\bigr)\cup\{\mathcal C\cup\mathcal C'\}
\qquad\text{in the \textup{(p-case)},}
\]
\[
K(\bar q,p)
=
\bigl(K(q,p)\setminus\{\mathcal C,\mathcal C'\}\bigr)\cup\{\mathcal C\cup\mathcal C'\}
\qquad\text{in the \textup{(q-case)}.}
\]
In particular,
\[
|K(q,\bar p)|=|K(q,p)|-1
\qquad\text{or}\qquad
|K(\bar q,p)|=|K(q,p)|-1,
\]
according to the case.

Furthermore, for every $\vec r\in\Gamma^k$ and $\vec d\in\Gamma^m$,
\[
\mathcal E_{\vec r,\vec d}^{\bar p,q}
=
\mathcal E_{\vec r,\vec d}^{p,q}
\setminus
\left\{
\prod_{i=\min B}^{\max B}x_i=\operatorname{col}_p(B)
\right\}
\qquad\text{in the \textup{(p-case)},}
\]
\[
\mathcal E_{\vec r,\vec d}^{p,\bar q}
=
\mathcal E_{\vec r,\vec d}^{p,q}
\setminus
\left\{
\prod_{i=\min B}^{\max B}x_i=\operatorname{col}_q(B)^{-1}
\right\}
\qquad\text{in the \textup{(q-case)}.}
\]
Consequently,
\[
\theta_{\bar p}^{\vec r}\cap\Omega_q^{\vec d}
\supseteq
\theta_p^{\vec r}\cap\Omega_q^{\vec d}
\qquad\text{or}\qquad
\theta_p^{\vec r}\cap\Omega_{\bar q}^{\vec d}
\supseteq
\theta_p^{\vec r}\cap\Omega_q^{\vec d},
\]
according to the case.
\end{corollary}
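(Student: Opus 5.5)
I will treat the (p-case); the (q-case) is symmetric, obtained by exchanging $p$ and $q$, upper and lower, and $\operatorname{col}_p(B)$ with $\operatorname{col}_q(B)^{-1}$. There are four things to check: $\bar p$ is a legal element of $\NC_\Lambda(k,l)$; the connected components change by exactly one merge; the system $\mathcal E$ loses exactly the equation of $B$; and the inclusion of solution sets follows.

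\textbf{Step 1: $\bar p\in\NC_\Lambda(k,l)$.}
\emph{Partition property.} $\bar p$ replaces the two disjoint blocks $B$ and $D$ by their union, so it is still a partition.
\emph{Noncrossingness.} I argue as in the definition of $\bar p$ after Proposition~\ref{prop:gluing pair}. By Lemma~\ref{lem:gluing-step}, $\operatorname{span}(B)\subsetneq I_D$, and no block $V\in p$ satisfies $\operatorname{span}(B)\subsetneq\operatorname{span}(V)\subsetneq I_D$. Take any other block $U$ meeting $\operatorname{span}(D\cup B)$. Since $p$ is noncrossing, $U$ is nested in $D$, contains $D$, or lies inside a gap of $D$. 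In the last case $U$ either is disjoint from $\operatorname{span}(B)$ or contains it; containing it strictly inside $I_D$ is excluded by the minimality above. In every case $U$ cannot cross $D\cup B$.
\emph{Boundary condition.} $B$ is a lower single-layer block lying strictly inside $I_D$, so $B$ is not global-outer. Passing to $\bar p$, the boundary $\partial\bar p$ corresponds to $\partial p$ with $D$ replaced by $D\cup B$. That block has the same span as $D$ and carries the colour $\operatorname{col}_p(D)$. Hence the ordered boundary product is unchanged and still equals $1$.

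\textbf{Step 2: $K(q,\bar p)$ differs from $K(q,p)$ by one merge.}
Joining sequences for $(\bar p,q)$ are exactly joining sequences for $(p,q)$ together with the new link between $B$ and $D$. By Lemma~\ref{lem:gluing-step}, $B\in\mathcal C'_p$ and $D\in\mathcal C_p$ with $\mathcal C\neq\mathcal C'$. So the new link merges exactly $\mathcal C$ and $\mathcal C'$ and leaves every other component untouched. This gives the displayed formula for $K(q,\bar p)$ and the count $|K(q,\bar p)|=|K(q,p)|-1$.

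\textbf{Step 3: the system $\mathcal E$ loses exactly one equation.} This is the main obstacle. I go through the list of equations in Lemma~\ref{Equation}.
\begin{itemize}
\item The equation attached to $B$ disappears, because $B$ is no longer a block.
\item Merging $D$ with $B$ does not change $\min(D\cap[l])$ or $\max(D\cap[l])$, so the equation for $D$ keeps the same interval and the same colour.
\item If $D$ is a through-block, I must check that $g_D$, $\alpha_D$ and $t_{D'}$ are unchanged. The left-adjacent through-block is unchanged, because the set of through-blocks is unchanged. The relative boundaries $\partial^\downarrow(\mathcal P_p^\downarrow(\cdot))$ are unchanged as well: $B$ lay strictly inside $I_D$, so it was never in any $\mathcal P_p^\downarrow(\cdot)$ relative boundary that matters. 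The exception would be a block containing $B$ through a gap of $D$, which is excluded by the minimality in Lemma~\ref{lem:gluing-step}.
\item Single-layer blocks of $\bar p$ other than $D\cup B$ keep their equations.
\end{itemize}
I expect this verification of the through-block coefficients to be the most delicate point. I will handle it by comparing $\mathcal P_{p}^{\downarrow}(V)$ and $\mathcal P_{\bar p}^{\downarrow}(V)$ for every $V\in\mathrm{Th}(p)$. The only block that moves is $B$, which in $\bar p$ is absorbed into $D$; it was strictly nested, so it was never relative outer.

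\textbf{Step 4: inclusion of solution sets.} By Lemma~\ref{Equation}, $\theta^{\vec r}\cap\Omega^{\vec d}$ corresponds to the solution set of the system. Removing an equation can only enlarge this set, which gives
$\theta_{\bar p}^{\vec r}\cap\Omega_q^{\vec d}\supseteq\theta_p^{\vec r}\cap\Omega_q^{\vec d}$.
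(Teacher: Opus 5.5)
Your proposal is correct and follows the same route as the paper's proof. Both arguments obtain noncrossingness from the minimality clause of Lemma~\ref{lem:gluing-step}, show that the new $B$--$D$ link merges exactly $\mathcal C$ and $\mathcal C'$, and note that the equation of $D\cup B$ coincides with that of $D$ while the equation of $B$ disappears, which gives the inclusion. Your explicit checks of the boundary condition and of the invariance of $g_V,\alpha_V,t_{V'}$ fill in details the paper leaves implicit. Two small additions would tighten it: in the noncrossing step, include the trivial sub-case $\operatorname{span}(U)\subsetneq\operatorname{span}(B)$; and note that the final inclusion can be read off directly from Definition~\ref{block relations}, since $D\cup B$ has the same endpoints and colour as $D$, so you need not pass through the system $\mathcal E$.
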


\begin{proof}
We prove the \textup{(p-case)}; the \textup{(q-case)} is identical.

Define
\[
\bar p:=(p\setminus\{B,D\})\cup\{D\cup B\},
\]
where the new block $D\cup B$ is coloured by $\operatorname{col}_p(D)$.

We first check that $\bar p\in \NC_{\Lambda}(k,l)$. Since
\[
\operatorname{span}(B)\subsetneq \operatorname{span}(D),
\]
we have
\[
\operatorname{span}(D\cup B)=\operatorname{span}(D).
\]
Let $E\in p\setminus\{B,D\}$. We claim that $E$ does not cross $D\cup B$.
Indeed, if $E$ crossed $D\cup B$, then, since $E$ does not cross $D$ and
$\operatorname{span}(D\cup B)=\operatorname{span}(D)$, we would necessarily have
\[
\operatorname{span}(E)\subsetneq \operatorname{span}(D)
\quad\text{and}\quad
\operatorname{span}(E)\cap \operatorname{span}(B)\neq\varnothing.
\]
As $E$ and $B$ are blocks of the noncrossing partition $p$, their spans are
either disjoint or nested; hence
\[
\operatorname{span}(B)\subsetneq \operatorname{span}(E)\subsetneq \operatorname{span}(D),
\]
contradicting Lemma~\ref{lem:gluing-step}. Therefore no block
$E\in p\setminus\{B,D\}$ crosses $D\cup B$, and so
\[
\bar p=(p\setminus\{B,D\})\cup\{D\cup B\}\in \NC_{\Lambda}(k,l).
\]

Since the only blocks modified are $B$ and $D$, and no other block lies between
them, the only connected components affected by this operation are
$\mathcal C$ and $\mathcal C'$. These two connected components are merged into a
single connected component, while all other connected components remain unchanged.
Therefore
\[
K(q,\bar p)
=
\bigl(K(q,p)\setminus\{\mathcal C,\mathcal C'\}\bigr)\cup\{\mathcal C\cup\mathcal C'\},
\]
and in particular
\[
|K(q,\bar p)|=|K(q,p)|-1.
\]

Now let $\vec r\in\Gamma^k$ and $\vec d\in\Gamma^m$. Since
\[
\operatorname{span}(B)\subsetneq \operatorname{span}(D),
\]
we have
\[
\min(D\cup B)=\min D,
\qquad
\max(D\cup B)=\max D.
\]
Moreover, by construction, the new block $D\cup B$ is coloured by
$\operatorname{col}_p(D)$. Hence the equation attached to $D\cup B$ in
$\mathcal E_{\vec r,\vec d}^{\bar p,q}$ is exactly the same as the equation
attached to $D$ in $\mathcal E_{\vec r,\vec d}^{p,q}$. All other block equations
remain unchanged, except that the equation corresponding to $B$ disappears.
Therefore
\[
\mathcal E_{\vec r,\vec d}^{\bar p,q}
=
\mathcal E_{\vec r,\vec d}^{p,q}
\setminus
\left\{
\prod_{i=\min B}^{\max B}x_i=\operatorname{col}_p(B)
\right\}.
\]
Consequently,
\[
\theta_{\bar p}^{\vec r}\cap\Omega_q^{\vec d}
\supseteq
\theta_p^{\vec r}\cap\Omega_q^{\vec d}.
\]
\end{proof}

\begin{figure}[htbp]
\centering
\begin{tikzpicture}[
    scale=0.80,
    transform shape,
    x=0.42cm,
    y=0.72cm,
    pt/.style={circle,fill=black,inner sep=1.05pt},
    toplab/.style={font=\scriptsize},
    midlab/.style={font=\tiny},
    botlab/.style={font=\scriptsize},
    rowlab/.style={font=\small},
    gcomp/.style={
        draw=teal!70!black,
        line width=0.9pt,
        line cap=round,
        line join=round,
        rounded corners=1.8pt
    },
    bcomp/.style={
        draw=teal!70!black,
        line width=0.9pt,
        line cap=round,
        line join=round,
        rounded corners=1.8pt
    },
    rcomp/.style={
        draw=red!75!black,
        line width=0.9pt,
        line cap=round,
        line join=round,
        rounded corners=1.8pt
    },
    pcomp/.style={
        draw=violet!75!black,
        line width=0.9pt,
        line cap=round,
        line join=round,
        rounded corners=1.8pt
    }
]

\newcommand{\usingle}[2]{%
  \draw[#1]
    (#2-0.16,0.03)
      .. controls (#2-0.16,0.21) and (#2+0.16,0.21) ..
    (#2+0.16,0.03);
}
\newcommand{\lsingle}[2]{%
  \draw[#1]
    (#2-0.16,-0.03)
      .. controls (#2-0.16,-0.21) and (#2+0.16,-0.21) ..
    (#2+0.16,-0.03);
}

\foreach \i in {1,...,24}{
    \coordinate (M\i) at (\i,0);
}

\coordinate (T1) at (0.30,2.45);
\coordinate (T2) at (0.95,2.45);
\coordinate (T3) at (1.60,2.45);
\coordinate (T4) at (18.45,2.45);
\coordinate (T5) at (19.55,2.45);

\coordinate (B1) at (0.30,-2.45);
\coordinate (B2) at (0.95,-2.45);
\coordinate (B3) at (1.60,-2.45);
\coordinate (B4) at (2.25,-2.45);

\coordinate (BotMid) at ($(B1)!0.5!(B4)$);

\node[rowlab] at (-1.2,  2.45) {$[k]$};
\node[rowlab] at (-1.2,  0.00) {$[l]$};
\node[rowlab] at (-1.2, -2.45) {$[m]$};

\draw[gcomp] (T1) -- ($(T1)+(0,-0.32)$);
\draw[gcomp] (T2) -- ($(T2)+(0,-0.32)$);
\draw[gcomp] (T3) -- ($(T3)+(0,-0.32)$);
\draw[gcomp] ($(T1)+(0,-0.32)$) -- ($(T3)+(0,-0.32)$);
\draw[gcomp] ($(T2)+(0,-0.32)$) -- ($(T2)+(0,-0.88)$)
             -- ($(M1)+(0,1.55)$) -- (M1);

\draw[gcomp] (M3)  -- ($(M3)+(0,1.45)$);
\draw[gcomp] (M7)  -- ($(M7)+(0,1.45)$);
\draw[gcomp] (M12) -- ($(M12)+(0,1.45)$);
\draw[gcomp] ($(M3)+(0,1.45)$) -- ($(M12)+(0,1.45)$);

\draw[gcomp] (M8)  -- ($(M8)+(0,0.95)$);
\draw[gcomp] (M10) -- ($(M10)+(0,0.95)$);
\draw[gcomp] (M11) -- ($(M11)+(0,0.95)$);
\draw[gcomp] ($(M8)+(0,0.95)$) -- ($(M11)+(0,0.95)$);

\draw[gcomp] (M15) -- ($(M15)+(0,1.18)$);
\draw[gcomp] (M18) -- ($(M18)+(0,1.18)$);
\draw[gcomp] ($(M15)+(0,1.18)$) -- ($(M18)+(0,1.18)$);

\draw[gcomp] (M16) -- ($(M16)+(0,0.80)$);
\draw[gcomp] (M17) -- ($(M17)+(0,0.80)$);
\draw[gcomp] ($(M16)+(0,0.80)$) -- ($(M17)+(0,0.80)$);

\draw[bcomp] (M2)  -- ($(M2)+(0,1.82)$);
\draw[bcomp] (M13) -- ($(M13)+(0,1.82)$);
\draw[bcomp] ($(M2)+(0,1.82)$) -- ($(M13)+(0,1.82)$);

\draw[bcomp] (M4) -- ($(M4)+(0,0.86)$);
\draw[bcomp] (M6) -- ($(M6)+(0,0.86)$);
\draw[bcomp] ($(M4)+(0,0.86)$) -- ($(M6)+(0,0.86)$);

\usingle{bcomp}{9}

\coordinate (PtopL) at ($(T4)+(0,-0.32)$);
\coordinate (PtopR) at ($(T5)+(0,-0.32)$);
\coordinate (PtopM) at ($(PtopL)!0.5!(PtopR)$);

\draw[bcomp] (T4) -- (PtopL);
\draw[bcomp] (T5) -- (PtopR);
\draw[bcomp] (PtopL) -- (PtopR);

\coordinate (Pbot14) at ($(M14)+(0,1.45)$);
\coordinate (Pbot19) at ($(M19)+(0,1.45)$);
\coordinate (Pbot24) at ($(M24)+(0,1.45)$);

\draw[bcomp] (M14) -- (Pbot14);
\draw[bcomp] (M19) -- (Pbot19);
\draw[bcomp] (M24) -- (Pbot24);
\draw[bcomp] (Pbot14) -- (Pbot24);
\draw[bcomp] (PtopM) -- (Pbot19);

\draw[bcomp] (M21) -- ($(M21)+(0,0.80)$);
\draw[bcomp] (M22) -- ($(M22)+(0,0.80)$);
\draw[bcomp] ($(M21)+(0,0.80)$) -- ($(M22)+(0,0.80)$);

\usingle{rcomp}{5}

\draw[pcomp] (M20) -- ($(M20)+(0,1.12)$);
\draw[pcomp] (M23) -- ($(M23)+(0,1.12)$);
\draw[pcomp] ($(M20)+(0,1.12)$) -- ($(M23)+(0,1.12)$);

\coordinate (QTopL) at ($(M1)+(0,-1.58)$);
\coordinate (QTopR) at ($(M8)+(0,-1.58)$);
\coordinate (QTopMid) at ($(QTopL)!0.5!(QTopR)$);

\draw[gcomp] (M1) -- ($(M1)+(0,-1.58)$);
\draw[gcomp] (M2) -- ($(M2)+(0,-1.58)$);
\draw[gcomp] (M4) -- ($(M4)+(0,-1.58)$);
\draw[gcomp] (M6) -- ($(M6)+(0,-1.58)$);
\draw[gcomp] (M7) -- ($(M7)+(0,-1.58)$);
\draw[gcomp] (M8) -- ($(M8)+(0,-1.58)$);
\draw[gcomp] (QTopL) -- (QTopR);

\draw[gcomp] (B1) -- ($(B1)+(0,0.40)$);
\draw[gcomp] (B4) -- ($(B4)+(0,0.40)$);
\draw[gcomp] ($(B1)+(0,0.40)$) -- ($(B4)+(0,0.40)$);

\draw[gcomp] ($(BotMid)+(0,0.40)$) -- ($(BotMid)+(0,0.82)$)
             -- (QTopMid);

\lsingle{gcomp}{3}

\draw[gcomp] (M10) -- ($(M10)+(0,-1.40)$);
\draw[gcomp] (M16) -- ($(M16)+(0,-1.40)$);
\draw[gcomp] ($(M10)+(0,-1.40)$) -- ($(M16)+(0,-1.40)$);

\draw[gcomp] (M11) -- ($(M11)+(0,-1.10)$);
\draw[gcomp] (M12) -- ($(M12)+(0,-1.10)$);
\draw[gcomp] (M15) -- ($(M15)+(0,-1.10)$);
\draw[gcomp] ($(M11)+(0,-1.10)$) -- ($(M15)+(0,-1.10)$);

\draw[gcomp] (M17) -- ($(M17)+(0,-0.82)$);
\draw[gcomp] (M18) -- ($(M18)+(0,-0.82)$);
\draw[gcomp] ($(M17)+(0,-0.82)$) -- ($(M18)+(0,-0.82)$);

\draw[bcomp] (B2) -- ($(B2)+(0,0.27)$);
\draw[bcomp] (B3) -- ($(B3)+(0,0.27)$);
\draw[bcomp] ($(B2)+(0,0.27)$) -- ($(B3)+(0,0.27)$);

\draw[bcomp] (M9)  -- ($(M9)+(0,-1.82)$);
\draw[bcomp] (M21) -- ($(M21)+(0,-1.82)$);
\draw[bcomp] ($(M9)+(0,-1.82)$) -- ($(M21)+(0,-1.82)$);

\draw[bcomp] (M13) -- ($(M13)+(0,-0.76)$);
\draw[bcomp] (M14) -- ($(M14)+(0,-0.76)$);
\draw[bcomp] ($(M13)+(0,-0.76)$) -- ($(M14)+(0,-0.76)$);

\lsingle{bcomp}{19}

\draw[bcomp] (M22) -- ($(M22)+(0,-0.82)$);
\draw[bcomp] (M24) -- ($(M24)+(0,-0.82)$);
\draw[bcomp] ($(M22)+(0,-0.82)$) -- ($(M24)+(0,-0.82)$);

\lsingle{rcomp}{5}
\lsingle{pcomp}{20}
\lsingle{pcomp}{23}

\foreach \P/\lbl in {
    T1/{1^+},T2/{2^+},T3/{3^+},T4/{4^+},T5/{5^+}
}{
    \node[pt] at (\P) {};
    \node[toplab,above=2pt] at (\P) {$\lbl$};
}

\foreach \i in {1,...,24}{
    \node[pt] at (M\i) {};
    \node[midlab,below=2pt] at (M\i) {\(\i\)};
}

\foreach \P/\lbl in {
    B1/{1^-},B2/{2^-},B3/{3^-},B4/{4^-}
}{
    \node[pt] at (\P) {};
    \node[botlab,below=2pt] at (\P) {$\lbl$};
}

\end{tikzpicture}
\caption{Illustration of the \textup{(q-case)} of Corollary~\ref{cor:gluing-consequences}. It is obtained from Figure~\ref{fig:bilayer-nc-example} by gluing the two \(q\)-blocks
\(
B=\{2,4,6\}
\)
and
\(
D=\{1,7,8\}
\)
into the single block
\(
D\cup B=\{1,2,4,6,7,8\},
\)
that is, \(\bar q=(q\setminus\{B,D\})\cup\{D\cup B\}\).
Accordingly, the only connected components affected are the two components containing \(B\) and \(D\), which merge into one, illustrating
\(K(\bar q,p)=\bigl(K(q,p)\setminus\{\mathcal C,\mathcal C'\}\bigr)\cup\{\mathcal C\cup\mathcal C'\}\).
The singleton blocks are represented by small arcs.}
\label{fig:bilayer-nc-example-2}

\end{figure}

\begin{proposition}[Stability of $\delta$ under gluing]
\label{prop:delta-under-gluing}
Keep the notation of Lemma~\ref{lem:gluing-step} and
Corollary~\ref{cor:gluing-consequences}.

\medskip

\noindent
\textup{(p-case)} For every $\vec r\in \Gamma^k$ and $\vec d\in \Gamma^m$, $\delta_{q\cdot p}(\vec r,\vec d)=1
\Longrightarrow\delta_{q\cdot \bar p}(\vec r,\vec d)=1.$

\noindent
\textup{(q-case)} For every $\vec r\in \Gamma^k$ and $\vec d\in \Gamma^m$,
$\delta_{q\cdot p}(\vec r,\vec d)=1
\Longrightarrow\delta_{\bar q\cdot p}(\vec r,\vec d)=1.$
\end{proposition}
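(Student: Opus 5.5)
The plan is to compare $q\cdot p$ and $q\cdot\bar p$ block by block, treating only the (p-case); the (q-case) is symmetric. By Corollary~\ref{cor:gluing-consequences}, $K(q,\bar p)$ is obtained from $K(q,p)$ by replacing $\mathcal C,\mathcal C'$ with $\mathcal C\cup\mathcal C'$. So the underlying partition $q\cdot\bar p$ is $q\cdot p$ with the two outer-row blocks $\mathcal C\cap([k]\sqcup[m])$ and $\mathcal C'\cap([k]\sqcup[m])$ merged, and it equals $q\cdot p$ when one of them is empty, for instance when $\mathcal C'$ is a cycle. Since $\delta_{q\cdot\bar p}(\vec r,\vec d)=1$ is a conjunction of block relations (Definition~\ref{block relations}), it suffices to check two things: every block coming from an unaffected component satisfies its relation with its $\bar p$-label, and the merged block satisfies its relation with the label $\mathrm{lab}(\mathcal C\cup\mathcal C')$.

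\textbf{Unaffected components.} Since $T_q\circ T_p\neq0$, pick some $\vec x\in\theta_p^{\vec r'}\cap\Omega_q^{\vec d'}$. By the last assertion of Corollary~\ref{cor:gluing-consequences}, $\vec x$ also lies in $\theta_{\bar p}^{\vec r'}\cap\Omega_q^{\vec d'}$. By Remark~\ref{rem:label-represented-by-middle-vector}, every label of a component of $(p,q)$ or of $(\bar p,q)$ can be computed as interval products of this single $\vec x$ together with block colours.

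For a component $\mathcal K\notin\{\mathcal C,\mathcal C'\}$, the relevant intervals (frame, tails $[\max(T_r\cap[l])+1,\max(\mathcal K\cap[l])]$, the sets $\mathcal P^{\uparrow},\mathcal P^{\downarrow}$ between adjacent through-blocks) are unchanged. The only exception is when $B$ was a relative outer block in some $\mathcal P_p^{\downarrow}(T)$. In that case it is replaced by the through-block-free block $D\cup B$, and the telescoping of interval products of $\vec x$ still gives the same value. Hence $\mathrm{lab}$ is unchanged on these components, and their relations hold because they held for $q\cdot p$.

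\textbf{Merged component.} The identity to aim for is: the relation for $\mathcal C\cup\mathcal C'$ in $q\cdot\bar p$ is implied by the relations for $\mathcal C$ and $\mathcal C'$ in $q\cdot p$. I would split into cases according to the types of $\mathcal C'$ (cycle, lower-half, upper-half, through; trivial types cannot occur since $\mathcal C'\cap[l]\ni x$) and of $\mathcal C$.

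The geometric input is Lemma~\ref{lem:gluing-step}. $B$ is a free outer lower block of $\mathcal C'$ sitting directly inside a gap of $D\in\mathcal C_p$, with no intermediate $p$-block. Consequently all of $H_{\mathcal C'}$ lies between two consecutive points of $D\cap[l]$. For example, if $\mathcal C$ is upper-half and $\mathcal C'$ is lower-half, then $\mathcal C\cup\mathcal C'$ is of through type. Its label $t\,h\,\mu^{-1}\iota$ (Definition~\ref{def: colour}) should be rewritten, via products of $\vec x$ over consecutive intervals as in Lemma~\ref{Equation}, as a product of the form appearing in $\mathrm{lab}(\mathcal C)$ and $\mathrm{lab}(\mathcal C')$. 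Then the through relation of Corollary~\ref{colour of components}(3) follows by multiplying the two half relations of Corollary~\ref{colour of components}(1),(2). The other type combinations go the same way: express each prefix or interval product over the merged component as the concatenation of the $\mathcal C$-part, the span of $B$ (contributing $\operatorname{col}_p(B)$), and the $\mathcal C'$-part.

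\textbf{Main obstacle.} The hard step is this case-by-case identification of $\mathrm{lab}(\mathcal C\cup\mathcal C')$ with the appropriate product of $\mathrm{lab}(\mathcal C)$, $\mathrm{lab}(\mathcal C')$ and boundary terms. The adjacency data $T_l'$, $g$, $\alpha$ change after merging, because the leftmost and rightmost through-blocks of the merged component differ from those of $\mathcal C$ and $\mathcal C'$. I would first handle the case where $\mathcal C'$ is a cycle, where nothing changes on the outer rows. Then I would do the half/half cases by the telescoping computation above, and reduce the through cases to these using the tail constants $h,\mu$ of Corollary~\ref{cor: constant interval}.
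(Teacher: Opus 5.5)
Your framework matches the paper's. A single middle vector $\vec x\in\theta_p^{\vec r'}\cap\Omega_q^{\vec d'}\subset\theta_{\bar p}^{\vec r'}\cap\Omega_q^{\vec d'}$ represents all labels via Remark~\ref{rem:label-represented-by-middle-vector}, components other than $\mathcal C,\mathcal C'$ are untouched, and one splits by types. The gap is in the merged component, where both your geometric input and your stated target fail.

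It is not true that $H_{\mathcal C'}$ lies between two consecutive points of $D\cap[l]$. No through-block of $p$ can have a middle point strictly inside $I_D$, since it would cross $D$ whether $D$ is a lower block or a through-block. On the other hand, every middle point of $\mathcal C'$ is $\ge x>\min(D\cap[l])$. So as soon as $\mathcal C'$ is of upper-half or through type, its $p$-through-blocks lie to the right of $\max(D\cap[l])$. In general the $q$-blocks of $\mathcal C'$ can also leave $I_D$. Hence further components can sit strictly between $\mathcal C$ and $\mathcal C'$. An example is outer upper-half components $\mathcal C_2,\dots,\mathcal C_{n-1}$ with $\max(\mathcal C\cap[k])<\max(\mathcal C_j\cap[k])<\max(\mathcal C'\cap[k])$. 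Their outer-row blocks end up nested inside the merged block $\mathcal U\cap([k]\sqcup[m])$, where $\mathcal U:=\mathcal C\cup\mathcal C'$.

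Consequently the relation for $\mathcal U$ is \emph{not} implied by those of $\mathcal C$ and $\mathcal C'$ alone.

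If $\mathcal U$ is upper-half, the interval product $\prod_{\mathcal U\cap[k]}\vec r$ also runs over the $[k]$-spans of all the $\mathcal C_j$. What must be shown is the telescoping identity $\mathrm{lab}_{q\cdot\bar p}(\mathcal U)=\mathrm{lab}_{q\cdot p}(\mathcal C')\,\mathrm{lab}_{q\cdot p}(\mathcal C_{n-1})\cdots\mathrm{lab}_{q\cdot p}(\mathcal C)$. It is obtained by splitting $\prod_{F_{\mathcal U}}\vec x$ into the frames $f_{\mathcal C_j}$ and the gaps $g_{\mathcal C_j}$, so that the factors $t_j^{-1}t_j$ cancel. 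One then multiplies all $n$ upper relations.

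If $\mathcal U$ is of through type, as in your example ($\mathcal C$ upper-half, $\mathcal C'$ lower-half), its relation in Definition~\ref{block relations} is a prefix relation $\prod_{v=1}^{\max(\mathcal U\cap[m])}d_v=\bigl(\prod_{u=1}^{\max(\mathcal U\cap[k])}r_u\bigr)\,\mathrm{lab}(\mathcal U)$. Multiplying the two interval relations of Corollary~\ref{colour of components}(1),(2) cannot in general produce prefix products. You need three further ingredients:
\begin{enumerate}
\item the prefix relation of the through component $\mathcal T$ immediately preceding $\mathcal C$ on $[k]$, which is also the one preceding $\mathcal C'$ on $[m]$ (if there is none, its factors are set to $1$);
\item the relations of every intermediate half component on $[k]$ and on $[m]$;
\item a telescoping computation showing that the resulting quotient equals the through label $t\,h'(\mu')^{-1}\iota$ of $\mathcal U$ from Definition~\ref{def: colour}.
\end{enumerate}
The case where $\mathcal C$ is through and $\mathcal C'$ is upper-half requires the same chaining. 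Once these intermediate and preceding components are included, your outline becomes the paper's Case~1\,(b$^{\uparrow}$) and Case~2\,(a),(b).
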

\begin{proof}
We prove the \textup{(p-case)}; the argument for the \textup{(q-case)} is entirely similar.

Fix $\vec r\in \Gamma^k$ and $\vec d\in \Gamma^m$ such that $\delta_{q\cdot p}(\vec r,\vec d)=1.$
We must show that $\delta_{q\cdot \bar p}(\vec r,\vec d)=1.$

Since all connected components of $q\cdot p$ other than $\mathcal C$ and
$\mathcal C'$ remain unchanged in $q\cdot \bar p$, it suffices to verify the
defining relation for the unique connected component $\mathcal U:=\mathcal C\cup \mathcal C'$of $q\cdot \bar p$ obtained by merging $\mathcal C$ and $\mathcal C'$.

We distinguish cases according to the relative position and the types of
\(\mathcal C\) and \(\mathcal C'\).

\medskip
\noindent\textbf{Case 1.} \(\mathcal U=\mathcal C \cup \mathcal C'\) is an upper(lower)-half $(\bar{p},q)$ connected component.

\smallskip
\noindent
\begin{minipage}[t]{0.08\textwidth}
\textbf{(a$^{\uparrow}$)}
\end{minipage}%
\begin{minipage}[t]{0.90\textwidth}
\vspace{-\baselineskip}
\begin{enumerate}
\item Both \(\mathcal C\) and \(\mathcal C'\) are of upper-half type with $[\min(\mathcal C\cap[k]),\max(\mathcal C\cap[k])]
\subset
[\min(\mathcal C'\cap[k]),\max(\mathcal C'\cap[k])].$

\item Up to interchanging \(\mathcal C\) and \(\mathcal C'\), the component
\(\mathcal C\) is of cycle type and \(\mathcal C'\) is of upper-half type.
\end{enumerate}
\end{minipage}

Then $\min(\mathcal U\cap[k])=\min(\mathcal C'\cap[k]),\max(\mathcal U\cap[k])=\max(\mathcal C'\cap[k])$ and $F_{\mathcal U}=F_{\mathcal C'}.$
Take $\vec x\in \bigcup_{\vec r,\vec d}(\theta_p^{\vec r}\cap\Omega_q^{\vec d})\subset\bigcup_{\vec r,\vec d}(\theta_{\bar p}^{\vec r}\cap\Omega_q^{\vec d}).$
By Corollary~\ref{cor: constant interval}, Definition~\ref{def: colour} and Remark  \ref{rem:label-represented-by-middle-vector}, the
color of \(\mathcal U\) in \(q\cdot\bar p\) and the color of \(\mathcal C'\) in
\(q\cdot p\) can be represented by \(\vec x\). 

Then $\mathrm{lab}_{q\cdot\bar p}(\mathcal U)=
t_{\mathcal C'}\Bigl(\prod \vec x_{\restriction F_{\mathcal C'}}\Bigr)^{-1}  g_{\mathcal C'}^{-1}
(t_{\mathcal C'}')^{-1}=t_{\mathcal C'} f_{\mathcal C'}^{-1}  g_{\mathcal C'}^{-1}
(t_{\mathcal C'}')^{-1}
=\mathrm{lab}_{q\cdot p}(\mathcal C').$
Thus the defining relation for \(\mathcal U\) is identical to that for
\(\mathcal C'\). Therefore, for any $\vec r$ satisfying
\(\delta_{q\cdot p}(\vec r,\vec d)=1\),
\[
\prod_{i=\min(\mathcal U\cap[k])}^{\max(\mathcal U\cap[k])} r_i
=\mathrm{lab}_{q\cdot p}(\mathcal C')^{-1}
=\mathrm{lab}_{q\cdot\bar p}(\mathcal U)^{-1}=\prod_{i=\min(\mathcal C'\cap[k])}^{\max(\mathcal C'\cap[k])} r_i,
\]
so the required condition holds for \(\mathcal U\).

\smallskip
\noindent
\begin{minipage}[t]{0.08\textwidth}
\textbf{(a$^{\downarrow}$)}
\end{minipage}%
\begin{minipage}[t]{0.90\textwidth}
\vspace{-\baselineskip}
\begin{enumerate}
\item Both \(\mathcal C\) and \(\mathcal C'\) are of lower-half type with $[\min(\mathcal C\cap[m]),\max(\mathcal C\cap[m])]
\subset
[\min(\mathcal C'\cap[m]),\max(\mathcal C'\cap[m])].$

\item Up to interchanging \(\mathcal C\) and \(\mathcal C'\), the component
\(\mathcal C\) is of cycle type and \(\mathcal C'\) is of lower-half type.
\end{enumerate}
\end{minipage}

The proof is identical to that of \textbf{(a$^{\uparrow}$)} and is omitted.

\smallskip
\noindent\textbf{(b$^{\uparrow}$)} If $\mathcal C_1=\mathcal C,\mathcal C_n=\mathcal C'$ are of upper-half type
and let \(\mathcal C_2,\dots,\mathcal C_{n-1}\)  be  all the outer upper-half
components \(\mathcal D\) with $\max(\mathcal C\cap[k])<\max(\mathcal D\cap[k])<\max(\mathcal C'\cap[k]),$
listed in increasing order of \(\max(\mathcal D\cap[k])\). WLOG, assume that
there are no upper-trivial components between \(\mathcal C\) and \(\mathcal C'\). We have: $\min(\mathcal U\cap[k])=\min(\mathcal C_1\cap[k]),
\max(\mathcal U\cap[k])=\max(\mathcal C_n\cap[k]).$
Moreover,$F_{\mathcal C_j}=[\min(V_l(\mathcal C_{j})\cap[l]), \max(V_r(\mathcal C_{j})\cap[l])]$
and $F_{\mathcal U}=[\min(V_l(\mathcal C)\cap[l]),\max(V_r(\mathcal C')\cap[l])].$   

Take $\vec x\in \bigcup_{\vec r,\vec d}(\theta_p^{\vec r}\cap\Omega_q^{\vec d})
\subset
\bigcup_{\vec r,\vec d}(\theta_{\bar p}^{\vec r}\cap\Omega_q^{\vec d}).$ By Corollary \ref{cor: constant interval}, Definition \ref{def: colour} and Remark  \ref{rem:label-represented-by-middle-vector}, the color of \(\mathcal U\) in \(q\cdot\bar p\) and the color of $\mathcal C_j$ in $q\cdot p$ can be represented by \(\vec x\). Since  there is no upper-half component \(\mathcal D\) satisfying $\max(\mathcal C_{j-1}\cap[k])<\max(\mathcal D\cap[k])<\max(\mathcal C_{j}\cap[k])$, we have:
\begin{equation}
\begin{aligned}
\mathrm{lab}_{q\cdot \bar p}(\mathcal U)
&=t_n\Bigl(\prod \vec x_{\restriction F_{\mathcal U}}\Bigr)^{-1}g_0^{-1}t_0^{-1}=t_nS_n^{-1} g_{n}^{-1} S_{n-1}^{-1} g_{n-1}^{-1} \cdots
   S_1^{-1} g_1^{-1} t_0^{-1} \\
&= (t_n S_n^{-1} g_{n}^{-1} t_{n-1}^{-1})\,
   (t_{n-1} S_{n-1}^{-1} g_{n-1}^{-1} t_{n-2}^{-1})
   \cdots
   (t_1 S_1^{-1} g_1^{-1} t_0^{-1}) \\
&=\mathrm{lab}_{q\cdot p}(\mathcal C')\,\mathrm{lab}_{q\cdot p}(\mathcal C_{n-1})\cdots
\mathrm{lab}_{q\cdot p}(\mathcal C).
\end{aligned}
\end{equation}

where
\[
S_j:=\prod_{i=\min(V_l(\mathcal C_{j})\cap[l])}^{\max(V_r(\mathcal C_{j})\cap[l])} x_i=f_{\mathcal C_j},
\qquad
g_{j}=\prod_{i=\max(V_r(\mathcal C_{j-1})\cap[l])+1}^{\min(V_l(\mathcal C_{j})\cap[l])-1} x_i=g_{\mathcal C_{j}} .
\]

On the other hand, since \(\delta_{q\cdot p}(\vec r,\vec d)=1\), for each
\(j=1,\dots,n\),
$\prod_{i=\min(\mathcal C_j\cap[k])}^{\max(\mathcal C_j\cap[k])} r_i
=\mathrm{lab}_{q\cdot p}(\mathcal C_j)^{-1}=
col_{q\cdot p}(\mathcal C_j)^{-1}.$ Multiplying these identities yields
$\prod_{i=\min(\mathcal U\cap[k])}^{\max(\mathcal U\cap[k])}r_i=\mathrm{lab}_{q\cdot\bar p}(\mathcal U)^{-1}=
col_{q\cdot\bar p}(\mathcal U)^{-1},$
which is exactly the defining relation for the upper-half component \(\mathcal U\).

\smallskip
\noindent\textbf{(b$^{\downarrow}$)}
If $\mathcal C_1=\mathcal C,\mathcal C_n=\mathcal C'$ are of lower-half type
and let \(\mathcal C_2,\dots,\mathcal C_{n-1}\) be  all the outer lower-half
components \(\mathcal D\) with $\max(\mathcal C\cap[k])<\max(\mathcal D\cap[k])<\max(\mathcal C'\cap[k]),$ listed in increasing order of \(\max(\mathcal D\cap[k])\).
The proof is identical to that of \textbf{(b)} and is omitted.

\medskip
\noindent\textbf{Case 2.}
\(\mathcal U=\mathcal C \cup \mathcal C'\) is a through type $(\bar{p},q)$ connected component.

\noindent\textbf{(a)} If \(\mathcal C\) is of through type while \(\mathcal C'\) is of upper-half and $\max(\mathcal C\cap[k])<\max(\mathcal C'\cap[k]).$
Let $\mathcal C_1=\mathcal C,\mathcal C_n=\mathcal C'$,
and let \(\mathcal C_2,\dots,\mathcal C_{n-1}\) be the outer upper-half
components \(\mathcal D\) with $\max(\mathcal C\cap[k])<\max(\mathcal D\cap[k])<\max(\mathcal C'\cap[k]),$
listed in increasing order of \(\max(\mathcal D\cap[k])\). WLOG, assume that
there are no upper-trivial components between \(\mathcal C\) and \(\mathcal C'\).

Then $\max(\mathcal U\cap[k])=\max(\mathcal C'\cap[k]), \max(\mathcal U\cap[m])=\max(\mathcal C\cap[m]),\max(\mathcal U\cap[l])=\max(\mathcal C'\cap[l])$ and $ V_r^{\uparrow}(\mathcal U)=V_r(\mathcal C')=V_r(\mathcal C_n),V_r^{\downarrow}(\mathcal U)=V_r^{\downarrow}(\mathcal C)=V_r^{\downarrow}(\mathcal C_1).$

Set $a:=\max(V_r^{\uparrow}(\mathcal U)), b:=\max(V_r^{\downarrow}(\mathcal U)),$
and define $\varepsilon(a,b):=
\begin{cases}
1,& a\le b,\\
-1,& a>b.
\end{cases}$
Further, let \(h,\mu\) denote the constants given by
Corollary~\ref{cor: constant interval} for the through component
\(\mathcal C=\mathcal C_1\), while \(h',\mu'\) denote the corresponding
constants for the through component \(\mathcal U\).

Take$\vec x\in \bigcup_{\vec r,\vec d}(\theta_p^{\vec r}\cap\Omega_q^{\vec d})
\subset
\bigcup_{\vec r,\vec d}(\theta_{\bar p}^{\vec r}\cap\Omega_q^{\vec d}).$
By Corollary~\ref{cor: constant interval} and Definition~\ref{def: colour},
the colors of \(\mathcal U\) in \(q\cdot\bar p\) and of \(\mathcal C_j\) in
\(q\cdot p\) are represented by the same vector \(\vec x\). Moreover, $h'(\mu')^{-1}
=
\left(
\prod_{i=\min\{a,b\}+1}^{\max\{a,b\}} x_i
\right)^{\varepsilon(a,b)}.$
Hence, by Definition~\ref{def: colour}, $\mathrm{lab}_{q\cdot\bar p}(\mathcal U)
=t_n\left(
\prod_{i=\min\{a,b\}+1}^{\max\{a,b\}} x_i
\right)^{\varepsilon(a,b)}\iota_1.$

For fixed $(\vec r,\vec d)\in \Gamma^k \times \Gamma^m$ satisfying $\delta_{q\cdot p}(\vec r,\vec d)=1$,
we consider the difference $X$ between $\prod_{i=1}^{\max(\mathcal C'\cap[k])} r_i$ and $\prod_{i=1}^{\max(\mathcal C\cap[m])} d_i$, i.e. $\Bigl(\prod_{i=1}^{\max(\mathcal C'\cap[k])} r_i\Bigr)X=\Bigl(\prod_{i=1}^{\max(\mathcal C\cap[m])} d_i\Bigr).$

Since \(\mathcal C_1=\mathcal C\) is of through type and
\(\delta_{q\cdot p}(\vec r,\vec d)=1\), its defining relation gives
\[
\Bigl(\prod_{i=1}^{\max(\mathcal C_1\cap[k])} r_i\Bigr)^{-1}
\Bigl(\prod_{i=1}^{\max(\mathcal C_1\cap[m])} d_i\Bigr)
=
\mathrm{lab}_{q\cdot p}(\mathcal C_1)
=
t_1h\mu^{-1}\iota_1.
\]
Using this relation together with the defining relations of the upper-half
components \(\mathcal C_2,\dots,\mathcal C_n\), we get
\begin{align*}
X
&=(\prod_{i=1}^{\max(\mathcal C_n\cap[k])} r_i)^{-1}
  \Bigl(\prod_{i=1}^{\max(\mathcal C_1\cap[m])} d_i\Bigr)=\Bigl[\prod_{j=2}^{n}\Bigl(\prod_{i=\min(\mathcal C_j\cap[k])}^{\max(\mathcal C_j\cap[k])} r_i\Bigr)\Bigr]^{-1}
  \Bigl(\prod_{i=1}^{\max(\mathcal C_1\cap[k])} r_i\Bigr)^{-1}
  \Bigl(\prod_{i=1}^{\max(\mathcal C_1\cap[m])} d_i\Bigr)\\
&=\Bigl[\prod_{j=2}^{n}\Bigl(\prod_{i=\min(\mathcal C_j\cap[k])}^{\max(\mathcal C_j\cap[k])} r_i\Bigr)\Bigr]^{-1}
  \,t_1h\mu^{-1}\iota_1= \mathrm{lab}_{q\cdot p}(\mathcal C_n)\,
   \mathrm{lab}_{q\cdot p}(\mathcal C_{n-1})\cdots
   \mathrm{lab}_{q\cdot p}(\mathcal C_2)\,
   t_1h\mu^{-1}\iota_1 \\
&= \bigl(t_nS_n^{-1}g_{n}^{-1}t_{n-1}^{-1}\bigr)
   \bigl(t_{n-1}S_{n-1}^{-1}g_{n-1}^{-1}t_{n-2}^{-1}\bigr)\cdots
   \bigl(t_2S_2^{-1}g_2^{-1}t_1^{-1}\bigr)\,
   t_1h\mu^{-1}\iota_1\\
&= t_nS_n^{-1}g_{n}^{-1}S_{n-1}^{-1}g_{n-1}^{-1}\cdots
   S_2^{-1}g_2^{-1}h\mu^{-1}\iota_1= t_n
   \left(
   \prod_{i=\min\{a,b\}+1}^{\max\{a,b\}} x_i
   \right)^{\varepsilon(a,b)}\iota_1
 = \mathrm{lab}_{q\cdot\bar p}(\mathcal U).
\end{align*}
Here $S_j:=\prod_{i=\min(V_l(\mathcal C_j)\cap[l])}^{\max(V_r(\mathcal C_j)\cap[l])}x_i=f_{\mathcal C_j},
\qquad
g_{j}:=\prod_{i=\max(V_r(\mathcal C_{j-1})\cap[l])+1}^{\min(V_l(\mathcal C_j)\cap[l])-1}x_i=g_{\mathcal C_j}.$

which is exactly the defining relation for the through component \(\mathcal U\).

\noindent\textbf{(a$'$)} If $\mathcal C$ is of through type while $\mathcal C'$ is of lower-half type and $\max(\mathcal C\cap[m])<\max(\mathcal C'\cap[m]).$ The proof is identical to that of \textbf{(a)} and is omitted.

\noindent\textbf{(b)} If \(\mathcal C\) is of upper-half type and \(\mathcal C'\) is of lower-half type,
let \(\mathcal T\) be the through-type connected component immediately preceding
\(\mathcal C\) on \([k]\), that is, $\max(\mathcal T\cap[k])<\max(\mathcal C\cap[k])$
and there is no through-type connected component \(\mathcal D\) with $\max(\mathcal T\cap[k])<\max(\mathcal D\cap[k])<\max(\mathcal C\cap[k]).$
Since noncrossingness preserves the order of through-type connected components on
\([k]\) and \([m]\), the same component \(\mathcal T\) is precisely the
through-type connected component immediately preceding \(\mathcal C'\) on \([m]\).

Let $\mathcal A_r=\mathcal C,$
where \(\mathcal A_1,\dots,\mathcal A_{r-1}\) are exactly the upper-half connected
components \(\mathcal D\) satisfying $\max(\mathcal T\cap[k])<\max(\mathcal D\cap[k])<\max(\mathcal C\cap[k]),$
listed in increasing order of \(\max(\mathcal D\cap[k])\).

Similarly, let $\mathcal B_s=\mathcal C',$
where \(\mathcal B_1,\dots,\mathcal B_{s-1}\) are exactly the lower-half connected
components \(\mathcal D\) satisfying $\max(\mathcal T\cap[m])<\max(\mathcal D\cap[m])<\max(\mathcal C'\cap[m]),$
listed in increasing order of \(\max(\mathcal D\cap[m])\).

Then $\max(\mathcal U\cap[k])=\max(\mathcal C\cap[k]),
\max(\mathcal U\cap[l])=\max(\mathcal C'\cap[l]),
\max(\mathcal U\cap[m])=\max(\mathcal C'\cap[m]);$
moreover $V_r^{\uparrow}(\mathcal U)=V_r(\mathcal C)=V_r(\mathcal A_r),V_r^{\downarrow}(\mathcal U)=V_r(\mathcal C')=V_r(\mathcal B_s).$

Let \(h,\mu\) denote the constants given by
Corollary~\ref{cor: constant interval} for the through component
\(\mathcal T\), while \(h',\mu'\) denote the corresponding
constants for the through component \(\mathcal U\).

Take $\vec x\in \bigcup_{\vec r,\vec d}(\theta_p^{\vec r}\cap\Omega_q^{\vec d})
\subset
\bigcup_{\vec r,\vec d}(\theta_{\bar p}^{\vec r}\cap\Omega_q^{\vec d}).$
By Corollary~\ref{cor: constant interval} and Definition~\ref{def: colour},
the colors of \(\mathcal U\) in \(q\cdot\bar p\) and of $\mathcal A_j$, $\mathcal B_j$ and $\mathcal T$ in
\(q\cdot p\) are represented by the same vector \(\vec x\).

For fixed $(\vec r,\vec d)\in \Gamma^k \times \Gamma^m$ satisfying $\delta_{q\cdot p}(\vec r,\vec d)=1$,
we consider the difference $X$ between $\prod_{i=1}^{\max(\mathcal C\cap[k])} r_i$ and $\prod_{i=1}^{\max(\mathcal C'\cap[m])} d_i$, i.e. $\Bigl(\prod_{i=1}^{\max(\mathcal C\cap[k])} r_i\Bigr)X=\Bigl(\prod_{i=1}^{\max(\mathcal C'\cap[m])} d_i\Bigr).$

Since \(\mathcal T\) is of through type and \(\delta_{q\cdot p}(\vec r,\vec d)=1\), its defining relation gives
\[
\Bigl(\prod_{i=1}^{\max(\mathcal T\cap[k])} r_i\Bigr)^{-1}
\Bigl(\prod_{i=1}^{\max(\mathcal T\cap[m])} d_i\Bigr)
=
\mathrm{lab}_{q\cdot p}(\mathcal T)
=
th\mu^{-1}\iota.
\]
Using this relation together with the defining relations of
\(\mathcal A_1,\dots,\mathcal A_r\) and \(\mathcal B_1,\dots,\mathcal B_s\), we get
\begin{align*}
X
&=(\prod_{i=1}^{\max(\mathcal C\cap[k])} r_i)^{-1}\Bigl(\prod_{i=1}^{\max(\mathcal C'\cap[m])} d_i\Bigr)\\
&=\Bigl[ \prod_{j=1}^{r}\Bigl(\prod_{i=\min(\mathcal A_j\cap[k])}^{\max(\mathcal A_j\cap[k])} r_i\Bigr)\Bigr]^{-1}
\Bigl(\prod_{i=1}^{\max(\mathcal T\cap[k])} r_i\Bigr)^{-1}
\Bigl(\prod_{i=1}^{\max(\mathcal T\cap[m])} d_i\Bigr)
\Bigl[ \prod_{j=1}^{s}\Bigl(\prod_{i=\min(\mathcal B_j\cap[m])}^{\max(\mathcal B_j\cap[m])} d_i\Bigr)\Bigr]\\
&=\Bigl[ \prod_{j=1}^{r}\Bigl(\prod_{i=\min(\mathcal A_j\cap[k])}^{\max(\mathcal A_j\cap[k])} r_i\Bigr)\Bigr]^{-1}
\,t_1h\mu^{-1}\iota_1\,
\Bigl[ \prod_{j=1}^{s}\Bigl(\prod_{i=\min(\mathcal B_j\cap[m])}^{\max(\mathcal B_j\cap[m])} d_i\Bigr)\Bigr]\\
&= [\mathrm{lab}_{q\cdot p}(\mathcal A_r)\,
   \mathrm{lab}_{q\cdot p}(\mathcal A_{r-1})\cdots
   \mathrm{lab}_{q\cdot p}(\mathcal A_1)]\,
   t_1h\mu^{-1}\iota_1\,
   [\mathrm{lab}_{q\cdot p}(\mathcal B_1)\,
   \mathrm{lab}_{q\cdot p}(\mathcal B_2)\cdots
   \mathrm{lab}_{q\cdot p}(\mathcal B_s)]\\
&= \bigl(t_rS_r^{-1}g_{r}^{-1}t_{r-1}^{-1}\bigr)
   \bigl(t_{r-1}S_{r-1}^{-1}g_{r-1}^{-1}t_{r-2}^{-1}\bigr)\cdots
   \bigl(t_1S_1^{-1}g_1^{-1}t^{-1}\bigr)\,
   th\mu^{-1}\iota
   (\iota^{-1}b_1\Theta_1\iota_1)\cdots(\iota_s^{-1}b_s\Theta_s\iota_s)\\
&= t_rS_r^{-1}g_{r}^{-1}S_{r-1}^{-1}g_{r-1}^{-1}\cdots
   S_1^{-1}g_1^{-1}h\mu^{-1}b_1\Theta_1\cdots b_s\Theta_s\iota_s\\
&= t_r
   \left(
   \prod_{i=\min\{a,b\}+1}^{\max\{a,b\}} x_i
   \right)^{\varepsilon(a,b)}\iota_s
 = \mathrm{lab}_{q\cdot\bar p}(\mathcal U).
\end{align*}
Here
\[
S_j:=\prod_{i=\min(V_l(\mathcal A_j)\cap[l])}^{\max(V_r(\mathcal A_j)\cap[l])}x_i=f_{\mathcal A_j},
\qquad
g_{j}:=\prod_{i=\max(V_r(\mathcal A_{j-1})\cap[l])+1}^{\min(V_l(\mathcal A_j)\cap[l])-1}x_i=g_{\mathcal A_j},
\]
\[
\Theta_j:=\prod_{i=\min(V_l(\mathcal B_j)\cap[l])}^{\max(V_r(\mathcal B_j)\cap[l])}x_i=f_{\mathcal B_j},
\qquad
b_{j}:=\prod_{i=\max(V_r(\mathcal B_{j-1})\cap[l])+1}^{\min(V_l(\mathcal B_j)\cap[l])-1}x_i=b_{\mathcal B_j}.
\]

\smallskip
\noindent\textbf{(b$'$)} If $\mathcal C$ is of upper-half type and $\mathcal C'$ is of lower-half type, and there is no preceding through-type connected component, then the proof is identical to that of \textbf{(b)}, except that all factors attached to the missing through-type component are formally set equal to $1$, namely
\[
t_1=h=\mu=\iota_1=
\Bigl(\prod_{i=1}^{\max(\mathcal T\cap[m])} d_i\Bigr)
=1.
\]
With this convention, the computation in \textbf{(b)} remains unchanged.

\noindent\textbf{(c)} If \(\mathcal C'\) is of through type and $\max(\mathcal C\cap[k])<\max(\mathcal C'\cap[k]),$
we have:$V_r^{\uparrow}(\mathcal U)=V_r^{\uparrow}(\mathcal C'), V_r^{\downarrow}(\mathcal U)=V_r^{\downarrow}(\mathcal C'),$ and $\max(\mathcal U\cap[k])=\max(\mathcal C'\cap[k]), \max(\mathcal U\cap[m])=\max(\mathcal C'\cap[m]),\max(\mathcal U\cap[l])=\max(\mathcal C'\cap[l]).$

\(h,\mu\) denote the constants given by Corollary~\ref{cor: constant interval}
for the through component \(\mathcal C'\), while \(h',\mu'\) denote the corresponding
constants for the through component \(\mathcal U\). Take $\vec x\in \bigcup_{\vec r,\vec d}(\theta_p^{\vec r}\cap\Omega_q^{\vec d})
\subset
\bigcup_{\vec r,\vec d}(\theta_{\bar p}^{\vec r}\cap\Omega_q^{\vec d}).$
By Corollary~\ref{cor: constant interval} and
Definition~\ref{def: colour}, the color of \(\mathcal U\) in \(q\cdot\bar p\) and $\mathcal C'\) in $ q\cdot p$  can be represented by \(\vec x\), therefore:
\[
\mathrm{lab}_{q\cdot\bar p}(\mathcal U)=t_{\mathcal C'} h'(\mu')^{-1}\iota_{\mathcal C'}
=t_{\mathcal C'}\Bigl(\prod_{i=\max(V_r^{\uparrow}(\mathcal U))+1}^{\max(\mathcal U\cap[l])}x_i\Bigr)\Bigl(\prod_{i=\max(V_r^{\downarrow}(\mathcal U))+1}^{\max(\mathcal U\cap[l])}x_i\Bigr)^{-1}\iota_{\mathcal C'}
=t_{\mathcal C'} h(\mu)^{-1}\iota_{\mathcal C'}=\mathrm{lab}_{q\cdot p}(\mathcal C').
\]
Thus the defining relation for \(\mathcal U\) is identical to that for \(\mathcal C'\), i.e.
\[
\prod_{i=1}^{\max(\mathcal C'\cap[k])}r_i
=\prod_{i=1}^{\max(\mathcal U\cap[k])} r_i
=
\Bigl(\prod_{i=1}^{\max(\mathcal U\cap[m])} d_i\Bigr)\,
\mathrm{lab}_{q\cdot\bar p}(\mathcal U)^{-1}
=
\Bigl(\prod_{i=1}^{\max(\mathcal C'\cap[m])} d_i\Bigr)\,
\mathrm{lab}_{q\cdot p}(\mathcal C')^{-1}.
\]

Therefore $\delta_{q\cdot\bar p}(\vec r,\vec d)=1.$
This proves the \textup{(p-case)}. The \textup{(q-case)} is proved in the same way.
\end{proof}

\begin{proposition}\label{Solution}
Let $p\in \NC_{\Lambda}(k,l)$ and $q\in \NC_{\Lambda}(l,m)$ with
$T_q\circ T_p\neq 0$, and let $(q\cdot p,\vec u)$ be the coloured non-crossing partition
defined in Definition~\ref{def: colour}. Then, for every
$\vec r\in\Gamma^k$ and $\vec d\in\Gamma^m$, we have
\[
\delta_{q\cdot p}(\vec r,\vec d)=1
\quad\Longleftrightarrow\quad
\theta_p^{\vec r}\cap\Omega_q^{\vec d}\neq\emptyset.
\]
\end{proposition}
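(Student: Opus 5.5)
We prove the two implications separately. The forward direction is a reduction to the single-component situation, using the gluing machinery already developed.

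\emph{($\Leftarrow$).} Assume $\theta_p^{\vec r}\cap\Omega_q^{\vec d}\neq\emptyset$ and pick $\vec x$ in it. We check the defining relation of $\delta_{q\cdot p}(\vec r,\vec d)$ component by component; the needed identities were essentially recorded in Corollary~\ref{colour of components}.
\begin{itemize}
\item For an upper-half component, Corollary~\ref{colour of components}(1) gives $\prod_{\mathcal C\cap[k]}\vec r=\mathrm{lab}(\mathcal C)^{-1}$.
\item For a lower-half component, Corollary~\ref{colour of components}(2) gives the corresponding identity on $[m]$.
\item For a through component, Corollary~\ref{colour of components}(3) gives $\bigl(\prod_{j\le\max(\mathcal C\cap[k])}r_j\bigr)\mathrm{lab}(\mathcal C)=\prod_{j\le\max(\mathcal C\cap[m])}d_j$.
\item Trivial components inherit their block relations from $p$ or $q$ (item (4)).
\item Cycle components produce no block of $q\cdot p$.
\end{itemize}
By Remark~\ref{rem:label-represented-by-middle-vector}, the labels may be evaluated on this particular $\vec x$. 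Comparing with Definition~\ref{block relations} for the blocks $\mathcal C\cap([k]\sqcup[m])$ then gives $\delta_{q\cdot p}(\vec r,\vec d)=1$. The one subtlety is that single-layer blocks of $q\cdot p$ use the relation $\bigl(\prod_{\min}^{\max}r_u\bigr)t=1$ on the interval spanned. The spans of half components contain nested components, so we must use that $\prod$ over the full $[k]$-interval equals the product of the nested pieces. That equality follows from the same telescoping used in Step~4 of the proof that $q\cdot p\in\NC_\Lambda(k,m)$.

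\emph{($\Rightarrow$).} Assume $\delta_{q\cdot p}(\vec r,\vec d)=1$. By Lemma~\ref{Equation}, it suffices to show that $\mathcal E^{p,q}_{\vec r,\vec d}$ is solvable. Equivalently, by Lemma~\ref{lem:balanced-pot} and the potential/solution bijection, the gain graph $\mathcal G(\mathcal E^{p,q}_{\vec r,\vec d})$ must be balanced. We argue by induction on $|K(q,p)|$, or more precisely on the number of components whose middle intervals $H_{\mathcal C}$ overlap.

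\emph{Base case.} If $|K(q,p)|=1$, Lemma~\ref{|K(q,p)|=1} gives solvability directly.

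\emph{Disjoint intervals.} If the intervals $H_{\mathcal C}$ are pairwise disjoint, the system $\mathcal E^{p,q}_{\vec r,\vec d}$ splits into independent subsystems, one per interval $H_{\mathcal C}$. Each subsystem is then handled by the one-component argument of Lemma~\ref{|K(q,p)|=1}, applied to the restriction of $(p,q)$ to that component. The gaps between consecutive intervals are covered by the through-block equations, so we must check that the coefficients $\kappa$ there are consistent with the restricted problem. We expect that check to be routine bookkeeping with $g_V,\alpha_V$.

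\emph{Overlapping intervals.} Otherwise, Lemma~\ref{lem:gluing-step} produces $\bar p$ (or $\bar q$) with $|K|$ decreased by one. Proposition~\ref{prop:delta-under-gluing} gives $\delta_{q\cdot\bar p}(\vec r,\vec d)=1$, so by induction $\mathcal E^{\bar p,q}_{\vec r,\vec d}$ is solvable. By Corollary~\ref{cor:gluing-consequences}, $\mathcal E^{\bar p,q}$ equals $\mathcal E^{p,q}$ with the single equation for $B$ deleted. It remains to show that any solution of the reduced system, or a suitably modified one, satisfies the deleted equation $\prod_{\min B}^{\max B}x_i=\mathrm{col}_p(B)$.

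\emph{Main obstacle.} We expect this last step to be the hardest. The approach is to exploit the freedom in solutions, which is $|\Lambda|^{c-1}$ by Corollary~\ref{Number of solutions}. In the graph $\overline G_{\bar p,q}$, the endpoints $\ell(B)$ and $r(B)$ should lie in different connected components, since $B$ was a free outer block of $\mathcal C'$ nested directly under $D$. In that case we can multiply the potential on the component containing $r(B)$ by a suitable constant so that the edge $e_B$ acquires gain $\mathrm{col}_p(B)$; this does not alter any other equation. If instead $\ell(B)$ and $r(B)$ are already connected in $\overline G_{\bar p,q}$, then adding $e_B$ closes a cycle. We would show that cycle is balanced using Proposition~\ref{constant entrance } (constancy over entrances) together with $T_q\circ T_p\neq0$: some admissible $\vec x'$ already satisfies all equations including $B$'s, and that cycle's gain does not depend on $(\vec r,\vec d)$, exactly as in the comparison argument of Lemma~\ref{|K(q,p)|=1}.
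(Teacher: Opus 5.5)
Your outline has the same architecture as the paper's proof. You induct on $|K(q,p)|$, take the base case from Lemma~\ref{|K(q,p)|=1}, and split the system when the $H_{\mathcal C}$ are pairwise disjoint. Otherwise you glue via Lemma~\ref{lem:gluing-step}, Corollary~\ref{cor:gluing-consequences} and Proposition~\ref{prop:delta-under-gluing}, and then check the deleted equation for $B$ against a reference solution $\vec z\in\theta_p^{\vec r'}\cap\Omega_q^{\vec d'}$ supplied by $T_q\circ T_p\neq0$.

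Two small points before the main issue:
\begin{itemize}
\item Before invoking the induction hypothesis you must record $T_q\circ T_{\bar p}\neq 0$. This follows from $\theta_{\bar p}^{\vec r'}\cap\Omega_q^{\vec d'}\supseteq\theta_p^{\vec r'}\cap\Omega_q^{\vec d'}$.
\item The comparison uses $\delta_{q\cdot p}(\vec r',\vec d')=1$, which is your ($\Leftarrow$) direction.
\end{itemize}

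The gap is in the decisive last step. Splitting into the case where $\ell(B)$ and $r(B)$ lie in different components of $\overline G_{\bar p,q}$ and the case where $e_B$ closes a cycle is logically fine. But all the content is in the cycle case, and your justification there does not hold as stated: the gain of a cycle through $e_B$ does depend on $(\vec r,\vec d)$. To close up from $r(B)$ one in general has to cross the frame $F_{\mathcal C'}$ (upper-half case) or the through-stretch of $\mathcal C'$ (through case). That means using through-block edges, whose gains $\kappa_{\vec r,\vec d}(V)$ contain $\prod_{V^+}\vec r$ or $\prod_{V^-}\vec d$. What is true is only that, along a suitable closing path, the gain takes the same value at every pair with $\delta_{q\cdot p}=1$, and proving this is the actual content.

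The paper does it as follows. It closes the path along the $q$-side decomposition of $H_{\mathcal C'}$ (lower entrances and $\mathrm{FreeOut}_q^{\uparrow}(\mathcal C')$) and returns along the $p$-side pieces other than $\operatorname{span}(B)$. Entrances are handled by Proposition~\ref{constant entrance }, free outer blocks by their labels, and the through-block part by $\delta_{q\cdot p}$ at both $(\vec r,\vec d)$ and $(\vec r',\vec d')$.
\begin{itemize}
\item For upper-half $\mathcal C'$, this is the single relation $\prod_{\mathcal C'\cap[k]}\vec r=\mathrm{lab}(\mathcal C')^{-1}=\prod_{\mathcal C'\cap[k]}\vec r'$. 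It pins down $\prod_{F_{\mathcal C'}}$ and is indeed close to Lemma~\ref{|K(q,p)|=1}.
\item For through-type $\mathcal C'$ it is not. The defining relation of $\mathcal C'$ in $q\cdot p$ is a prefix relation, and the leftmost through-blocks of $\mathcal C'$ have their left-adjacent through-blocks in other components. So the $(\vec r,\vec d)$-dependence cancels only after combining the relation of $\mathcal C'$ with those of the preceding through component $\mathcal C_0$ and of all outer half components between $\mathcal C_0$ and $\mathcal C'$. These are the elements $C_\pm$ in the paper's Case~2.
\end{itemize}
Saying \emph{exactly as in Lemma~\ref{|K(q,p)|=1}} does not supply this. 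You need to specify the closing path and carry out that multi-component computation.
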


\begin{proof}

The implication $(\Longleftarrow)$ follows directly from the construction of $q\cdot p$.

Assume that \(\theta_p^{\vec r}\cap\Omega_q^{\vec d}\neq\emptyset\), and choose
\(\vec x\in \theta_p^{\vec r}\cap\Omega_q^{\vec d}\).
For each no-cycle \((p,q)\)-connected component \(\mathcal C\),
Corollary~\ref{colour of components} gives exactly the required product identity,
and by Remark~\ref{rem:label-represented-by-middle-vector} the quantities
appearing there are precisely those used in Definition~\ref{def: colour} to define
\(\mathrm{lab}(\mathcal C)\). Hence the colour condition for every block of
\(q\cdot p\) is satisfied, so \(\delta_{q\cdot p}(\vec r,\vec d)=1\).

$(\Longrightarrow)$ fix $\vec r\in\Gamma^k$ and $\vec d\in\Gamma^m$ such that $\delta_{q\cdot p}(\vec r,\vec d)=1.$ We must show that the system $\mathcal E_{\vec r,\vec d}^{p,q}$ admits a solution. We prove the statement by induction on $|K(q,p)|$.

If $|K(q,p)|=1$, the claim follows from Corollary~\ref{|K(q,p)|=1}.

Now assume that the statement holds whenever $|K(q,p)|=n$, and let $(p,q)$ satisfy
$|K(q,p)|=n+1$.

If the intervals \(H_{\mathcal C}\), \(\mathcal C\in K(q,p)\), are pairwise disjoint,
then the system \(\mathcal E_{\vec r,\vec d}^{p,q}\) splits into independent subsystems
indexed by the connected components \(\mathcal C\in K(q,p)\). More precisely, for each
\(\mathcal C\in K(q,p)\), define $K_{\mathcal C}:=\bigcup_{B\in \mathcal C_p}(B\cap[k]), L_{\mathcal C}:=H_{\mathcal C},
M_{\mathcal C}:=\bigcup_{D\in \mathcal C_q}(D\cap[m]),$
and let $\rho_{\mathcal C}^{k}:K_{\mathcal C}\to[k_{\mathcal C}],
\rho_{\mathcal C}^{l}:L_{\mathcal C}\to[l_{\mathcal C}],
\rho_{\mathcal C}^{m}:M_{\mathcal C}\to[m_{\mathcal C}]$
be the order-preserving bijections. We denote by $p_{\mathcal C}\in \NC_{\Lambda}(k_{\mathcal C},l_{\mathcal C}),
q_{\mathcal C}\in \NC_{\Lambda}(l_{\mathcal C},m_{\mathcal C})$
the coloured partitions obtained from the block families \(\mathcal C_p\) and
\(\mathcal C_q\) by applying these order-preserving relabellings, with colours inherited
from \(p\) and \(q\). We also write $\vec r_{\mathcal C}:=(r_i)_{i\in K_{\mathcal C}},\vec d_{\mathcal C}:=(d_i)_{i\in M_{\mathcal C}},$
where the entries are listed in increasing order.

Then the subsystem of \(\mathcal E_{\vec r,\vec d}^{p,q}\) involving the variables
\((x_i)_{i\in H_{\mathcal C}}\) is, after the relabelling $y_{\rho_{\mathcal C}^{l}(i)}:=x_i(i\in H_{\mathcal C}),$
precisely $\mathcal E_{\vec r_{\mathcal C},\vec d_{\mathcal C}}^{p_{\mathcal C},q_{\mathcal C}}.$
Since the intervals \(H_{\mathcal C}\) are pairwise disjoint, these subsystems have
disjoint sets of variables. Moreover, by construction, $|K(q_{\mathcal C},p_{\mathcal C})|=1.$
The assumption \(\delta_{q\cdot p}(\vec r,\vec d)=1\) restricts to
$\delta_{q_{\mathcal C}\cdot p_{\mathcal C}}
(\vec r_{\mathcal C},\vec d_{\mathcal C})=1(\mathcal C\in K(q,p)).$
Therefore, by Corollary~\ref{|K(q,p)|=1}, each subsystem admits a solution. Since the
sets \(H_{\mathcal C}\) are pairwise disjoint, these solutions can be concatenated to give
a solution of the whole system \(\mathcal E_{\vec r,\vec d}^{p,q}\).

Assume now that the intervals \(H_{\mathcal C}\), \(\mathcal C\in K(q,p)\), are not pairwise disjoint.
By Lemma~\ref{lem:gluing-step} and Corollary~\ref{cor:gluing-consequences}, one of the two cases in Lemma~\ref{lem:gluing-step} occurs. We only treat the \textup{(p-case)}; the \textup{(q-case)} is analogous.

Thus there exist connected components \(\mathcal C,\mathcal C'\in K(q,p)\) and blocks
$B=\pi_p(\min \mathcal C')\in \mathrm{FreeOut}_p^{\downarrow}(\mathcal C'),D\in \mathcal C_p,$
such that $\operatorname{span}(B)\subsetneq \operatorname{span}(D),$
and such that the gluing
\[
\bar p:=(p\setminus\{B,D\})\cup\{D\cup B\}
\]
satisfies $|K(q,\bar p)|=|K(q,p)|-1=n$
and $\theta_{\bar p}^{\vec r}\cap\Omega_q^{\vec d}
\supseteq
\theta_p^{\vec r}\cap\Omega_q^{\vec d}$
for every \(\vec r\in\Gamma^k\) and \(\vec d\in\Gamma^m\).

Moreover, by Proposition~\ref{prop:delta-under-gluing}, $\delta_{q\cdot p}(\vec r,\vec d)=1
\quad\Longrightarrow\quad
\delta_{q\cdot\bar p}(\vec r,\vec d)=1.$

Since \(T_q\circ T_p\neq 0\), there exist $\vec r^{\,\prime}\in\Gamma^k,\vec d^{\,\prime}\in\Gamma^m$
such that$\theta_p^{\vec r^{\,\prime}}\cap\Omega_q^{\vec d^{\,\prime}}\neq\emptyset.$
Hence $\theta_{\bar p}^{\vec r^{\,\prime}}\cap\Omega_q^{\vec d^{\,\prime}}\neq\emptyset,$
so that \(T_q\circ T_{\bar p}\neq 0\). Therefore the inductive hypothesis applies to the pair \((\bar p,q)\). Since $|K(q,\bar p)|=n\text{and}
\delta_{q\cdot\bar p}(\vec r,\vec d)=1,$
we obtain $\theta_{\bar p}^{\vec r}\cap\Omega_q^{\vec d}\neq\emptyset.$
Choose $\vec y\in \theta_{\bar p}^{\vec r}\cap\Omega_q^{\vec d}.$

It remains to prove that \(\vec y\in \theta_p^{\vec r}\cap\Omega_q^{\vec d}\), i.e. that $\prod_{i=\min B}^{\max B} y_i=\operatorname{col}(B).$

For this, fix
\(\vec z\in \theta_p^{\vec r^{\,\prime}}\cap\Omega_q^{\vec d^{\,\prime}}\),
and compare \(\vec y\) and \(\vec z\) on the component \(\mathcal C'\).
By noncrossingness and the choice of \(B\), we have
$\max(\operatorname{span}(B))<\min L$   for all  $L\in Th_p(\mathcal C)
\quad
\bigl(\text{resp. } L\in Th_q(\mathcal C')\bigr),
$
if \(B=\pi_p(\min\mathcal C')\)
\(\bigl(\text{resp. } B=\pi_q(\min\mathcal C')\bigr)\).

Take any $\vec z\in \theta_p^{\vec r'}\cap \Omega_q^{\vec d'}$.
Fix an entrance $E$ of $\mathcal C'$.
By the definition of $\bar p$ and Proposition \ref{constant entrance }, we have
$\vec y,\vec z\in \mathcal S_E(q,p)$, hence $\prod \vec y_{\restriction E}=\prod \vec z_{\restriction E}$.

Moreover, for any $W\in\mathrm{FreeOut}_p^{\downarrow}(\mathcal C')
\ \sqcup\
\mathrm{FreeOut}_q^{\uparrow}(\mathcal C'),
$
we have
$\prod\vec{y}_{\restriction \mathrm{span}(W)}
=
\prod\vec{z}_{\restriction \mathrm{span}(W)}
=
\mathrm{col}(W).$

\medskip
\noindent\textbf{Case 1.} Assume $\mathcal C'$ is of upper-half type (the lower-half case is analogous).
Since $\delta_{q\cdot p}(\vec r,\vec d)=1=\delta_{q\cdot p}(\vec r',\vec d')$, we obtain
\[
\prod_{\mathcal C'\cap[k]}\vec r
=\mathrm{lab}(\mathcal C')^{-1}
=
\prod_{\mathcal C'\cap[k]}\vec r'.
\]
As $\vec y\in\theta_{\bar p}^{\vec r}\cap\Omega_q^{\vec d}$ and
$\vec z\in\theta_p^{\vec r'}\cap\Omega_q^{\vec d'}$, we have
$\delta_{\bar p}(\vec r,\vec y)=1=\delta_p(\vec r',\vec z)$, hence
\[
\prod\vec y_{\restriction F_{\mathcal C'}}
=
g^{-1}(t')^{-1}\Bigl(\prod_{\mathcal C'\cap[k]}\vec r\Bigr)t
=
g^{-1}(t')^{-1}\Bigl(\prod_{\mathcal C'\cap[k]}\vec r'\Bigr)t
=
\prod\vec z_{\restriction F_{\mathcal C'}}.
\]

Let $H_{\mathcal C'}=[\min(\mathcal C'\cap[\ell]),\max(\mathcal C'\cap[\ell])]$.
Now consider $\mathcal C'_q$ together with all lower entrances of $\mathcal C'$. Because
\[
H_{\mathcal C'}
=
\Bigl(\bigsqcup_{E\in \mathrm{Ent}^{\downarrow}(\mathcal C')}\mathrm{span}(E)\Bigr)
\ \bigsqcup\
\Bigl(\bigsqcup_{W\in{\mathrm{FreeOut}_q^{\uparrow}}(\mathcal C')} \mathrm{span}(W)\Bigr),
\]
on every component of this decomposition we already proved
$\prod \vec y_{\restriction(\cdot)}=\prod \vec z_{\restriction(\cdot)}$.
This yields
\[
\prod_{i=\min(\mathcal C'\cap[\ell])}^{\max(\mathcal C'\cap[\ell])}y_i
=
\prod_{i=\min(\mathcal C'\cap[\ell])}^{\max(\mathcal C'\cap[\ell])}z_i.
\]

Consider $\mathcal C'_p\setminus\{B\}$ together with all upper entrances of $\mathcal C'$. Because
\[
H_{\mathcal C'}
=
\Bigl(\bigsqcup_{E\in \mathrm{Ent}^{\uparrow}(\mathcal C')}\mathrm{span}(E)\Bigr)
\ \bigsqcup\
\Bigl(\bigsqcup_{W\in \mathrm{FreeOut}_p^{\downarrow}(\mathcal O') }\mathrm{span}(W)\Bigr)
\ \bigsqcup\
F_{\mathcal C'}
\ \bigsqcup\
\mathrm{span}(B),
\]
on every component of this decomposition except $\mathrm{span}(B)$
we already proved
$\prod \vec y_{\restriction(\cdot)}=\prod \vec z_{\restriction(\cdot)}$.
This yields
\[
\prod_{i=\min(\mathcal C'\cap[\ell])}^{\min(B)-1}y_i
=
\prod_{i=\min(\mathcal C'\cap[\ell])}^{\min(B)-1}z_i,
\qquad
\prod_{i=\max(B)+1}^{\max(\mathcal C'\cap[\ell])}y_i
=
\prod_{i=\max(B)+1}^{\max(\mathcal C'\cap[\ell])}z_i.
\]

Therefore, taking the product over the decomposition of $H_{\mathcal C'}$ above and cancelling
the already-matched factors, we conclude
\[
\prod\vec y_{\restriction \mathrm{span}(B)}
=
\mathrm{col}(B)
=
\prod\vec z_{\restriction \mathrm{span}(B)}.
\]

\medskip
\noindent\textbf{Case 2.} Assume $\mathcal C'$ is of through type.

 Write the preceding through-type component left to $\mathcal C'$ as $\mathcal C_0$. WLOG, we assume there are no upper(lower)-trivial components between $\mathcal C_0$ and $\mathcal C'$. Write $\{\mathcal C_1,\dots,\mathcal C_{u_n}\}$ (resp. $\{\mathcal O_1,\dots,\mathcal O_{d_n}\}$)is precisely the set of all outer upper-half-type(resp. lower) components between $\mathcal C_0$ and $\mathcal C'$.

Since 
\(\delta_{q\cdot p}(\vec r,\vec d)=1=\delta_{q\cdot p}(\vec r',\vec d')\),
then for each  component \(\mathcal C_j\) \((1\le j\le u_n)\) and each component
\(\mathcal O_j\) \((1\le j\le d_n)\) we have
$\prod_{ \mathcal C_j\cap[k]} \vec r=\prod_{ \mathcal C_j\cap[k]} \vec r',
\prod_{ \mathcal O_j\cap[m]} \vec d=\prod_{ \mathcal O_j\cap[m]} \vec d'.$
Moreover, the component \(\mathcal C_0\) yields $\Bigl(\prod_{i=1}^{\max(\mathcal C_0\cap [k])} r_i\Bigr)\,t_0h_0\mu_0^{-1}\iota_0
=
\prod_{i=1}^{\max(\mathcal C_0\cap [m])} d_i$. For the  component \(\mathcal C'\), combining the defining relations gives:
\[
(\prod_{[1,\max{C_0\cap [k]}]}\vec r)\Bigl(\prod_{j=1}^{u_n}\prod_{ \mathcal C_j\cap[k]} \vec r\Bigr)
\,t_{u_n}g_{u_n}\Bigl(g_{u_n}^{-1}t_{u_n}^{-1}(\prod_{ \mathcal C'\cap[k]}\vec r)\,t\Bigr)\,
h\mu^{-1}\iota
=
(\prod_{[1,\max{C_0\cap [m]]}}\vec d)\Bigl(\prod_{j=1}^{d_n}\prod_{ O_j\cap[m]} \vec d\Bigr)
(\prod_{ \mathcal C'\cap[k]} \vec d).
\]
Using the identity coming from \(\mathcal C_0\), we can rewrite the right-hand side as
$$
(\prod_{[1,\max{C_0\cap [k]}]}\vec r)\,t_0h_0\mu_0^{-1}\iota_0
\Bigl(\prod_{j=1}^{d_n}\prod_{ \mathcal O_j\cap[m]} \vec d\Bigr)
(\prod_{ \mathcal C'\cap[m]} \vec d)$$
$$=
(\prod_{[1,\max{C_0\cap [k]}]}\vec r)\,t_0h_0\mu_0^{-1}
\Bigl(\iota_0(\prod_{j=1}^{d_n}\prod_{ \mathcal O_j\cap[m]} \vec d)\,
\iota_{d_n}^{-1}b_{d_n}\Bigr)
\Bigl(b_{d_n}^{-1}\iota_{d_n}(\prod_{\mathcal C'\cap[m]} \vec d)\Bigr).
$$

Then:
\[
\Bigl(\prod_{j=1}^{u_n}\prod_{ \mathcal C_j\cap[k]} \vec r\Bigr)
\,t_{u_n}g_{u_n}\Bigl(g_{u_n}^{-1}t_{u_n}^{-1}(\prod_{ \mathcal C'\cap[k]}\vec r)\,t\Bigr)\,
h\mu^{-1}\iota
=
t_0h_0\mu_0^{-1}
\Bigl(\iota_0(\prod_{j=1}^{d_n}\prod_{ \mathcal O_j\cap[m]} \vec d)\,
\iota_{d_n}^{-1}b_{d_n}\Bigr)
\Bigl(b_{d_n}^{-1}\iota_{d_n}(\prod_{\mathcal C'\cap[m]} \vec d)\Bigr).
\]

Next, from 
\(\delta_{\bar p}(\vec r,\vec y)=1=\delta_{\bar p}(\vec r',\vec z)\) and 
\(\delta_q(\vec y,\vec d)=1=\delta_q(\vec z,\vec d')\),
we obtain the identifications:
$\prod_{i=\min V_l^{\uparrow}(\mathcal C')}^{\max V_r^{\uparrow}(\mathcal C')} y_i
=
g_{u_n}^{-1}t_{u_n}^{-1}(\prod_{ \mathcal C'\cap[k]}\vec r)\,t,$
$\prod_{i=\min V_l^{\uparrow}(\mathcal C')}^{\max V_r^{\uparrow}(\mathcal C')} z_i
=g_{u_n}^{-1}t_{u_n}^{-1}(\prod_{ \mathcal C'\cap[k]}\vec r')\,t$
and
$\prod_{i=\min V_l^{\downarrow}(\mathcal C')}^{\max V_r^{\downarrow}(\mathcal C')} y_i
=b_{d_n}^{-1}\iota_{d_n}(\prod_{\mathcal C'\cap[m]} \vec d)
$, $\prod_{i=\min V_l^{\downarrow}(\mathcal C')}^{\max V_r^{\downarrow}(\mathcal C')} z_i
=b_{d_n}^{-1}\iota_{d_n}(\prod_{\mathcal C'\cap[m]} \vec d')
.$

Therefore,
$\left(\prod_{i=\min V_l^{\uparrow}(\mathcal C')}^{\max V_r^{\uparrow}(\mathcal C')} y_i\right)
h\mu^{-1}
\left(\prod_{i=\min V_l^{\downarrow}(\mathcal C')}^{\max V_r^{\downarrow}(\mathcal C')} y_i\right)^{-1}
=
C_+^{-1}h_0\mu_0^{-1}C_-
=
(C_+')^{-1}h_0\mu_0^{-1}C_-'
=
\left(\prod_{i=\min V_l^{\uparrow}(\mathcal C')}^{\max V_r^{\uparrow}(\mathcal C')} z_i\right)
h\mu^{-1}
\left(\prod_{i=\min V_l^{\downarrow}(\mathcal C')}^{\max V_r^{\downarrow}(\mathcal C')} z_i\right)^{-1}$, where we set
\[
C_+:=t_0^{-1}\Bigl(\prod_{j=1}^{u_n}\prod_{ \mathcal C_j\cap[k]} \vec r\Bigr)t_{u_n}g_{u_n},
\qquad
C_-:=\iota_0\Bigl(\prod_{j=1}^{d_n}\prod_{ \mathcal O_j\cap[m]} \vec d\Bigr)
b_{d_n}^{-1}\iota_{d_n},
\]
and similarly \(C_+'\) (resp. \(C_-'\)) is defined by replacing \((\vec r,\vec d)\) with \((\vec r',\vec d')\).
By $\prod_{ \mathcal C_j\cap[k]} \vec r=\prod_{ \mathcal C_j\cap[k]} \vec r',
\prod_{ \mathcal O_j\cap[m]} \vec d=\prod_{ \mathcal O_j\cap[m]} \vec d'.$ , we have \(C_+=C_+'\) and \(C_-=C_-'\).  

Since \(H_{\mathcal C'}\) decomposes into the disjoint union of
the  upper(resp. lower) entrances of \(\mathcal C'\), the spans of outer blocks in \(\mathcal C'_p\) (resp. \(\mathcal C'_q\)),
and the interval \([\min V_l^{\uparrow}(\mathcal C'),\,\max V_r^{\uparrow}(\mathcal C')]\)
(resp. \([\min V_l^{\downarrow}(\mathcal C'),\,\max V_r^{\downarrow}(\mathcal C')]\)),
we can compare the corresponding products along these pieces.

In particular, we have
\[
(\prod \vec y_{\restriction \mathrm{span}(B)})
\Bigl(\prod_{i=\max B+1}^{\min V_l^{\uparrow}(\mathcal C')-1} y_i\Bigr)
\Bigl(\prod_{i=\min V_l^{\uparrow}(\mathcal C')}^{\max V_r^{\uparrow}(\mathcal C')} y_i\Bigr)\,h
=
\Bigl(\prod_{i=\min B}^{\min V_l^{\downarrow}(\mathcal C')-1} y_i\Bigr)
\Bigl(\prod_{i=\min V_l^{\downarrow}(\mathcal C')}^{\max V_r^{\downarrow}(\mathcal C')} y_i\Bigr)\,\mu .
\]
Using$
\Bigl(\prod_{i=\min V_l^{\uparrow}(\mathcal C')}^{\max V_r^{\uparrow}(\mathcal C')} y_i\Bigr)\,h\mu^{-1}\,
\Bigl(\prod_{i=\min V_l^{\downarrow}(\mathcal C')}^{\max V_r^{\downarrow}(\mathcal C')} y_i\Bigr)^{-1}
= C_+^{-1}h_0\mu_0^{-1}C_-,$
we obtain
$$\prod \vec y_{\restriction \mathrm{span}(B)}
=
\Bigl(\prod_{i=\min B}^{\min V_l^{\downarrow}(\mathcal C')-1} y_i\Bigr)\,
\bigl(C_+^{-1}h_0\mu_0^{-1}C_-\bigr)^{-1}.$$
Now \(\vec y\) and \(\vec z\) have identical restriction products on every entrance and on each block contained in
\([\min B,\,\min V_l^{\downarrow}(\mathcal C')-1]\); hence$\prod_{i=\min B}^{\min V_l^{\downarrow}(\mathcal C')-1} y_i
=
\prod_{i=\min B}^{\min V_l^{\downarrow}(\mathcal C')-1} z_i .$
Consequently, $
\prod \vec y_{\restriction \mathrm{span}(B)}
=
\prod \vec z_{\restriction \mathrm{span}(B)}
=
\mathrm{col}_p(B).$

\end{proof}

\begin{proposition}[Vertical composition]
Let
$p\in \NC_{\Lambda}\bigl((k,l),(g_1,\dots,g_k;\,h_1,\dots,h_l)\bigr), q\in \NC_{\Lambda}\bigl((l,m),(h_1,\dots,h_l;\,d_1,\dots,d_m)\bigr).$ Then either$T_q\circ T_p=0,$
or $T_q\circ T_p=|\Lambda|^{\,c(\mathcal G)-1}T_{q\cdot p}.$
Consequently, the linear span of the operators induced by colored partitions is closed under composition.
\end{proposition}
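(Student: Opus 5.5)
We compute $T_q\circ T_p$ on basis vectors and compare it with $T_{q\cdot p}$ using the middle-layer solution counting developed above. For $\vec r\in\Lambda^k$,
\[
T_q T_p(e_{\vec r})=\sum_{\vec d\in\Lambda^m}\Bigl(\sum_{\vec s\in\Lambda^l}\delta_p(\vec r,\vec s)\,\delta_q(\vec s,\vec d)\Bigr)e_{\vec d}
=\sum_{\vec d}\#\bigl(\theta_p^{\vec r}\cap\Omega_q^{\vec d}\bigr)\,e_{\vec d}.
\]
So the statement reduces to a dichotomy for the coefficients $N(\vec r,\vec d):=\#(\theta_p^{\vec r}\cap\Omega_q^{\vec d})$. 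If $N\equiv 0$, then $T_q\circ T_p=0$ and there is nothing to prove. We therefore assume $T_q\circ T_p\neq 0$. In that case the coloured partition $(q\cdot p,\mathrm{col}_{q\cdot p})$ is defined. It lies in $\NC_\Lambda(k,m)$ by the proposition preceding the stability subsection. By the corollary following it, together with Lemma~\ref{lem:vertical-composition-NCgh}, it lies in $\NC_\Lambda(\vec g,\vec d)$, so $T_{q\cdot p}$ is one of the admissible partition operators.

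The main step is to show that for every $(\vec r,\vec d)$,
\[
N(\vec r,\vec d)=|\Lambda|^{\,c(\mathcal G)-1}\,\delta_{q\cdot p}(\vec r,\vec d).
\]
By Lemma~\ref{Equation}, $\theta_p^{\vec r}\cap\Omega_q^{\vec d}$ is in bijection with the solution set of $\mathcal E^{p,q}_{\vec r,\vec d}$. Indeed, the relabelled equations are equivalent to the original block relations, so the correspondence is the identity on $\vec x$. Proposition~\ref{Solution} gives $\delta_{q\cdot p}(\vec r,\vec d)=1$ if and only if this set is nonempty. When it is nonempty, Corollary~\ref{Number of solutions} gives exactly $|\Lambda|^{c-1}$ solutions. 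That corollary rests on the bijection between potentials and solutions and on the counting lemma for balanced gain graphs. Here $c=c(\mathcal G)$ is the number of connected components of the underlying graph $\overline G_{p,q}$.

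The point I expect to need the most care is that $c$ must not depend on $(\vec r,\vec d)$, since otherwise the scalar would not factor out. It is independent: the vertex set $\{0,\dots,l\}$ and the edges $e_B$ for $B\in p^-\sqcup q^+$ depend only on the underlying partitions $p$ and $q$, and only the gains $\kappa_{\vec r,\vec d}$ vary. Hence $c$ is a single integer attached to the pair $(p,q)$. One should also check that the counting lemma applies when some components of $\overline G_{p,q}$ carry no equations (isolated vertices). It does: such a vertex is a free root, and each contributes a factor $|\Lambda|$, which is consistent with the formula.

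Combining these facts,
\[
T_qT_p(e_{\vec r})=|\Lambda|^{c-1}\sum_{\vec d}\delta_{q\cdot p}(\vec r,\vec d)\,e_{\vec d}=|\Lambda|^{c-1}\,T_{q\cdot p}(e_{\vec r}),
\]
which is the claimed identity. Closure of the linear span under composition then follows by bilinearity: each product of spanning operators is either $0$ or a scalar multiple of some $T_{q\cdot p}$, and $q\cdot p$ is admissible for the matching tuple labels.
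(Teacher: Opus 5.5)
Your proposal is correct and is essentially the paper's own argument: you expand $T_qT_p(e_{\vec r})$, read off the coefficient of $e_{\vec d}$ as $\#(\theta_p^{\vec r}\cap\Omega_q^{\vec d})$, get its support from Proposition~\ref{Solution} and its value $|\Lambda|^{c-1}$ from Corollary~\ref{Number of solutions}; your remarks that $c$ does not depend on $(\vec r,\vec d)$ and that isolated vertices are handled make explicit what the paper leaves tacit. One small imprecision: $\mathcal E^{p,q}_{\vec r,\vec d}$ omits the conditions on $\vec r$ from upper single-layer blocks of $p$ and on $\vec d$ from lower single-layer blocks of $q$, so $\theta_p^{\vec r}\cap\Omega_q^{\vec d}$ equals its solution set only when those conditions hold — but that is exactly the case $\theta_p^{\vec r}\cap\Omega_q^{\vec d}\neq\varnothing$ where you use the count, so the argument goes through.
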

\begin{proof}
By Corollary \ref{Number of solutions} and Proposition \ref{Solution}, we obtain: when $T_q\circ T_p\neq 0$

\[
T_p\circ T_q(e_{\vec r})
=\sum_{\vec s\in\theta_p^{\vec r}}\ \sum_{\vec d\in\Lambda^{m}:\ \vec s\in\Omega_q^{\vec d}} e_{\vec d}
=|\Lambda|^{\,c-1}\sum_{\vec d\in\Lambda^{m}:\theta_p^{\vec{r}} \cap \Omega_q^{\vec{d}}\neq \emptyset} e_{\vec d}=|\Lambda|^{\,c-1}\sum_{\vec d\in\Lambda^{m}:\delta_{q\cdot p}(\vec{r},\vec{d})=1} e_{\vec d}=|\Lambda|^{\,c-1}T_{q\cdot p}(e_{\vec r})
\]
where $c=c(\mathcal G)$ be the number of connected components of the gain graph $\mathcal G$ associated to\((p^{-},q^{+})\)
\end{proof}

\subsection{Construction of the category $\mathcal C_{\Gamma,\Lambda}$}
\begin{proposition}
Let $p\in \NC_{\Lambda}\bigl((k,l),(g_1,\dots,g_k;\,h_1,\dots,h_l)\bigr), q\in \NC_{\Lambda}\bigl((m,n),(g'_1,\dots,g'_m;\,h'_1,\dots,h'_n)\bigr).$ Then:
\begin{enumerate}

\item \textbf{Horizontal concatenation (tensor product).}
We have $T_p\otimes T_q = T_{p\otimes q},$
where $p\otimes q$ is obtained from $p$ and $q$ by horizontal concatenation, keeping the
$\Lambda$-color of each block unchanged, and $p\otimes q \in
\NC_{\Lambda}\bigl((k+m,l+n),
(g_1,\dots,g_k,g'_1,\dots,g'_m;\,h_1,\dots,h_l,h'_1,\dots,h'_n)\bigr).$

\item \textbf{Reflection (adjoint).}
For \(p\in \NC_{\Lambda}\bigl((k,l),(g_1,\dots,g_k;\,h_1,\dots,h_l)\bigr)\), define
\(p^{*}\) to be the partition obtained by reflecting \(p\) with respect to a horizontal
line between the two rows of points, and replacing the color \(col(V)\) of each block \(V\)
by its inverse \(col(V)^{-1}\).
Then $p^{*}\in \NC_{\Lambda}\bigl((l,k),(h_1,\dots,h_l;\,g_1,\dots,g_k)\bigr),$
and $T_p^{*}=T_{p^{*}}.$

\end{enumerate}

\end{proposition}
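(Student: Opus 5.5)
For (1), the plan is to compare $T_p\otimes T_q$ and $T_{p\otimes q}$ matrix coefficient by matrix coefficient. Since $T_p\otimes T_q(e_{\vec r}\otimes e_{\vec r'})=\sum_{\vec s,\vec s'}\delta_p(\vec r,\vec s)\delta_q(\vec r',\vec s')\,e_{\vec s}\otimes e_{\vec s'}$, it suffices to prove
\[
\delta_{p\otimes q}\bigl((\vec r,\vec r'),(\vec s,\vec s')\bigr)=\delta_p(\vec r,\vec s)\,\delta_q(\vec r',\vec s').
\]
Blocks of $p$ keep exactly their conditions from Definition~\ref{block relations}, and single-layer blocks of $q$ are only translated, so their interval conditions are unchanged. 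The only issue is a through-block $V$ of $q$. In $p\otimes q$ its condition reads
\[
\Bigl(\prod_{j=1}^{l}s_j\Bigr)\Bigl(\prod_{v\le \max V_-}s'_v\Bigr)=\Bigl(\prod_{i=1}^{k}r_i\Bigr)\Bigl(\prod_{u\le\max V_+}r'_u\Bigr)t_V .
\]

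The first key step is therefore a \emph{full-row identity}: if $(p,\vec t)\in\NC_\Lambda(k,l)$ and $\delta_p(\vec r,\vec s)=1$, then $\prod_{j=1}^l s_j=\prod_{i=1}^k r_i$. I would prove it as follows when $p$ has a through-block. Let $T$ be the rightmost through-block of $p$. The tail $[\max T_+ +1,k]$ is partitioned by the spans of the upper outer blocks $U$ to the right of $T$, and on each such span $\prod_U\vec r=t_U^{-1}$. Similarly, the tail $[\max T_-+1,l]$ is partitioned by the spans of the lower outer blocks $D$, with $\prod_D\vec s=t_D$. Substituting these into the condition for $T$ gives
\[
\prod\vec s=\prod\vec r\cdot\Bigl(\prod^{\prec}_{U}t_U\Bigr)\,t_T\,\Bigl(\prod^{\prec}_{D}t_D\Bigr)=\prod\vec r .
\]
Here the upper factors come out in decreasing order of $\max$, which is exactly the order $\prec$, so the middle expression is the boundary product of $p$, equal to $1$. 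The case without through-blocks is the same with $t_T$ omitted.

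With the full-row identity, the through-block conditions of $q$ inside $p\otimes q$ reduce to the conditions in $q$, which gives the factorization of $\delta$. It remains to check that $p\otimes q\in\NC_\Lambda$, both the boundary condition and the $\Gamma$-block condition. The $\Gamma$-condition is blockwise and immediate. For the boundary condition, I would determine $\partial(p\otimes q)$ by cases. Because of the prefix definition of $\mathrm{span}$ for through-blocks, a through-block of $q$ has span containing all of $p$. So if $q$ has a through-block, the global-outer blocks of $p\otimes q$ are those of $q$. Otherwise they are the outer blocks of $p$ together with the single-layer outer blocks of $q$, with the upper ones of $q$ placed first and the lower ones of $q$ placed last. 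In each case the ordered product factors as a product of (conjugates of) the boundary products of $p$ and $q$, hence equals $1$. I expect this case analysis of $\partial(p\otimes q)$ and its order $\prec$ to be the main obstacle: the through-block span convention makes the outer structure asymmetric, and one must check carefully that the single-layer outer blocks of $q$ sandwich the boundary word of $p$ in the right order. If a case fails to factor, I would instead rewrite the boundary condition as the full-row identity for all solutions, which is tautologically multiplicative.

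For (2), since all matrix coefficients are $0$ or $1$, $T_p^{*}$ has coefficients $\delta_p(\vec r,\vec s)$ with rows and columns swapped. So it suffices to show $\delta_{p^*}(\vec s,\vec r)=\delta_p(\vec r,\vec s)$. For a through-block, $\prod_1^{\max V_-}\vec s=(\prod_1^{\max V_+}\vec r)t_V$ is equivalent to $(\prod\vec s)\,t_V^{-1}=\prod\vec r$. A lower block condition $\prod_V\vec s=t_V$ becomes the upper condition $(\prod_V\vec s)\,t_V^{-1}=1$ in $p^*$, and an upper block condition turns into a lower one in the same way. Reflection preserves the $\Gamma$-block condition trivially. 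Reflection also reverses the order $\prec$ on the boundary, since upper and lower blocks are exchanged and the order within each row is flipped. Combined with inverting all colours, the new boundary product is the inverse of the old one, hence equals $1$. This shows $p^*\in\NC_\Lambda$ and $T_p^*=T_{p^*}$.
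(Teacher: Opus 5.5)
Your proposal is correct and follows the paper's proof. For (1) you reduce to $\delta_{p\otimes q}=\delta_p\,\delta_q$ via the full-row identity $\prod\vec r=\prod\vec s$ coming from the boundary condition of $p$, and for (2) you use the blockwise equivalence $\delta_p(\vec r,\vec s)=\delta_{p^*}(\vec s,\vec r)$, as the paper does. You go beyond the paper in two places: you derive the full-row identity explicitly, through the rightmost through-block and the outer single-layer blocks on each tail, and you check the boundary conditions for $p\otimes q$ and $p^*$, whose membership in $\NC_\Lambda$ the paper asserts without proof.
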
 

\begin{proof}
\begin{enumerate}
\item Identify $e_{\vec r}\otimes e_{\vec r'}$ with $e_{(\vec r,\vec r')}$.
It is enough to prove
$\delta_{p\otimes q}\bigl((\vec r,\vec r'),(\vec s,\vec s')\bigr)
=
\delta_p(\vec r,\vec s)\,\delta_q(\vec r',\vec s').$

We first note that if $\delta_p(\vec r,\vec s)=1$, then$\prod_{i=1}^k r_i=\prod_{j=1}^l s_j.$
 Indeed, multiplying over $\partial p=\{V_1\prec\cdots\prec V_r\}$ in the boundary order,
one gets $\prod_{V\in\partial p}^{\prec} t_V
=
\left(\prod_{i=1}^k r_i\right)^{-1}\left(\prod_{j=1}^l s_j\right).$
Since $(p,\vec t)\in\NC_\Lambda(k,l)$, the left-hand side is equal to $1$, hence
$\prod_{i=1}^k r_i=\prod_{j=1}^l s_j.$

Now assume that $\delta_p(\vec r,\vec s)=1$ and $\delta_q(\vec r',\vec s')=1$.
For blocks coming from $p$, the defining relations in $p\otimes q$ are exactly those of $p$.
For single-layer blocks coming from $q$, the shifted relations are exactly those of $q$.
If $W$ is a through-block of $q$, then in $p\otimes q$ its relation becomes
$\left(\prod_{i=1}^{l} s_i\right)\left(\prod_{i=1}^{\max W_-} s'_i\right)
=
\left(\prod_{j=1}^{k} r_j\right)\left(\prod_{j=1}^{\max W_+} r'_j\right)t_W,$
which, using $\prod_{i=1}^k r_i=\prod_{j=1}^l s_j$, is equivalent to the original relation
$\prod_{i=1}^{\max W_-} s'_i
=
\left(\prod_{j=1}^{\max W_+} r'_j\right)t_W.$
Hence
$\delta_{p\otimes q}\bigl((\vec r,\vec r'),(\vec s,\vec s')\bigr)=1.$

Conversely, if $\delta_{p\otimes q}\bigl((\vec r,\vec r'),(\vec s,\vec s')\bigr)=1,$
then restricting to the blocks of $p$ yields $\delta_p(\vec r,\vec s)=1$, hence $\prod_{i=1}^k r_i=\prod_{j=1}^l s_j.$
Using this identity, the defining relations for the shifted blocks coming from $q$ reduce exactly to
those of $q$, so $\delta_q(\vec r',\vec s')=1$.
Therefore $\delta_{p\otimes q}\bigl((\vec r,\vec r'),(\vec s,\vec s')\bigr)
=
\delta_p(\vec r,\vec s)\,\delta_q(\vec r',\vec s').$
This proves $T_p\otimes T_q=T_{p\otimes q}$.

\item  $\langle T_p e_{\vec r},e_{\vec s}\rangle=\delta_p(\vec r,\vec s)$, so it suffices to prove that $\delta_p(\vec r,\vec s)=\delta_{p^{*}}(\vec s,\vec r),\forall\,\vec r\in\Lambda^k,\ \vec s\in\Lambda^l.$
Fix a block $V$ of $p$, and let $V^{*}$ be the corresponding block of $p^{*}$. We compare the defining relations.

If $V_{-}\neq\emptyset$ and $V_{+}\neq\emptyset$, then for $p$ we have$\prod_{i=1}^{\max V_-} s_i=\left(\prod_{j=1}^{\max V_+} r_j\right)t_V$
For $p^{*}$, since upper and lower rows are exchanged and $t_{V^{*}}=t_V^{-1}$, the condition becomes$\prod_{j=1}^{\max V_+} r_j=\left(\prod_{i=1}^{\max V_-} s_i\right)t_{V^{*}}=\left(\prod_{i=1}^{\max V_-} s_i\right)t_V^{-1},$
which is equivalent to the previous one.

If $V_{-}\neq\emptyset$ and $V_{+}=\emptyset$, then for $p$ we have
$\prod_{i=\min V_-}^{\max V_-} s_i=t_V.$
After reflection, $V^{*}_{-}=\emptyset$ and $V^{*}_{+}\neq\emptyset$, so the condition for $p^{*}$ is
$\left(\prod_{i=\min V_-}^{\max V_-} s_i\right)t_{V^{*}}=1
\quad\Longleftrightarrow\quad
\left(\prod_{i=\min V_-}^{\max V_-} s_i\right)t_V^{-1}=1,$
hence again $\prod_{i=\min V_-}^{\max V_-} s_i=t_V$.

If $V_{-}=\emptyset$ and $V_{+}\neq\emptyset$, then for $p$ we have
$\left(\prod_{j=\min V_+}^{\max V_+} r_j\right)t_V=1.$
After reflection, $V^{*}_{-}\neq\emptyset$ and $V^{*}_{+}=\emptyset$, so the condition for $p^{*}$ is$\prod_{j=\min V_+}^{\max V_+} r_j=t_{V^{*}}=t_V^{-1},$
which is equivalent to $\left(\prod_{j=\min V_+}^{\max V_+} r_j\right)t_V=1$.

Thus each block relation for $p$ is equivalent to the corresponding block relation for $p^{*}$, so$
\delta_p(\vec r,\vec s)=\delta_{p^{*}}(\vec s,\vec r).$
Therefore
$\langle T_p e_{\vec r},e_{\vec s}\rangle
=\delta_p(\vec r,\vec s)
=\delta_{p^{*}}(\vec s,\vec r)
=\langle e_{\vec r},T_{p^{*}}e_{\vec s}\rangle,$
and hence $T_p^{*}=T_{p^{*}}$.
\end{enumerate}
\end{proof}
\begin{definition}
Let $p\in NC(k,l)$. We define $Lrot^k(p)\in NC(0,k+l)$ as follows.

Label the points of $Lrot^k(p)$ by $\{1,\dots,k+l\}$. Define a map
\[
\theta:\{1,\dots,k+l\}\to [k]\sqcup [l]
\]
by
\[
\theta(i):=
\begin{cases}
k+1-i \in [k], & 1\le i\le k,\\
i-k \in [l], & k+1\le i\le k+l.
\end{cases}
\]
Then $a,b\in \{1,\dots,k+l\}$ lie in the same block of $Lrot^k(p)$ if and only if
$\theta(a)$ and $\theta(b)$ lie in the same block of $p$.

If $(p,\vec t)$ is a $\Lambda$-colored partition, we define $
Lrot^k(p,\vec t):=(Lrot^k(p),\vec t),$
that is, the color of each block is unchanged.
\end{definition}

\begin{corollary}
For $(p,\vec t)\in NC_{\Lambda}\bigl((k,l),(g_1,\dots,g_k;\,h_1,\dots,h_l)\bigr)$, the map $T_{(p,\vec t)}$ belongs to $\Mor\bigl(u(g_1)\otimes u(g_2)\otimes\cdots\otimes u(g_k),\,
u(h_1)\otimes u(h_2)\otimes\cdots\otimes u(h_l)\bigr).$
\end{corollary}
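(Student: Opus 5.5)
We reduce the statement to the one-row case already proved, namely that $T_{(p',\vec t')}\in\Mor(\epsilon,u(h'_1)\otimes\cdots\otimes u(h'_n))$ for $(p',\vec t')\in\NC_\Lambda((0,n),\vec h')$, using Frobenius reciprocity. For this we need duality morphisms for each $u(g)$. Let $\bar g:=g^{-1}$. The two-point block $\{1,2\}\in\NC(0,2)$ coloured by $1_\Lambda$ satisfies the boundary condition and the $\Gamma$-condition $\bar g g=1$, so it lies in $\NC_\Lambda((0,2),(\bar g,g))$. Its operator is $R_g:=T=\sum_{s_1s_2=1}e_{s_1}\otimes e_{s_2}$, and it belongs to $\Mor(\epsilon,u(\bar g)\otimes u(g))$. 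Similarly, the block coloured by $1$ on $(g,\bar g)$ gives $\bar R_g\in\Mor(\epsilon,u(g)\otimes u(\bar g))$. By the adjoint statement of the last proposition, $R_g^*,\bar R_g^*$ are the corresponding cup intertwiners. We should also check the snake identities $(\bar R_g^*\otimes\id)(\id\otimes R_g)=\id$ directly. These follow since $T$ sends $e_r\mapsto\sum_{s}e_r\otimes e_{s}\otimes e_{s^{-1}}$ and the pairing contracts $e_r\otimes e_s$ with $rs=1$. So $u(\bar g)$ is a conjugate of $u(g)$.

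Next we compare $T_{(p,\vec t)}$ with its rotation. Set $\bar{\vec g}:=(g_k^{-1},\dots,g_1^{-1})$ and $(p',\vec t'):=Lrot^k(p,\vec t)\in\NC(0,k+l)$. First we verify that $(p',\vec t')\in\NC_\Lambda((0,k+l),(\bar{\vec g},\vec h))$. For the $\Gamma$-condition, a block $V$ with $\prod\vec g|_{V_+}=\prod\vec h|_{V_-}$ becomes a block whose reversed-inverted upper part times lower part equals $(\prod\vec g|_{V_+})^{-1}\prod\vec h|_{V_-}=1$. For the boundary condition, the outer blocks of $p'$ are exactly $\partial p$, read in the order $\prec$ (upper blocks by decreasing $\max$, then the through block, then lower blocks by increasing $\max$), which after rotation is the left-to-right order. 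Hence the boundary product is unchanged and equals $1$. This is where the counterclockwise convention in Definition~\ref{bi-coloured} is used, and I would check carefully that the left-rotated through block is the unique outer block covering the junction.

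Then we prove the operator identity
\[
T_{(p',\vec t')}=\bigl(\id_{k}^{\mathrm{rev}}\otimes T_{(p,\vec t)}\bigr)\circ \bar R_{\vec g},
\]
where $\bar R_{\vec g}\in\Mor(\epsilon,u(\bar{\vec g})\otimes u(\vec g))$ is the nested cap built from the $R_{g_i}$ by tensor products (itself a partition operator by the horizontal-concatenation proposition). Equivalently, we prove it coefficientwise: $\delta_{p'}(1,(\vec x,\vec s))=\delta_p(\vec r,\vec s)$ with $x_i=r_{k+1-i}^{-1}$. This comparison is the main obstacle. It requires translating each of the three block relations of Definition~\ref{block relations} under rotation. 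Single-layer lower blocks are unchanged. An upper block's relation $(\prod r)t_V=1$ becomes $\prod x=t_V$ after inversion-reversal. For a through block, the relation $\prod_1^{\max V_-}s=(\prod_1^{\max V_+}r)t_V$ must match the interval product from $\min$ of the rotated block. Here one uses that the points of $[k]$ to the right of $\max V_+$ and the points of $[l]$ to the right of $\max V_-$ form a union of blocks on which the relations already force the corresponding products, so that the interval relation of $p'$ reduces to the prefix relation of $p$. I would organise this by induction, peeling a consecutive block via Lemma~\ref{Consecutive criterion} as in the proof of the one-row proposition.

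Finally, we conclude. By the one-row proposition, $T_{(p',\vec t')}\in\Mor(\epsilon,u(\bar{\vec g})\otimes u(\vec h))$. By Frobenius reciprocity (composing with $\bar R_{\vec g}^*\otimes\id$ and applying the snake identities), $T_{(p,\vec t)}=(\bar R^{*}\otimes\id)(\id\otimes T_{(p',\vec t')})$, which lies in $\Mor(u(\vec g),u(\vec h))$, since morphism spaces are closed under composition and tensor product.
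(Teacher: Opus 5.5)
Your proposal is correct and is essentially the paper's argument: rotate by $Lrot^k$, apply the one-row proposition to $(Lrot^k(p),\vec t)$, and recover $T_{(p,\vec t)}=(T_{(\cap_k,\vec e)}\otimes\id^{\otimes l})\circ(\id^{\otimes k}\otimes T_{(Lrot^k(p),\vec t)})$ from the coefficient identity $\delta_{Lrot^k(p)}\bigl(\emptyset,(r_k^{-1},\dots,r_1^{-1},\vec s)\bigr)=\delta_p(\vec r,\vec s)$; your check of the boundary condition for the rotated partition supplies something the paper only asserts. Two small corrections: the step you call the main obstacle needs no induction, because a rotated through-block $V$ occupies exactly the interval $[k+1-\max V_+,\,k+\max V_-]$, whose product is $(r_1\cdots r_{\max V_+})^{-1}s_1\cdots s_{\max V_-}$, so its relation is literally the prefix relation of $V$ in $p$; and the nested cap is not a tensor product of the $R_{g_i}$ (tensor products give side-by-side pairs), but it is itself the partition operator of the nested pairing with unit colours, hence an intertwiner directly by the one-row case.
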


\begin{proof}
Let $(p',\vec t):=(Lrot^k(p),\vec t)\in
\NC_{\Lambda}\bigl((0,k+l),(g_k^{-1},\dots,g_1^{-1},h_1,\dots,h_l)\bigr).$

Let $\cup_k:=\bigl\{\{1,2k\},\{2,2k-1\},\dots,\{k,k+1\}\bigr\}
\in \NC\bigl((0,2k),(g_1,\dots,g_k,g_k^{-1},\dots,g_1^{-1})\bigr),$
and let $(\cup_k,\vec e)$ be the corresponding element of $NC_{\Lambda}\bigl((0,2k),(g_1,\dots,g_k,g_k^{-1},\dots,g_1^{-1})\bigr)$
such that every block is decorated by the unit element. Set $(\cap_k,\vec e):=(\cup_k,\vec e)^*.$
Then $(\cap_k,\vec e)\in
NC_{\Lambda}\bigl((2k,0),(g_1,\dots,g_k,g_k^{-1},\dots,g_1^{-1})\bigr),$
and, by the adjoint property proved above, $T_{(\cap_k,\vec e)}=T_{(\cup_k,\vec e)}^*.$

We claim that $T_{(p,\vec t)}
=\bigl(T_{(\cap_k,\vec e)}\otimes id^{\otimes l}\bigr)
\circ
\bigl(id^{\otimes k}\otimes T_{(p',\vec t)}\bigr).$ Indeed, fix $\vec r=(r_1,\dots,r_k)\in\Lambda^k$. By definition,
\[
T_{(p',\vec t)}(1)
=
\sum_{\vec a\in\Lambda^k,\ \vec s\in\Lambda^l}
\delta_{p'}(\vec a,\vec s)\,
e_{a_1}\otimes\cdots\otimes e_{a_k}\otimes e_{s_1}\otimes\cdots\otimes e_{s_l}.
\]
Hence $\bigl(id^{\otimes k}\otimes T_{(p',\vec t)}\bigr)(e_{r_1}\otimes\cdots\otimes e_{r_k})$
is equal to
\[
\sum_{\vec a,\vec s}
\delta_{p'}(\vec a,\vec s)\,
e_{r_1}\otimes\cdots\otimes e_{r_k}\otimes
e_{a_1}\otimes\cdots\otimes e_{a_k}\otimes
e_{s_1}\otimes\cdots\otimes e_{s_l}.
\]
Applying $T_{(\cap_k,\vec e)}\otimes id^{\otimes l}$, we obtain
\[
\sum_{\vec a,\vec s}
\delta_{p'}(\vec a,\vec s)\,
T_{(\cap_k,\vec e)}
(e_{r_1}\otimes\cdots\otimes e_{r_k}\otimes e_{a_1}\otimes\cdots\otimes e_{a_k})
\,
e_{s_1}\otimes\cdots\otimes e_{s_l}.
\]
Now, by the definition of $(\cap_k,\vec e)$, $T_{(\cap_k,\vec e)}
(e_{r_1}\otimes\cdots\otimes e_{r_k}\otimes e_{a_1}\otimes\cdots\otimes e_{a_k})
=
1$
if and only if $a_1=r_k^{-1},\quad a_2=r_{k-1}^{-1},\quad \dots,\quad a_k=r_1^{-1},$
and it is equal to $0$ otherwise. Therefore the above sum reduces to
\[
\sum_{\vec s\in\Lambda^l}
\delta_{p'}\bigl((r_k^{-1},\dots,r_1^{-1}),\vec s\bigr)\,
e_{s_1}\otimes\cdots\otimes e_{s_l}.
\]
Since $p'=Lrot^k(p)$, the defining relations for $p'$ are exactly those for $p$, with the
upper labeling $(r_1,\dots,r_k)$ replaced by the lower labeling
$(r_k^{-1},\dots,r_1^{-1})$ on the first $k$ points. Hence
\[
\delta_{p'}\bigl((r_k^{-1},\dots,r_1^{-1}),\vec s\bigr)=\delta_p(\vec r,\vec s),
\qquad \forall\,\vec s\in\Lambda^l.
\]
It follows that
\[
\bigl(T_{(\cap_k,\vec e)}\otimes id^{\otimes l}\bigr)
\circ
\bigl(id^{\otimes k}\otimes T_{(p',\vec t)}\bigr)
(e_{r_1}\otimes\cdots\otimes e_{r_k})
=
\sum_{\vec s\in\Lambda^l}\delta_p(\vec r,\vec s)\,
e_{s_1}\otimes\cdots\otimes e_{s_l},
\]
which is exactly $T_{(p,\vec t)}(e_{r_1}\otimes\cdots\otimes e_{r_k})$. This proves the claim. Now $T_{(p',\vec t)}$ is an intertwiner by the case of colored partitions with no upper points, since $(p',\vec t)\in NC_\Lambda\bigl((0,k+l),(g_k^{-1},\dots,g_1^{-1},h_1,\dots,h_l)\bigr).$ Moreover, $(\cup_k,\vec e)\in NC_\Lambda\bigl((0,2k),(g_1,\dots,g_k,g_k^{-1},\dots,g_1^{-1})\bigr),$
so again by the case of colored partitions with no upper points,
\[
T_{(\cup_k,\vec e)}\in
Mor\bigl(\epsilon,\,
u(g_1)\otimes\cdots\otimes u(g_k)\otimes
u(g_k^{-1})\otimes\cdots\otimes u(g_1^{-1})\bigr).
\]
Since $(\cap_k,\vec e)=(\cup_k,\vec e)^*$ and $T_{q^*}=T_q^*$, it follows that$
T_{(\cap_k,\vec e)}=T_{(\cup_k,\vec e)}^*$
is an intertwiner from$
u(g_1)\otimes\cdots\otimes u(g_k)\otimes
u(g_k^{-1})\otimes\cdots\otimes u(g_1^{-1})$ to $\epsilon$.

Therefore $id^{\otimes k}\otimes T_{(p',\vec t)}$ and $
T_{(\cap_k,\vec e)}\otimes id^{\otimes l}$
are both intertwiners, and since intertwiner spaces are stable under composition, the identity $
T_{(p,\vec t)}
=\bigl(T_{(\cap_k,\vec e)}\otimes id^{\otimes l}\bigr)
\circ
\bigl(id^{\otimes k}\otimes T_{(p',\vec t)}\bigr)$
shows that
\[
T_{(p,\vec t)}\in
Mor\bigl(u(g_1)\otimes\cdots\otimes u(g_k),\,u(h_1)\otimes\cdots\otimes u(h_l)\bigr).
\]

\end{proof}

\begin{theorem}\label{thm:C-Gamma-Lambda-category}
Let $\mathcal C_{\Gamma,\Lambda}$ be the concrete linear category defined as follows:
\begin{enumerate}
\item The objects are finite tuples of elements of $\Gamma$.
\item For $\vec g=(g_1,\dots,g_k)$ and $\vec h=(h_1,\dots,h_l)$,
\[
\Mor_{\mathcal C_{\Gamma,\Lambda}}(\vec g,\vec h)
:=
\mathrm{Span}\Bigl\{
T_{(p,\vec t)} \in \mathcal L\bigl(\ell^2(\Lambda)^{\otimes k},\,\ell^2(\Lambda)^{\otimes l}\bigr)
\ \Big|\
(p,\vec t)\in \NC_{\Lambda}\!\bigl((k,l),(\vec g,\vec h)\bigr)
\Bigr\}.
\]
\item The composition, tensor product, and involution of morphisms are the usual ones in the corresponding operator spaces.
\end{enumerate}
Then $\mathcal C_{\Gamma,\Lambda}$ is a rigid concrete $C^*$-tensor category.
\end{theorem}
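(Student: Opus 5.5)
We check the axioms of a concrete rigid $C^*$-tensor category (in the sense used for Theorem~\ref{thm:TK-reconstruction}) one at a time, using the propositions already proved about partition operators. Each morphism space $\Mor_{\mathcal C_{\Gamma,\Lambda}}(\vec g,\vec h)$ is by definition a linear subspace of $\mathcal L(\ell^2(\Lambda)^{\otimes k},\ell^2(\Lambda)^{\otimes l})$. So it suffices to show that the family of these subspaces contains the identities and is closed under composition, tensor product and adjoint, and that conjugate objects exist with solutions of the conjugate equations inside the category.

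\textbf{Step 1: identities and compositions.} For $\vec g=(g_1,\dots,g_k)$ we take the partition with $k$ through-blocks $\{i^+,i^-\}$, each coloured by $1_\Lambda$. Every block is a through-block, so the boundary consists of the single rightmost through-block and the boundary condition holds. The block condition $\prod \vec g|_{V_+}=\prod\vec g|_{V_-}$ is trivial. By Definition~\ref{block relations}, $\delta(\vec r,\vec s)=1$ if and only if $s_1\cdots s_j=r_1\cdots r_j$ for all $j$, that is, if and only if $\vec s=\vec r$. Hence $T=\id$.

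For composition, take $(p,\vec t)\in\NC_\Lambda(\vec g,\vec h')$ and $(q,\vec f)\in\NC_\Lambda(\vec h',\vec h)$. The Vertical composition proposition gives either $T_q\circ T_p=0$ or $T_q\circ T_p=|\Lambda|^{c-1}T_{q\cdot p}$. The corollary following the definition of $q\cdot p$ gives $(q\cdot p,\mathrm{col}_{q\cdot p})\in\NC_\Lambda(\vec g,\vec h)$. Bilinearity of composition then extends closure from single partition operators to their spans.

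\textbf{Step 2: tensor product and adjoint.} The proposition on horizontal concatenation gives $T_p\otimes T_q=T_{p\otimes q}$ with $p\otimes q\in\NC_\Lambda$ of the concatenated tuples; we take $\vec g\otimes\vec g'$ to be the concatenation of tuples. The unit object is the empty tuple, and the associativity constraints are strict. The reflection statement gives $T_p^*=T_{p^*}$ with $p^*\in\NC_\Lambda(\vec h,\vec g)$, so every morphism space is closed under $*$. Each morphism space is finite-dimensional, hence norm closed, so we obtain a $C^*$-category.

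Subobjects and direct sums are not built into the definition. If the reconstruction theorem requires them, we pass to the completion under subobjects and direct sums. This is standard: projections in $\mathrm{End}(\vec g)$ split in the idempotent completion, and the concrete fiber functor extends. We state that this is the intended meaning of ``concrete rigid $C^*$-tensor category'', as in Woronowicz's original framework.

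\textbf{Step 3: rigidity.} The conjugate of $\vec g=(g_1,\dots,g_k)$ is $\bar{\vec g}:=(g_k^{-1},\dots,g_1^{-1})$. For $R$ and $\bar R$ we use $T_{(\cup_k,\vec e)}$ from the proof of the last corollary. It lies in $\Mor(\emptyset,\vec g\otimes\bar{\vec g})$, and its variant with the tuples swapped lies in $\Mor(\emptyset,\bar{\vec g}\otimes\vec g)$. We then check the conjugate equations
$$(R^*\otimes\id)(\id\otimes\bar R)=\id,\qquad (\bar R^*\otimes\id)(\id\otimes R)=\id.$$
The cleanest route is a direct computation on basis vectors, exactly as in the proof of that corollary. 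There $T_{(\cap_k,\vec e)}$ forces $a_i=r_{k+1-i}^{-1}$, and snaking twice returns $\vec r$. Alternatively, we compose the partitions and use the vertical composition formula: no cycle components arise, so $c=1$, and the result is the identity partition with trivial colours.

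The main obstacle is Step 3. Along the way we must verify that the cup and cap carry consistent unit colourings under the boundary convention of Definition~\ref{bi-coloured}; with all colours trivial this holds. We must also confirm that the scalar $|\Lambda|^{c-1}$ equals $1$ in the snake composition. This holds because every middle point lies in a through component, so the middle gain graph is connected. The direct basis computation sidesteps both issues, so we will present that one.
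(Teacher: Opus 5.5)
Your proposal is correct. For the closure properties it follows the same path as the paper: identities, composition via the vertical composition proposition together with the corollary $q\cdot p\in\NC_\Lambda(\vec g,\vec h)$, and tensor product and adjoint via the concatenation/reflection proposition.

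Where you differ is rigidity. The theorem has no proof attached where it is stated; the paper's argument is in the proof of the corollary opening Section~5. There it uses the fact that every $T_{(p,\vec t)}$ intertwines $u(\vec g)$ and $u(\vec h)$ to regard $\mathcal C_{\Gamma,\Lambda}$ as a $*$-tensor subcategory of $\Rep(\GG)$, and then asserts that rigidity is inherited from $\Rep(\GG)$.

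You instead prove rigidity intrinsically. You take $\bar{\vec g}=(g_k^{-1},\dots,g_1^{-1})$ and use $R=T_{(\cup_k,\vec e)}$, whose value is $R(1)=\sum_{\vec a\in\Lambda^k} e_{a_1}\otimes\cdots\otimes e_{a_k}\otimes e_{a_k^{-1}}\otimes\cdots\otimes e_{a_1^{-1}}$, together with its swapped analogue. You then check the conjugate equations directly on basis vectors. That computation is right: $R^*$ forces $b_i=r_{k+1-i}^{-1}$, and reversing and inverting twice returns $\vec r$.

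This route is self-contained; it needs neither $\GG$ nor the intertwiner corollary. It also supplies exactly what the paper's inference leaves implicit. A non-full subcategory of a rigid category is rigid only if solutions of the conjugate equations lie in the subcategory, and your cup/cap operators show that they do. The paper's route is shorter but becomes complete only once this observation is added.

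You also check the identity morphisms and address the lack of subobjects and direct sums, both of which the paper passes over. Your backup argument via vertical composition is also valid: the middle graph of the snake composite is connected, so $c=1$. It is simply unnecessary given the direct computation.
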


\section{Reconstruction of $\GG=\widehat{\Gamma}\wr_{\ast,\beta}\Lambda$}
\begin{corollary}
The category $\mathcal C_{\Gamma,\Lambda}$ is a rigid concrete $C^*$-tensor category. Hence, by the Woronowicz--Tannaka--Krein reconstruction theorem, there exist a compact quantum group $\mathbb H$ and a unitary monoidal equivalence $\Rep(\mathbb H)\simeq \mathcal C_{\Gamma,\Lambda}.$
Moreover, Proposition~\ref{prop:WTK-concrete-universal} yields a surjective Hopf $*$-algebra morphism $\pi:\Pol(\mathbb H)\to \Pol(\GG)$
such that
\[
(\id\otimes\pi)(w(g))=u(g),\qquad g\in\Gamma.
\]
Consequently, $\pi$ extends to a surjective $*$-homomorphism
\[
\varphi:C(\mathbb H)\to C(\GG).
\]
\end{corollary}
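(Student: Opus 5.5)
The statement is the corollary just above: $\mathcal C_{\Gamma,\Lambda}$ is a rigid concrete $C^*$-tensor category, Woronowicz--Tannaka--Krein reconstruction applies, and there is a surjection $\pi:\Pol(\mathbb H)\to\Pol(\GG)$ sending $w(g)$ to $u(g)$, which extends to $\varphi:C(\mathbb H)\to C(\GG)$. The plan is simply to assemble results already established.

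\textbf{Step 1: categorical structure.} Theorem~\ref{thm:C-Gamma-Lambda-category} gives that $\mathcal C_{\Gamma,\Lambda}$ is a rigid concrete $C^*$-tensor category. Its ingredients are:
\begin{itemize}
\item closure under composition, by the vertical composition proposition, where $T_q\circ T_p$ is either $0$ or a scalar multiple of $T_{q\cdot p}$;
\item closure under tensor products and adjoints, by the horizontal concatenation and reflection proposition;
\item rigidity, from the duality morphisms $T_{(\cup_k,\vec e)}$ and $T_{(\cap_k,\vec e)}$ used in the rotation argument. These satisfy the conjugate equations, which are a direct computation with the partitions $\cup$ and $\cap$ of single pairs.
\end{itemize}
With this, Theorem~\ref{thm:TK-reconstruction} produces a compact quantum group $\mathbb H$ and a unitary monoidal equivalence $\Phi:\mathcal C_{\Gamma,\Lambda}\to\Rep(\mathbb H)$.

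\textbf{Step 2: universal surjection.} I apply Proposition~\ref{prop:WTK-concrete-universal} with generating objects $v_g:=(g)$ for $g\in\Gamma$. These generate: every object is a tuple $(g_1,\dots,g_k)$, hence a tensor product of the $v_{g_i}$. The conjugate of $v_g$ is $v_{g^{-1}}$, as witnessed by the cup partition. I set $w(g):=\Phi(v_g)$.

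The target quantum group is $\GG$ with representations $u(g)$. By the earlier proposition these are unitary representations whose coefficients generate $C(\GG)$, which also makes $\GG$ full in the required sense. The key hypothesis is the inclusion
\[
\Mor_{\mathcal C_{\Gamma,\Lambda}}(\vec g,\vec h)\subseteq\Mor_{\GG}(u(\vec g),u(\vec h)).
\]
Since the left side is spanned by the operators $T_{(p,\vec t)}$, it suffices to treat each one, and that is exactly the corollary establishing $T_{(p,\vec t)}\in\Mor(u(g_1)\otimes\cdots\otimes u(g_k),u(h_1)\otimes\cdots\otimes u(h_l))$. That corollary in turn reduces, via the rotation $Lrot^k$, to the case of partitions with no upper points.

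Proposition~\ref{prop:WTK-concrete-universal} then gives the surjective Hopf $*$-morphism $\pi$ with $(\id\otimes\pi)(w(g))=u(g)$. Since $\Pol(\GG)$ is generated by the coefficients of the $u(g)$, $\pi$ is onto. The universal property of the full $C^*$-completion $C(\mathbb H)$ extends $\pi$ to $\varphi$, and surjectivity follows because the image is dense and closed.

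\textbf{Main obstacle.} The real content is concentrated in two already-proved results: the intertwiner property for partitions with no upper points, and closure under composition (Proposition~\ref{Solution}). At this stage the only genuinely new checks are the following.
\begin{itemize}
\item The conjugate equations for $\cup$ and $\cap$ hold with the correct normalisation. Here I use that each block carries the unit colour, so the relevant composite is exactly the identity, with no $|\Lambda|$ factor.
\item Infinite $\Gamma$ causes no difficulty. This is handled by the inductive-limit part of Proposition~\ref{prop:WTK-concrete-universal}, applied to finite subsets $E\subset\Gamma$.
\end{itemize}
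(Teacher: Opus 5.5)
Your proposal is correct and follows essentially the same route as the paper. It uses closure of the morphism spaces under composition, tensor product and adjoint, then Woronowicz--Tannaka--Krein reconstruction, then Proposition~\ref{prop:WTK-concrete-universal} fed by the corollary that every $T_{(p,\vec t)}$ intertwines the corresponding $u(g_i)$'s.

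The one real difference is the rigidity step. You check directly that the unit-coloured cups and caps lie in $\mathcal C_{\Gamma,\Lambda}$ and satisfy the conjugate equations exactly, with no $|\Lambda|$ factor. The paper instead infers rigidity from the fact that $\mathcal C_{\Gamma,\Lambda}$ sits inside the rigid category $\Rep(\GG)$. That inference is only valid because those duality morphisms belong to the subcategory, so your version supplies the justification the paper leaves implicit.
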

\begin{proof}
By construction, every morphism space of $\mathcal C_{\Gamma,\Lambda}$ is a
linear subspace of an intertwiner space between tensor powers of the Hilbert
space $\ell^2(\Lambda)$. More precisely,
\[
\Mor\!\bigl((g_1,\dots,g_k),(h_1,\dots,h_l)\bigr)
\subset
\mathcal L\bigl(\ell^2(\Lambda)^{\otimes k},\ell^2(\Lambda)^{\otimes l}\bigr).
\]

The previous proposition shows that these morphism spaces are stable under
tensor product, composition and adjoint. Hence $\mathcal C_{\Gamma,\Lambda}$
is a $*$-tensor subcategory of the representation category of the free wreath product $\GG$.

Since the representation category $\Rep(\GG)$ is a rigid $C^*$-tensor category,
it follows that $\mathcal C_{\Gamma,\Lambda}$ is itself a rigid $C^*$-tensor category.

The functor $\omega$ is the obvious concrete unitary fiber functor, so the
Woronowicz--Tannaka--Krein reconstruction theorem yields a compact quantum group
$\mathbb H$ such that
\[
\Rep(\mathbb H)\simeq \mathcal C_{\Gamma,\Lambda}.
\]
For each $g\in\Gamma$, the object $g$ corresponds to a finite-dimensional unitary
representation $w(g)$ of $\mathbb H$ on $\ell^2(\Lambda)$, and therefore
\[
w(g)=\sum_{r,s\in\Lambda} e_{rs}\otimes w_{r,s}(g).
\]

Finally, the inclusion of all morphism spaces of $\mathcal C_{\Gamma,\Lambda}$
into the corresponding intertwiner spaces for $\GG$ allows us to apply
Proposition~\ref{prop:WTK-concrete-universal}. This gives the surjective Hopf
$*$-algebra morphism
\[
\pi:\Pol(\mathbb H)\to \Pol(\GG)
\]
such that
\[
(\id\otimes \pi)(w(g))=u(g),\qquad g\in\Gamma,
\]
and hence the induced surjective $*$-homomorphism
\[
\varphi:C(\mathbb H)\to C(\GG)
\]
with the same property.
\end{proof}

\begin{remark}
For each $g\in\Gamma$, let $
w(g)\in B(\ell^2(\Lambda))\otimes C(\mathbb H)$
be the representation corresponding to the generating object $g$ of $\mathcal C_{\Gamma,\Lambda}$.
Fixing the canonical orthonormal basis $(e_r)_{r\in\Lambda}$ of $\ell^2(\Lambda)$ and writing
$(e_{rs})_{r,s\in\Lambda}$ for the matrix units of $B(\ell^2(\Lambda))$, Write $w(g)=\sum_{r,s\in\Lambda} e_{rs}\otimes w_{r,s}(g).$
Then the surjective Hopf $*$-algebra morphism $\pi:\Pol(\mathbb H)\to \Pol(\GG)$ constructed above satisfies $\pi\bigl(w_{r,s}(g)\bigr)=u_{r,s}(g), g\in\Gamma,\ r,s\in\Lambda.$

\end{remark}

\begin{lemma}[Relations for $w(1)$]
\label{lem:relations-w1}
Write $w(1)=\sum_{r,s\in\Lambda} e_{rs}\otimes w_{r,s}(1)
\in B(\ell^2(\Lambda))\otimes C(\mathbb H).$
Assume that the following basic partition maps constructed above are intertwiners:
\[
T_{p_1}\in \Mor(\epsilon,w(1)),\qquad
T_{p(t)}\in \Mor(w(1)\otimes w(1),w(1))\quad(t\in\Lambda),
\qquad
T_P\in \Mor(w(1)\otimes w(1),w(1)),
\]
where $T_{p_1}(1)=e_1,$ $T_{p(t)}(e_r\otimes e_s)=\delta_{s,t^{-1}}\,e_{rt}, r,s,t\in\Lambda,$
and $T_P(e_r\otimes e_s)=e_{rs},r,s\in\Lambda.$
Then, together with the unitarity of $w(1)$, these intertwining relations force the following
relations in $C(\mathbb H)$:
\begin{enumerate}
\item[\textup{(1)}] For all $\gamma\in\Lambda$,
\[
w_{\gamma,1}(1)=\delta_{\gamma,1}\,1.
\]

\item[\textup{(2)}] For all $c,j\in\Lambda$,
\[
w_{c,j}(1)=\delta_{c,j}\,\bar j,
\qquad
\bar j:=w_{j,j}(1).
\]
In particular, each $\bar j$ is unitary.

\item[\textup{(3)}] For all $r,s\in\Lambda$,
\[
\overline{rs}=\bar r\,\bar s.
\]
Equivalently,
\[
\Lambda\to \mathcal U(C(\mathbb H)),
\qquad
r\mapsto \bar r,
\]
is a group homomorphism, with $\bar 1=1$.
\end{enumerate}
\end{lemma}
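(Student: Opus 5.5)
The plan is to unwind each of the three intertwining relations into scalar identities between the matrix coefficients $w_{r,s}(1)$, by evaluating both sides on basis vectors and comparing coefficients. I use the convention that $T\in\Mor(u,v)$ means $(T\otimes\id)u=v(T\otimes\id)$. Throughout, I write $w_{r,s}$ for $w_{r,s}(1)$ and $w=\sum e_{rs}\otimes w_{r,s}$. The main thing to watch is keeping the leg and index conventions consistent, since that is where sign and inverse errors would enter.

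\emph{Step 1 (relation (1)).} From $T_{p_1}\in\Mor(\epsilon,w(1))$ we get $w(e_1\otimes 1)=e_1\otimes 1$, that is,
\[
\sum_{\gamma} e_\gamma\otimes w_{\gamma,1}=e_1\otimes 1 .
\]
Comparing coefficients gives $w_{\gamma,1}=\delta_{\gamma,1}1$. In particular $\bar 1=w_{1,1}=1$.

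\emph{Step 2 (relation (2)).} Fix $t\in\Lambda$ and evaluate $(T_{p(t)}\otimes\id)(w\otimes w)=w(T_{p(t)}\otimes\id)$ on $e_a\otimes e_b$.
\begin{itemize}
\item The left side equals $\sum_r e_{rt}\otimes w_{r,a}w_{t^{-1},b}$.
\item The right side equals $\delta_{b,t^{-1}}\sum_c e_c\otimes w_{c,at}$.
\end{itemize}
Reading off the coefficient of $e_c$, we obtain, for all $a,b,c,t$,
\[
w_{ct^{-1},a}\,w_{t^{-1},b}=\delta_{b,t^{-1}}\,w_{c,at}.
\]
I then specialise to $a=1$ and $b=t^{-1}$. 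By Step 1 the left side vanishes unless $ct^{-1}=1$, so $w_{c,t}=\delta_{c,t}\,w_{t^{-1},t^{-1}}$. Hence $w(1)$ is diagonal. Taking $c=t$ shows that $w_{t,t}=w_{t^{-1},t^{-1}}$, which is consistent with setting $\bar t:=w_{t,t}$. Since $w(1)$ is unitary and diagonal, each diagonal entry $\bar j$ is unitary. This gives (2).

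\emph{Step 3 (relation (3)).} Evaluate $(T_P\otimes\id)(w\otimes w)=w(T_P\otimes\id)$ on $e_a\otimes e_b$ and compare the coefficient of $e_c$. This gives
\[
\sum_{rs=c}w_{r,a}w_{s,b}=w_{c,ab}.
\]
Inserting the diagonality from (2), the only surviving term has $r=a$ and $s=b$. Taking $c=ab$ yields $\bar a\,\bar b=\overline{ab}$. Together with $\bar 1=1$ and the unitarity of each $\bar j$, this shows that $r\mapsto\bar r$ is a group homomorphism into $\mathcal U(C(\mathbb H))$.

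I expect the computation in Step 2 to be the delicate point. There the inverse $t^{-1}$ and the placement of $t$ in $e_{rt}$ must be tracked carefully, because a slip would produce $w_{c,t^{-1}}$ instead of $w_{c,t}$. Steps 1 and 3 are straightforward coefficient comparisons.
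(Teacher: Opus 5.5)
Your proof is correct. Steps 1 and 3 are the same coefficient comparisons the paper makes.

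The difference is in how you obtain diagonality in Step 2. The paper uses the \emph{vanishing} case $b\neq t^{-1}$ of the identity $w_{ct^{-1},a}w_{t^{-1},b}=\delta_{b,t^{-1}}w_{c,at}$. It left-multiplies $w_{ct^{-1},a}w_{t^{-1},b}=0$ by $w_{ct^{-1},a}^*$, sums over $c$, and uses $w(1)^*w(1)=1$ to get $w_{t^{-1},b}=0$. This gives row-diagonality from unitarity alone, without relation (1). You instead use the \emph{non-vanishing} case $a=1$, $b=t^{-1}$ together with relation (1). That gives column-diagonality by direct substitution, and unitarity is needed only to see that each $\bar j$ is unitary. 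Both routes are valid; yours is slightly more elementary.

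Your route also exposes something you pass over too quickly. The by-product $w_{t,t}=w_{t^{-1},t^{-1}}$ is not merely ``consistent'' with the notation. Combined with (3), it gives $\bar t^{\,2}=\bar t\,\overline{t^{-1}}=\bar 1=1$ for every $t\in\Lambda$. In $C(\GG)$ one has $u(1)=\sum_r e_{rr}\otimes r$, so the map $e_r\otimes e_s\mapsto\delta_{s,t^{-1}}e_{rt}$ in the statement is not an intertwiner for $u(1)$ unless $t^2=1$.

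The operator that Definition~\ref{block relations} actually attaches to the intended partition is $e_r\otimes e_s\mapsto\delta_{s,t}e_{rt}$. Here the intended partition is a through-block joining the first upper point to the lower point, coloured $t$, plus a singleton on the second upper point, coloured $t^{-1}$. Your specialisation, now with $a=1$ and $b=t$, works verbatim for this corrected map. It gives $w_{c,t}=\delta_{c,t}w_{t,t}$, without the spurious extra relation.
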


\begin{proof}
\emph{(1)} The intertwining relation $T_{p_1}\in\Mor(\epsilon,w(1))$ means
$w(1)(T_{p_1}\otimes 1)=T_{p_1}\otimes 1$. Since $T_{p_1}(1)=e_1$, we have
$w(1)(e_1\otimes 1)=e_1\otimes 1$, i.e.
\[
\sum_{\gamma\in\Lambda} e_\gamma\otimes w_{\gamma,1}(1)=e_1\otimes 1,
\]
which yields $w_{\gamma,1}(1)=\delta_{\gamma,1}\,1$.

\medskip
\emph{(2)} Fix $t\in\Lambda$. The condition $T_{p(t)}\in\Mor(w(1)\otimes w(1),w(1))$
is equivalent to
\[
(T_{p(t)}\otimes 1)(w(1)\otimes w(1))=w(1)(T_{p(t)}\otimes 1).
\]
Applying both sides to $e_r\otimes e_s\otimes 1$ and comparing coefficients gives, for all
$\gamma,r,s\in\Lambda$,
\[
w_{\,\gamma t^{-1},\,r}(1)\;w_{t^{-1},\,s}(1)=\delta_{s,t^{-1}}\;w_{\gamma,\,rt}(1).
\]
Since $w(1)$ is unitary. If $s\neq t^{-1}$, the right-hand side vanishes, hence
$w_{\gamma t^{-1},r}(1)\,w_{t^{-1},s}(1)=0$ for all $\gamma,r$. Left-multiplying by
$w_{\gamma t^{-1},r}(1)^*$ and summing over $\gamma$, the column unitarity of $w(1)$ yields
$w_{t^{-1},s}(1)=0$ for all $s\neq t^{-1}$. Since $t$ ranges over $\Lambda$, this shows that
each row has only its diagonal entry possibly nonzero, i.e. $w_{c,j}(1)=\delta_{c,j}\bar{j}$.
Row (or column) unitarity then implies $\bar{j}\in\mathcal U(C(\mathbb H))$.

\medskip
\emph{(3)} Finally, from $T_P\in\Mor(w(1)\otimes w(1),w(1))$ we get, by the same coefficient
comparison as in the standard $\NC(2,1)$ one-block case, that for all $\gamma,r,s\in\Lambda$,
\[
w_{\gamma,rs}(1)=\sum_{\substack{a,b\in\Lambda\\ab=\gamma}} w_{a,r}(1)\,w_{b,s}(1).
\]
Using diagonality $w_{a,r}(1)=\delta_{a,r}\bar{r}$ and $w_{b,s}(1)=\delta_{b,s}\bar{s}$, the sum reduces to
\[
w_{\gamma,rs}(1)=\delta_{\gamma,rs}\,\bar{r}\bar{s}.
\]
On the other hand, diagonality also gives $w_{\gamma,rs}(1)=\delta_{\gamma,rs}\,\bar{rs}$, hence
$\bar{rs}=\bar{r} \bar{s}$ for all $r,s\in\Lambda$. Taking $r=s=1$ and using (1) gives $\bar{1}=w_{1,1}(1)=1$.
\end{proof}

\begin{lemma}[Relations induced by $q(t)$ and $p_2$]
\label{lem:bar-nu-flip-no-shorthand}
Let $g\in\Gamma$ and let $t\in\Lambda$. We use the following two families of
partition intertwiners constructed above.

First, for each $t\in\Lambda$, let
\[
T_{q(t)}\in \Mor\bigl(w(1)\otimes w(g),\,w(g)\otimes w(1)\bigr)
\]
be the intertwiner associated with the partition $q(t)\in\NC(2,2)$ whose blocks are
$\{1,2,3\}$ and $\{4\}$, with $\Lambda$-colors $t$ and $t^{-1}$ and
$\Gamma$-colors $(1,g,g,1)$.
Explicitly,
\[
T_{q(t)}(e_r\otimes e_s)
=
e_{rst}\otimes e_{t^{-1}},
\qquad r,s\in\Lambda.
\]
Second, let
\[
T_{p_2}\in \Mor\bigl(w(1)\otimes w(g),\,w(g)\otimes w(1)\bigr)
\]
be the intertwiner associated with the one-block partition $p_2\in\NC(2,2)$ with
$\Lambda$-color $1$ and $\Gamma$-colors $(1,g,g,1)$. Its action is
\[
T_{p_2}(e_j\otimes e_d)
=
\sum_{\substack{i,k\in\Lambda\\ ik=jd}} e_i\otimes e_k.
\]

Then these intertwining relations imply the following identities in $C(\mathbb H)$.

\begin{enumerate}
\item[\textup{(1)}] For all $r,s,t\in\Lambda$,
\begin{equation}\label{eq:shift-no-shorthand}
w_{r,s}(g)\,\bar t = w_{rt,\,st}(g).
\end{equation}
In particular, taking $t=s^{-1}$ gives
\begin{equation}\label{eq:reduce-first-col}
w_{r,s}(g)\,\bar s^{-1} = w_{rs^{-1},\,1}(g).
\end{equation}

\item[\textup{(2)}] For all $r,s\in\Lambda$,
\begin{equation}\label{eq:flip-twist-no-shorthand}
\bar s^{-1}\,w_{r,s}(g)=w_{s^{-1},\,r^{-1}}(g)\,\bar r.
\end{equation}

\item[\textup{(3)}] Define, for each $x\in\Lambda$,
\begin{equation}\label{eq:def-bar-nu-no-shorthand}
\bar\nu_x(g):=w_{x,1}(g).
\end{equation}
Then, by \eqref{eq:reduce-first-col},
\[
\bar\nu_{rs^{-1}}(g)=w_{r,s}(g)\,\bar s^{-1},
\qquad r,s\in\Lambda.
\]
Combining this identity with \eqref{eq:flip-twist-no-shorthand} gives the
$\bar\nu$--flip relation
\begin{equation}\label{eq:bar-nu-flip-no-shorthand}
\bar s^{-1}\,\bar\nu_{rs^{-1}}(g)=\bar\nu_{s^{-1}r}(g)\,\bar s^{-1},
\qquad r,s\in\Lambda.
\end{equation}
\end{enumerate}
\end{lemma}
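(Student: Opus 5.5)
The plan is to write out the two intertwining relations coefficientwise, exactly as in the proof of Lemma~\ref{lem:relations-w1}, and then combine them with that lemma. By Lemma~\ref{lem:relations-w1}, $w(1)$ is diagonal: $w(1)=\sum_j e_{jj}\otimes \bar j$, and $j\mapsto\bar j$ is a unitary group homomorphism.

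\textbf{Step 1: the relation from $T_{q(t)}$.} Write $w(1)\otimes w(g)=\sum e_{aj}\otimes e_{bd}\otimes w_{a,j}(1)w_{b,d}(g)$. The identity $(T_{q(t)}\otimes 1)(w(1)\otimes w(g))=(w(g)\otimes w(1))(T_{q(t)}\otimes 1)$ is applied to $e_j\otimes e_d\otimes 1$.
\begin{itemize}
\item The left side gives $\sum_b e_{jbt}\otimes e_{t^{-1}}\otimes \bar j\,w_{b,d}(g)$.
\item The right side gives $\sum_{x,y} e_x\otimes e_y\otimes w_{x,jdt}(g)\,w_{y,t^{-1}}(1)$, which by diagonality equals $\sum_x e_x\otimes e_{t^{-1}}\otimes w_{x,jdt}(g)\,\overline{t^{-1}}$.
\end{itemize}
Comparing the coefficient of $e_{jbt}$ yields $\bar j\,w_{b,d}(g)=w_{jbt,\,jdt}(g)\,\bar t^{-1}$.

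\textbf{Step 2: extracting (1) and (2).} Setting $j=1$ (so $\bar 1=1$) gives $w_{bt,dt}(g)=w_{b,d}(g)\,\bar t$, which is \eqref{eq:shift-no-shorthand}. The special case $t=s^{-1}$ is \eqref{eq:reduce-first-col}. Conversely, plugging the shift back into the general relation gives $\bar j\,w_{b,d}(g)=w_{jb,jd}(g)$, a left-translation covariance.

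For (2) I would use $T_{p_2}$ in the same way: apply both sides to $e_j\otimes e_d\otimes 1$ and compare coefficients. Concretely, $(T_{p_2}\otimes1)(w(1)\otimes w(g))(e_j\otimes e_d\otimes 1)$ equals $\sum_{b}\sum_{ik=jb}e_i\otimes e_k\otimes \bar j\,w_{b,d}(g)$, while the right side equals $\sum_{ik=jd}\sum_x e_x\otimes e_k\otimes w_{x,i}(g)\,\bar k$. Fixing the output $(i,k)$ gives $\bar j\,w_{j^{-1}ik,\,d}(g)=\sum w_{i,i'}(g)\,\bar k$, summed over $i'$ with $i'k=jd$, i.e.\ the single term $i'=jdk^{-1}$.

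Specialising the indices (for instance $j=1$, $i=s^{-1}$, $k$ chosen so that the $d$-index becomes $r^{-1}$) should give a relation between $w_{r,s}(g)$ and a term indexed by inverses. This is then normalised with the two translation covariances from Steps 1 and 2 to reach exactly $\bar s^{-1}w_{r,s}(g)=w_{s^{-1},r^{-1}}(g)\bar r$. The index bookkeeping here, and confirming that the $\Gamma$-colours $(1,g,g,1)$ produce the inversion rather than a mere translation, is the step I expect to be the main obstacle. If the direct specialisation does not close, I would first rotate $T_{p_2}$ via $\cup/\cap$ (the rigidity used in the preceding corollary) to land in $\Mor(\epsilon, w(1)\otimes w(g)\otimes w(g^{-1})\otimes w(1))$. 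There the inverses appear naturally through the conjugate $w(g^{-1})=\overline{w(g)}$-type relation.

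\textbf{Step 3: the $\bar\nu$ relation.} Part (3) is then formal. Definition \eqref{eq:def-bar-nu-no-shorthand} with \eqref{eq:reduce-first-col} gives $\bar\nu_{rs^{-1}}(g)=w_{r,s}(g)\bar s^{-1}$. Substituting this into \eqref{eq:flip-twist-no-shorthand} gives $\bar s^{-1}\bar\nu_{rs^{-1}}(g)\bar s=w_{s^{-1},r^{-1}}(g)\,\bar r$. By the shift relation with $t=r$, this equals $w_{s^{-1}r,1}(g)\,\bar s^{\,\cdot}$ up to the appropriate factor. Simplifying with the homomorphism property then yields $\bar\nu_{s^{-1}r}(g)\bar s^{-1}\bar s$, i.e.\ \eqref{eq:bar-nu-flip-no-shorthand}.
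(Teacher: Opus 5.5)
Your route is the paper's: compare coefficients in the $T_{q(t)}$ relation to get (1), compare coefficients in the $T_{p_2}$ relation to get
$\bar j\,w_{j^{-1}ik,d}(g)=w_{i,jdk^{-1}}(g)\,\bar k$, and obtain (3) formally. You derived that identity correctly, and it is exactly the paper's.

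\textbf{Part (2) is not actually derived.} You stop at ``should give'' and call it the main obstacle, but it is a one-line substitution. Take $j=s^{-1}$, $d=s$, $i=s^{-1}$, $k=r$. Then $j^{-1}ik=r$ and $jdk^{-1}=r^{-1}$, so the identity becomes $\bar s^{-1}w_{r,s}(g)=w_{s^{-1},r^{-1}}(g)\,\bar r$. This is (2), and it is precisely what the paper does.

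Your suggested specialisation $j=1$ is not enough by itself. With $j=1$ the $T_{p_2}$ identity is literally (1) with $t=k$, so it only reproduces (1).

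\textbf{Your worry about the $\Gamma$-colours is misplaced.} The colours $(1,g,g,1)$ do not ``produce the inversion''. $T_{p_2}$ does not depend on $g$; the colours only make the partition admissible. The inverses in (2) are an artefact of the index choice.

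\textbf{You already have enough to prove (2).} Your two covariances are $w_{a,b}(g)\bar t=w_{at,bt}(g)$, and $\bar j\,w_{b,d}(g)=w_{jb,jd}(g)$ (obtained from your Step 1 relation at $t=1$). These turn both sides of the $T_{p_2}$ identity into $w_{ik,jd}(g)$. So they give (2) directly:
$\bar s^{-1}w_{r,s}(g)=w_{s^{-1}r,1}(g)=w_{s^{-1},r^{-1}}(g)\,\bar r$.
The fallback of rotating by $\cup/\cap$ and invoking $w(g^{-1})$ is therefore unnecessary.

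\textbf{Step 3 needs tidying.} There is no ``appropriate factor''. From $\bar\nu_{rs^{-1}}(g)=w_{r,s}(g)\bar s^{-1}$ and (2) you get $\bar s^{-1}\bar\nu_{rs^{-1}}(g)\,\bar s=w_{s^{-1},r^{-1}}(g)\,\bar r$. The shift relation with $t=r$ gives exactly $w_{s^{-1},r^{-1}}(g)\,\bar r=w_{s^{-1}r,1}(g)=\bar\nu_{s^{-1}r}(g)$. Multiplying on the right by $\bar s^{-1}$ then yields the $\bar\nu$-flip relation.
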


\begin{proof}
We first derive \eqref{eq:shift-no-shorthand}. Since
\[
T_{q(t)}\in \Mor\bigl(w(1)\otimes w(g),\,w(g)\otimes w(1)\bigr),
\]
we have
\begin{equation}\label{eq:q-intertw}
(T_{q(t)}\otimes 1)\,(w(1)\otimes w(g))
=
(w(g)\otimes w(1))\,(T_{q(t)}\otimes 1).
\end{equation}
Fix $r,s\in\Lambda$ and apply \eqref{eq:q-intertw} to
$e_r\otimes e_s\otimes 1$. Using the diagonality of $w(1)$,
\[
w(1)(e_r\otimes 1)=e_r\otimes \bar r,
\qquad
w(g)(e_s\otimes 1)=\sum_{b\in\Lambda} e_b\otimes w_{b,s}(g),
\]
we obtain
\[
(w(1)\otimes w(g))(e_r\otimes e_s\otimes 1)
=
\sum_{b\in\Lambda} e_r\otimes e_b\otimes \bar r\,w_{b,s}(g).
\]
Hence the left-hand side of \eqref{eq:q-intertw} is
\begin{equation}\label{eq:q-LHS}
\sum_{b\in\Lambda} T_{q(t)}(e_r\otimes e_b)\otimes \bar r\,w_{b,s}(g).
\end{equation}
By the construction of $T_{q(t)}$,
\[
T_{q(t)}(e_r\otimes e_b)=e_{rbt}\otimes e_{t^{-1}}.
\]
Thus
\[
(T_{q(t)}\otimes 1)(w(1)\otimes w(g))(e_r\otimes e_s\otimes 1)
=
\sum_{b\in\Lambda} e_{rbt}\otimes e_{t^{-1}}
\otimes \bar r\,w_{b,s}(g).
\]

On the other hand,
\[
(T_{q(t)}\otimes 1)(e_r\otimes e_s\otimes 1)
=
e_{rst}\otimes e_{t^{-1}}\otimes 1.
\]
Therefore the right-hand side of \eqref{eq:q-intertw} is
\[
(w(g)\otimes w(1))(e_{rst}\otimes e_{t^{-1}}\otimes 1)
=
\sum_{a\in\Lambda}
e_a\otimes e_{t^{-1}}\otimes w_{a,rst}(g)\,\bar t^{-1}.
\]
Comparing the coefficients of $e_a\otimes e_{t^{-1}}$ gives
\[
\bar r\,w_{r^{-1}at^{-1},s}(g)
=
w_{a,rst}(g)\,\bar t^{-1},
\qquad a,r,s,t\in\Lambda.
\]
Taking $r=1$ gives
\[
w_{at^{-1},s}(g)=w_{a,st}(g)\,\bar t^{-1}.
\]
Right-multiplying by $\bar t$ and replacing $a$ by $at$ yields
\[
w_{a,s}(g)\,\bar t=w_{at,st}(g),
\qquad a,s,t\in\Lambda,
\]
which is exactly \eqref{eq:shift-no-shorthand}. In particular, taking
$t=s^{-1}$ gives \eqref{eq:reduce-first-col}.

We next derive \eqref{eq:flip-twist-no-shorthand}. Since
\[
T_{p_2}\in \Mor\bigl(w(1)\otimes w(g),\,w(g)\otimes w(1)\bigr),
\]
we have
\begin{equation}\label{eq:p2-intertw}
(T_{p_2}\otimes 1)\,(w(1)\otimes w(g))
=
(w(g)\otimes w(1))\,(T_{p_2}\otimes 1).
\end{equation}
Fix $j,d\in\Lambda$ and apply \eqref{eq:p2-intertw} to
$e_j\otimes e_d\otimes 1$. Using
\[
w(1)(e_j\otimes 1)=e_j\otimes \bar j,
\qquad
w(g)(e_d\otimes 1)=\sum_{x\in\Lambda}e_x\otimes w_{x,d}(g),
\]
we get
\[
(w(1)\otimes w(g))(e_j\otimes e_d\otimes 1)
=
\sum_{x\in\Lambda} e_j\otimes e_x\otimes \bar j\,w_{x,d}(g).
\]
Since
\[
T_{p_2}(e_j\otimes e_x)
=
\sum_{\substack{i,k\in\Lambda\\ ik=jx}}e_i\otimes e_k,
\]
the coefficient of $e_i\otimes e_k$ on the left-hand side of
\eqref{eq:p2-intertw} is
\[
\bar j\,w_{j^{-1}ik,d}(g).
\]

On the other hand,
\[
(T_{p_2}\otimes 1)(e_j\otimes e_d\otimes 1)
=
\sum_{\substack{a,b\in\Lambda\\ ab=jd}} e_a\otimes e_b\otimes 1.
\]
Applying $w(g)\otimes w(1)$, the coefficient of $e_i\otimes e_k$ on the
right-hand side is
\[
w_{i,jdk^{-1}}(g)\,\bar k.
\]
Comparing coefficients gives
\[
\bar j\,w_{j^{-1}ik,d}(g)
=
w_{i,jdk^{-1}}(g)\,\bar k,
\qquad i,k,j,d\in\Lambda.
\]
Now set
\[
j=s^{-1},\qquad d=s,\qquad i=s^{-1},\qquad k=r.
\]
Then
\[
\bar s^{-1}\,w_{r,s}(g)
=
w_{s^{-1},r^{-1}}(g)\,\bar r,
\]
which is \eqref{eq:flip-twist-no-shorthand}.

Finally, define
\[
\bar\nu_x(g):=w_{x,1}(g),
\qquad x\in\Lambda,
\]
as in \eqref{eq:def-bar-nu-no-shorthand}. By \eqref{eq:reduce-first-col},
\[
w_{r,s}(g)\,\bar s^{-1}
=
\bar\nu_{rs^{-1}}(g).
\]
Starting from \eqref{eq:flip-twist-no-shorthand} and right-multiplying by
$\bar s^{-1}$ gives
\[
\bar s^{-1}\,w_{r,s}(g)\,\bar s^{-1}
=
w_{s^{-1},r^{-1}}(g)\,\bar r\,\bar s^{-1}.
\]
The left-hand side is
\[
\bar s^{-1}\bigl(w_{r,s}(g)\,\bar s^{-1}\bigr)
=
\bar s^{-1}\,\bar\nu_{rs^{-1}}(g).
\]
For the right-hand side, applying \eqref{eq:shift-no-shorthand} with
$(r,s,t)=(s^{-1},r^{-1},r)$ gives
\[
w_{s^{-1},r^{-1}}(g)\,\bar r
=
w_{s^{-1}r,1}(g)
=
\bar\nu_{s^{-1}r}(g).
\]
Therefore
\[
\bar s^{-1}\,\bar\nu_{rs^{-1}}(g)
=
\bar\nu_{s^{-1}r}(g)\,\bar s^{-1},
\]
which is exactly \eqref{eq:bar-nu-flip-no-shorthand}.
\end{proof}

\begin{lemma}[Multiplicativity relation]
Let $p_3\in\NC(2,1)$ be the two-level partition with upper points labelled $1,2$
(from left to right) and the unique lower point labelled $3$, consisting of a single block
$W=\{1,2,3\}$. Endow $W$ with $\Gamma$-colors
$\mathrm{col}_\Gamma(1)=g,\mathrm{col}_\Gamma(2)=h,\mathrm{col}_\Gamma(3)=gh,$
and with $\Lambda$-color $1$. Let $T_{p_3}:\ell^2(\Lambda)^{\otimes 2}\to \ell^2(\Lambda)$ be the
associated intertwiner. Then
\[
T_{p_3}\in \Mor\bigl(w(g)\otimes w(h),\, w(gh)\bigr)
\quad\Longrightarrow\quad
w_{\gamma,1}(gh)=\sum_{\substack{r,s\in\Lambda\\ rs=\gamma}}w_{r,1}(g)\,w_{s,1}(h),
\qquad \forall\,g,h\in\Gamma,\ \forall\,\gamma\in\Lambda.
\]
In particular, with the notation
\[
\bar{\nu}_\gamma(x):=w_{\gamma,1}(x)\qquad (x\in\Gamma,\ \gamma\in\Lambda),
\]
this reads
\[
\bar{\nu}_{\gamma}(gh)=\sum_{\substack{r,s\in\Lambda\\ rs=\gamma}}\bar{\nu}_{r}(g)\,\bar{\nu}_{s}(h),
\qquad \forall\,g,h\in\Gamma,\ \forall\,\gamma\in\Lambda.
\]
\end{lemma}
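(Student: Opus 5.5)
We first compute $T_{p_3}$ explicitly from Definition~\ref{block relations}. The single block $W=\{1,2,3\}$ is a through-block with $\max W_+=2$, $\max W_-=1$ and label $t_W=1$, so for $\vec r=(r_1,r_2)$ and $s\in\Lambda$ we get $\delta_{p_3}(\vec r,s)=1$ iff $s=r_1r_2$. Hence
\[
T_{p_3}(e_{r_1}\otimes e_{r_2})=e_{r_1r_2}.
\]
Admissibility holds: the boundary consists only of $W$ with label $1$, and the $\Gamma$-condition reads $gh=gh$. Thus $T_{p_3}$ is a morphism of $\mathcal C_{\Gamma,\Lambda}$, and therefore an intertwiner for $\mathbb H$ by the equivalence $\Rep(\mathbb H)\simeq\mathcal C_{\Gamma,\Lambda}$.

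Next we write out the intertwining relation
\[
(T_{p_3}\otimes 1)\bigl(w(g)\otimes w(h)\bigr)=w(gh)\,(T_{p_3}\otimes 1).
\]
We apply both sides to $e_a\otimes e_b\otimes 1$ and compare the coefficients of $e_\gamma$. On the left we have $(w(g)\otimes w(h))(e_a\otimes e_b\otimes 1)=\sum_{r,s}e_r\otimes e_s\otimes w_{r,a}(g)w_{s,b}(h)$, and applying $T_{p_3}$ gives coefficient $\sum_{rs=\gamma}w_{r,a}(g)\,w_{s,b}(h)$. On the right we get $w(gh)(e_{ab}\otimes 1)$, whose coefficient is $w_{\gamma,ab}(gh)$. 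This yields
\[
w_{\gamma,ab}(gh)=\sum_{rs=\gamma}w_{r,a}(g)\,w_{s,b}(h)\qquad(a,b,\gamma\in\Lambda).
\]

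Specializing to $a=b=1$ gives exactly the stated identity, and rewriting it with $\bar\nu_\gamma(x):=w_{\gamma,1}(x)$ gives the second form. The main point to check carefully is the direction convention in $\Mor$ and in the coefficient expansion $w(g)(e_a\otimes 1)=\sum_r e_r\otimes w_{r,a}(g)$. This convention must match the one used in Lemma~\ref{lem:bar-nu-flip-no-shorthand}, so that the index placed first is the row index. Once the conventions are fixed consistently, the rest is routine.
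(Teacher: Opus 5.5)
Your proposal is correct and follows essentially the same route as the paper: you apply the intertwining relation $(T_{p_3}\otimes 1)(w(g)\otimes w(h))=w(gh)(T_{p_3}\otimes 1)$ to $e_a\otimes e_b\otimes 1$, compare coefficients to get $w_{\gamma,ab}(gh)=\sum_{rs=\gamma}w_{r,a}(g)\,w_{s,b}(h)$, and then set $a=b=1$. Your additional derivation of $T_{p_3}(e_{r_1}\otimes e_{r_2})=e_{r_1r_2}$ from the definition of $\delta_{(p,\vec t)}$, together with the admissibility check, is correct; the paper simply takes the formula for $T_{p_3}$ as known, and the lemma assumes the intertwining property rather than proving it.
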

\begin{proof}
Recall that the map induced by the one-block partition $p_3\in\NC(2,1)$ is given on the
canonical basis by
\[
T_{p_3}(e_r\otimes e_s)=e_{rs},\qquad r,s\in\Lambda.
\]
The assumption $T_{p_3}\in\Mor(w(g)\otimes w(h),w(gh))$ means that
\begin{equation}\label{eq:Tp3-intertw}
(T_{p_3}\otimes 1)\,(w(g)\otimes w(h)) \;=\; w(gh)\,(T_{p_3}\otimes 1).
\end{equation}
Fix $r,s\in\Lambda$ and apply \eqref{eq:Tp3-intertw} to $e_r\otimes e_s\otimes 1$.

Writing
\[
w(x)=\sum_{a,b\in\Lambda} e_{ab}\otimes w_{ab}(x),\qquad x\in\Gamma,
\]
we have
\[
w(g)(e_r\otimes 1)=\sum_{a\in\Lambda} e_a\otimes w_{a r}(g),
\qquad
w(h)(e_s\otimes 1)=\sum_{b\in\Lambda} e_b\otimes w_{b s}(h).
\]
Hence
\[
(w(g)\otimes w(h))(e_r\otimes e_s\otimes 1)
=\sum_{a,b\in\Lambda} e_a\otimes e_b\otimes w_{a r}(g)\,w_{b s}(h),
\]
and therefore, using $T_{p_3}(e_a\otimes e_b)=e_{ab}$,
\[
(T_{p_3}\otimes 1)(w(g)\otimes w(h))(e_r\otimes e_s\otimes 1)
=\sum_{a,b\in\Lambda} e_{ab}\otimes w_{a r}(g)\,w_{b s}(h).
\]
On the other hand,
\[
w(gh)(T_{p_3}\otimes 1)(e_r\otimes e_s\otimes 1)
= w(gh)(e_{rs}\otimes 1)
=\sum_{\gamma\in\Lambda} e_\gamma\otimes w_{\gamma,\,rs}(gh).
\]
Comparing coefficients of $e_\gamma$ on both sides yields, for all $\gamma,r,s\in\Lambda$,
\[
w_{\gamma,\,rs}(gh)=\sum_{\substack{a,b\in\Lambda\\ab=\gamma}} w_{a r}(g)\,w_{b s}(h).
\]
Taking $r=s=1$ gives
\[
w_{\gamma,1}(gh)=\sum_{\substack{a,b\in\Lambda\\ab=\gamma}} w_{a,1}(g)\,w_{b,1}(h),
\]
which is exactly the claimed identity (with the relabeling $a\mapsto r$, $b\mapsto s$).
\end{proof}

 \begin{lemma}[Cup relation and unitarity]\label{lem:cup-unitary-consequence}
 
Let $\xi$ be the two-point one-block partition with $\Lambda$-color $1$. The associated
intertwiner $T_\xi\in \Mor\bigl(\epsilon,\ w(g)\otimes w(g^{-1})\bigr)$ is given by
\[
T_\xi(1)=\xi=\sum_{r\in\Lambda} e_r\otimes e_{r^{-1}}
\ \in\ \ell^2(\Lambda)\otimes\ell^2(\Lambda).
\]
Then, for all $c,s\in\Lambda$,
\begin{equation}\label{eq:main}
w_{c,s}(g^{-1}) \;=\; w_{c^{-1},\,s^{-1}}(g)^{*}.
\end{equation}
\end{lemma}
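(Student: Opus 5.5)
The plan is the standard argument that turns a cup intertwiner into a conjugate-transpose relation, using only the unitarity of $w(g)$ and $w(g^{-1})$ (both are images of objects in the $C^*$-category, so they are unitary representations of $\mathbb H$).

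First, I would write out what it means for $T_\xi$ to lie in $\Mor(\epsilon, w(g)\otimes w(g^{-1}))$. Concretely,
\[
(w(g)\otimes w(g^{-1}))(\xi\otimes 1)=\xi\otimes 1,
\]
where the tensor-product representation is $w(g)_{13}w(g^{-1})_{23}$. Applying this to $\xi=\sum_r e_r\otimes e_{r^{-1}}$ and taking the coefficient of $e_a\otimes e_b$ gives
\[
\sum_{r\in\Lambda} w_{a,r}(g)\,w_{b,r^{-1}}(g^{-1})=\delta_{b,a^{-1}}\,1,\qquad a,b\in\Lambda.
\]

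Second, I would rewrite this as a matrix identity. Define $X\in M_\Lambda(C(\mathbb H))$ by $X_{r,b}:=w_{b,r^{-1}}(g^{-1})$. The identity above then reads $w(g)\,X=J$, where $J_{a,b}=\delta_{b,a^{-1}}$ is the permutation matrix of inversion. Since $w(g)$ is unitary, we can multiply on the left by $w(g)^*$ to get $X=w(g)^*J$. Entrywise this gives
\[
w_{b,r^{-1}}(g^{-1})=\sum_a w_{a,r}(g)^*\,\delta_{b,a^{-1}}=w_{b^{-1},r}(g)^*.
\]
Setting $c=b$ and $s=r^{-1}$ yields $w_{c,s}(g^{-1})=w_{c^{-1},s^{-1}}(g)^*$, which is the claimed relation \eqref{eq:main}.

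The only delicate point is bookkeeping: getting the leg numbering right in the tensor product representation, and keeping the convention $w(x)(e_s\otimes 1)=\sum_a e_a\otimes w_{a,s}(x)$ used in the preceding lemmas. Left invertibility is not an issue, since unitarity of $w(g)$ in $M_\Lambda(C(\mathbb H))$ gives it directly. Note that the argument never uses unitarity of $w(g^{-1})$.
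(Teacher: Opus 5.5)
Your proposal is correct and is essentially the paper's proof. The paper derives the same coefficient identity $\sum_{r} w_{a,r}(g)\,w_{c,r^{-1}}(g^{-1})=\delta_{c,a^{-1}}1$, then multiplies on the left by $w_{a,r}(g)^*$, sums over $a$ and uses column unitarity of $w(g)$; this is exactly your left multiplication of $w(g)X=J$ by $w(g)^*$, written entrywise.
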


\begin{proof}
Since $T_\xi\in \Mor\bigl(\epsilon,\ w(g)\otimes w(g^{-1})\bigr)$, we have the intertwining relation
\begin{equation}\label{eq:intertwine-xi}
(w(g)\otimes w(g^{-1}))(\xi\otimes 1)=\xi\otimes 1.
\end{equation}

\medskip
\noindent\textbf{Step 1: Coefficient form of \eqref{eq:intertwine-xi}.}
Expanding the left-hand side of \eqref{eq:intertwine-xi} we get
\[
(w(g)\otimes w(g^{-1}))(\xi\otimes 1)
=\sum_{r\in\Lambda}(w(g)\otimes w(g^{-1}))(e_r\otimes e_{r^{-1}}\otimes 1).
\]
Using
$w(g)(e_r\otimes 1)=\sum_{a\in\Lambda} e_a\otimes w_{a r}(g),
w(g^{-1})(e_{r^{-1}}\otimes 1)=\sum_{c\in\Lambda} e_c\otimes w_{c,\,r^{-1}}(g^{-1}),$
we obtain
\[
(w(g)\otimes w(g^{-1}))(\xi\otimes 1)
=\sum_{r\in\Lambda}\ \sum_{a,c\in\Lambda}
e_a\otimes e_c\otimes w_{a r}(g)\,w_{c,\,r^{-1}}(g^{-1}).
\]
On the other hand,
\[
\xi\otimes 1=\sum_{r\in\Lambda} e_r\otimes e_{r^{-1}}\otimes 1
=\sum_{a,c\in\Lambda} e_a\otimes e_c\otimes \delta_{c,\,a^{-1}}\,1,
\].
Comparing coefficients of the basis vectors $e_a\otimes e_c$ in
$\ell^2(\Lambda)\otimes\ell^2(\Lambda)$ yields
\begin{equation}\label{eq:cup-relation}
\sum_{r\in\Lambda} w_{a r}(g)\,w_{c,\,r^{-1}}(g^{-1})=\delta_{c,\,a^{-1}}\,1,
\qquad \forall\,a,c\in\Lambda.
\end{equation}

\medskip
\noindent\textbf{Step 2: Unitarity and the $*$-formula.}
Since $w(g)$ is unitary, its coefficients satisfy, for all $r,s\in\Lambda$,
\begin{equation}\label{eq:unitary-column}
\sum_{a\in\Lambda} w_{a r}(g)^*\,w_{a s}(g)=\delta_{r,s}\,1.
\end{equation}
Fix $c\in\Lambda$ and $r\in\Lambda$. Starting from \eqref{eq:cup-relation}, multiply on the left
by $w_{a r}(g)^*$ and sum over $a\in\Lambda$:
\[
\sum_{a\in\Lambda} w_{a r}(g)^*
\Bigl(\sum_{\rho\in\Lambda} w_{a \rho}(g)\,w_{c,\rho^{-1}}(g^{-1})\Bigr)
=
\sum_{a\in\Lambda} w_{a r}(g)^*\,\delta_{c,a^{-1}}\,1.
\]
On the left-hand side, interchange the sums and use \eqref{eq:unitary-column}:
\[
\sum_{\rho\in\Lambda}
\Bigl(\sum_{a\in\Lambda} w_{a r}(g)^*\,w_{a \rho}(g)\Bigr)\,w_{c,\rho^{-1}}(g^{-1})
=
\sum_{\rho\in\Lambda}\delta_{r,\rho}\,w_{c,\rho^{-1}}(g^{-1})
=
w_{c,r^{-1}}(g^{-1}).
\]
On the right-hand side, the Kronecker delta forces $a=c^{-1}$, hence
\[
\sum_{a\in\Lambda} w_{a r}(g)^*\,\delta_{c,a^{-1}}\,1
=
w_{c^{-1},r}(g)^*.
\]
Therefore, for all $c,r\in\Lambda$,
\[
w_{c,r^{-1}}(g^{-1})=w_{c^{-1},r}(g)^*.
\]
Replacing $r^{-1}$ by $s$ yields \eqref{eq:main}.
\end{proof}

 \begin{theorem}\label{Reconstruction}
The compact quantum groups $\GG$ and $\mathbb H$ are isomorphic. More precisely, there exists an isomorphism of compact quantum groups
\[
\psi:C(\GG)\xrightarrow{\ \cong\ } C(\mathbb H)
\]
such that
\[
\psi(s)=\bar{s},\qquad \forall\, s\in\Lambda,
\]
and
\[
\psi\bigl(\nu_\gamma(g)\bigr)=\bar{\nu}_\gamma(g),
\qquad \forall\, \gamma\in\Lambda,\ g\in\Gamma.
\]
Equivalently,
\[
\GG\simeq \mathbb H.
\]
\end{theorem}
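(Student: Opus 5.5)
We construct $\psi$ from the universal property of $C(\GG)$ and then show it inverts the surjection $\varphi:C(\mathbb H)\to C(\GG)$ obtained earlier from Proposition~\ref{prop:WTK-concrete-universal}.

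\textbf{Step 1: $\psi$ is a well-defined $*$-homomorphism.} It suffices to check that the elements $\bar s$ ($s\in\Lambda$) and $\bar\nu_\gamma(g)$ satisfy the defining relations \eqref{eq:nu-relations-a}--\eqref{eq:nu-relations-d} together with the relations of $C^*(\Lambda)$. The sources are as follows.
\begin{itemize}
\item By Lemma~\ref{lem:relations-w1}, $s\mapsto\bar s$ is a unitary group homomorphism. This gives a $*$-representation of $C^*(\Lambda)$.
\item Relation \eqref{eq:nu-relations-b} holds because $w(1)$ is diagonal and $w_{\gamma,1}(1)=\delta_{\gamma,1}$, again by Lemma~\ref{lem:relations-w1}.
\item Relation \eqref{eq:nu-relations-c} is the multiplicativity lemma for $p_3$.
\item Relation \eqref{eq:nu-relations-d} is the $\bar\nu$-flip relation \eqref{eq:bar-nu-flip-no-shorthand} after the substitution $s\mapsto s^{-1}$.
\item For \eqref{eq:nu-relations-a}, we use the cup lemma \eqref{eq:main}: $\bar\nu_\gamma(g)^*=w_{\gamma,1}(g)^*=w_{\gamma^{-1},1}(g^{-1})=\bar\nu_{\gamma^{-1}}(g^{-1})$.
\end{itemize}
The universal property of $C(\GG)$ then yields $\psi$.

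\textbf{Step 2: $\psi$ intertwines the fundamental representations.} Using the shift relation \eqref{eq:reduce-first-col}, we have $w_{r,s}(g)=\bar\nu_{rs^{-1}}(g)\bar s$. Hence
\[
(\id\otimes\psi)(u(g))=\sum_{r,s}e_{rs}\otimes\bar\nu_{rs^{-1}}(g)\bar s=w(g).
\]
Two consequences follow. First, $\psi$ is surjective onto the dense Hopf $*$-algebra $\Pol(\mathbb H)$, which is generated by the coefficients of the $w(g)$ by Proposition~\ref{prop:WTK-concrete-universal}; so $\psi$ has dense range, and closed range as a $*$-homomorphism, hence is onto. Second, $\psi$ intertwines the comultiplications, since $(\id\otimes\Delta)$ acts on both $u(g)$ and $w(g)$ by the representation formula.

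\textbf{Step 3: $\psi$ and $\varphi$ are mutually inverse.} By the Remark, $\varphi(w_{r,s}(g))=u_{r,s}(g)=\nu_{rs^{-1}}(g)s$. In particular $\varphi(\bar s)=\varphi(w_{s,s}(1))=\nu_1(1)s=s$ and $\varphi(\bar\nu_\gamma(g))=\nu_\gamma(g)$. So $\varphi\circ\psi$ fixes the generators of $C(\GG)$, and $\psi\circ\varphi$ fixes the coefficients $w_{r,s}(g)$, which generate $C(\mathbb H)$. Both compositions are therefore the identity.

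\textbf{Main obstacle.} The delicate point is whether $C(\mathbb H)$ is the full (universal) completion, so that $\varphi$ exists on $C(\mathbb H)$ and not only on $\Pol(\mathbb H)$. We take $C(\mathbb H)$ to be the universal enveloping C$^*$-algebra of $\Pol(\mathbb H)$, as in Proposition~\ref{prop:WTK-concrete-universal}. With that convention both maps are defined at the C$^*$-level, and the identities of Step 3, checked on dense generating sets, extend by continuity.

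A secondary check is that the partitions $q(t)$, $p_2$, $p_3$, $\xi$, $p(t)$, $P$ and $p_1$ actually lie in the relevant $\NC_\Lambda(\vec g,\vec h)$. This means verifying their boundary conditions and $\Gamma$-block conditions against Definition~\ref{bi-coloured}, so that they are genuine morphisms in $\mathcal C_{\Gamma,\Lambda}$ and the preceding lemmas apply.
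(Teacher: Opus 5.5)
Your proposal is correct and follows the same route as the paper: you obtain $\psi$ from the universal property of $C(\GG)$ using the relations supplied by the partition lemmas, and then check that $\psi$ and the Tannaka--Krein surjection $\varphi$ are mutually inverse on generators. Your version is more complete than the paper's, which says only ``by construction'': the identity $(\id\otimes\psi)(u(g))=w(g)$ gives both surjectivity and compatibility with the comultiplications, and you compute $\varphi(\bar s)=s$ and $\varphi(\bar\nu_\gamma(g))=\nu_\gamma(g)$ explicitly; moreover, your admissibility check will show that $p(t)$ must be coloured $t$ on its through-block and $t^{-1}$ on its upper singleton, giving $T_{p(t)}(e_r\otimes e_s)=\delta_{s,t}\,e_{rt}$ rather than the paper's $\delta_{s,t^{-1}}\,e_{rt}$ (whose boundary product is $t^2$), and this corrected map yields the same diagonality of $w(1)$.
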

\begin{proof}
By construction, the elements $\bar{s}$ ($s\in\Lambda$) and
$\bar{\nu}_\gamma(g)$ ($\gamma\in\Lambda$, $g\in\Gamma$) generate
$C(\mathbb H)$, and they satisfy the same defining relations as the
generators $s$ and $\nu_\gamma(g)$ of $C(\GG)$.
Hence the universal property of $C(\GG)$ yields a surjective
$*$-homomorphism
\[
\psi:C(\GG)\to C(\mathbb H)
\]
satisfying the above formulas.

Recall that we already have a surjective $*$-homomorphism
\[
\varphi:C(\mathbb H)\to C(\GG)
\]
such that
\[
(\id\otimes\varphi)(w(g))=u(g),\qquad g\in\Gamma .
\]
By construction, the compositions $\varphi\circ\psi$ and
$\psi\circ\varphi$ act as the identity on the generators of
$C(\GG)$ and $C(\mathbb H)$ respectively. Therefore
\[
\varphi\circ\psi=\id_{C(\GG)},\qquad
\psi\circ\varphi=\id_{C(\mathbb H)}.
\]
It follows that $\varphi$ and $\psi$ are mutually inverse
$*$-isomorphisms. Consequently,
\[
C(\mathbb H)\cong C(\GG),
\]
and hence the compact quantum groups $\mathbb H$ and $\GG$
are isomorphic.
\end{proof}

 \begin{remark}
An interesting special case of the reconstruction theorem occurs when
\[
\Gamma=\{e\}.
\]
In this situation, the category built from $\Lambda$-coloured noncrossing partitions satisfying the boundary condition reconstructs the compact quantum group $\widehat{\Lambda}$. Since a finite group is completely determined by its dual compact quantum group, it follows that every finite group $\Lambda$ can be recovered from this enhanced noncrossing-partition formalism.

We emphasize that this should not be interpreted in the usual ``easy quantum group'' sense. Indeed, the group structure of $\Lambda$ is not encoded by the bare noncrossing partition itself, but by the additional block labelling together with the boundary product constraint. In other words, the planar noncrossing skeleton is preserved, while the full multiplication law of $\Lambda$ is carried by the decoration data.
\end{remark}

\nocite{*}
\bibliographystyle{alpha}
\bibliography{BIBLIO}

\end{document}